\documentclass[letterpaper,11pt,reqno]{amsart}
\usepackage{mlmodern}
\usepackage[utf8]{inputenc}
\usepackage[T1]{fontenc}
\usepackage{amsmath, bm}
\usepackage{amssymb}
\usepackage{graphicx}
\usepackage[counterclockwise]{rotating}
\usepackage{mathtools}
\usepackage{enumerate}
\usepackage{amsfonts}
\usepackage{dirtytalk}
\usepackage{chngcntr}
\counterwithin{figure}{section}
\usepackage{xy}
\usepackage{amsthm}
\usepackage{xfrac}
\usepackage[dvipsnames]{xcolor}
\usepackage[obeyspaces,hyphens,spaces]{url}

\usepackage[
backend=bibtex,
giveninits=true,
style=alphabetic, url=false, isbn=false,
maxbibnames=99,mincitenames=1, maxcitenames=4]{biblatex}
\renewbibmacro{in:}{}
\usepackage[obeyspaces,hyphens,spaces]{url}

\DeclareFieldFormat[article,periodical]{volume}{\mkbibbold{#1}} 

\usepackage[urlcolor=Magenta, 
linkcolor=red, 
citecolor=Green,
colorlinks=true,
linktocpage=true]{hyperref}
\usepackage{xr-hyper}

\usepackage{tikz}
\usepackage{tikz-cd}
\tikzstyle{lien}=[->,>=stealth,rounded corners=5pt,thick]
\usepackage{geometry}
\usepackage{cleveref}

\crefformat{section}{\S#2#1#3} 
\crefformat{subsection}{\S#2#1#3}
\crefformat{subsubsection}{\S#2#1#3}

\newtheorem{thm}{Theorem}[section]
\newtheorem{prop}[thm]{Proposition}
\newtheorem{lem}[thm]{Lemma}

\newtheorem{assum}{Condition}

\theoremstyle{definition}
\newtheorem{defi}[thm]{Definition}
\newtheorem{rem}{Remark}[section]

\numberwithin{equation}{section}

\usepackage{import}
\usepackage{graphicx}
\graphicspath{{images/}}

\usepackage{blindtext}

\usepackage{subfiles}

\author{Julien Moy}
\address{Laboratoire de Mathématiques d'Orsay, Universit{\'e} Paris-Saclay, 91405 Orsay cedex, France}
\email{julien.moy@universite-paris-saclay.fr}
\title[Smooth spectral statistics of random Schr\"odinger operators]{Smooth spectral statistics of random perturbations of Schr\"odinger operators near Anosov energy levels}

\def\lesssimA#1#2{\mathrel{ \vcenter{\offinterlineskip%
			\ialign{\hfil##\hfil\cr \noalign{\vskip 0.15ex} $#1<$\cr \noalign{\vskip 0.25ex} $#1\sim$\cr}%
}}}

\def\lesssim{\mathpalette\lesssimA{}}

\def\gtrsimA#1#2{\mathrel{ \vcenter{\offinterlineskip%
			\ialign{\hfil##\hfil\cr \noalign{\vskip 0.15ex} $#1>$\cr \noalign{\vskip 0.25ex} $#1\sim$\cr}%
}}}

\def\gtrsim{\mathpalette\gtrsimA{}}

\begin{document}
	\maketitle
	
	\begin{abstract} 
		
		We investigate the spectral statistics of random perturbations of semiclassical Schr\"odinger operators on compact manifolds, near energy levels corresponding to chaotic classical dynamics. The prototypical example is that of an operator $h^2\Delta+V(x)$ that is perturbed by a smooth potential $h^\alpha V_\omega$ with $\alpha\in (0,1)$, and $V_\omega$ is a random potential that decorrelates on distances $h^\beta$, with $0<\beta<2\alpha$. We show that for a generic perturbation, the spectral fluctuations of the smoothed counting function of eigenvalues obey a universal behavior at a certain mesoscopic scale, which is coherent with the predictions of random matrix theory.

	\end{abstract}
	\tableofcontents

	\section{Introduction}
A central question in quantum chaos is to understand the behavior in the semiclassical limit of spectra of classically chaotic quantum systems. The guiding example to have in mind is the Laplacian on a closed hyperbolic surface, for which the corresponding classical dynamics is the geodesic flow on the unit cotangent bundle. The Bohigas--Giannoni--Schmit conjecture (BGS) \cite{PhysRevLett.52.1,BGS2} predicts that \emph{generically}, the spectra of such systems display the same fluctuation properties at small energy scales as those predicted by random matrix theory. In the absence of spin, the spectral statistics should match those of the Gaussian Orthogonal Ensemble (GOE) when the system is time-reversal invariant, and the Gaussian Unitary Ensemble (GUE) otherwise. In contrast, for classically integrable systems, the Berry--Tabor conjecture \cite{BerryTabor} predicts that the spectrum displays Poissonian behavior. We refer to the nice review of Ullmo \cite{Ullmo2016BohigasGiannoniSchmitC} for more background on the BGS conjecture.

Although theoretical evidence by Berry \cite{BerryRigidity,Berry1} and numerical experiments support the BGS conjecture, there is no rigorous proof for any individual system. Heuristic arguments typically rely on the Gutzwiller trace formula, which expresses the semiclassical density of states as the sum of a \emph{smooth part} (the Weyl term) and an \emph{oscillating part} involving classical periodic orbits. The energy variance of this oscillating sum has been estimated using a so-called \emph{diagonal approximation} which consists in discarding pairs of distinct periodic orbits. However, the validity of this approximation remains speculative, and is even known to fail dramatically for certain systems exhibiting exceptional symmetries, such as arithmetic hyperbolic surfaces \cite{Luo1994}, for which the length spectrum exhibits exponential degeneracies.

Recent works in the mathematical literature have focused on finding mechanisms to justify this diagonal approximation, by introducing some randomness to the system. In the context of hyperbolic surfaces, instead of studying a fixed surface, one changes perspective and considers an ensemble of surfaces equipped with a probability measure, in the hope of obtaining results that hold with high probability. We mention the works of Rudnick \cite{rudnick2022goe}, Rudnick--Wigman \cite{rudnick2023central,rudnick2023sure} on the Weil--Petersson model of random hyperbolic surfaces, and Naud \cite{naud2022random}, Maoz \cite{Maoz2} and the author \cite{Moy2024} on the discrete model of random covers of a negatively curved surface. These works investigate ensemble fluctuations of the smoothed counting function of eigenvalues of the square root of the Laplacian $\sqrt{\Delta-1/4}$. For both these models, the \emph{ensemble variance} of the smoothed counting function of eigenvalues was shown to be universal in the limit of large genus (for the Weil--Petersson model) or large degree (for the model of random covers), and conformed to the predictions of random matrix theory.

In this paper, we pursue this line of work by studying the spectrum of random perturbations of a fixed semiclassical Schr\"odinger operator near an energy $E$, such that the classical dynamics on the energy shell $E$ is of Anosov type. We consider the model recently introduced by Ingremeau and Vogel \cite{ingremeau2023emergence} to study the \emph{random wave conjecture}.

\subsection{Spectral fluctuations} Let $(M,\operatorname{vol})$ be a smooth closed, connected surface, equipped with a smooth volume form $\operatorname{vol}$. Let $P_h$ be a selfadjoint semiclassical pseudodifferential operator on $M$ depending on a parameter $h\in (0,1]$, with elliptic principal symbol $p$. The example to keep in mind for now is $P_h=h^2\Delta_g+V(x)$ acting on $L^2(M,\operatorname{vol}_g)$ where $\Delta_g$ is the positive Laplace--Beltrami operator associated with a metric $g$, and $V$ is a smooth potential. We will allow for more general operators later on. The operator $P_h$ has discrete spectrum on $L^2(M,\rm{vol})$:
\[\operatorname{Spec}(P_h)=\{E_{j}\}_{j\ge 0}.\]
Throughout the paper, the Fourier transform of a function $f$ is defined by $\widehat f(t)=\frac{1}{2\pi}\int_{\mathbf{R}}\mathrm{e}^{-\mathrm{i}\xi t}f(\xi)\mathrm{d}\xi$. We introduce smoothed counting functions of eigenvalues. 
\begin{defi}[Smoothed counting function] Let $f$ be a test function with smooth Fourier transform supported in $[-1,1]$. Let $\eta>0$. We define
	\begin{equation} \label{eq: counting func} N_{f}(E,\eta):=\operatorname{Tr} f\Big(\frac{P_{h}-E}{\eta}\Big)=\sum_j f\Big(\frac{E_{j}-E}{\eta}\Big). \end{equation}
\end{defi}
The quantity $N_{f}(E,\eta)$ can be thought of as a regularization of the sharp counting function
\begin{equation} \label{eq: sharp counting function}N_{\rm sharp}(E,\eta):=\#\big\{ j~:~ |E_j-E|<\eta\big\}.\end{equation}
We are interested in the fluctuations of $N_f(E',\eta)$, when $E'$ varies in an interval of energies $[E-\delta E,E+\delta E]$. We assume $h^2\ll\eta\ll \delta E\ll h$.\footnote{here $f\ll g$ means that the ratio $f/g$ goes to $0$ as $h\to 0$.} The assumption $h^2\ll \eta$ ensures that a window of width $\eta$ typically contains many eigenvalues (because the mean level spacing is of order $h^2$ by the Weyl law) thus we work in a mesoscopic regime. The assumption $\eta \ll \delta E$ ensures that the window $[E-\delta E,E+\delta E]$ incorporates many disjoint intervals of width $\eta$, which is necessary to compute statistics. The assumption $\delta E\ll h$ is rather technical and ensures that the Weyl term in the trace formula remains constant in the window $[E-\delta E,E+\delta E]$. We denote by $\mathbf P_{E'}$ the normalized Lebesgue measure on $[E-\delta E,E+\delta E]$, and by $\mathbf E_{E'}$ the associated expected value. A particularly interesting statistic is the \emph{spectral number variance}:
\[\mathbf E_{E'}\big[N_{f}(E',\eta)^2\big]-\mathbf E_{E'}\big[N_{f}(E',\eta)\big]^2.\]
This quantity measures the irregularities at scale $\eta$ of the distribution of eigenvalues of $P_h$ in the window $[E-\delta E,E+\delta E]$. A small spectral number variance is an indicator of \emph{rigidity} of the spectrum. This statistic --- with $N_f$ replaced by $N_{\rm sharp}$ --- was studied by Berry \cite{Berry1} using the Gutzwiller trace formula, in the case where the classical flow on the energy shell $E$ is chaotic, say Anosov. One of the goals of the present paper is to study spectral fluctuations of $N_f$ in a model of random Schr\"odinger operators that all share the same classical limit.

\begin{defi}Let $f$ be a real test function, with smooth compactly supported Fourier transform $\widehat f$. Set
	\[\Sigma_{\rm GOE}^2(f):=4\int_{\mathbf R_+} x |\widehat f(x)|^2\mathrm{d}x, \qquad \Sigma_{\rm GUE}^2(f):=2\int_{\mathbf R_+} x |\widehat f(x)|^2\mathrm{d}x. \]
\end{defi} 
These numbers correspond to the ensemble variance of the smoothed counting function of eigenvalues for the GOE and GUE ensembles of random matrices, which were computed by Dyson and Mehta \cite{DysonMehtaIV}. An avatar of the BGS conjecture is that when the classical dynamics on the energy shell $\Sigma_E:=p^{-1}(E)$ is chaotic, then, in the regime $h^2\ll \eta \ll \delta E\ll h$, the random variable $N_{f}(\cdot,\eta)$ should behave like a centered Gaussian random variable with variance $\Sigma_{\rm GOE}^2(f)$ or $\Sigma_{\rm GUE}^2(f)$ depending on whether or not the system is time-reversal invariant. For generic classically integrable systems, the Berry--Tabor conjecture \cite{BerryTabor} instead suggests that the variance of $N_f(\cdot,\eta)$ grows like $\frac{\eta}{h^2}$, which is the average number of energy levels in a window of width $\eta$.

\begin{rem} It is shown in \cite{AnttiHe} that if $X^N$ is a random Wigner matrix of size $N\times N$ (e.g. a matrix of the Gaussian Orthogonal Ensemble GOE) rescaled so that its empirical distribution of eigenvalues converges to the semicircle's law on $(-2,2)$, and $E\in (-2,2)$ is fixed, then for any $\eta$ of order $N^{-\alpha}$ with $\alpha\in (0,1)$, the \emph{ensemble variance} of the smoothed counting function
	\[\operatorname{Tr} f\Big(\frac{X^N-E}{\eta}\Big)=\sum_{E_j\in \operatorname{Spec}(X^N)}f\Big(\frac{E_j-E}{\eta}\Big)\]
	converges to $\Sigma_{\rm GOE}^2(f)$ as $N\to +\infty$.
This is a mesoscopic regime, where the window $[E-\eta,E+\eta]$ is small but still contains a large number of eigenvalues.
	
In random matrix theory, the phenomenon that, for a typical realization of a random matrix, the spectral fluctuations of the number of eigenvalues agree with the corresponding ensemble fluctuations is known as \emph{ergodicity} \cite{PANDEY1979170}.

\end{rem}

\subsubsection{Spectral fluctuations of random operators}

As the BGS conjecture is currently out of reach for a fixed operator $P_h$, we introduce small random perturbations of the operator $P_h$, that we denote $P_h^\omega$, in hope to obtain some results for \say{generic} perturbations. We will describe the random model in detail below, but the example to have in mind is $P_h^\omega=h^2\Delta_g+V(x)+h^\alpha V_\omega(x)$, with $V_\omega=\sum \omega_j V_j$, where the $V_j$'s are smooth potentials localized in balls of radius $h^\beta$ (with $2\beta<\alpha$) that cover the whole space $M$, and the $\omega_j$ are i.i.d. random real variables. The spectrum of $P_h^\omega$ is then a random point process $\{E_j^\omega\}_{j\ge 0}$, and the counting function \eqref{eq: counting func} becomes a random variable denoted $N_f^\omega(E,\eta)$. Our goal is to understand, for typical perturbations $V_\omega$, the fluctuations of $N_f^\omega(E',\eta)$ when $E'$ varies over $[E-\delta E,E+\delta E]$ . 

The random perturbations that we consider are somewhat \emph{ad hoc} but have the following advantage: they are too weak to modify the classical dynamics on the timescales that we consider, but are strong enough to affect the phases in the quantum evolution. 

\subsection{Description of the random operator} We now describe the family of operators that we will consider. We refer to the appendix of \cite{MoyLongTime} for the notions of semiclassical calculus used here. Fix a Weyl quantization procedure $\operatorname{Op}_h^{M}$ on $M$, that maps a symbol $a\in S^m(T^*M)$ to an operator $\operatorname{Op}_h^{M}(a):C^\infty(M)\to C^\infty(M)$. Let $p=p(x,\xi)\in S^m(T^*M)$ be a smooth elliptic real symbol of order $m>0$, that does not depend on $h$, and let $P_h=\operatorname{Op}_h^{M}(p)$.
\begin{assum}\label{assum: convex} The symbol $p$ is strictly convex in $\xi$, meaning that the Hessian $\nabla_\xi^2 p(x,\xi)$ is positive definite everywhere. Further, one has $p(x,\xi)=p(x,-\xi)$. \end{assum} 
The symmetry of $p$ implies that the Hamiltonian flow generated by $p$ is \emph{time-reversal invariant} under the transformation $(x,\xi)\mapsto (x,-\xi)$, see \S \ref{subsec: time reversal}. The convexity assumption will be used to ensure separation properties for periodic orbits of the classical flow, once projected on $M$.

\subsubsection{Dynamical assumptions} 
Fix $E\in \mathbf{R}$. We make the following dynamical assumption. Denote by $\Phi^t$ the Hamiltonian flow on $T^*M$ generated by $p$.
\begin{assum} \label{assum: anosov} The energy level $\Sigma_E:=p^{-1}(E)$ is regular, and the restriction of the Hamiltonian flow $\Phi^t$ to $\Sigma_E$ is Anosov. It means that there is a continuous flow invariant splitting
	\[T\Sigma_E=\mathbf{R}H_p\oplus E_u\oplus E_s,\]
	where
	\begin{itemize}
		\item $H_p$ is the Hamiltonian vector field associated with $p$.
		\item for all $v\in E_u$, for all $t\ge 0$, one has $\|\mathrm{d}\Phi^{-t}\cdot v\|\le C\mathrm{e}^{-Ct}\|v\|$.
		\item for all $v\in E_s$, for all $t\ge 0$, one has $\|\mathrm{d}\Phi^{t}\cdot v\|\le C\mathrm{e}^{-Ct}\|v\|$.
	\end{itemize}
	Here $\|\cdot\|$ is any continuous norm on $T(T^*M)$. The spaces $E_u$ and $E_s$ are called \emph{unstable} and \emph{stable}, and both have dimension $1$.
\end{assum} 

\subsubsection{Perturbations of $P_h$} We now describe the type of perturbations of $P_h$ that we consider. These are slightly more general than those allowed in \cite{ingremeau2023emergence}. In particular, we do not ask the perturbation to depend linearly on the random parameter $\omega$. 

\begin{assum}\label{assum: on alpha beta} We fix two parameters $\alpha,\beta\in [0,1)$ that satisfy
	\[0<\beta<\min(\alpha/2,2-2\alpha).\]\end{assum}
	We consider $h$-dependent real symbols $p_{\delta,\omega}+r_{\delta,\omega}\in S^{m}_\beta(T^*M)$ with $\delta\in [0,h^\alpha]$ and $\omega\in \Omega_h:=[-1,1]^{J_h}$, where $J_h$ is an $h$-dependent set.  The error $r_{\delta,\omega}$ will not affect the spectral statistics on the energy scales that we consider but should be included in the operator in order for the examples of \S \ref{subsec: examples} to fit into our setting. For example, the quantization of the symbol $|\xi|_g^2$ is not exactly the semiclassical Laplacian, and we need to add a lower order term $r\in h^2S^0(T^*M)$ to ensure that $h^2\Delta_g=\operatorname{Op}_h^{M}(|\xi|_g^2+r)$ up to a $\mathcal O(h^\infty)$ remainder.
	
The conditions on $p_{\delta,\omega}+r_{\delta,\omega}$ are quite complicated but have the advantage to include all the examples from \S \ref{subsec: examples} below. The basic example is $p_{\delta,\omega}+r_{\delta,\omega}=p+\delta\sum_{j\in J_h} \omega_jq_j$ where the $q_j$'s are smooth potentials localized on balls of radius $h^\beta$ that cover the whole space $M$.
\begin{assum} \label{assum: condition potential} The perturbations $p_{\delta,\omega}+r_{\delta,\omega}$ satisfy the following properties.
	\begin{enumerate}
		\item \emph{(symbol class of the perturbation)} One has $p_{0,\omega}=p$ for all $\omega\in \Omega_h$. For all $n\ge 1$, the symbol $\partial_\delta^n p_{\delta,\omega}$ belongs to the exotic class $S^m_\beta(T^*M)$, uniformly in $\delta\in [0,h^\alpha]$ and $\omega\in \Omega_h$. Moreover for all $n\ge 1$ and $j\in J_h$, the symbol $\partial_\delta^n \partial_{\omega_j} p_{\delta,\omega}$ belongs to $S^m_\beta(T^*M)$.
		\item \emph{(structure of the linearized perturbation)} Let $q_\omega=\partial_\delta p_{\delta,\omega}|_{\delta=0}$. Then, one has
		\[q_{\omega}=\sum_{j\in J_h} \omega_j q_j,\]
		where the $q_j$'s belong to $S^m_\beta(T^*M)$.
		\item  \emph{(localized bumps)} \label{assum: supp pot Vj} For any $j\in J_h$, the projection of $\operatorname{Supp}(\partial_{\omega_j} p_{\delta,\omega})$ to $M$ is contained in a ball $B(x_j,h^\beta)$, uniformly in $\delta\in [0,h^\alpha]$. Moreover, each $x\in M$ belongs at most to $\mathcal O(1)$ of the balls $B(x_j,h^\beta)$.
		\item \emph{(the bumps cover periodic orbits)} \label{assum: int Vj flow} There are constants $c_0,T_0,\mathfrak c>0$ such that for any periodic orbit $\gamma\subset \Sigma_E$ of the flow, with period $T_0\le T_\gamma\le \mathfrak c|\log h|$, one has
		\[\sum_{j\in J_h} \Big|\int_\gamma q_j\Big|\ge c_0T_\gamma.\]
		This is satisfied e.g. when $q_j\ge 0$ for all $j$ and $\sum_{j\in J_h}q_j\ge c_0$ on $\Sigma_E$.
		\item \emph{(symbol class of the remainder)} One has $r_{\delta,\omega}\in h^{2-2\beta}S_\beta^m(T^*M)$ uniformly in $\delta\in [0,h^\alpha]$ and $\omega\in [-1,1]^{J_h}$.
\end{enumerate}\end{assum}

\begin{defi}[Time-reversal symmetry breaking]\label{defi: preserve T sym} The linearized perturbation $q_\omega=\sum \omega_j q_j$ \emph{preserves time reversal symmetry} if $q_j(x,-\xi)=q_j(x,\xi)$ for all $j\in J_h$. The perturbation \emph{strongly breaks time reversal symmetry} if there are constants $c_0,\mathfrak c>0$ such that for any periodic orbit $\gamma\subset \Sigma_E$ of period $T_0\le T_\gamma\le \mathfrak c|\log h|$, one has
	\begin{equation} \label{eq: defi time sym break}\sum_{j\in J_h}\Big|\int_\gamma \big(q_j(x,\xi)-q_j(x,-\xi)\big)\Big|\ge c_0T_\gamma.\end{equation}
\end{defi}
For example, $q_\omega$ strongly breaks time reversal symmetry whenever $q_j(x,-\xi)=-q_j(x,\xi)$ and item \ref{assum: int Vj flow} above is satisfied.
\subsubsection{The random model} Let $m\in C^2_{\rm comp}((-1,1),\mathbf{R})$ be a compactly supported probability density. We endow the product space $\Omega_h:=[-1,1]^{J_h}$ with the product measure $\mathrm{d}\mathbf P_\omega= (\mathrm{d}m)^{\otimes J_h}$. One can then think of $(\omega_j)_{j\in J_h}$ as a family of i.i.d. real random variables with common density $m$.

We denote by $\mathbf P_\omega(A)$ the probability of an event $A$, and by $\mathbf E_\omega(F)$ the expected value of a function $F$ of $\omega$. Let $p_{\delta,\omega}+r_{\delta,\omega}$ satisfy Condition \ref{assum: condition potential}. The random operator is
\[P_{h,\delta}^\omega=\operatorname{Op}_h^{M}\big(p_{\delta,\omega}+r_{\delta,\omega}\big).\]

\subsection{Main results} Let $p$ satisfy Conditions \ref{assum: convex}--\ref{assum: anosov}. Let $(\alpha,\beta)$ satisfy Condition \ref{assum: on alpha beta}. Consider a family of perturbations $P_{h,\delta}^\omega=\operatorname{Op}_h^{M}(p_{\delta,\omega}+r_{\delta,\omega})$ satisfying Condition \ref{assum: condition potential}. We assume that the perturbations either preserve or break time reversal symmetry, in the sense of Definition \ref{defi: preserve T sym}.
 We let $\Sigma_{\rm GOE/\rm GUE}^2(f)$ be equal to $\Sigma_{\rm GOE}^2(f)$ in the first case, and $\Sigma_{\rm GUE}^2(f)$ in the second case. We take $\delta=h^\alpha$ and set
 \[P_h^\omega:=P_{h,h^\alpha}^\omega.\]
 For $E'\in [E-\delta E,E+\delta E]$, let 
 \[N(\omega,E'):=N_f^\omega(E',\eta)\] denote the smoothed counting function of eigenvalues of $P_{h}^\omega$ in $[E'-\eta,E'+\eta]$, which depends on $h$ and $\omega\in [-1,1]^{J_h}$. Also, let $\widetilde N(\omega,E')=N(\omega,E')-\mathbf E_{E'}(N(\omega,E'))$ denote the centered counting function.
 
 \subsubsection{Gaussian spectral fluctuations for generic perturbations} Our first result shows that for a generic perturbation $\omega$, the spectral fluctuations of $\widetilde N(\omega,E)$ obey a universal behavior. The result will be considerably strengthened in Theorem \ref{thm: conv random fun energy} below, but is simpler to state as is for now.
\begin{thm} \label{thm: as GOE fluctuations} There is a constant $\mathfrak c=\mathfrak c(\alpha,\beta)>0$ that depends additionally on the flow such that the following holds. Consider sequences $(h_n,\delta E_n,\eta_n)_{n\ge 0}$ that satisfy
	\[\frac{h_n}{\mathfrak c|\log h_n|}\le \eta_n\ll \delta E_n\ll h_n\ll 1,\qquad \text{as $n\to +\infty$}.\]
Then, there is a sequence of measurable subsets $B_n\subset \Omega_{h_n}$ such that
	\begin{itemize}
		\item $\mathbf P_\omega(B_n)\longrightarrow 1$.
		\item For any sequence $(\omega_n\in B_n)$, the sequence of centered random variables
		\[\widetilde N(\omega_n,\cdot):=N(\omega_n,\cdot)-\mathbf E_{E'}(N(\omega_n,\cdot)),\]
		defined on $[E-\delta E_n,E+\delta E_n]$, converges in law to a centered Gaussian random variable $\mathcal Z\sim \mathcal N(0,\Sigma_{\rm GOE/GUE}^2) $.
		\item The convergence in law is uniform in $(\omega_n\in B_n)$. For example, one has uniform convergence of the distribution function of $\widetilde N(\omega_n,\cdot)$ to $\mathcal Z$. More precisely,
		\[\sup_{\omega_n\in B_n} \sup_{x\in \mathbf R} \big|\mathbf P_{E'}(\widetilde N(\omega_n,E')\le x)-\mathbf P(\mathcal Z\le x)\big|\underset{n\to +\infty}{\longrightarrow} 0.\]
	\end{itemize}
\end{thm}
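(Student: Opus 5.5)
The plan is the method of moments in the energy variable, applied to a trace-formula expansion of $N(\omega,E')$, and combined with a concentration argument in $\omega$.

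\textbf{Step 1: trace formula.} Since $\widehat f$ is supported in $[-1,1]$, write
\[N(\omega,E')=\int \eta\,\widehat f(\eta t)\,\mathrm{e}^{\mathrm{i}tE'}\operatorname{Tr}\mathrm{e}^{-\mathrm{i}tP_h^\omega}\,\mathrm{d}t,\]
up to normalization. Only times $|t|\le 1/\eta$ contribute, and in semiclassical time $T=ht$ this means $|T|\le h/\eta\le\mathfrak c|\log h|$. For $\mathfrak c$ small this is below the Ehrenfest-type time for the exotic class $S_\beta$. On that range a Gutzwiller trace formula holds for the perturbed operator, in the form established in \cite{MoyLongTime} and adapted to $P_h^\omega$. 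It gives
\[\widetilde N(\omega,E')=\sum_{\gamma}A_\gamma\,\widehat f\Big(\frac{\eta T_\gamma}{h}\Big)\cos\Big(\frac{S_\gamma(E')+h^\alpha\int_\gamma q_\omega+\dots}{h}\Big)+o(1),\]
where the sum runs over periodic orbits with $T_0\le T_\gamma\le \mathfrak c|\log h|$ and $A_\gamma\approx T_\gamma^{\#}/|\det(I-P_\gamma)|^{1/2}$. The smooth Weyl term is constant on the window because $\delta E\ll h$, so centering removes it. Orbits with $T_\gamma\le T_0$ give a negligible contribution once $\eta\ll h$.

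\textbf{Step 2: randomization of phases.} The phase contains $h^{\alpha-1}\sum_j\omega_j\int_\gamma q_j$. By Condition~\ref{assum: condition potential}(\ref{assum: int Vj flow}) and $\alpha<1$, this is a large multiple of a nondegenerate linear form in $\omega$. Integrating by parts in $\omega$ against the $C^2$ density $m$ makes averages like $\mathbf E_\omega[\mathrm{e}^{\mathrm{i}(\phi_\gamma-\phi_{\gamma'})/h}]$ small unless $\gamma'=\gamma$, or $\gamma'=\gamma^{-1}$ in the time-reversal-preserving case. In the strongly breaking case Definition~\ref{defi: preserve T sym} kills the pairing with $\gamma^{-1}$ as well. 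The nonlinear part of $p_{\delta,\omega}$ is handled using Condition~\ref{assum: condition potential}(1), and $\beta<\alpha/2$ keeps the bumps fine enough. Combined with the separation of projected orbits from the convexity in Condition~\ref{assum: convex}, this yields the diagonal approximation on average in $\omega$. The Hannay--Ozorio de Almeida sum rule $\sum_{T_\gamma\in[T,T+1]}A_\gamma^2\sim T$ then gives
\[\mathbf E_\omega\mathbf E_{E'}[\widetilde N^2]\to \Sigma^2_{\rm GOE/GUE}(f).\]
The factor $2$ between GOE and GUE comes from the pairing $\gamma\leftrightarrow\gamma^{-1}$.

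\textbf{Step 3: higher moments and the exceptional set.} For each $k$, expand $\mathbf E_{E'}[\widetilde N^k]$ as a sum over $k$-tuples of orbits. Averaging in $E'$ over a window of width $\delta E\gg\eta$ forces $\sum\pm T_\gamma\approx 0$ to precision $h/\delta E$. Averaging in $\omega$ forces the signed multiset of orbits (modulo time reversal) to be balanced. The surviving configurations are Wick pairings, which gives Gaussian moments $(k-1)!!\,\Sigma^{k}$. Next bound the $\omega$-variance $\mathbf E_\omega\big|\mathbf E_{E'}[\widetilde N^k]-(k-1)!!\Sigma^k\big|^2$ by $o(1)$ using the same $2k$-orbit decorrelation. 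Chebyshev then gives sets where the first $K_n$ moments are within $\epsilon_n$ of the Gaussian ones; take $B_n$ to be their intersection, with $K_n\to\infty$ slowly. Uniform convergence of distribution functions follows because the Gaussian is determined by its moments and has a continuous CDF. Concretely, bounds on finitely many moments control $\sup_x|F_n-\Phi|$ uniformly, by a quantitative moment-to-distribution argument or a Pólya-type argument.

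\textbf{Main obstacle.} The hardest step is the off-diagonal control in Steps 2--3. The number of orbits up to time $\mathfrak c|\log h|$ is $h^{-c\mathfrak c}$, so the decay from $\omega$-integration by parts, which is of order $h^{1-\alpha}$ per integration times orbit-separation factors, has to beat these exponential counts. This forces $\mathfrak c$ small. Near-coincident orbit pairs whose $q_j$-integrals are almost equal require the $h^\beta$-scale separation of distinct projected orbits. I would first try to prove a lower bound on $\sum_j|\int_\gamma q_j-\int_{\gamma'}q_j|$ for distinct $\gamma,\gamma'\neq\gamma^{-1}$, using the transversality provided by convexity.
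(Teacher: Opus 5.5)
Your architecture matches the paper's. You use the trace formula of Proposition~\ref{prop: long time gutz} with a Weyl term frozen on the window. You split the energy moments into a deterministic Wick part, evaluated by the sum rule, and an off-diagonal part bounded in $L^2_\omega$. You finish with Markov's inequality and the method of moments. Make explicit in Step~3 that pairs of tuples with equal nonzero net coefficients survive the $\omega$-average. They are controlled only by the energy average: the $\operatorname{sinc}^2$ factor is decomposed dyadically against $\sum_{T_\gamma\in[T,T+a]}A_\gamma^2\lesssim La$, and this is what yields $\mathcal O(\Lambda^{-1})$.

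The genuine gap is the decorrelation estimate itself. For the $k$-th moment, the pairs of tuples in general contain $r\le 2k$ orbits that occur only once. Each such orbit carries weight $\mathcal O(1)$ instead of $A_\gamma^2$, and there are $\mathrm{e}^{CLr}$ choices of them. To absorb this you need, for distinct orbits modulo time reversal and nonzero integers $c_i$,
\[
\Big|\mathbf E_\omega\Big[\exp\Big(\frac{\mathrm{i}}{h}\sum_{i=1}^n c_iS_\omega(\gamma_i)\Big)\Big]\Big|\le C_n h^{\varepsilon n},
\]
with $\varepsilon$ and $\mathfrak c$ independent of $n$, i.e. one gain per unbalanced orbit. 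A single gain $h^{\varepsilon}$ forces $L\le \frac{\varepsilon}{2kC}|\log h|$. Then $\mathfrak c$ shrinks with the moment order, and you only get the result for $L_n=o(|\log h_n|)$, not for all $L_n\le\mathfrak c|\log h_n|$.

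The lemma you propose, a lower bound on $\sum_j|\int_\gamma q_j-\int_{\gamma'}q_j|$, does not deliver this. It does not control the combined linear form for three or more orbits. Even a lower bound on that combined form only lets you integrate by parts along one direction, which gives a single factor $h^{1-\alpha-\beta/2}$. Iterating along that one direction would require more than the two derivatives of $m$ you have, and higher $\omega$-derivatives of the remainder. That remainder cannot be dropped either. The ``$\dots$'' in your phase is $r_\omega(\gamma)/h$, whose natural size $h^{2\alpha-2\beta-1}\mathrm{e}^{CL}$ is large in part of the allowed range of $(\alpha,\beta)$.

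The paper's fix is structural, in four steps.
\begin{enumerate}
\item Convexity in $\xi$ together with phase-space separation of periodic orbits makes crossings of distinct projected orbits transversal, with angle $\gtrsim\mathrm{e}^{-CL}$. Hence two orbits with $\gamma_2\notin\{\gamma_1,\gamma_1^{-1}\}$ share only $\mathcal O(\mathrm{e}^{CL})$ bumps. This count of shared bumps, not a lower bound on differences, is the right form of your transversality lemma.
\item Removing the shared bumps gives each $\gamma_i$ a private set $\widetilde J_h(\gamma_i)$ of $\gtrsim T_{\gamma_i}h^{-\beta}$ bumps at distance $\ge C_0h^\beta$ from every other $\gamma_{i'}$. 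These sets are pairwise disjoint (Lemma~\ref{lem: bunches periodic orbits}).
\item Perturbed orbits stay within $h^{\alpha-\beta}\mathrm{e}^{CL}\ll h^\beta$ of the unperturbed ones; this is where $\beta<\alpha/2$ really enters. Therefore $\partial_{\omega_j}r_\omega(\gamma_{i'})=0$ for $j\in\widetilde J_h(\gamma_i)$ and $i'\neq i$ (Lemma~\ref{lem: derivees r omega zero}).
\item One then integrates by parts once along each of the $n$ disjointly supported directions $\theta^{(i)}$. Each density factor and each $r_\omega(\gamma_i)$ is differentiated only once, so the estimate factorizes. With $|\partial_{\omega_j}r_\omega(\gamma)|\lesssim h^{2\alpha-\beta}\mathrm{e}^{CT_\gamma}$ (Proposition~\ref{prop: bounds derivatives omega r}, from the generating-function estimates of \cite{MoyLongTime}), each orbit contributes a factor $h^{1-\alpha-\beta/2}+h^{\alpha-2\beta}\mathrm{e}^{CL}$. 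This factor is small precisely because $\beta<2-2\alpha$ and $\beta<\alpha/2$.
\end{enumerate}
This factorization is the missing idea in your Steps~2--3.
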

\begin{rem}Here is another way to frame the result. For each $\omega\in \Omega_h$, the function $\widetilde N(\omega,\cdot)$ is a random variable on $[E-\delta E,E+\delta E]$, with law $\mu_\omega:=\widetilde N(\omega,\cdot)_*\mathbf P_{E'}$. Then, $\mu_\omega$ is a random variable on $\Omega_h$ taking values in the space of probability measures on $\mathbf R$. Theorem \ref{thm: as GOE fluctuations} establishes that as $h\to 0$, $\mu_\omega$ converges in probability, for the weak topology on probability measures, to the centered Gaussian probability measure with variance $\Sigma_{\rm GOE/GUE}^2$.\end{rem} 
\begin{rem}
Theorem \ref{thm: as GOE fluctuations} is really saying something about (most) individual quantum systems in that we are computing energy fluctuations and not ensemble fluctuations. In this regard, it is close to the original conjecture of \cite{PhysRevLett.52.1}.\end{rem}

The regime $\frac{h}{\mathfrak c|\log h|}\ll \eta\ll h$ is arguably restrictive compared to the expectation of the BGS conjecture, which is that the results should hold in the full regime $h^2\ll \eta\ll h$. This time limitation is common in quantum chaos and is caused by the fact that we are unable to describe the semiclassical evolution of wave-packets beyond times of order $|\log h|$. More importantly, we are bound by the fact that we can only ensure separation of periodic orbits in phase space at scale $h^\beta$ (which is a key point in the proof) for orbits of period $\le \mathfrak c|\log h|$.
\subsubsection{Convergence to a Gaussian random field for generic perturbations} To state our second result, we introduce a new set of variables. Let
\[L=\frac{h}{\eta}, \qquad \Lambda =\frac{\delta E}{\eta}, \qquad \mu = \frac{E-E'}{\eta}.\]
The assumption $\frac{h}{\mathfrak c|\log h|}\le \eta\ll \delta E\ll h$ then becomes $1\ll \Lambda\ll L\le \mathfrak c|\log h|$, and $\mu$ ranges over $[-\Lambda,\Lambda]$ while $E'$ ranged over $[E-\delta E,E+\delta E]$. Then,
\[N_f(E',\eta)=N_f\big(E-\frac{\mu h}{L}, \frac{h}{L}\big)=N_{f_\mu}\big(E,\frac{h}{L}\big),\]
where $f_\mu=f(\cdot-\mu)$. We define
\begin{equation} \label{eq: defi N omega mu}N(\omega,\mu):=N_{f_\mu}\big(E,\frac{h}{L}\big), \qquad \omega\in [-1,1]^{J_h}, \quad \mu\in [-\Lambda,\Lambda].\end{equation}
The set $[-\Lambda,\Lambda]$ is endowed with the uniform probability measure $\mathbf P_\lambda$. We introduce the centered random variable
\[\widetilde N(\omega,\mu):=N(\omega,\mu)-\mathbf E_\lambda[N(\omega,\cdot)].\]
We fix $\omega$ and pick $\lambda\in [-\Lambda,\Lambda]$ at random. We fix another parameter $b>0$ and view the function $\widetilde N(\omega,\lambda+\cdot):\mu\in [-b,b]\to \mathbf{R}$ as a random function on $[-\Lambda,\Lambda]$. That is, we look at the behavior of the function $\widetilde N(\omega,\cdot)$ in a randomly chosen window $[\lambda-b,\lambda+b]$. The BGS conjecture predicts that this random function should behave universally according to random matrix theory. \medskip

Before stating our second result, we recall from \cite[\S 2.3]{ingremeau2023emergence} (see also the appendix of \cite{NazarovSodin2016}) some material on random smooth functions and Gaussian random fields. The space $C^\infty(\mathbf{R})$ is endowed with the topology of convergence of all derivatives on compact sets. That is, $f_n\rightarrow f$ if and only if
\[\forall \ell,k\ge 0, \qquad \|f_n-f\|_{C^\ell([-k,k])}\underset{n\to +\infty}{\longrightarrow}0.\]
With this topology, the space $C^\infty(\mathbf{R})$ is a separable Fr\'echet space. We let $\mathcal B(C^\infty(\mathbf{R}))$ denote the Borel $\sigma$-algebra on $C^\infty(\mathbf{R})$. 
A \emph{random smooth function} on $\mathbf{R}$ is a random variable taking values in $C^\infty(\mathbf{R})$.
\begin{defi}A (centered) \emph{smooth random Gaussian field} on $\mathbf{R}$ is a random smooth function $\mathfrak f:\mathbf{R}\to \mathbf{R}$ such that for any tuple $(x_1,\ldots,x_n)\in \mathbf{R}^n$, the random vector $(\mathfrak f(x_1),\ldots,\mathfrak f(x_n))$ follows a (centered) Gaussian law. The \emph{covariance kernel} is defined by $K(x,y)=\operatorname{Cov}(\mathfrak f(x),\mathfrak f(y))$. If $\mathfrak f$ is centered, then the law of the random vector $(\mathfrak f(x_1),\ldots,\mathfrak f(x_n))$ is given by $\mathcal N(0,A)$, where $A=\big(K(x_i,x_j)\big)_{1\le i,j\le n}$. In particular, the covariance kernel determines the law of $\mathfrak f$. \end{defi}

\begin{defi}A sequence of random smooth functions $(\mathfrak f_n)_{n\ge 1}$ converges in law, or in distribution, to a smooth Gaussian field $\mathfrak f$ if for any bounded continuous functional $F:C^\infty(\mathbf{R})\to \mathbf{R}$, one has
\[\mathbf E[F(\mathfrak f_n)]\underset{n\to +\infty}{\longrightarrow}\mathbf E[F(\mathfrak f)].\]\end{defi}
Typical examples of functional $F$ are $F(f)=\int_{\mathbf{R}}\chi(x,f(x),\ldots,f^{(k)}(x))\mathrm{d}x$, where $\chi$ is a smooth compactly supported function on $\mathbf{R}^{k+1}$, or $F(f)=\varphi(f(x_0))$ for some bounded function $\varphi\in C^\infty(\mathbf{R})$ and $x_0\in \mathbf{R}$. \medskip

Our second result establishes the convergence in law, for generic choices of the parameter $\omega$, of the random function $\widetilde N(\omega,\lambda+\cdot)$ to a smooth Gaussian field with universal covariance.

\begin{thm}[Convergence to a Gaussian random field]\label{thm: conv random fun energy} There exists a constant $\mathfrak c=\mathfrak c(\alpha,\beta)>0$ that depends additionally on the flow such that the following holds. Assume $1\ll b_n\le \Lambda_n\ll L_n\le \mathfrak c|\log h_n|$ as $n\to +\infty$. Then, there exists a sequence of measurable subsets $B_n\subset \Omega_{h_n}$ such that $\mathbf P_{\omega}(B_n)\longrightarrow 1$, and for any sequence $(\omega_n\in B_n)$, the sequence of random functions 
  \[\widetilde N(\omega_n,\lambda+\cdot):\mu\in [-b_n,b_n]\to \mathbf{R}\]
  obtained by picking $\lambda\in [-\Lambda_n,\Lambda_n]$ uniformly at random, converges in distribution to a smooth Gaussian field $\mathfrak f_{\rm GOE/GUE}$ with covariance kernel 
  \[K_{\rm GOE}(\mu,\mu'):=4\int_{\mathbf{R}_+} t|\widehat f(t)|^2 \cos(t|\mu-\mu'|)\mathrm{d}t\]
  	if the perturbations preserve time reversal symmetry, and~$K_{\rm GUE}(\mu,\mu')=\frac{1}{2} K_{\rm GOE}(\mu,\mu')$ if the perturbations strongly break time reversal symmetry.
\end{thm}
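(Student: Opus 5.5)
The plan is to prove Theorem \ref{thm: conv random fun energy} by the method of moments, applied to the $\lambda$-randomness for a fixed typical $\omega$. Both tightness in $C^\infty$ and the identification of the limit will be reduced to statements about trigonometric sums.

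\emph{Step 1 (trace formula).} Since $\widehat f$ is supported in $[-1,1]$, we write
\[N(\omega,\mu)=\frac{1}{2\pi h}\int \widehat f(t/L)\,\mathrm{e}^{\mathrm{i}t(E-\mu\eta)/h}\operatorname{Tr}\mathrm{e}^{-\mathrm{i}tP_h^\omega/h}\,\frac{\mathrm{d}t}{L}\]
up to normalization, with $|t|\le L\le\mathfrak c|\log h|$. In this Ehrenfest-type window we expect a Gutzwiller trace formula for $P_h^\omega$, uniform in $\omega$, obtained from wave-packet propagation for exotic symbols in $S_\beta$. The perturbation $h^\alpha q_\omega$ has size $h^\alpha\ll h^{2\beta}$. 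It therefore does not alter the classical orbits at leading order, and it enters only through the phase factor $\exp(-\mathrm{i}h^{\alpha-1}\int_\gamma q_\omega)$.

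This gives
\[\widetilde N(\omega,\mu)=\sum_{\gamma}A_\gamma\,\widehat f\big(T_\gamma/L\big)\,\mathrm{e}^{\mathrm{i}\mu T_\gamma}\,\mathrm{e}^{\mathrm{i}\theta_\gamma(\omega)}+\text{c.c.}+o(1).\]
Here the sum runs over periodic orbits of period $T_0\le T_\gamma\le L$. The amplitudes $A_\gamma=T_\gamma^\#/(2\pi L|\det(I-P_\gamma)|^{1/2})$ come with Maslov phases. The phases are $\theta_\gamma(\omega)=h^{-1}S_\gamma(E)-h^{\alpha-1}\sum_j\omega_j\int_\gamma q_j+\dots$. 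The Weyl term is constant in $\mu$ because $\delta E\ll h$, so it disappears upon centering. Short orbits are handled by the $\widehat f(T/L)$ cutoff and centering.

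\emph{Step 2 (moments in $\lambda$).} Fix $\mu_1,\dots,\mu_k\in[-b,b]$. We expand $\mathbf E_\lambda\big[\prod_i\widetilde N(\omega,\lambda+\mu_i)\big]$ as a sum over $k$-tuples of orbits $\gamma_i$ with signs $\epsilon_i$. Averaging over $\lambda\in[-\Lambda,\Lambda]$ produces the factor $\operatorname{sinc}\big(\Lambda\sum\epsilon_iT_{\gamma_i}\big)$. Taking the $\omega$-expectation produces $\prod_j\widehat m\big(h^{\alpha-1}\sum_i\epsilon_i\int_{\gamma_i}q_j\big)$.

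The key claim is an \emph{$\omega$-decorrelation}: unless the signed multiset $\{(\gamma_i,\epsilon_i)\}$ cancels pairwise, the $\omega$-average is $\mathcal O(h^\infty)$. Pairwise cancellation means each $\gamma$ is paired with $\gamma$ of opposite sign, or with its time reversal $\gamma^{-1}$ in the symmetric case. To prove the claim we combine three facts. By Condition \ref{assum: condition potential}(\ref{assum: int Vj flow}), a non-cancelling combination has $\sum_j|\sum_i\epsilon_i\int_{\gamma_i}q_j|\gtrsim T$. By convexity, distinct periodic orbits of period $\le\mathfrak c|\log h|$ separate in $M$ at scale $h^\beta$, so a nonzero combination survives on many bumps. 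Finally, since $m\in C^2_{\rm comp}$, we have $|\widehat m(s)|\lesssim(1+|s|)^{-2}$, and many factors with $|s|\gtrsim h^{\alpha-1}h^{\beta}\cdot(\text{const})\gg1$ give a superpolynomial decay that beats the $\mathrm{e}^{CkL}$ growth in the number of orbits for $\mathfrak c$ small.

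For the paired terms, Wick's theorem yields the Gaussian moment structure, with covariance
\[\sum_\gamma |A_\gamma|^2|\widehat f(T_\gamma/L)|^2\cos(T_\gamma(\mu-\mu')).\]
Using the Hannay--Ozorio de Almeida sum rule $\sum_{T_\gamma\in[T,T+1]}T_\gamma^{\#2}/|\det(I-P_\gamma)|\sim T$, which is a consequence of equidistribution and mixing of the Anosov flow, and rescaling $t=T/L$, this converges to $2\int t|\widehat f(t)|^2\cos(t(\mu-\mu'))\,\mathrm dt$. The GOE kernel carries an additional factor $2$, coming from the pairing of $\gamma$ with $\gamma^{-1}$. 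In the strongly broken case, \eqref{eq: defi time sym break} kills that pairing and we obtain the GUE kernel. Diagonal terms with $\gamma\neq\gamma'$ but $|T_\gamma-T_{\gamma'}|\lesssim\Lambda^{-1}$ are also killed by the $\omega$-average; this is where $\Lambda\ll L$ is not needed.

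\emph{Step 3 (from $\mathbf E_\omega$ to typical $\omega$).} Step 2 computes $\mathbf E_\omega\mathbf E_\lambda[\cdots]$. We also compute the $\omega$-variance of $\mathbf E_\lambda[\prod\widetilde N]$ in the same way, using $2k$-tuples. It tends to $0$, since the non-paired contributions again vanish and the cross pairings are suppressed by the sinc factor, because $\Lambda\to\infty$. Chebyshev's inequality together with a diagonal argument over countably many $(k,\mu_i$ rational$)$ then produces $B_n$. Uniform bounds on derivatives in $\mu$, from the same expansion with $T_\gamma^\ell$ weights, give tightness in $C^\infty([-b,b])$. Moment convergence of the finite-dimensional marginals then identifies the limit field.

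The main obstacle is Step 2's decorrelation estimate. It requires showing quantitatively that any non-cancelling signed combination of at most $k$ orbits of length $\le\mathfrak c|\log h|$ sees $\gtrsim h^{-\beta}T$-many bumps with non-negligible weight, uniformly over the exponentially many tuples. I would first attempt this via $h^\beta$-separation of projected orbit segments, using convexity to rule out long tangencies, and then a counting bound on close returns.
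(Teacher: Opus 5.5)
Your overall architecture matches the paper's: trace formula with a $\mu$-independent Weyl term, $\omega$-decorrelation of unpaired orbit tuples, Wick evaluation of the paired terms via Proposition \ref{prop: equidistribution result}, an $\omega$-variance bound for the $\lambda$-moments, then Chebyshev and tightness. Your countable diagonal argument over rational $\vec\mu$ can replace the paper's net argument based on Lemma \ref{lem: esperance cell bis}, provided the $C^\ell$ bounds needed for quenched tightness are also built into $B_n$. The genuine gap is the decorrelation estimate of Step 2. The problem is analytic, not the geometric issue you single out.

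Writing $\mathbf E_\omega[\cdots]=\prod_j\widehat m\big(h^{\alpha-1}\sum_i\epsilon_i\int_{\gamma_i}q_j\big)$ assumes the phase is affine in $\omega$. In Proposition \ref{prop: long time gutz} the phase is $h^{-1}S_\omega(\gamma)$ with $S_\omega(\gamma)=S(\gamma)-h^\alpha\int_\gamma q_\omega+r_\omega(\gamma)$. The remainder $r_\omega(\gamma)$, hidden in your ``$+\dots$'', comes from the deformation of the periodic orbit by the perturbation. It is nonlinear in $\omega$ even when $p_{\delta,\omega}=p+\delta q_\omega$; Condition \ref{assum: condition potential} only asks $\partial_\delta p_{\delta,\omega}|_{\delta=0}$ to be linear. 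It involves all the $\omega_j$ met by the orbit, and it is a priori of size $h^{2\alpha-2\beta}\mathrm e^{CL}$. So $h^{-1}r_\omega(\gamma)$ need not be small when $2\alpha-2\beta<1$, which Condition \ref{assum: on alpha beta} allows (e.g.\ $\alpha=1/2$, $\beta=1/10$). Hence the $\omega$-integral does not factor, and your claimed $\mathcal O(h^\infty)$ bound is unsupported.

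The paper fixes this in three steps. First, $\partial_{\omega_j}r_\omega(\gamma_{i'})=0$ whenever $j$ lies in the bump set $\widetilde J_h(\gamma_i)$ of a different orbit (Lemma \ref{lem: derivees r omega zero}), because perturbed orbits stay $h^{\alpha-\beta}\mathrm e^{CL}$-close to the unperturbed ones. Second, $|\partial_{\omega_j}r_\omega(\gamma)|\lesssim h^{2\alpha-\beta}\mathrm e^{CL}$ (Proposition \ref{prop: bounds derivatives omega r}). Third, it integrates by parts once per orbit along $\theta^{(i)}\cdot\nabla_\omega$, treating $r_\omega$ perturbatively. This yields only a factor $h^{\varepsilon}$ per unpaired orbit, with $\varepsilon<\min(1-\alpha-\beta/2,\alpha-2\beta)$. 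This is where both constraints $\beta<2-2\alpha$ and $\beta<\alpha/2$ enter.

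Since this decay is only polynomial, it cannot beat the $\mathrm e^{CkL}$ orbit count wholesale. The counting must attach one factor $h^\varepsilon$ to each orbit that appears once, which is what the $(N_{\mathbf s},Q_{\mathbf s},d_{\mathbf s})$ bookkeeping of \S\ref{subsec: Delta k neq Delta k'} does, using $\mathrm e^{CL}h^\varepsilon<1$ for $\mathfrak c$ small. Your per-bump heuristic is also off even for an exactly affine phase. One has $h^{\alpha-1}|\theta_j(\gamma)|\sim h^{\alpha-1+\beta}\to 0$ when $\alpha+\beta>1$, which is allowed for $\alpha\in(2/3,1)$. So decay can only come from the aggregate $\sum_j s_j^2\gtrsim h^{2\alpha-2+\beta}T_\gamma$, i.e.\ from $\beta<2-2\alpha$.

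Two smaller points. First, tuples with repetitions such as $(\gamma^2,+),(\gamma,-),(\gamma,-)$ have an exactly $\omega$-independent phase although they do not cancel pairwise. They must be discarded via the amplitude bound of Lemma \ref{lem: poincare map powers}, not by decorrelation. Second, the sinc factor should read $\operatorname{sinc}(\Lambda\sum_i\epsilon_iT_{\gamma_i}/L)$, with the $1/L$. Using it to suppress pairs with $\Delta k=\Delta k'\neq 0$ requires the short-window bound \eqref{eq: we have used} and a dyadic decomposition at scale $L/\Lambda$, which is what produces the $\mathcal O(\Lambda^{-1})$ rate.
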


\begin{rem}We make a few comments.
	\begin{itemize}
		\item Strictly speaking, in order to deal with functions defined on the whole real line, one should multiply $\widetilde N(\omega_n,\lambda+\cdot)$ by $\chi(\mu/b_n)$ where $\chi$ is a smooth function satisfying $\chi\equiv 1$ on $[-1/2,1/2]$ and $\chi\equiv 0$ outside $[-1,1]$.
		\item  Theorem \ref{thm: as GOE fluctuations} basically computed the on-diagonal kernel $K_{\rm GOE}(\mu,\mu)$ only. Theorem \ref{thm: conv random fun energy} greatly improves upon this by describing the correlations between the values of $\widetilde N(\omega,\lambda+\mu)$ and $\widetilde N(\omega,\lambda+\mu')$ for $\mu\neq \mu'$, as well as correlations of any order.
\item The kernel $K_{\rm GOE}(\mu,\mu')$ depends only on the distance $|\mu-\mu'|$, that is, the limit law is translation invariant. We write $K_{\rm GOE}(\mu,\mu')=k_{\rm GOE}(|\mu-\mu'|)$. 
\item Assuming $\widehat f(0)\neq 0$, one can show by performing two integrations by parts that
\[k_{\rm GOE}(\alpha)\underset{\alpha\to +\infty}\sim -\frac{1}{\alpha^2}\Big(\frac{1}{\pi}\int_{\mathbf{R}}f(x)\mathrm{d}x\Big)^2.\]
In particular $k_{\rm GOE}(\alpha)$ becomes negative for large values of $\alpha$.  This reflects repulsion between eigenvalues: when $|E_1-E_2|/\eta$ is too large, the number of eigenvalues in $[E_1-\eta,E_2+\eta]$ is negatively correlated with the number of eigenvalues in $[E_2-\eta,E_2+\eta]$, which is also a key feature of the Gaussian ensembles of matrices. Although Theorem \ref{thm: conv random fun energy} is restricted to specific test functions $f$, it is instructive to look at the shape of the kernel $k_{\rm GOE}$ for the sharp window $f=\mathbf 1_{[-1,1]}$. Then $k_{\rm GOE}(\alpha)$ measures the correlation between the number of eigenvalues in windows $[E_1-\eta,E_1+\eta]$ and $[E_2-\eta,E_2+\eta]$ for $|E_2-E_1|=\alpha \eta$, and can be explicitly computed as an improper integral:
\[k_{\rm GOE}(\alpha)=\frac{1}{\pi^2}\log\Big|\frac{\alpha^2-4}{\alpha^2}\Big|.\]
Note that $k_{\rm GOE}$ is decreasing on $[0,2]$ and increasing on $[2,+\infty]$, with $k_{\rm GOE}(0)=+\infty$, $k_{\rm GOE}(2)=-\infty$, and $k_{\rm GOE}(+\infty)=0$. We interpret this as follows. For small $\alpha$, the number of eigenvalues in the window $[\lambda-\eta,\lambda+\eta]$ is positively correlated with the number of eigenvalues in the translated window $[\lambda+\alpha \eta-\eta,\lambda+\alpha\eta+\eta]$, which should not be surprising because the two windows have a large overlap. Then $k_{\rm GOE}$ becomes negative and at $\alpha=2$ the two windows stop overlapping. The equality $k_{\rm GOE}(2)=-\infty$ means that the number of eigenvalues in $[\lambda-\eta,\lambda+\eta]$ is very negatively correlated with the number of eigenvalues in the adjacent window $[\lambda+\eta,\lambda+3\eta]$. For larger values of $\alpha$, correlations are still negative, although they tend to $0$ as $\alpha\to +\infty$.
\end{itemize}\end{rem}

\subsection{Ensemble fluctuations of the counting function} In Theorem \ref{thm: as GOE fluctuations} and Theorem \ref{thm: conv random fun energy}, we fixed $\omega$ and studied the fluctuations of the counting functions as the energy varies. We now adopt a different (and arguably, less interesting) point of view where $\omega$ is picked at random, and $\widetilde N(\omega,\cdot)$ is viewed as a random function on $[-\Lambda,\Lambda]$. 

The next result is analogous to Theorem \ref{thm: conv random fun energy}, where the operator is picked at random instead of the energy window.

\begin{thm}[Convergence of the sequence of random functions $\widetilde N(\omega,\cdot)$]\label{thm: convergence random function} There exists a constant $\mathfrak c=\mathfrak c(\alpha,\beta)>0$ that depends additionally on the flow such that the following holds. Assume $1\ll \Lambda_n\ll L_n\le \mathfrak c|\log h_n|$ as $n\to +\infty$. Replace $\widetilde N(\omega,\mu)$ by $\chi(\mu/\Lambda)\widetilde N(\omega,\mu)$ where $\chi$ is a smooth function equal to $1$ on $[-1/2,1/2]$ and vanishing outside $[-1,1]$. Then, as $n\to +\infty$, the sequence of random functions $\widetilde N(\omega,\cdot):[-\Lambda_n,\Lambda_n]\to \mathbf{R}$ (where $\omega$ is picked at random in $\Omega_{h_n}$) converges in distribution to a smooth Gaussian field with covariance kernel $K_{\rm GOE}(\mu,\mu')$ if the perturbations preserve time reversal symmetry, and $K_{\rm GUE}(\mu,\mu')=\frac{1}{2} K_{\rm GOE}(\mu,\mu')$ if the perturbations strongly break time reversal symmetry.
\end{thm}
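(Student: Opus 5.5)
Since the covariance kernel determines the law of a smooth centered Gaussian field, the plan is to follow the standard scheme for convergence of random smooth functions (as in \cite[\S 2.3]{ingremeau2023emergence}). There are two parts. First, establish tightness in $C^\infty(\mathbf R)$ by uniformly bounding $\mathbf E_\omega\|\chi(\cdot/\Lambda)\widetilde N(\omega,\cdot)\|_{C^\ell([-k,k])}^2$. Second, prove convergence of finite-dimensional distributions. For the latter we show that, for all $\mu_1,\dots,\mu_n$ and $c\in\mathbf R^n$, the linear combination $\sum_i c_i\widetilde N(\omega,\mu_i)$ converges in law to $\mathcal N(0,\sum c_ic_jK(\mu_i,\mu_j))$, using the method of moments (or cumulants).

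\textbf{Step 1 (trace formula).} Write $N(\omega,\mu)=\frac{1}{2\pi}\int \widehat f(t)\mathrm{e}^{\mathrm{i}t\mu}\operatorname{Tr}\mathrm{e}^{-\mathrm{i}tL(P_h^\omega-E)/h}\,\mathrm{d}t$. Since $\operatorname{Supp}\widehat f\subset[-1,1]$, only times $|s|\le L\le\mathfrak c|\log h|$ enter, and we are in the Ehrenfest regime. A Gutzwiller trace formula for $P_h^\omega$ valid up to these times (assumed from the paper's semiclassical analysis) gives a Weyl term plus oscillating sums over periodic orbits $\gamma$ of $\Phi^t$ on $\Sigma_E$ with $T_\gamma\le L$. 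The Weyl term is essentially independent of $\mu$ because $\Lambda\ll L$, so it is removed by centering. The phase of each orbit term is $\mathrm{e}^{\mathrm{i}S_\gamma/h}\mathrm{e}^{\mathrm{i}\mu T_\gamma/L}$, and the perturbation adds $\mathrm{e}^{\mathrm{i}h^{\alpha-1}\sum_j\omega_j\int_\gamma q_j}$ to first order. The higher order corrections in $\delta$ and the remainder $r_{\delta,\omega}$ are negligible thanks to $\beta<2-2\alpha$ and Condition \ref{assum: condition potential}(5). Thus
\[\widetilde N(\omega,\mu)\approx \sum_{\gamma}\frac{T_\gamma^\#\,\widehat f(T_\gamma/L)}{L|\det(I-P_\gamma)|^{1/2}}\,\mathrm{e}^{\mathrm{i}(S_\gamma/h+\mu T_\gamma/L+\theta_\gamma)}\,\mathrm{e}^{\mathrm{i}h^{\alpha-1}\langle\omega,a_\gamma\rangle}+\mathrm{c.c.},\qquad a_\gamma=\Big(\int_\gamma q_j\Big)_j.\]

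\textbf{Step 2 (moments and diagonal approximation).} Compute $\mathbf E_\omega$ of products of such sums. Since the $\omega_j$ are independent, each expectation factorizes into products of $\widehat m\big(h^{\alpha-1}\sum_k\pm a_{\gamma_k,j}\big)$. Because $m\in C^2_{\rm comp}$, we have $|\widehat m(\xi)|\lesssim\langle\xi\rangle^{-2}$, so a tuple of orbits contributes non-negligibly only when $\sum_k\pm a_{\gamma_k}$ is small componentwise. Here is where the separation input enters. By strict convexity (Condition \ref{assum: convex}) and the Anosov property, projections of distinct periodic orbits (modulo time reversal) with $T\le\mathfrak c|\log h|$ are $h^\beta$-separated along substantial portions. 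Combining this with Condition \ref{assum: condition potential}(3)--(4), the vector $\sum\pm a_{\gamma_k}$ has $\ell^1$ norm $\gtrsim T$ unless the orbits pair up as $\gamma=\gamma'$, or $\gamma'=\gamma^{-1}$ in the time-reversal-preserving case. Strong breaking, \eqref{eq: defi time sym break}, kills the $\gamma^{-1}$ pairing, and the product of $\widehat m$ factors over $\gtrsim T_\gamma h^{-\beta}$ bumps then beats the exponential proliferation of orbits provided $\mathfrak c$ is small. The surviving pairings give the second moment
\[\mathbf E_\omega[\widetilde N(\mu)\widetilde N(\mu')]\approx g\sum_\gamma\frac{T_\gamma^{\#2}|\widehat f(T_\gamma/L)|^2}{L^2|\det(I-P_\gamma)|}\cos\big((\mu-\mu')T_\gamma/L\big),\]
with $g=2$ in the GOE case and $g=1$ in the GUE case. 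Applying the prime orbit theorem (Parry--Pollicott / Margulis) with $\sum_{T_\gamma\in[T,T+\mathrm{d}T]}T_\gamma|\det(I-P_\gamma)|^{-1}\approx \mathrm{d}T$ and substituting $t=T/L$ yields $K_{\rm GOE}$ or $K_{\rm GUE}$; here $L\to\infty$ is used. For higher moments, Wick-type pairings are the only surviving terms, which gives Gaussian moments. Equivalently, all cumulants of order at least $3$ vanish in the limit.

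\textbf{Step 3 (tightness).} Derivatives in $\mu$ bring down factors $(T_\gamma/L)^\ell\le1$, so the same second-moment bound controls $C^\ell$ norms. Sobolev embedding on $[-k,k]$ then gives the tightness needed for the smooth topology. The factor $\chi(\mu/\Lambda)$ tends to $1$ locally.

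The main obstacle is Step 2: the combinatorial control of non-paired tuples of orbits for moments of order $2n$ uniformly up to times $\mathfrak c|\log h|$. This requires quantitative $h^\beta$ phase-space separation of orbits, together with counting nearly-coinciding orbit tuples against the exponential growth $\mathrm{e}^{h_{\rm top}T}/T$. I would first try to prove a lemma stating that if $\sum_k\pm a_{\gamma_k}$ is not identically cancelled by a pairing, then at least $c\,T h^{-\beta}$ of its coordinates are $\gtrsim h^\beta$. That lemma would make each bad term $\mathcal O(h^{\infty})$-small relative to the counting factor.
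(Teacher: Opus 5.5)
Your overall scheme is the paper's: moments in $\omega$, survival of Wick pairings, the Hannay--Ozorio de Almeida sum rule, and Sobolev-based tightness. However, Step~1 contains a reduction that fails in much of the admissible range, and Step~2 depends on it.

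You keep only the first-order part $-h^{\alpha}\int_\gamma q_\omega$ of the perturbed action. That makes the phase linear in $\omega$, so that $\mathbf E_\omega$ factorizes into products of $\widehat m$. The trace formula only gives
$S_\omega(\gamma)=S(\gamma)-h^\alpha\int_\gamma q_\omega+r_\omega(\gamma)$, with $r_\omega(\gamma)=\int_0^{h^\alpha}(h^\alpha-\delta)\,\partial_\delta^2S(\gamma_{\delta,\omega})\,\mathrm{d}\delta$. The available estimates bound this only by $h^{2\alpha-2\beta}\mathrm{e}^{CT_\gamma}$, and $r_\omega(\gamma)/h$ need not be small.

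For instance, in the random metric model the explicit second-order term $\frac{h^{2\alpha-1}}{2}\int_\gamma q_\omega^2|\xi|_g^2$ is a quadratic form in the $\omega_j$. Its fluctuations are of order $h^{2\alpha-1+\beta/2}\sqrt{T_\gamma}$, which is unbounded as soon as $\alpha<\frac12-\frac\beta4$ (e.g.\ $\alpha=0.3$, $\beta=0.1$). The orbit-deformation part of $\partial_\delta^2S$ behaves similarly even when $p_{\delta,\omega}$ is linear in $\delta$.

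The hypotheses you cite do not control this term:
item (5) of Condition~\ref{assum: condition potential} concerns the symbol remainder $r_{\delta,\omega}$, not $r_\omega(\gamma)$. The condition $\beta<2-2\alpha$ is what makes the \emph{linear} phase oscillate, since $h^{\alpha-1}\|a_\gamma\|\gtrsim h^{\alpha-1+\beta/2}\to\infty$. Because $r_\omega(\gamma)$ couples all the $\omega_j$ along $\gamma$ nonlinearly, the expectation does not factorize. Your argument is therefore justified only in a regime like $2\alpha-2\beta>1$. A symptom is that your proof never uses $2\beta<\alpha$.

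The missing idea is how the paper handles a phase that is only approximately linear (Proposition~\ref{prop: averages actions}). It keeps $r_\omega$ and uses two facts about it:
\begin{itemize}
\item the gradient bound $|\partial_{\omega_j}r_\omega(\gamma)|\lesssim h^{2\alpha-\beta}\mathrm{e}^{CT_\gamma}$ (Proposition~\ref{prop: bounds derivatives omega r});
\item locality: $\partial_{\omega_j}r_\omega(\gamma_{i'})=0$ whenever $j$ lies in the private bump set $\widetilde J_h(\gamma_i)$ with $i\neq i'$ (bumps met by $\gamma_i$ but at distance $\ge C_0h^\beta$ from the other orbits; Lemma~\ref{lem: derivees r omega zero}). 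This holds because perturbed orbits stay within $h^{\alpha-\beta}\mathrm{e}^{CL}\ll h^\beta$ of the unperturbed ones, which is where $2\beta<\alpha$ enters.
\end{itemize}
One then integrates by parts once along each of the disjointly supported directions $\theta^{(i)}$. Since $h^{-1}|\theta^{(i)}\cdot\nabla_\omega r_\omega|$ is dominated by $h^{\alpha-1}\|\theta^{(i)}\|^2$, each orbit with nonzero net coefficient yields a factor $h^{1-\alpha-\beta/2}+h^{\alpha-2\beta}\mathrm{e}^{CL}\le h^{\varepsilon}$, for any $\varepsilon<\min(1-\alpha-\beta/2,\alpha-2\beta)$ once $\mathfrak c$ is small.

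This is a power gain, not the $\mathcal O(h^\infty)$ you anticipate. The combinatorics must therefore attach a factor $h^\varepsilon$ to each orbit appearing only once, which costs $\sum_{T_\gamma\le L}A_\gamma\lesssim\mathrm{e}^{CL}$, and $\mathfrak c$ must be chosen so that $\mathrm{e}^{C\mathfrak c|\log h|}h^{\varepsilon}<1$. The separation lemma you propose is the right geometric input (Lemmas~\ref{lem: Jh(gamma)}--\ref{lem: bunches periodic orbits}). It must feed this integration-by-parts estimate rather than a product of characteristic functions.

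Finally, $\widetilde N$ is centered by the $\mu$-average, so it differs from $N_{\rm osc}$ by the random quantity $\mathbf E_\mu N_{\rm osc}(\omega,\cdot)$. You need $\mathbf E_\omega|\mathbf E_\mu N_{\rm osc}|^2\lesssim\Lambda^{-1}$, which follows from the $\operatorname{sinc}$ factor using $\Lambda\to\infty$. Your Step~1 drops this term.
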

\subsection{Examples} \label{subsec: examples} We describe three models that all fit in the above picture. The first one was studied in \cite{ingremeau2023emergence} in the case $p=|\xi|_g^2$ on a negatively curved manifold, where the authors investigated the long-time evolution of Lagrangian states by the quantum dynamics generated by $h^2\Delta_g+\delta q_\omega(x)$.

\subsubsection{Random electric potential model} 
We consider a random Schr\"odinger operator 
\begin{equation} \label{tag: random potential} P_{h,\delta}^\omega=h^2\Delta_g+V(x)+\delta q_\omega(x), \tag{random electric potential}\end{equation}
where $q_\omega(x)=\sum_{j\in J_h} \omega_j q_j(x)$ is a sum of localized random bumps satisfying Condition \ref{assum: condition potential}. Then we know that
\[P_{h,\delta}^\omega=\operatorname{Op}_h^{M}(|\xi|_g^2+V(x)+\delta q_\omega +h^2 r)+\mathcal O(h^\infty)_{L^2\to L^2}\]
with $r\in S^0(T^*M)$. We let $p=|\xi|_g^2+V(x)$, $p_{\delta,\omega}=p+\delta q_\omega$ and $r_{\delta,\omega}=h^2 r$. These perturbations preserve time reversal symmetry.

\subsubsection{Random metric model}
We consider a conformal metric $g_{\varphi}=\mathrm{e}^{\varphi}g$ with $\varphi\in C^\infty(M)$. Since we work in dimension $2$, the Laplacian $\Delta_{g_\varphi}$ acting on $L^2(M,\mathrm{dvol}_{g_\varphi})$ is unitarily conjugated to the operator $\mathrm{e}^{-\varphi/2}\Delta_g\mathrm{e}^{-\varphi/2}$ acting on $L^2(M,\mathrm{dvol}_g)$, therefore they have the same spectrum. We consider a random conformal factor $\varphi_{\delta,\omega}(x)=\delta\sum_{j\in J_h}\omega_j q_j(x)$ and consider the random operator
\begin{equation} \label{tag: random metric} P_{h,\delta}^\omega=h^2\big(\mathrm{e}^{-\frac{\varphi_{\delta,\omega}}{2}}\Delta_g\mathrm{e}^{-\frac{\varphi_{\delta,\omega}}{2}}\big). \tag{random metric} \end{equation}
Using the results from the appendix of \cite{MoyLongTime}, one can check that
\[P_{h,\delta}^\omega=\operatorname{Op}_h^{M}(\mathrm{e}^{-\varphi_{\delta,\omega}}|\xi|_g^2+ r_{\delta,\omega})+\mathcal O(h^\infty)_{L^2\to L^2},\]
with $r_{\delta,\omega}\in h^{2-2\beta}S_\beta^0(T^*M)$ uniformly in $\delta\in [0,h^\alpha]$ and $\omega\in [-1,1]^{J_h}$. This fits into the above assumptions with $p_{\delta,\omega}=\mathrm{e}^{-\varphi_{\delta,\omega}}|\xi|_g^2$. These perturbations preserve time-reversal symmetry.

\subsubsection{Random magnetic potential model} Let $(U_i,f_i)_{i\in I}$ be a finite atlas of $M$, and let $(\chi_i)_{i\in I}$ be a subordinated partition of unity.  Let $(q_j')_{j\in J_h'}$ satisfy Condition \ref{assum: condition potential} (with $J_h',q_j'$ in place of $J_h,q_j$). Let $(\mathbf e_1,\mathbf e_2)$ denote the standard basis of $T^*\mathbf{R}^2$. Let $\{\omega_{k,i,j}\}_{k\in\{1,2\},i\in I,j\in J_h'}$ be i.i.d. real random variables, and define
\[\mathbf A_\omega=\sum_{i\in I} \chi_i f_i^*\Big(\sum_{j\in J_h'}q_j'(x)\big( \omega_{1,i,j}\mathbf e_1+\omega_{2,i,j}\mathbf e_2\big)\Big).\]
We set $J_h=\{1,2\}\times I\times J_h'$, and $q_{\bm j}(x,\xi)=-2q_j' \chi_i \langle \xi, f_i^*\mathbf e_k\rangle$ for $\bm j=(k,i,j)\in J_h$. We define a random magnetic operator by
\begin{equation}\label{tag: magnetic} P_{h,\delta}^\omega=(\nabla-\mathrm{i}h\delta\mathbf A_\omega)^*(\nabla-\mathrm{i}h\delta\mathbf A_\omega)+V(x).\tag{random magnetic potential}\end{equation}
One can check using the results from the appendix of \cite{MoyLongTime} that
\[P_{h,\delta}^\omega=\operatorname{Op}_h^{M}\big(|\xi-\delta\mathbf A_\omega(x)|_g^2+V(x)+r_{\delta,\omega}\big)+\mathcal O(h^\infty)_{L^2\to L^2},\]
with $r_{\delta,\omega}\in h^2h^{-2\beta}S^0_\beta(T^*M)$. We set $p_{\delta,\omega}=|\xi-\delta\mathbf A_\omega(x)|_g^2+V(x)$. Note that
\[q_\omega(x,\xi)=\partial_\delta p_{\delta,\omega}\big|_{\delta=0}=-2\langle \xi,\mathbf A_\omega(x)\rangle_g=\sum_{\bm{j}\in J_h} \omega_{\bm j}q_{\bm j}.\]
We will show in Proposition \ref{prop: random mag checks assumptions} that $p_{\delta,\omega}$ satisfies Condition \ref{assum: condition potential}. Since $q_{\bm j}(x,\xi)-q_{\bm j}(x,-\xi)=2q_{\bm j}(x,\xi)$, these perturbations strongly break time reversal symmetry.

\subsubsection{Other models} One can build many other models of random operators that preserve or break time reversal symmetry. For example, one could use the model of random semiclassical pseudodifferential operators ($\Psi$DOs) studied by Ingremeau and Vogel \cite{ingremeau2023emergence}. These operators are \say{more random} than those described above as they introduce randomness in the $\xi$ variable too. Indeed, in the random electric potential/magnetic potential/metric models described above, the random perturbation $q_\omega(x,\xi)$ is correlated with $q_{\omega}(x',\xi')$ if and only if $d(x,x')\lesssim h^{\beta}$. In the random $\Psi$DO model, $q_\omega(x,\xi)$ is correlated with $q_\omega(x',\xi')$ if and only if $d(x,x')\lesssim h^\beta$ \emph{and} $d(\xi,\xi')\lesssim h^\beta$.

\subsection{Related works}

	\subsubsection{Propagation of Lagrangian states by random perturbations of Schr\"odinger operators} The model studied in the present paper was introduced by Ingremeau and Vogel in \cite{ingremeau2023emergence}, where they suggested that it is a good toy model to study quantum chaos. Their object of interest is the long-time evolution of a monochromatic Lagrangian state $f_h=\mathrm{e}^{\frac{\mathrm{i}}{h} \phi}a$ under the perturbed quantum dynamics, that is the function $\mathrm{e}^{-\frac{\mathrm{i}}{h} tP_h^\omega}f_h(x)$. There are two notions of randomness in their work: one can pick $\omega\in [-1,1]^{J_h}$ at random, or pick $x\in M$ at random. The authors show that for a generic perturbation $\omega$, the random function $\mathrm{e}^{-\frac{\mathrm{i}}{h} tP_h^\omega}f_h(\exp_x(\cdot))$ (where $x$ is picked at random in $M$) converges in law to a Gaussian random field after suitable rescaling \cite[Theorem 2.10]{ingremeau2023emergence}. This is analogous to Theorems \ref{thm: as GOE fluctuations}-\ref{thm: conv random fun energy}. They also show that for almost all $x\in M$, the random function $\mathrm{e}^{-\frac{\mathrm{i}}{h} tP_h^\omega}f_h$ (where $\omega$ is picked at random in $[-1,1]^{J_h}$), after suitable rescaling to become a function on $T_xM\simeq \mathbf{R}^d$, converges in law to a monochromatic Gaussian free field \cite[Theorem 2.11]{ingremeau2023emergence}. This is analogous to Theorem \ref{thm: convergence random function}.
	
	In their work, Ingremeau and Vogel modify the random variables that enter in play in order to obtain independence and thereby apply a multivariate central limit Theorem. Our approach avoids this step and directly applies the method of moments to establish convergence in distribution. This allows us to work in the full range $0<\beta<\min(\alpha/2,2-2\alpha)$, while \cite{ingremeau2023emergence} was constrained to a smaller range of parameters in the random potential model.
	
	In \cite{ingremeau2025improvedlinftyboundseigenfunctions}, they obtained improved $L^\infty$ upper bounds on eigenfunctions for the conjugated operator $\widetilde P_h:= \mathrm{e}^{-\frac{\mathrm{i}}{h}P_h^\omega}P_h \mathrm{e}^{\frac{\mathrm{i}}{h}P_h^\omega}$, in the case where $P_h=h^2\Delta_g$ on a compact negatively curved manifold.

\subsubsection{Perturbation of quantized diffeomorphisms of the torus} The precise meaning of \say{generic} in the BGS conjecture is unclear, but it is necessary in order to discard exceptional systems for which BGS is known to fail. The idea of studying BGS by averaging over an ensemble of quantum systems that share the same classical limit was suggested by Zirnbauer \cite{zirnbauer1999pair}. He considered the $2$-point correlation function for perturbations of Berezin--Toeplitz operators that quantize chaotic diffeomorphism of the torus. If $U_N$ quantizes a chaotic map $f:\mathbf T^2\to \mathbf T^2$ (here $N=h^{-1}$ and one works in the limit $N\to +\infty$), the perturbations take the form
\[U_N^\omega=\exp\Big(\frac{\mathrm{i}}{h}\sum_{0\le k\le s} h^\alpha\xi_k Q_{k,N}\Big)U_N,\]
where $s$ is a fixed integer, $\alpha\in (0,1)$, $(\xi_k)_{0\le k\le s}$ are i.i.d. Gaussian random variables and the operators $Q_{k,N}$ are quantizations of Hamiltonians $q_k$ on $\mathbf T^2$. This model is similar to the one considered in the present paper, except that we let $s\sim h^{-2\beta}$ grow as $h\to 0$, and the functions $q_k$ are supported in balls of radius $h^\beta$. Since there are available semiclassical and classical trace formulae for quantized diffeomorphisms of the torus, see e.g. \cite{Faure2007}, it is likely that Theorems \ref{thm: as GOE fluctuations}--\ref{thm: convergence random function} can be adapted to this setting.

\subsubsection{Random hyperbolic surfaces} The development of new integration techniques on moduli space of random surfaces in the recent years has made possible the study of spectral statistics for models of random surfaces. Rudnick \cite{rudnick2022goe} considered the \emph{ensemble variance} of the smoothed counting function \eqref{eq: counting func} for eigenvalues of the operator $\sqrt{\Delta-1/4}$ on the moduli space $\mathcal M_g$ of hyperbolic surfaces of genus $g$. In the iterated limit $g\to +\infty$ then $L\to +\infty$, this variance converges to an expression consistent with GOE. Rudnick--Wigman \cite{rudnick2023sure} showed that for a typical surface $M\in \mathcal M_g$, the spectral variance of the counting function matches the ensemble variance, in the iterated limit $g\to +\infty$ then $L\to +\infty$. This is analogous to the result of Proposition \ref{prop: moments N_osc}, for $n=2$. Rudnick--Wigman \cite{rudnick2023central} showed that the oscillating term in the trace formula converges in law to a centered Gaussian random variable with variance $\Sigma_{\rm GOE}^2$, in the iterated limit $g\to +\infty$ then $L\to +\infty$. This is analogous to a special case of Theorem \ref{thm: convergence random function} where the considered functionals $F:C^\infty(\mathbf{R})\to \mathbf{R}$ are of the form $F(f)=\varphi(f(0))$ with $\varphi\in C^\infty_{\rm comp}(\mathbf{R})$. In the recent preprint \cite{meriaz2026localweyllawlengthminimising}, Meriaz computed the variance of the local Weyl law over the moduli space of \emph{marked hyperbolic surfaces} of genus $g$. His results are consistent with the predictions of Berry's random wave model for eigenfunctions. In our setting, letting $(\psi_j^\omega)_j$ denote an orthonormal basis of eigenfunctions of $P_h^\omega$, it would also be interesting to obtain information about the fluctuations of the smoothed local density of states
\[\sum_j f\Big(\frac{E-E_j^\omega}{\eta}\Big) |\psi_j^\omega(x)|^2,\]
either as $E,x$, or $\omega$ varies.

There have been parallel studies in the model of random covers of a closed hyperbolic surface introduced by Magee--Naud--Puder \cite{mageepudernaud,magee_puder_2023}. The limit $g\to +\infty$ is replaced by $n\to +\infty$, where $n$ is the degree of the covering surface. The analogue of \cite{rudnick2022goe} was obtained by Naud \cite{naud2022random}, who was also able to incorporate unitary twists that break time-reversal invariance in order to observe a behavior consistent with GUE --- see Marklof--Monk \cite{marklof2024modulispacetwistedlaplacians} for a similar result in the Weil--Petersson model. The analogue of the results from \cite{rudnick2023central,rudnick2023sure} were obtained by Maoz \cite{Maoz1,Maoz2}. All the above results were adapted to random covers of surfaces of \emph{variable} negative curvature by the author in \cite{Moy2024}.

\subsection{Extension to higher dimension} There are analogues of Theorems \ref{thm: as GOE fluctuations} and \ref{thm: convergence random function} in dimension $\ge 3$, but the results are nicer to express in dimension $2$. Indeed, in dimension $d\ge 3$, by the trace formula of \cite[Theorem 1.1]{MoyLongTime} applied to $f_\mu=f(\cdot-\mu)$, the Weyl term in the trace formula \eqref{eq: GUTZ} below is given by
\[N_{\rm Weyl}=\frac{1}{h^{d-1}L}\sum_{k\le N}\sum_{i+j=k}c_{k,i,j,\delta} L^{-k}\mu^i\widehat f^{(j)}(0)+\mathcal O_f(h^{N(1-2\beta)-C_d}(1+|\mu|)^{2N+C_d}).\]
The coefficients $c_{k,i,j,\delta}$ satisfy $|c_{k,i,j,\delta}|\lesssim h^{\lfloor \frac{k+1}{2}\rfloor}$. In dimension $d=2$, for $|\mu|\le \Lambda\le L\le \mathfrak c|\log h|$ one can simplify
\[N_{\rm Weyl}=\frac{C_\omega}{hL}\widehat f(0)+\mathcal O\Big(\frac{\Lambda}{L}\Big),\]
where we denoted $C_\omega=c_{0,0,0,\delta}$. In particular, the Weyl term is independent of the spectral window centered at $E-\mu\frac{h}{L}$, up to a small error. Consequently, subtracting $\mathbf E_\mu[N]$ from the counting function allows suppressing the Weyl term. In higher dimensions, since we are constrained to the regime $L\le \mathfrak c|\log h|$, the term $N_{\rm Weyl}$ depends polynomially on $\mu$ --- note that if we could reach extremely long times $L\sim h^{-(d-1)}$, the Weyl term would still be independent of $\mu$, but here this is not the case.

To suppress the Weyl term, we consider the polynomial function $\overline N(\omega,\cdot)$ defined on $[-\Lambda,\Lambda]$, of degree $\le C_d$ large enough depending on the dimension (when $d=2$ one can take $C_d=0$) that minimizes the $L^2$-norm
\[\mathbf E_\mu[|N(\omega,\mu)-\overline N(\omega,\mu)|^2].\]
The polynomial function $\overline N(\omega,\cdot)$ encodes the slowly varying fluctuations of the counting function $N(\omega,\cdot)$ --- this is somehow analogous to the \emph{local mean density} considered by Berry \cite{BerryRigidity,Berry1}---, and one is really interested in the difference
\[\widetilde N=N-\overline N,\]
which encodes the small scale behavior of the counting function.

I expect that Theorems \ref{thm: as GOE fluctuations} and \ref{thm: convergence random function} still hold for this function $\widetilde N$; the proof is not modified once the Weyl term has been dealt with.

\subsection{Overview of the proof} 

The proof of Theorem \ref{thm: as GOE fluctuations} relies on the semiclassical trace formula for the operator $P_{h}^\omega$ applied to the test function $f_\mu:=f(\cdot-\mu)$, that we proved in \cite{MoyLongTime}.
\begin{prop}[{\cite[Theorem 1.1]{MoyLongTime}}]\label{prop: long time gutz} Assume $M$ has dimension $2$. Assume that Conditions \ref{assum: convex}--\ref{assum: condition potential} are satisfied. Let $\omega\in [-1,1]^{J_h}$ and $\mu\in [-\Lambda,\Lambda]$. Then, for $1\le L\le \mathfrak c(\alpha,\beta)|\log h|$, the smoothed counting function from \eqref{eq: defi N omega mu} satisfies
	\begin{equation} \label{eq: GUTZ}N(\omega,\mu)= \frac{C_\omega}{hL}\widehat f(0)+N_{\rm osc}(\omega,\mu)+R(\omega,\mu),\end{equation}
where
\begin{itemize}
	\item $C_\omega$ is a constant depending on $h,\omega$ but not on $\mu$, that converges as $h\to 0$ to the Liouville measure of the energy shell $p^{-1}(E)$.
	\item $N_{\rm osc}=N_{\rm osc}(\omega,\mu)$ depends on $\omega$ and $\mu$, and is given by
	\begin{equation} \label{eq: Nosc in traceformula}N_{\rm osc}={\frac{1}{L}\sum_{\gamma\in \mathcal P}\sum_{\pm} \frac{T_\gamma^\sharp}{|\det(I-P_\gamma)|^{1/2}} \mathrm{e}^{\pm\frac{\mathrm{i}}{h} S_\omega(\gamma)}\mathrm{e}^{\pm\mathrm{i}\mu T_\gamma/L}\widehat f\Big(\pm\frac{T_\gamma}{L}\Big)}.\end{equation}
	Here:
	\begin{itemize}
		\item $\mathcal P$ is the set of periodic orbits of the unperturbed flow $\Phi^t:p^{-1}(E)\to p^{-1}(E)$.
		\item $T_\gamma$ and $T_\gamma^\sharp$ denote the period and primitive period of a periodic orbit $\gamma$, respectively.
		\item $P_\gamma:E_u\oplus E_s\to E_u\oplus E_s$ is the linearized Poincar\'e map of the orbit, whose conjugacy class is independent of the point of the orbit.
		\item $S_\omega(\gamma)$ is the action of a perturbed orbit, given by
		\[S_\omega(\gamma)=S(\gamma)-h^{\alpha}\int_{\gamma} q_\omega+r_\omega(\gamma),\]
		where $S(\gamma)$ is the action of $\gamma$ and the error satisfies 
		\begin{equation} \label{eq: bound derivee r omega} |\partial_{\omega_i}r_{\omega}(\gamma)|\lesssim h^{2\alpha} h^{-\beta} \mathrm{e}^{CT_\gamma}\end{equation} 
		for some constant $C>0$.
	\end{itemize} 
	\item $R=R(\omega,\mu)$ is bounded uniformly in $\omega,\mu$ by
	\[|R(\omega,\mu)|\lesssim \frac{\Lambda}{L}.\]
\end{itemize} \end{prop}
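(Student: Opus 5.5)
The plan is the standard route to a Gutzwiller formula: Fourier inversion, then a long-time parametrix for the propagator, then stationary phase on the trace, with extra care because the perturbation lives in the exotic class $S^m_\beta$ and the times reach $\mathfrak c|\log h|$. By Fourier inversion, with $\eta=h/L$,
\[
N(\omega,\mu)=\operatorname{Tr} f_\mu\Big(\frac{L(P_h^\omega-E)}{h}\Big)=\int_{-1}^{1}\widehat f(s)\,\mathrm{e}^{-\mathrm{i}\mu s}\,\mathrm{e}^{-\mathrm{i}sLE/h}\operatorname{Tr}\mathrm{e}^{\mathrm{i}sLP_h^\omega/h}\,\mathrm{d}s .
\]
Up to constants fixed by the Fourier convention, the problem is therefore to compute the propagator trace $\operatorname{Tr}\mathrm{e}^{-\mathrm{i}tP^\omega_h/h}$ for $|t|\le L\le \mathfrak c|\log h|$, tested against the smooth weight $\widehat f(t/L)\mathrm{e}^{\pm\mathrm{i}\mu t/L}$.

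First I would localize in energy. Since $f$ is Schwartz, I would insert a cutoff $\chi(P_h^\omega)$ supported near $E$, with an $\mathcal O(h^\infty)$ error, using the functional calculus in $S_\beta$. Next I would split the time integral with a partition $1=\psi_0(t)+\psi_1(t)$, where $\psi_0$ is supported in $|t|\le T_0$.

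For the short times $|t|\le T_0$, the usual small-time parametrix together with stationary phase at $t=0$ gives the Weyl term $\frac{C_\omega}{hL}\widehat f(0)$. Here $C_\omega$ is the Liouville volume of $p_{h^\alpha,\omega}^{-1}(E)$, which tends to that of $p^{-1}(E)$. The $\mu$-dependence enters only through Taylor corrections in $t/L$, which produce the error $\mathcal O(\Lambda/L)$.

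For the long times $T_0\le |t|\le L$, I would build a Lagrangian (WKB) parametrix for $\mathrm{e}^{-\mathrm{i}tP^\omega_h/h}\chi(P^\omega_h)$ by composing short-time Fourier integral operators. I would control it with an Egorov theorem in the class $S_\beta$ up to times $\mathfrak c|\log h|$. The symbol derivatives grow like $h^{-\beta}\mathrm{e}^{C|t|}$, and this stays admissible provided $\mathfrak c$ is small compared with $(1-2\beta)$ divided by the Lyapunov exponent. Taking the trace and applying stationary phase, the critical set is the set of periodic orbits of the perturbed flow of $p_{h^\alpha,\omega}$ on the energy shell. By structural stability of Anosov flows, these are in bijection with the unperturbed orbits $\gamma\in\mathcal P$, are $\mathcal O(h^{\alpha-\beta}\mathrm{e}^{CT_\gamma})$-close to them, and remain hyperbolic. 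Stationary phase transversally (nondegenerate because $\det(I-P_\gamma)\ne 0$) and in the energy/time direction then produces the amplitudes $T_\gamma^\sharp|\det(I-P_\gamma)|^{-1/2}$ and the phases $\mathrm{e}^{\pm\frac{\mathrm{i}}{h}S_\omega(\gamma)}$. This yields the sum $N_{\rm osc}$. The errors are of the form $h^{1-2\beta}\mathrm{e}^{CT}$ per orbit, and summed against the orbit count $\#\{T_\gamma\le T\}\lesssim\mathrm{e}^{h_{\rm top}T}$ they remain $o(1)$ for $L\le\mathfrak c|\log h|$.

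For the action, I would expand $\delta\mapsto S_\delta(\gamma_\delta)$ at $\delta=h^\alpha$. Because $\gamma$ is a critical point of the action functional, the first variation is simply $-\int_\gamma q_\omega$. The second-order remainder $r_\omega(\gamma)$ is controlled by $\delta^2\sup|\partial^2_\delta p|$ times the orbit-perturbation size. Differentiating in $\omega_i$, and using that $\partial_{\omega_i}p_{\delta,\omega}$ is localized in $B(x_i,h^\beta)$ with derivatives of size $h^{-\beta}$ (Condition~\ref{assum: condition potential}), gives the bound \eqref{eq: bound derivee r omega}, $|\partial_{\omega_i}r_\omega(\gamma)|\lesssim h^{2\alpha-\beta}\mathrm{e}^{CT_\gamma}$. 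The lower-order term $r_{\delta,\omega}\in h^{2-2\beta}S_\beta$ changes the phase by $\mathcal O(h^{1-2\beta}T)$, which is absorbed into the errors.

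I expect the main obstacle to be the long-time stationary phase: keeping uniform control over the parametrix and the nondegeneracy of the phase up to the Ehrenfest-type time when the symbols are exotic ($S_\beta$) and the perturbation is $h^\alpha$-large compared with $h$. I would first try to handle this by conjugating to the unperturbed flow, treating $h^\alpha q_\omega$ as a multiplicative phase along trajectories. I would then check that the exponential losses $\mathrm{e}^{CT}$ are beaten by the gains $h^{1-2\beta}$ and $h^{\alpha-2\beta}$ once $\mathfrak c=\mathfrak c(\alpha,\beta)$ is chosen small.
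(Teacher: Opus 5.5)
Your overall architecture (Fourier inversion, a long-time parametrix, stationary phase at the periodic orbits of the perturbed flow, and a Taylor expansion of $\delta\mapsto S(\gamma_{\delta,\omega})$ with first variation $-\int_\gamma q_\omega$) is consistent with the trace formula the paper imports from \cite{MoyLongTime}. There it is carried out with a long-time generating function.

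The gap is in the one estimate the paper actually proves here, \eqref{eq: bound derivee r omega}. Your one-line argument (differentiate in $\omega_i$ and use the localization of $\partial_{\omega_i}p_{\delta,\omega}$) does not establish it. In $r_\omega(\gamma)=\int_0^{h^\alpha}(h^\alpha-\delta)\,\partial_\delta^2S(\gamma_{\delta,\omega})\,\mathrm{d}\delta$, the second derivative $\partial_\delta^2 S$ contains, besides $-\int_{\gamma_{\delta,\omega}}\partial_\delta^2p_{\delta,\omega}$, a term pairing $\partial_\delta\gamma_{\delta,\omega}$ with $\nabla\partial_\delta p_{\delta,\omega}$ along the whole orbit. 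Applying $\partial_{\omega_i}$ to that term differentiates the perturbed orbit itself, which depends on $\omega_i$ globally, not only near $x_i$. So localization alone controls nothing.

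The missing ingredient is quantitative control of the derivatives of the perturbed periodic orbit in $(\delta,\omega_i)$. This is where the factors $h^{-\beta}$ and $\mathrm{e}^{CT_\gamma}$ come from, and the $C^0$-closeness you get from structural stability does not provide it. The paper proceeds as follows:
\begin{enumerate}
\item It writes $S(\gamma_{\delta,\omega})=\Psi(z_{\rm c})$, where $\Psi$ is the generating phase on a Poincar\'e section and $z_{\rm c}$ is its critical point.
\item From the implicit function theorem $\partial_\delta z_{\rm c}=-(\nabla_z^2\Psi)^{-1}\partial_\delta\nabla_z\Psi$ and the symbol bounds on $\Psi$, it gets $|\partial_{\omega_i}z_{\rm c}|\lesssim h^{2\beta}$, $|\partial_\delta z_{\rm c}|\lesssim h^{-\beta}\mathrm{e}^{CL}$, $|\partial^2_{\delta\omega_i}z_{\rm c}|\lesssim \mathrm{e}^{CL}$ and $|\partial^2_{\delta}z_{\rm c}|\lesssim h^{-3\beta}\mathrm{e}^{CL}$.
\item It expands $\partial^3_{\delta\delta\omega_i}[\Psi(z_{\rm c})]$ by the chain rule; the term containing $\partial^3 z_{\rm c}$ drops because $\nabla_z\Psi(z_{\rm c})=0$.
\item This gives $|\partial^3_{\delta\delta\omega_i}S|\lesssim h^{-\beta}\mathrm{e}^{CL}$, and integrating against $h^\alpha-\delta$ yields $h^{2\alpha-\beta}$.
\end{enumerate}

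Your localization idea can be made rigorous, and more economically. First use the first-variation identity $\partial_{\omega_i}S(\gamma_{\delta,\omega})=-\int_{\gamma_{\delta,\omega}}\partial_{\omega_i}p_{\delta,\omega}$ (as in the proof of Lemma \ref{lem: derivees r omega zero}), and only then differentiate twice in $\delta$. Since $\partial_{\omega_i}p_{\delta,\omega}\in \delta S^m_\beta$ lives over $B(x_i,h^\beta)$, where the orbit spends time $\mathcal O(T_\gamma h^\beta)$, the worst term is $\nabla\partial_\delta\partial_{\omega_i}p_{\delta,\omega}$ paired with $\partial_\delta\gamma_{\delta,\omega}$, of size $T_\gamma h^{\beta}\cdot h^{-\beta}\cdot h^{-\beta}\mathrm{e}^{CL}$. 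This route still needs the bounds on $\partial_\delta\gamma_{\delta,\omega}$ and $\partial_\delta^2\gamma_{\delta,\omega}$ quoted above.

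Two smaller points. First, your fallback for the long-time step, taken literally (conjugate to the unperturbed flow and treat $h^\alpha q_\omega$ as a multiplicative phase along unperturbed trajectories), only produces the first-order phase $-h^{\alpha-1}\int_\gamma q_\omega$ and discards $r_\omega(\gamma)/h$. The available bound is $|r_\omega(\gamma)|\lesssim h^{2\alpha-2\beta}\mathrm{e}^{CT_\gamma}$, and nothing makes this $o(h)$ in the range allowed by Condition \ref{assum: on alpha beta} (e.g.\ when $2\alpha-2\beta<1$). That is why the statement keeps the exact perturbed action, and why the $\omega$-derivative bound on $r_\omega$ is needed later in the integration by parts of Proposition \ref{prop: averages actions}. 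The stationary phase must therefore be done at the perturbed critical points, as in your main line. Second, stationary phase a priori produces Maslov phases; their absence in \eqref{eq: Nosc in traceformula} relies on \cite{Paternain1994}, and you should invoke it.
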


\begin{rem}Due to the particular structure of the Hamiltonian $p$, there are no Maslov indices in the trace formula, by a result of Paternain--Paternain \cite{Paternain1994}.\end{rem}
\begin{rem}Since the constant $C_\omega$ in \eqref{eq: GUTZ} is independent of $\mu$, it will vanish when computing the variance of $N(\omega,\mu)$ over $\mu\in [-\Lambda,\Lambda]$. \end{rem}
The bound \eqref{eq: bound derivee r omega} does not appear in \cite{MoyLongTime} but will be proved below in Proposition \ref{prop: bounds derivatives omega r}. Theorem \ref{thm: as GOE fluctuations} will then follow from the next proposition. 

\begin{prop}[$L^2_\omega$-convergence of the $\mu$-moments of $N_{\rm osc}$] \label{prop: moments N_osc}  Denote by $(m_n)_{n\ge 1}$ the moments of a centered Gaussian random variable with variance $\Sigma_{\rm GOE/GUE}^2$ depending on whether the perturbations preserve or break time reversal symmetry, which are given by
	\[m_n:=\left\{\begin{array}{ll} 0 & \text{if $n$ is odd,} \\ \frac{(2p)!}{2^pp!}\Sigma_{\rm GOE/GUE}^{2p} & \text{if $n=2p$ is even.}\end{array}\right.\]
	 Assume $1\le \Lambda \le L\le \mathfrak c|\log h|$ for some small enough constant $\mathfrak c(\alpha,\beta)>0$. Let $N_{\rm osc}$ be as in \eqref{eq: Nosc in traceformula}. Then, for all $n\ge 1$, one has
	\[\mathbf E_\omega(|\mathbf E_\mu(N_{\rm osc}^n)-m_n|^2)=\mathcal O_n(\Lambda^{-1}).\]
\end{prop}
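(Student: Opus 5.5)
We expand $N_{\rm osc}^n$ as a sum over $n$-tuples of periodic orbits and signs, and compute the $\mu$-average exactly. Writing $A_\gamma^\pm=\frac{T_\gamma^\sharp}{L|\det(I-P_\gamma)|^{1/2}}\widehat f(\pm T_\gamma/L)$, we get
\[\mathbf E_\mu(N_{\rm osc}^n)=\sum_{\gamma_1,\dots,\gamma_n}\sum_{\epsilon\in\{\pm\}^n}\prod_k A_{\gamma_k}^{\epsilon_k}\,\mathrm{e}^{\frac{\mathrm{i}}{h}\sum_k\epsilon_kS_\omega(\gamma_k)}\,\mathrm{sinc}\Big(\Lambda\sum_k\epsilon_kT_{\gamma_k}/L\Big),\]
where $\mathrm{sinc}(x)=\sin x/x$ comes from $\mathbf E_\mu \mathrm{e}^{\mathrm{i}\mu s}$. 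We split the tuples into \emph{paired} configurations and the rest. In a paired configuration, the indices split into pairs $\{k,l\}$ with $\epsilon_k=-\epsilon_l$ and $\gamma_l=\gamma_k$, or, in the time-reversal preserving case, $\gamma_l=\gamma_k^{-1}$ (the reversed orbit, which has the same action, period and $q_\omega$-integral). For these, the phases cancel exactly up to the remainders $r_\omega$, and the sinc factor equals $1$. For all other tuples, we want the ensemble average over $\omega$ to kill the phase.

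For the mean, we first compute $\mathbf E_\omega\mathbf E_\mu(N_{\rm osc}^n)$. The paired terms give, as $L\to\infty$ and by the Hannay--Ozorio de Almeida sum rule (Parry--Pollicott prime orbit theorem with weights $|\det(I-P_\gamma)|^{-1}\approx \mathrm{e}^{-T_\gamma}$), the quantity $\sum_\gamma |A_\gamma|^2\to \int_{\mathbf R_+}t|\widehat f(t)|^2\,\mathrm dt$ times the multiplicity. Each orbit contributes with both signs, which gives a factor $2$, and time reversal adds another factor $2$. This reproduces $\Sigma^2_{\rm GOE/GUE}$, and the number of pairings, $(2p)!/(2^pp!)$, gives $m_n$. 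Non-primitive orbits and coincidences inside the pairings (e.g. $\gamma_1=\gamma_2=\gamma_3$) have lower order because $T^\sharp_\gamma/T_\gamma$ is small or the sums are suppressed. For unpaired tuples, the phase contains $\mathrm{e}^{-\mathrm{i}h^{\alpha-1}\sum_k\epsilon_k\int_{\gamma_k}q_\omega}$, which is a product over $j$ of characteristic functions $\widehat m\big(h^{\alpha-1}c_j\big)$ with $c_j=\sum_k\epsilon_k\int_{\gamma_k}q_j$, times the action and remainder terms. We integrate by parts in $\omega_j$, using $m\in C^2$ and the bound \eqref{eq: bound derivee r omega}, which is harmless since $h^{2\alpha-\beta}\mathrm{e}^{CL}\ll h^{\alpha}$ when $\mathfrak c$ is small. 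Each integration gains $h^{1-\alpha}/|c_j|$. The key geometric input is that for an unpaired tuple, the convexity (Condition \ref{assum: convex}) and separation of orbits at scale $h^\beta$ for $T\le\mathfrak c|\log h|$, combined with Condition \ref{assum: condition potential}(\ref{assum: int Vj flow}) (and Definition \ref{defi: preserve T sym} in the GUE case, to rule out cancellation between $\gamma$ and $\gamma^{-1}$), yield many $j$ with $|c_j|\gtrsim h^\beta$. Since $\beta<\alpha/2<1-\alpha$ (this is where Condition \ref{assum: on alpha beta} enters), each such $j$ gains a factor $h^{1-\alpha-\beta}$. With enough such $j$, this beats the exponential number $\mathrm{e}^{CnL}\le h^{-Cn\mathfrak c}$ of tuples, so the unpaired contribution is $\mathcal O(h^\infty)$ after shrinking $\mathfrak c$.

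For the variance in $\omega$, we write $\mathbf E_\omega|\mathbf E_\mu N_{\rm osc}^n-m_n|^2$ as a sum over $2n$-tuples, $n$ from each factor with one factor conjugated. The same dichotomy applies. Fully paired $2n$-configurations that pair the two copies only internally reproduce $m_n^2$, and these cancel against the cross terms $-2m_n\mathbf E_\omega\mathbf E_\mu N^n_{\rm osc}+m_n^2$. The remaining paired configurations involve at least one pair linking the first copy to the second. In such configurations, the sinc factors no longer collapse to $1$: the first-copy sum $\sum\epsilon_kT_{\gamma_k}$ is then the free variable, and summing $\mathrm{sinc}(\Lambda s/L)^2$ against the smooth density of periods gives a gain $L/\Lambda$ times the normalisation $1/L$, which yields $\mathcal O(\Lambda^{-1})$. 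The finite-$L$ corrections to the sum rule and the remainder $R$ are handled the same way.

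The main obstacle is the geometric step: showing that every non-paired tuple has sufficiently many bumps $j$ with $|c_j|\gtrsim h^\beta$. This requires controlling near-coincident orbits, i.e. distinct orbits that shadow each other closely. For this I would first use the $h^\beta$-separation of projected periodic orbits, which holds for $T\le\mathfrak c|\log h|$ by convexity, together with a counting bound on the number of orbits that can share long segments.
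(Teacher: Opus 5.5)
Your overall architecture matches the paper's. Diagonal tuples give $m_n+\mathcal O(L^{-2})$ via the sum rule. In $\mathbf E_\omega|\mathbf E_\mu(\cdot)|^2$ the off-diagonal pairs split into two groups. Those whose combined phase cancels are handled by the $\operatorname{sinc}^2$ energy average, essentially as in your cross-linked pairing argument. Those whose phase survives must be killed by the $\omega$-average, and this last step has a gap.

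Integrating by parts in a single coordinate $\omega_j$ gains $h^{1-\alpha}/|c_j|$. Since an orbit crosses a bump of radius $h^\beta$ in time $\mathcal O(h^\beta)$, one has $|c_j|\lesssim h^\beta$, so the best per-bump gain is $h^{1-\alpha-\beta}$. Your justification ``$\beta<\alpha/2<1-\alpha$'' is false: $\alpha/2<1-\alpha$ only when $\alpha<2/3$. Condition \ref{assum: on alpha beta} only gives $\beta<2-2\alpha$, i.e.\ $1-\alpha-\beta/2>0$. For instance $\alpha=3/4$, $\beta=3/10$ is admissible, yet $h^{1-\alpha-\beta}=h^{-1/20}$, so per-bump integration by parts gains nothing.

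The missing idea is to integrate by parts \emph{once per orbit}, along the aggregated direction $\theta^{(i)}=(k_i\int_{\gamma_i}q_j)_{j\in\widetilde J_h(\gamma_i)}$. When the derivative falls on the density one gets $\sum_j\theta^{(i)}_j m'(\omega_j)/m(\omega_j)$. Its $L^1(\mathrm d\mathbf P_\omega)$-norm is $\lesssim\|\theta^{(i)}\|_2$ rather than $\|\theta^{(i)}\|_1$, by Cauchy--Schwarz, independence and $\int m'=0$. Since $\|\theta^{(i)}\|_2^2\gtrsim h^{-\beta}T_{\gamma_i}\cdot h^{2\beta}$, the gain is $h^{1-\alpha}/\|\theta^{(i)}\|_2\lesssim h^{1-\alpha-\beta/2}$. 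This square-root gain over the $\sim h^{-\beta}T_\gamma$ bumps is exactly what $\beta<2-2\alpha$ pays for.

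Your claim that the unpaired contribution is $\mathcal O(h^\infty)$ is also unsupported. Iterating integrations by parts over many $\omega_j$ attached to one orbit produces mixed derivatives $\partial_{\omega_j}\partial_{\omega_{j'}}r_\omega(\gamma)$, which \eqref{eq: bound derivee r omega} does not control. Moreover, $r_\omega$ destroys the product-of-characteristic-functions structure. The paper only ever needs $\theta^{(i)}\cdot\nabla_\omega r_\omega(\gamma_i)$, for two reasons:
\begin{itemize}
\item the sets $\widetilde J_h(\gamma_i)$ are pairwise disjoint and $C_0h^\beta$-far from the other orbits (Lemma \ref{lem: bunches periodic orbits}, built on Lemma \ref{lem: Jh(gamma gamma')}: two distinct orbits share only $\mathcal O(\mathrm e^{CL})$ bumps --- projected orbits are not separated, they do cross);
\item Lemma \ref{lem: derivees r omega zero} makes $r_\omega(\gamma_{i'})$ independent of $\omega_j$ for $j\in\widetilde J_h(\gamma_i)$, $i'\ne i$.
\end{itemize}
The outcome is only a factor $h^\varepsilon$ per orbit with nonzero net coefficient, so crude counting of $\mathrm e^{CnL}$ tuples fails. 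You need the bookkeeping of \S\ref{subsec: Delta k neq Delta k'}. There, only orbits occurring exactly once in the combined $2n$-tuple are counted with $\mathrm e^{CL}$, paid for by $h^\varepsilon$. Orbits occurring at least twice are summed against $A_\gamma^2$, with $\sum_\gamma A_\gamma^2\lesssim L^2$ matching the $L^{-2n}$ normalisation.

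Finally, your dichotomy misfiles tuples like $(\gamma^2,+),(\gamma,-),(\gamma,-)$. Their phase cancels identically although they are not paired, so no $\omega$-averaging can help. They belong to the diagonal class $\sum\varepsilon_ik_i\gamma_i=0$ in $\mathbf Z[\mathcal P_0]$. They are small by Lemma \ref{lem: poincare map powers}, not because $T^\sharp_\gamma/T_\gamma$ is small.
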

After applying the Markov inequality, Proposition \ref{prop: moments N_osc} tells us that for most $\omega\in \Omega_h$, the $\mu$-moments of $N_{\rm osc}$ approach the moments of a centered Gaussian random variable, which in turn allows showing convergence in law of $N_{\rm osc}$ to a Gaussian along most sequences $(\omega_n\in \Omega_{h_n})$.

\subsubsection{The diagonal approximation} Let us explain informally how to prove a basic version of Theorem \ref{thm: convergence random function}, by studying the $\omega$-variance of $N_{\rm osc}(\omega,0)$, as this already contains the mechanisms that allows proving Proposition \ref{prop: moments N_osc}. Suppose for simplicity that the test function $f$ is even and nonnegative. Introducing the shorthand $A_\gamma:=\frac{T_\gamma^\sharp}{|\det(I-P_\gamma)|^{1/2}}\widehat f\Big(\frac{T_\gamma}{L}\Big)$, one can rewrite
\[N_{\rm osc}(\omega,0)=\frac{2}{L}\sum_{\gamma\in \mathcal P} A_\gamma \cos(h^{-1}S_\omega(\gamma)).\] 
Then, the $\omega$-average of the square of the oscillating part is given by
\begin{equation} \label{eq: square N osc}\mathbf E_\omega\big[|N_{\rm osc}(\omega,0)|^2\big]=\frac{4}{L^2}\sum_{(\gamma,\gamma')} A_\gamma A_{\gamma'} \mathbf E_\omega\big[\cos(h^{-1}S_\omega(\gamma))\cos(h^{-1}S_\omega(\gamma'))\big].\end{equation}
The random perturbations are designed so that different periodic orbits see different bumps (unless one orbit is obtained by reversing the other). This relies on separation properties of periodic orbits in the position space, see \S \ref{subsec: separation}.  In particular, if $\gamma'\notin \{\gamma,\gamma^{-1}\}$ (here $\gamma^{-1}$ denotes the time reversed orbit), then $\int_\gamma q_\omega$ is almost uncorrelated with $\int_{\gamma'} q_\omega$ and $\mathbf E_\omega\big[\cos(h^{-1}S_\omega(\gamma))\cos(h^{-1}S_\omega(\gamma'))\big]\approx 0$. This provides a probabilistic mechanism to suppress nondiagonal contributions. Since $\gamma$ and $\gamma^{-1}$ have the same contribution, one ends up with
\[\mathbf E_\omega\big[|N_{\rm osc}(\omega,0)|^2\big]\approx\frac{8}{L^2}\sum_{\gamma\in \mathcal P} A_\gamma^2 \mathbf E_\omega\big[\cos(h^{-1}S_\omega(\gamma))^2\big],\]
where we have gained a factor $2$ compared to \eqref{eq: square N osc}. The cosines squared average to $\frac 12$, and by using equidistribution results for periodic orbits from \cite[\S 5]{Moy2024} (which are analogous to the sum rule of Hannay--Ozorio de Almeida \cite{JHHannay1984}), one finds
\[\mathbf E_\omega\big[|N_{\rm osc}(\omega,0)|^2\big]\approx \frac{4}{L^2}\sum_{\gamma\in \mathcal P} A_\gamma^2\underset{L\to +\infty}{\longrightarrow} \Sigma_{\rm GOE}^2(f).\]
Similar cancellation mechanisms were exploited by Ingremeau and Vogel to study the long time evolution of plane waves by the operator $\mathrm{e}^{-\frac{\mathrm{i}}{h} tP_{h}^\omega}$, see \cite{ingremeau2023emergence}. Informally, they relied on the fact that on a negatively curved surface $M$, for most points $(x,\xi)\in S^*M$, the geodesic segment of length $T$ issuing from $(x,\xi)$ does not pass $h^\beta$-close to itself on the base too often, when $T\le \mathfrak c(\beta)|\log h|$.

\subsection{Notation}  We use the letters $C,c$ to denote positive constants that may change from line to line. If the constant depends on a relevant set of parameters $\bullet$, we denote it by $C_\bullet$. If $f,g$ are two nonnegative quantities, we write $f\lesssim g$ if there is a constant $C>0$ such that $f\le Cg$. We write $f=\mathcal O(g)$ if $|f|\lesssim |g|$.

\subsection*{Acknowledgments} I am grateful to Maxime Ingremeau and Martin Vogel for discussions around this work. I thank St\'ephane Nonnenmacher for his guidance during the writing of this paper, and his feedback on a first version of this manuscript.
\section{Dynamical preliminaries} In this section, we first recall some equidistribution results for periodic orbits of the flow $\Phi^t:\Sigma_E\to \Sigma_E$. Then, we discuss time reversal symmetry before proving some quantitative estimates on the separation of periodic orbits in phase space and position space. These will be used in the next section to obtain decorrelation results for the actions of distinct periodic orbits of the perturbed classical dynamics.

\subsection{Equidistribution results} Let $p$ be a Hamiltonian on $T^*M$ satisfying Conditions \ref{assum: convex}--\ref{assum: anosov}. We recall some results from \cite[\S 5]{Moy2024}. The first one is a consequence of the local trace formula of Jin--Zworski and Jin--Tao \cite{Jin2016,jin2023number} for Anosov flows.
\begin{prop}[{\cite[Theorem 5.1 and Lemma 5.2]{Moy2024}}]\label{prop: equidistribution result} Denote by $\mathcal P$ the set of periodic orbits of the flow $\Phi^t:\Sigma_E\to \Sigma_E$. Let $\varphi$ be a smooth real compactly supported function. Then, the following holds for $L\ge 1$:
	\[\frac 1L\sum_{\gamma\in \mathcal P} \frac{T_\gamma^\sharp \varphi(T_\gamma/L)}{|\det(I-P_\gamma)|}=\int_{\mathbf{R}_+} \varphi(t)\mathrm{d}t+\mathcal O\Big(\frac 1L\Big), \]
	moreover the error improves to $\mathcal O(L^{-2})$ if $\varphi(0)=0$. Also, as $T\to +\infty$:
	\[\sum_{\gamma\in \mathcal P} \frac{T_\gamma^\sharp \varphi(T_\gamma-T)}{|\det(I-P_\gamma)|}=\int_{\mathbf{R}_+} \varphi(t)\mathrm{d}t+o(1).\]
Also, both results still hold when the sums are taken over \emph{primitive} periodic orbits. \end{prop}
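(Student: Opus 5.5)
The plan is to reduce all four assertions to one asymptotic for the weighted counting function of periodic orbits, and then obtain each sum by integrating by parts against the test function. Introduce
\[\mathcal N(t):=\sum_{\gamma\in\mathcal P,\,T_\gamma\le t}\frac{T_\gamma^\sharp}{|\det(I-P_\gamma)|},\qquad \mathcal R(t):=\mathcal N(t)-t .\]
The key input is the local trace formula of Jin--Zworski and Jin--Tao \cite{Jin2016,jin2023number}. It expresses the distribution $\sum_\gamma T_\gamma^\sharp|\det(I-P_\gamma)|^{-1}\delta(t-T_\gamma)$ on $t>0$ as a sum over Ruelle resonances plus a controlled remainder. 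The flow on $\Sigma_E$ is a contact Anosov flow on a $3$-manifold, since it is Hamiltonian on a regular energy shell. Hence the resonance $0$ is simple, and there is an essential spectral gap with finitely many resonances in a strip $\{\operatorname{Im} z>-\varepsilon\}$, all but $0$ with $\operatorname{Im}z<0$ (Dolgopyat, Nonnenmacher--Zworski, Tsujii). From this I would derive $\mathcal R(t)=c+\mathcal O(\mathrm e^{-\varepsilon t})$ for some constant $c$. It is enough to get this in smoothed form: for $\psi\in C_{\rm comp}^\infty$, one has $\int\psi(t-T)\,\mathrm d\mathcal R(t)=\mathcal O(\mathrm e^{-\varepsilon T})$. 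Since periods are bounded below by some $T_{\min}>0$, $\mathcal N$ vanishes near $0$, and hence $\mathcal R$ is bounded on $\mathbf R_+$. I expect this step to be the main obstacle. It needs the gap and the simplicity of the leading resonance, and it needs care in passing from the distributional trace formula to pointwise control of $\mathcal R$, using positivity of the weights and a smoothing and unsmoothing argument. If only the smoothed statement is available, it already suffices for everything below, because the test functions used are smooth.

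For the first statement, write
\[\frac1L\sum_\gamma\frac{T_\gamma^\sharp\varphi(T_\gamma/L)}{|\det(I-P_\gamma)|}=\frac1L\int_0^\infty\varphi(t/L)\,\mathrm dt+\frac1L\int_0^\infty\varphi(t/L)\,\mathrm d\mathcal R(t).\]
The first term equals $\int_{\mathbf R_+}\varphi$. For the second, integrate by parts; since $\mathcal R$ is bounded and $\mathcal R(0)=0$, it becomes $-\frac1{L^2}\int\varphi'(t/L)\mathcal R(t)\,\mathrm dt=\mathcal O(1/L)$. Now split $\mathcal R=c+(\mathcal R-c)$. The constant part contributes $-\frac{c}{L^2}\int\varphi'(t/L)\,\mathrm dt=\frac{c}{L}\varphi(0)$. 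The exponentially decaying part contributes $\mathcal O(L^{-2})$, because $\int|\mathcal R-c|\,\mathrm dt<\infty$. So the total error is $\mathcal O(1/L)$ in general and $\mathcal O(L^{-2})$ when $\varphi(0)=0$.

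The local statement follows in the same way. One has $\sum_\gamma\varphi(T_\gamma-T)w_\gamma=\int\varphi(t-T)\,\mathrm dt+\int\varphi(t-T)\,\mathrm d\mathcal R(t)$. For $T$ large the first integral is $\int_{\mathbf R_+}\varphi$, and the second equals $-\int\varphi'(t-T)\mathcal R(t)\,\mathrm dt=-c\int\varphi'+o(1)=o(1)$.

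For primitive orbits, I would bound the contribution of the non-primitive orbits $\gamma=\gamma_0^k$ with $k\ge2$. Uniform hyperbolicity gives $|\det(I-P_{\gamma_0}^k)|\ge c\,\mathrm e^{(k-1)\lambda T_{\gamma_0}}|\det(I-P_{\gamma_0})|$. Summing over $k\ge 2$ and using the bound on $\mathcal N$ over unit windows, the non-primitive orbits form a positive measure $\nu$ on $\mathbf R_+$ with $\nu([T,T+1])=\mathcal O(\mathrm e^{-\lambda T/2})$. In particular $\nu$ has finite mass and finite first moment. This gives the following.
\begin{itemize}
\item The primitive local sum differs from the full one by $\mathcal O(\mathrm e^{-\lambda T/2})=o(1)$.
\item The primitive global sum differs from the full one by $\frac1L\int\varphi(t/L)\,\mathrm d\nu=\mathcal O(1/L)$.
\item When $\varphi(0)=0$ one has $|\varphi(t/L)|\le Ct/L$, so the same difference is $\mathcal O(L^{-2})$ by the finite first moment.
\end{itemize}
Hence all the estimates persist for sums over primitive orbits.
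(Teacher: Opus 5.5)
Your route differs from the one the paper relies on, and it justifies its key input incorrectly. The flow on $\Sigma_E$ is not contact in general. Being Hamiltonian on a regular energy shell only gives the invariant $2$-form $\omega|_{\Sigma_E}$; a contact flow needs an invariant $1$-form $\alpha$ with $\alpha(H_p)\equiv 1$. The natural primitive $\xi\,\mathrm dx$ satisfies $\xi\,\mathrm dx(H_p)=\xi\cdot\partial_\xi p$, which is positive on $\Sigma_E$ but not constant. For $p=|\xi|_g^2+V$ it equals $2(E-V(x))$, and the flow on $\Sigma_E$ is the time change by $E-V$ of the geodesic flow of the Jacobi metric $(E-V)g$, which is not contact in general. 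So the contact-flow gap theorems you quote do not cover Conditions~\ref{assum: convex}--\ref{assum: anosov}.

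The gap you need is still true here. The flow is a mixing, volume-preserving Anosov flow on a $3$-manifold, and such flows are exponentially mixing for the Liouville measure by the Tsujii--Zhang theorem on smooth mixing Anosov flows in dimension three. This rules out nonzero resonances in a thin strip. With that substitution the rest of your argument is correct: the unsmoothing by monotonicity of $\mathcal N$, the integrations by parts giving $\frac cL\varphi(0)+\mathcal O(L^{-2})$, and the removal of non-primitive orbits via Lemma~\ref{lem: poincare map powers}.

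The paper does not reprove the statement. It quotes \cite{Moy2024} and presents it as a direct consequence of the local trace formula \cite{Jin2016,jin2023number}, and that route needs no gap and never goes through $\mathcal N(t)$. Fix $A>0$ and pair the trace formula with $\psi(t)=\chi(t)\varphi(t/L)$, where $\chi$ vanishes near $0$ and equals $1$ beyond the shortest period. After the factor $\frac1L$:
\begin{itemize}
\item the resonance $0$ gives $\int_{\mathbf R_+}\varphi+\mathcal O(1/L)$;
\item the remainder $F_A$ gives $\mathcal O(1/L)$, since the $\mathrm e^{-Bt}$-weighted $C^k$ norms of $\psi$ are bounded uniformly in $L$;
\item each resonance $\mu\neq 0$ with $-A<\operatorname{Im}\mu\le 0$ gives $\mathcal O(\langle\mu\rangle^{-k}L^{-1})$ after $k$ integrations by parts, which is summable by polynomial bounds on the number of resonances in strips.
\end{itemize}
When $\varphi(0)=0$, all three errors gain a factor $L^{-1}$; mixing is not even needed for this first statement. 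For the local statement, the $\mu$-term is $\mathrm e^{-\mathrm i\mu T}\int\varphi(s)\mathrm e^{-\mathrm i\mu s}\,\mathrm ds$, which tends to $0$ when $\operatorname{Im}\mu<0$. Dominated convergence then gives $o(1)$ as soon as $0$ is simple and is the only real resonance, which is why the statement records only $o(1)$. Your route buys more: a pointwise prime orbit theorem with exponential remainder, and an exponential rate in the local statement. The price is a deep mixing theorem that the statement does not require.

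A minor point: in the local statement the main term is $\int_{\mathbf R}\varphi$, not $\int_{\mathbf R_+}\varphi$, because $\varphi(\cdot-T)$ is supported in $(0,\infty)$ for $T$ large. The printed statement has the same slip. It is harmless, since the estimate is only used as an upper bound on sums over windows, as in \eqref{eq: we have used}.
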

\begin{rem}Proposition \ref{prop: equidistribution result} is a smoothed version of the sum rule derived by Hannay--Ozorio de Almeida \cite{JHHannay1984}.\end{rem}
Introduce the shorthand $A_\gamma:=\frac{T_\gamma^\sharp}{|\det(I-P_{\gamma})|} |\widehat f(T_\gamma/L)|$. By applying Proposition \ref{prop: equidistribution result} to $\varphi(t)=t |\widehat f(t)|^2$, we obtain
\begin{equation} \label{eq: formula for GOE}\frac{4}{L^2}\sum_{\gamma\in \mathcal P} A_\gamma^2=\Sigma_{\rm GOE}^2(f)+\mathcal O(L^{-2}).\end{equation}
We will use repeatedly the following elementary inequality, which is a variation of \cite[Lemma 6.3]{Moy2024}. It allows controlling the Poincar\'e determinant of repetitions of periodic orbits.
\begin{lem}\label{lem: poincare map powers} There is a constant $c>0$ and constants $C_j>0$ for $j\ge 0$ such that for any $\gamma\in \mathcal P$ and $k\ge 1$, we have
	\[\forall j\le k, \ |\det(I-P_{\gamma^k})|\ge C_{j}|\det(I-P_\gamma)|^j \mathrm{e}^{c(k-j)T_\gamma}.\]\end{lem}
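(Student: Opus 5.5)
The plan is to reduce everything to the single expanding eigenvalue of the Poincaré map and then use uniform hyperbolicity.

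\textbf{Eigenvalue formula.} Since $\dim M=2$, the space $E_u\oplus E_s$ is $2$-dimensional and $P_\gamma$ is symplectic. Hence $P_\gamma$ preserves $E_u$ and $E_s$, with eigenvalues $\lambda_\gamma$ and $\lambda_\gamma^{-1}$ for some real $\lambda_\gamma$ with $|\lambda_\gamma|>1$. Moreover $P_{\gamma^k}=P_\gamma^k$ has eigenvalues $\lambda_\gamma^{\pm k}$. Therefore
\[|\det(I-P_{\gamma^k})|=|1-\lambda_\gamma^k|\,|1-\lambda_\gamma^{-k}|=|\lambda_\gamma|^k\,|1-\lambda_\gamma^{-k}|^2 .\]

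\textbf{Uniform lower bound on $|\lambda_\gamma|$.} Next I show there is $c>0$ such that $|\lambda_\gamma|\ge \mathrm{e}^{cT_\gamma}$ for all $\gamma\in\mathcal P$. Condition \ref{assum: anosov} applied on $E_u$ gives $|\lambda_\gamma|\ge C^{-1}\mathrm{e}^{C' T_\gamma}$. It also gives a minimal period $T_{\min}>0$. Only finitely many orbits have period below any fixed $T_1$, and each has $|\lambda_\gamma|>1$. Choose $T_1$ so that $C^{-1}\mathrm{e}^{C'T}\ge \mathrm{e}^{C'T/2}$ for $T\ge T_1$. Then there is $\lambda_0>1$ with $|\lambda_\gamma|\ge\lambda_0$ for every $\gamma$, and $|\lambda_\gamma|\ge \mathrm{e}^{C'T_\gamma/2}$ when $T_\gamma\ge T_1$. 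For $T_\gamma\le T_1$ we have $|\lambda_\gamma|\ge\lambda_0\ge \mathrm{e}^{(\log\lambda_0/T_1)T_\gamma}$. So we may take $c=\min(C'/2,\log\lambda_0/T_1)$. This step is the only real point of the proof: the constant must not depend on $k$, so the factor $C^{-1}$ in the Anosov estimate cannot be raised to the power $k-j$. That is why it has to be absorbed into the exponent as above.

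\textbf{Conclusion.} Using $|\lambda_\gamma|\ge\lambda_0$, we get $|1-\lambda_\gamma^{-k}|\ge 1-\lambda_0^{-1}$ and $|\det(I-P_\gamma)|\le |\lambda_\gamma|(1+\lambda_0^{-1})^2$. Hence
\[|\det(I-P_{\gamma^k})|\ge (1-\lambda_0^{-1})^2|\lambda_\gamma|^{j}|\lambda_\gamma|^{k-j}\ge (1-\lambda_0^{-1})^2(1+\lambda_0^{-1})^{-2j}|\det(I-P_\gamma)|^j\,\mathrm{e}^{c(k-j)T_\gamma}.\]
This is the claim, with $C_j=(1-\lambda_0^{-1})^2(1+\lambda_0^{-1})^{-2j}$, which depends only on $j$.
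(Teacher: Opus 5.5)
Your proof is correct. The identity $|\det(I-P_{\gamma^k})|=|\lambda_\gamma|^k|1-\lambda_\gamma^{-k}|^2$, combined with the uniform bounds $|\lambda_\gamma|\ge\lambda_0>1$ and $|\lambda_\gamma|\ge \mathrm{e}^{cT_\gamma}$, gives the claim with $C_j$ independent of $k$. You also correctly spotted that the Anosov constant must be absorbed into the exponent rather than raised to the power $k-j$. The paper does not write out a proof here; it only cites the lemma as a variation of \cite[Lemma 6.3]{Moy2024}, and your unstable-eigenvalue argument is the natural way to prove it in dimension $2$.
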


\subsubsection{Time-reversal} \label{subsec: time reversal}
Let $\iota$ denote the involution $(x,\xi)\in T^*M\mapsto (x,-\xi)\in T^*M$. Since $p(x,\xi)=p(x,-\xi)$, one has
\begin{equation}\label{eq: time reversal}\iota\circ \Phi^t\circ \iota=\Phi^{-t}.
\end{equation}
We say that the flow is \emph{time-reversal invariant} under the transformation $\iota$. Each periodic orbit $\gamma\subset \mathcal P$ of the flow admits a time reversed version $\gamma^{-1}:=\iota(\gamma)$, obtained by reversing the momentum direction. One has the following easy lemma.
\begin{lem}\label{lem: iota swaps}The involution $\iota$ exchanges the stable and unstable spaces, i.e.
	\[\iota_*(E^u)=E^s, \qquad \iota_*(E^s)=E^u.\]
	Thus, $\iota$ preserves the direct sum $E^s\oplus E^u\subset T\Sigma_E$. \end{lem}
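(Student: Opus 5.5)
The plan is to differentiate the time-reversal identity \eqref{eq: time reversal} and then exploit the dynamical characterization of the stable and unstable bundles.

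First, $\iota$ preserves $\Sigma_E$: since $p\circ\iota=p$, it maps $p^{-1}(E)$ to itself. Its differential $\iota_*$ is therefore a bundle map $T\Sigma_E\to T\Sigma_E$ covering $\iota$. Differentiating $\iota\circ\Phi^t\circ\iota=\Phi^{-t}$ at a point $\rho\in\Sigma_E$ gives
\[\mathrm d\Phi^{t}(\iota\rho)\circ \iota_*(\rho)=\iota_*(\Phi^{-t}\rho)\circ \mathrm d\Phi^{-t}(\rho),\]
using $\iota^{-1}=\iota$.

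Next, take $v\in E_u(\rho)$ and set $w=\iota_*v\in T_{\iota\rho}\Sigma_E$. For $t\ge0$, the identity above gives $\mathrm d\Phi^t w=\iota_*\,\mathrm d\Phi^{-t}v$. The differential $\iota_*$ is bounded uniformly on the compact set $\Sigma_E$ for the continuous norm $\|\cdot\|$, so
\[\|\mathrm d\Phi^t w\|\le C'\|\mathrm d\Phi^{-t}v\|\le C''\mathrm e^{-Ct}\|v\|\to 0.\]

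To conclude that $w\in E_s(\iota\rho)$, decompose $w=aH_p+w_u+w_s$ using the Anosov splitting. Under the forward flow, $w_s$ decays, $w_u$ grows exponentially unless it is zero, and the component $aH_p$ is preserved because the flow is invariant along $H_p$. The projections onto the three summands are continuous and hence uniformly bounded, and the splitting is flow invariant. Decay of $\mathrm d\Phi^t w$ therefore forces $a=0$ and $w_u=0$. This gives $\iota_*E_u\subset E_s$.

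Swapping the roles of past and future, with the identity applied for $t\le 0$, gives $\iota_*E_s\subset E_u$. Since $\iota_*$ is invertible, both inclusions are equalities by dimension count, or simply because $\iota_*^2=\mathrm{id}$. The preservation of $E^s\oplus E^u$ follows immediately.

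The only mildly delicate step is the characterization $E_s=\{w:\ \|\mathrm d\Phi^tw\|\to0\}$. It requires the expansion of unstable vectors under the forward flow, which follows directly from the contraction of $E_u$ under $\mathrm d\Phi^{-t}$. It also requires the uniform boundedness of the projections, which holds because they are continuous on the compact manifold $\Sigma_E$.
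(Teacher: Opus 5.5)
Your proof is correct and complete. It differentiates $\iota\circ\Phi^t\circ\iota=\Phi^{-t}$, characterizes $E_s$ (resp.\ $E_u$) as the vectors that decay under the forward (resp.\ backward) linearized flow, and uses compactness of $\Sigma_E$ to bound $\iota_*$ and the splitting projections uniformly; this is the standard argument. The paper calls the lemma ``easy'' and gives no proof, so your write-up supplies exactly the reasoning it leaves implicit.
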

This allows relating the Poincar\'e determinants of an orbit and its time-reversed version.
\begin{lem}Let $\gamma$ denote a periodic orbit of the flow, and let $\gamma^{-1}:=\iota(\gamma)$ denote the time-reversed orbit. Then, one has
	\[|\det(I-P_{\gamma^{-1}})|=|\det(I-P_\gamma)|.\] \end{lem}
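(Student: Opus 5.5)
The idea is to show that $\iota$ conjugates the linearized Poincar\'e map of $\gamma^{-1}$ to the \emph{inverse} of that of $\gamma$, and then use that Poincar\'e maps of Hamiltonian flows are symplectic.

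Fix a point $z\in\gamma$ and let $T=T_\gamma$, so that $\Phi^T(z)=z$. The point $\iota(z)$ lies on $\gamma^{-1}$, and by \eqref{eq: time reversal} we have $\Phi^t(\iota z)=\iota\Phi^{-t}(z)$. Hence $\gamma^{-1}$ is traversed in the opposite direction and has the same period $T$. Differentiating $\Phi^T\circ\iota=\iota\circ\Phi^{-T}$ at $z$ gives
\[\mathrm{d}\Phi^T(\iota z)\circ \mathrm{d}\iota(z)=\mathrm{d}\iota(z)\circ \mathrm{d}\Phi^{-T}(z)=\mathrm{d}\iota(z)\circ\big(\mathrm{d}\Phi^{T}(z)\big)^{-1},\]
where the last equality uses $\Phi^{-T}(z)=z$.

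By Lemma \ref{lem: iota swaps}, $\mathrm{d}\iota(z)$ maps $E_u(z)\oplus E_s(z)$ onto $E_s(\iota z)\oplus E_u(\iota z)$. The linearized Poincar\'e maps are the restrictions of $\mathrm{d}\Phi^T$ to these invariant complements of $\mathbf R H_p$. Restricting the identity above therefore gives
\[P_{\gamma^{-1}}=\mathrm{d}\iota\circ P_\gamma^{-1}\circ(\mathrm{d}\iota)^{-1}.\]
Since the determinant is a conjugacy invariant, $\det(I-P_{\gamma^{-1}})=\det(I-P_\gamma^{-1})$.

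It remains to compare $\det(I-P_\gamma^{-1})$ with $\det(I-P_\gamma)$. Write
\[I-P^{-1}=-P^{-1}(I-P),\qquad\text{so}\qquad \det(I-P^{-1})=\det(P)^{-1}\det(I-P).\]
The map $P_\gamma$ is the restriction of the symplectic map $\mathrm{d}\Phi^T$ to the two-dimensional symplectic space $E_u\oplus E_s$. A symplectic linear map in dimension two has determinant $1$. Concretely, $P_\gamma$ is diagonal with entries $\lambda,\lambda^{-1}$ in a basis adapted to $E_u,E_s$. Hence $\det P_\gamma=1$, and the identity follows, in fact even without the absolute values.

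The only point needing care is that $E_u\oplus E_s$ is a symplectic subspace on which $\mathrm{d}\Phi^T$ preserves the restricted form. This holds because $E_u\oplus E_s$ is the symplectic complement of $\mathbf R H_p$ inside $T\Sigma_E$. Alternatively, $\det P_\gamma=\pm1$ already follows from invariance of the Liouville measure. Either way the absolute values in the statement remove any sign issue, so I do not expect a genuine obstacle.
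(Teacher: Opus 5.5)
Your proof is correct and follows the paper's argument: use the time-reversal identity and Lemma \ref{lem: iota swaps} to conjugate $P_{\gamma^{-1}}$ by $\mathrm{d}\iota$ to $P_\gamma^{-1}$, then use $\det P_\gamma=1$. You fill in details the paper leaves implicit, with one small misphrasing: $\mathbf R H_p$ is the kernel of the symplectic form restricted to $T\Sigma_E$, so its symplectic complement inside $T\Sigma_E$ is all of $T\Sigma_E$, not $E_u\oplus E_s$. The correct reason $E_u\oplus E_s$ is symplectic is that any complement of this kernel in $T\Sigma_E$ is a symplectic subspace; your Liouville-measure alternative also works.
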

\begin{proof}By \eqref{eq: time reversal} and Lemma \ref{lem: iota swaps}, up to conjugation one has
	\[P_{\gamma^{-1}}=\mathrm{d}\iota\circ (P_\gamma)^{-1}\circ \mathrm{d}\iota.\]
	Since $P_\gamma$ is symplectic, it has determinant $1$, thus
	\[|\det(I-P_{\gamma^{-1}})|=|\det(I-P_\gamma^{-1})|=|\det(I-P_\gamma)|.\]
\end{proof}

\subsection{Separation of periodic orbits}\label{subsec: separation} Let $p$ satisfy Conditions \ref{assum: convex}--\ref{assum: anosov}. Consider an Anosov energy level $E$. We show that distinct periodic orbits of the flow on $\Sigma_E$ do not spend too much time close to each other in position space (Proposition \ref{prop:separation orbite base 2}), and that a periodic orbit does not intersect itself too often on the base (Proposition \ref{prop: self intersect base}). We first quote a result of \cite{Schenk}, which deals with these questions in phase space. Its proof relies on a quantitative version of the Anosov closing lemma.
\begin{prop}[Separation of periodic orbits in phase space, {\cite[Lemma 9]{Schenk}}] \label{prop: separation orbits phase space} There are positive constants $c,C>0$ such that for any distinct primitive periodic orbits $\gamma_1,\gamma_2$ with periods $\le L$, one has
	\[\operatorname{dist}_{T^*M}(\gamma_1,\gamma_2)> c\mathrm{e}^{-CL}.\]
\end{prop}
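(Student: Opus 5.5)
The plan is a proof by contradiction based on the Lipschitz continuity of the flow and the uniqueness part of the Anosov closing (shadowing) lemma. Suppose $\gamma_1\neq\gamma_2$ are primitive periodic orbits with periods $T_1,T_2\le L$, and that there are points $z_1\in\gamma_1$, $z_2\in\gamma_2$ with $d(z_1,z_2)\le\varepsilon$. We will show that this is impossible once $\varepsilon\le c\mathrm{e}^{-CL}$.

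\textbf{Step 1 (preliminary constants).} Since $\Sigma_E$ is compact and $H_p$ does not vanish on $\Sigma_E$ (the level is regular), there is a uniform constant $\Lambda>0$ with
\[d(\Phi^t x,\Phi^t y)\le \mathrm{e}^{\Lambda|t|}d(x,y)\qquad\text{for all } x,y\in\Sigma_E,\ t\in\mathbf{R}.\]
There is also a lower bound $T_{\min}>0$ on all periods. We will use the closing lemma for Anosov flows in its quantitative form, including uniqueness. There exist $\delta_0,K>0$ such that any $\delta$-closed orbit segment with $\delta\le\delta_0$ and length $\ge T_{\min}$ is $K\delta$-shadowed by a periodic orbit. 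Moreover, by expansivity with constant $\delta_0'$, two periodic orbits that both $K\delta$-shadow the same segment, with $K\delta<\delta_0'$, coincide as sets, possibly after a time shift of size $\mathcal O(\delta)$.

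\textbf{Step 2 (transfer the period).} Flowing for time $T_1$ and using $\Phi^{T_1}z_1=z_1$ gives
\[d(\Phi^{T_1}z_2,z_2)\le d(\Phi^{T_1}z_2,z_1)+d(z_1,z_2)\le(\mathrm{e}^{\Lambda L}+1)\varepsilon=:\delta.\]
Hence the segment $\{\Phi^t z_2\}_{t\in[0,T_1]}$ is $\delta$-closed. The segment $\{\Phi^tz_1\}_{t\in[0,T_1]}$ is exactly closed, and the two segments stay within $\mathrm{e}^{\Lambda L}\varepsilon$ of each other for all $t\in[0,T_1]$. Therefore $\gamma_1$ itself $K\delta$-shadows the orbit segment of $z_2$.

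\textbf{Step 3 (use $\gamma_2$ as a second shadow).} Symmetrically, $\gamma_2$ shadows the same segment. It contains the whole segment, since $\Phi^tz_2\in\gamma_2$ for all $t$. So $\gamma_1$ and $\gamma_2$ both shadow one and the same $\delta$-pseudo-orbit. Choose $\varepsilon\le c\mathrm{e}^{-CL}$ with $C=\Lambda$ and $c$ small enough that $K\delta<\min(\delta_0,\delta_0')$. Then uniqueness in the closing lemma forces $\gamma_1=\gamma_2$ as sets, contradicting distinctness. Since both orbits are primitive, equality of sets is equality of primitive orbits, so repetitions cause no ambiguity.

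\textbf{Main obstacle.} The delicate point is the uniqueness statement in Step 3. The shadowing periodic orbit produced by the closing lemma has period only close to $T_1$, and $\gamma_2$'s true period $T_2$ may differ from $T_1$, even be a near-multiple of it. To handle this, I would work in local product charts along the segment. I would write $z_2=\Phi^{\tau}(w)$ with $w\in W^{s}_{\rm loc}(y)$, $y\in W^u_{\rm loc}(z_1)$ and $|\tau|\lesssim\varepsilon$, and iterate $\Phi^{T_1}$. The stable component contracts and the unstable one expands, at rate at least $\mathrm{e}^{cT_1}$ with $T_1\ge T_{\min}$. Recurrence of $z_2$ near $z_1$ along the periodic orbits then forces both components to vanish, which gives $z_2\in\gamma_1$. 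This is precisely the quantitative expansivity argument underlying \cite[Lemma 9]{Schenk}.
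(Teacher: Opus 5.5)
There is a genuine gap in Step~3, and it goes beyond the technicality you flag at the end. The uniqueness statement from Step~1, that two periodic orbits which $K\delta$-shadow the same finite segment coincide, is false as stated. Uniqueness in the closing lemma holds only among periodic orbits whose periods are $\mathcal O(\delta)$-close to the length of the segment. Only then does periodicity upgrade shadowing on $[0,T_1]$ to shadowing for all $t\in\mathbf R$, which is what expansivity needs.

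A quick way to see that something must break: Steps~2--3 never use $T_2\le L$. If they were correct, they would show that no periodic orbit other than $\gamma_1$, of \emph{any} period, comes within $c\mathrm{e}^{-\Lambda T_1}$ of $\gamma_1$. But the flow preserves Liouville measure, so every point is nonwandering and, by the closing lemma, periodic orbits are dense in $\Sigma_E$. Hence there are periodic orbits $\gamma_2\neq\gamma_1$ through points arbitrarily close to $z_1$. Such orbits follow $\gamma_1$ for roughly $\Lambda^{-1}\log(1/\varepsilon)$ units of time before escaping, so they have large period. Your proposed remedy does not close the gap either. Iterating $\Phi^{T_1}$ on $z_2$ presupposes that $\Phi^{kT_1}z_2$ stays in the product chart at $z_1$ for every $k$. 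For these orbits that holds only for $|k|\lesssim \log(1/\varepsilon)/T_1$, which gives no contradiction.

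The missing step is to prove first that $|T_1-T_2|=\mathcal O(\eta)$, and this is where both periods being $\le L$ enters. Set $\eta=\mathrm{e}^{2\Lambda L}\varepsilon$, so that $d(\Phi^tz_1,\Phi^tz_2)\le\eta$ for $|t|\le 2L$. For $t\in[0,T_1]$, both $t$ and $t-T_2$ lie in this window, so
\[d(\Phi^tz_1,\Phi^{t-T_2}z_1)\le d(\Phi^tz_1,\Phi^tz_2)+d(\Phi^{t-T_2}z_2,\Phi^{t-T_2}z_1)\le 2\eta .\]
By $T_1$-periodicity this bound holds for all $t\in\mathbf R$. Uniform expansivity of the Anosov flow then gives $\Phi^{-T_2}z_1=\Phi^{\tau}z_1$ with $|\tau|\lesssim\eta$, that is, $T_2+\tau\in T_1\mathbf N^*$. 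Exchanging the roles of the two orbits gives $T_1+\tau'\in T_2\mathbf N^*$. Since all periods lie in $[T_{\min},L]$ and $\eta\ll L^{-1}$, both multiples must equal $1$.

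Once $|T_1-T_2|\lesssim\eta$, take the reparametrization $s(t)=tT_1/T_2$. Both $\Phi^tz_2$ and $\Phi^{s(t)}z_1$ are $T_2$-periodic in $t$, so $d(\Phi^tz_2,\Phi^{s(t)}z_1)\lesssim\eta$ for all $t\in\mathbf R$. Bowen--Walters expansivity then gives $z_2\in\gamma_1$, so $\gamma_1=\gamma_2$ by primitivity. This is the point at which your Step~3 becomes legitimate, with $C$ slightly larger than $2\Lambda$.

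The paper does not prove this statement; it cites \cite[Lemma 9]{Schenk} and notes that the proof rests on a quantitative closing lemma. Your overall strategy is therefore in the right spirit, but the period-matching step is what actually carries the argument.
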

With the same arguments, one finds that periodic orbits do not refocus too much.
\begin{prop}[Non refocusing of periodic orbits]\label{prop: nonfocusing}Let $\gamma$ be a primitive periodic orbit of the flow on $\Sigma_E$, of period $L$. There is a constant $C_\star>0$ such that the following holds. For any $0\le \varepsilon\le \mathrm{e}^{-C_\star L}$, if $\rho,\rho'$ are two points of the orbit with $d_{T^*M}(\rho,\rho')\le \varepsilon$, then there is a time $\tau$ satisfying $|\tau|\le C_\star\varepsilon$ such that $\rho'=\Phi^{\tau}(\rho)$. \end{prop}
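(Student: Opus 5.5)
The plan is to mimic the proof of the quantitative Anosov closing lemma / Proposition \ref{prop: separation orbits phase space}, using local product structure near the orbit. Parametrize $\gamma$ by $t\in[0,L)$ with $\rho=\Phi^{s}(\rho_0)$ and $\rho'=\Phi^{s'}(\rho_0)$. Set $\sigma=s'-s \bmod L$. We then have $\rho'=\Phi^{\sigma}(\rho)$, and the goal is to show that $\sigma$ is within $C_\star\varepsilon$ of $0$ modulo $L$. Since $\gamma$ is primitive of period $L$, $\rho'=\Phi^{\tau}(\rho)$ for any $\tau\equiv\sigma \pmod L$, so it suffices to find such a representative with $|\tau|\le C_\star\varepsilon$.

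\textbf{Step 1 (short times).} By compactness of $\Sigma_E$ and the fact that $H_p\neq 0$ there (regular level), there are $t_0,c_1>0$ such that $d(\Phi^t\rho,\rho)\ge c_1|t|$ for $|t|\le t_0$. Consequently, if $\sigma$ or $L-\sigma$ is at most $t_0$, then $d(\rho,\rho')\le\varepsilon$ forces that time to be $\le \varepsilon/c_1$, and we take $\tau=\sigma$ or $\tau=\sigma-L$.

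\textbf{Step 2 (times in $[t_0,L-t_0]$ are impossible).} Suppose instead $t_0\le\sigma\le L-t_0$. Then $\Phi^{\sigma}(\rho)$ is $\varepsilon$-close to $\rho$, so $\rho$ is an $\varepsilon$-pseudo-periodic point of period $\sigma$. By the quantitative Anosov closing lemma (with constants uniform on $\Sigma_E$), there is a periodic orbit $\gamma'$ of period $T'$ with $|T'-\sigma|\lesssim\varepsilon$, passing within $C\varepsilon$ of $\rho$. Moreover $\gamma'$ shadows the segment $\Phi^{[0,\sigma]}\rho$ within $C\varepsilon$, and the shadowing is in fact exponentially good in the middle of the segment. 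Two cases then arise.
\begin{itemize}
\item If $\gamma'\neq\gamma$ (as primitive orbits), then $\gamma'$ and $\gamma$ have periods $\le L$ and $\operatorname{dist}(\gamma,\gamma')\le C\varepsilon\le C\mathrm{e}^{-C_\star L}$. This contradicts Proposition \ref{prop: separation orbits phase space} once $C_\star$ exceeds the constant $C$ there.
\item If $\gamma'=\gamma$ up to repetition, then $\gamma'=\gamma^k$, so $T'=kL$ with $k\ge1$. Hence $|kL-\sigma|\lesssim\varepsilon$, which is impossible since $\sigma\in[t_0,L-t_0]$ and $\varepsilon$ is small.
\end{itemize}
One subtlety must be handled in the second case: $\gamma'$ passes near $\rho$, but identifying $\gamma'$ with $\gamma$ uses uniqueness of the shadowing orbit in the flow-box neighborhood. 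Here the expansivity of the Anosov flow gives that two periodic orbits $C\varepsilon$-close along a whole period coincide.

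\textbf{Main obstacle.} I expect the main difficulty to be the bookkeeping in the closing lemma: one has to make sure the constants are uniform and that the time shift produced is $O(\varepsilon)$ rather than $O(\varepsilon \mathrm{e}^{CL})$. To handle this, I would work in local coordinates $(t,u,s)$ adapted to $\mathbf{R}H_p\oplus E_u\oplus E_s$ via a local product structure around $\rho$, and use the standard Poincaré-section argument. In that argument the return map near $\rho$ is hyperbolic, and its unique fixed point lies within $C\varepsilon$ of $\rho$ with return time within $C\varepsilon$ of $\sigma$, with constants depending only on the flow.
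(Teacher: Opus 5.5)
Your proof is correct and follows essentially the same route as the paper. Both arguments rule out short return times using the uniform bound $d(\Phi^t\rho,\rho)\gtrsim|t|$, then close the $\varepsilon$-return with the Anosov closing lemma, rule out $\gamma'=\gamma$ by comparing periods, and contradict Proposition \ref{prop: separation orbits phase space}; the paper simply closes the complementary segment from $\rho'$ back to $\rho$ after reducing to $\tau\in[0,L/2]$. Your expansivity remark and your worry about needing an $O(\varepsilon)$ rather than $O(\mathrm{e}^{CL}\varepsilon)$ shadowing bound are unnecessary: the cases $\gamma'=\gamma$ and $\gamma'\neq\gamma$ are exhaustive, and any $\mathrm{e}^{CL}$ loss is absorbed by the hypothesis $\varepsilon\le\mathrm{e}^{-C_\star L}$, which is how the paper handles it.
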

\begin{proof} Fix $C_\star>0$, to be chosen below. Let $\rho,\rho'$ be two points of the orbit satisfying $d_{T^*M}(\rho,\rho')\le \varepsilon$. Up to exchanging $\rho$ and $\rho'$ we can write $\rho'=\Phi^\tau(\rho)$ for some $\tau\in [0,L/2]$. By contradiction, assume that $|\tau|\ge C_\star\varepsilon$. For small times, by injectivity radius considerations we have $d(\rho,\Phi^t(\rho))\gtrsim |t|$. Thus the fact that $d_{T^*M}(\rho,\rho')\le \varepsilon$ forces $\tau\ge \tau_0$ for some uniform positive constant $\tau_0>0$. We can now apply the Anosov closing lemma \cite[Lemma 8.3.13]{Lefeuvre} to the pseudo-orbit $\{\Phi^t(\rho)\}_{t\in [\tau,L]}$ to construct another periodic orbit $\gamma'$ of period $L'$ satisfying $|L-\tau-L'|\le \mathrm{e}^{CL}\varepsilon$ and such that $\operatorname{dist}(\gamma,\gamma')\le \varepsilon \mathrm{e}^{CL}$. By taking $C_\star$ large enough depending on $C$, we find that $\gamma\neq \gamma'$, and the estimate $\operatorname{dist}(\gamma,\gamma')\le \varepsilon \mathrm{e}^{CL}$ then contradicts Proposition \ref{prop: separation orbits phase space}.
\end{proof}
We need analogous statements of the above propositions where we replace distances on $T^*M$ by distances on $M$. A result as Proposition \ref{prop: separation orbits phase space} with $\operatorname{dist}_{T^*M}$ replaced by $\operatorname{dist}_M$ is obviously false as many periodic orbits cross each other on the base manifold. However, the key point is that due to the above fact and the particular structure of the Hamiltonian, the angle made between the curves at such crossings cannot be too small, implying that two distinct periodic orbits cannot spend too much time close to each other. This is made precise in the following proposition.

\begin{prop}[Crossings of periodic orbits on the base] \label{prop:separation orbite base 2} Let $p$ satisfy Conditions \ref{assum: convex}--\ref{assum: anosov}. There are constants $C,c>0$ such that the following holds. 
	Take two primitive periodic orbits $\gamma_1,\gamma_2$ of the flow, with periods $T_1,T_2\le L$, and such that $\gamma_2\notin \{\gamma_1,\gamma_1^{-1}\}$. Define
	\[I_\varepsilon=\{t\in [0,T_1], \ \exists s\in [0,T_2], \ d_M(\gamma_1(t),\gamma_2(s))< \varepsilon\}.\]
	Assume that $\varepsilon\le c\mathrm{e}^{-CL}$. Then, the measure of $I_\varepsilon$ is smaller than $CL^{2}\mathrm{e}^{C L}\varepsilon$.
\end{prop}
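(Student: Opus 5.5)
We will reduce the statement to the phase-space separation result, Proposition \ref{prop: separation orbits phase space}, using the convexity in Condition \ref{assum: convex}. Because $\xi\mapsto p(x,\xi)$ is strictly convex and even, the restriction of the fiber map $\xi\mapsto \partial_\xi p(x,\xi)$ to the energy curve $\Sigma_E\cap T_x^*M$ is a diffeomorphism onto a curve of velocities in $T_xM$. In particular, a point of $\Sigma_E$ is determined, in a uniformly Lipschitz way, by its base point $x$ together with the direction of the velocity $\dot x=\partial_\xi p$. This velocity never vanishes on the regular level $\Sigma_E$, since $\partial_\xi p\cdot\xi>0$ by convexity and evenness.

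\emph{Local picture.} Fix $t\in I_\varepsilon$ and $s$ with $d_M(\gamma_1(t),\gamma_2(s))<\varepsilon$, and write $\theta(t,s)$ for the angle between the projected velocities (after parallel transport over the distance $\varepsilon$). There are two cases.

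\emph{Case 1: $\theta$ is small, or close to $\pi$.} If $\theta\le \varepsilon_1:=c'\mathrm{e}^{-C'L}$, then by the Lipschitz property above $d_{T^*M}(\gamma_1(t),\gamma_2(s))\lesssim \varepsilon+\varepsilon_1$. For appropriate constants this contradicts Proposition \ref{prop: separation orbits phase space}, because $\gamma_1\neq\gamma_2$.

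If instead $|\theta-\pi|\le\varepsilon_1$, then $\gamma_1(t)$ is close to $\iota(\gamma_2(s))$. Here we use that $\iota\gamma_2=\gamma_2^{-1}$ is again a primitive periodic orbit, distinct from $\gamma_1$ by assumption, and we reach the same contradiction. The only subtlety is that the fiber over $x$ consists of a single velocity curve, and $\iota$ sends direction $v$ to $-v$; this is why the condition $\gamma_2\ne\gamma_1^{-1}$ is exactly what is needed.

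\emph{Case 2: transversal crossing.} Otherwise $\theta\in[\varepsilon_1,\pi-\varepsilon_1]$, so the two base curves meet transversally with angle at least $\varepsilon_1$. Their speeds are bounded above and below, and their curvatures are bounded uniformly because the flow is smooth. Hence near each such crossing the set of $t'$ with $d_M(\gamma_1(t'),\gamma_2(s'))<\varepsilon$, for $s'$ close to $s$, is an interval of length $\lesssim \varepsilon/\sin\theta\lesssim \varepsilon\,\mathrm{e}^{C'L}$. This is an elementary computation in local coordinates. It is valid as long as $\varepsilon\ll\varepsilon_1^2$, so that curvature does not destroy transversality on the relevant scale $\varepsilon/\varepsilon_1$; this is where the hypothesis $\varepsilon\le c\,\mathrm{e}^{-CL}$ with large $C$ enters.

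\emph{Counting the crossings.} We cover $[0,T_2]$ by $O(L/\ell)$ time-segments of a small fixed length $\ell$, and do the same for $[0,T_1]$. Two short, nearly geodesic-like arcs of bounded curvature that are transversal at angle $\ge\varepsilon_1$ meet in $O(1)$ such near-crossing clusters. This uses convexity again: each arc is a graph over its tangent direction. That gives at most $O(L^2)$ clusters, each contributing measure $\lesssim \mathrm{e}^{C'L}\varepsilon$. Summing yields $|I_\varepsilon|\le CL^2\mathrm{e}^{CL}\varepsilon$.

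The main obstacle is the uniformity in Case 2. We must ensure that within a pair of short arcs the $\varepsilon$-close set is really a union of $O(1)$ intervals of length $O(\varepsilon/\varepsilon_1)$. To handle this, I would first show that along any $\varepsilon$-close stretch of parameters the angle $\theta$ varies by at most $O(\varepsilon/\varepsilon_1)$. Case 1 would then forbid the angle from drifting down below $\varepsilon_1/2$. Finally, a planar two-curve lemma in normal coordinates would finish the argument.
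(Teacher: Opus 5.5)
Your proposal is correct and is essentially the paper's proof. The Lipschitz inverse of the Gauss map of $\{p(x,\cdot)=E\}$ plays the role of Lemma~\ref{lem: lem convexe}: an angle near $0$ or $\pi$ becomes phase-space proximity to $\gamma_2$ or $\gamma_2^{-1}$, contradicting Proposition~\ref{prop: separation orbits phase space}, and a Taylor expansion at the remaining transversal encounters (angle $\gtrsim \mathrm{e}^{-CL}$) confines them to clusters of size $\lesssim \mathrm{e}^{CL}\varepsilon$. The only difference is where the dichotomy is applied: you use it at every point of $\mathcal A_\varepsilon=\{(t,s): d_M(\gamma_1(t),\gamma_2(s))<\varepsilon\}$, while the paper uses it at the minimum of the distance on each connected component (a positive minimum forces parallel velocities, a zero minimum is a genuine crossing).

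Two small corrections. First, ``each arc is a graph over its tangent direction'' does not give $O(1)$ clusters per pair of short arcs, since graphs with bounded curvature can cross about $\varepsilon_1^{-1}$ times at angles $\ge \varepsilon_1$. You do not need that count, so the obstacle you flag is not a real one. Your Case~2 estimate already shows that $\mathcal A_\varepsilon$ meets any square of side $c\varepsilon_1$ in the $(t,s)$-plane in a set of diameter $O(\varepsilon/\varepsilon_1)$, whence $|I_\varepsilon|\lesssim L^2\varepsilon_1^{-3}\varepsilon$, and the factor $\varepsilon_1^{-3}$ is absorbed into $\mathrm{e}^{CL}$. Second, excluding $\xi=0$ from $\Sigma_E$ (needed for nonvanishing velocity and for the fiber to be a single convex curve) requires $E>\max_x p(x,0)$. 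The paper takes this from \cite{Paternain1994}; regularity of $\Sigma_E$ alone does not give it.
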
 
\begin{rem}\label{rem: shell is trapped}Since $p$ is elliptic, the energy shell $\Sigma_E$ is contained in $\{|\xi|\le r_1\}$ for some $r_1>0$. The symmetry property of $p$ implies $\nabla_\xi p(x,0)=0$, and in turn the convexity assumption implies the existence of $\alpha>0$ depending on $r_1$ such that $|\nabla_\xi p(x,\xi)|\ge \alpha|\xi|$ when $|\xi|\le r_1$. Since $E$ is an Anosov energy level, by \cite{Paternain1994}, one has $E>\max_{x\in M}p(x,0)$. It follows that there exists another constant $r_0$ such that
	\[\Sigma_E\subset \{(x,\xi)\in T^*M~:~ r_0\le |\xi|\le r_1\}.\]
	Thus, $|\nabla_\xi p(x,\xi)|\ge \alpha r_0$ on $\Sigma_E$. By the Hamilton equations, it follows that classical trajectories on $\Sigma_E$ have speed $\ge \alpha r_0$ on the base.
\end{rem}
Before proving Proposition \ref{prop:separation orbite base 2}, we state a lemma about convex functions.
\begin{lem}\label{lem: lem convexe}Let $\alpha>0$ and let $f:\mathbf{R}^d\to \mathbf{R}$ be a $C^2$, $\alpha$-convex function (i.e. $\nabla^2 f\ge \alpha\mathrm{Id}$), such that $\nabla f(0)=0$. Let $R\ge r>0$. Let $a,b\in B(0,R)\backslash B(0,r)$ satisfy $f(a)=c$ and $f(b)=c+\varepsilon$ with $\varepsilon\in [0,1]$. Then, the angle $\theta\in [0,\pi]$ between $\nabla f(a)$ and $\nabla f(b)$ satisfies 
\[|b-a|\lesssim_{\!\! f,r,R}(\theta+ \varepsilon).\]\end{lem}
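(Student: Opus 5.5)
The plan is to exploit the fact that $\nabla f$ is a diffeomorphism of $\mathbf{R}^d$ whose derivative $\nabla^2 f\ge \alpha\mathrm{Id}$ is bounded below and, on the compact region involved, bounded above. Write $u=\nabla f(a)$ and $v=\nabla f(b)$. By $\alpha$-convexity we have the monotonicity estimate $\langle \nabla f(x)-\nabla f(y),x-y\rangle\ge \alpha|x-y|^2$. Hence $|u-v|\ge \alpha|b-a|$, and it suffices to show
\[|u-v|\lesssim \theta+\varepsilon.\]
Since $\nabla f(0)=0$, monotonicity also gives $|\nabla f(x)|\ge \alpha|x|\ge \alpha r$ on the annulus. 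Also $|\nabla f(x)|\le C_R|x|\le C_RR$, with $C_R=\sup_{B(0,R)}\|\nabla^2 f\|$. So $u$ and $v$ lie in a fixed annulus $\{\alpha r\le |w|\le C_RR\}$. Decomposing, $|u-v|\lesssim \big||u|-|v|\big|+\theta$, since the angular part is bounded by $\max(|u|,|v|)\theta$. It remains to control the difference of the radii $\big||u|-|v|\big|$ by $\theta+\varepsilon$.

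To bring in the level-set information, use the convex conjugate, or equivalently work in polar coordinates for the gradient. Set $g=f\circ(\nabla f)^{-1}$, a $C^1$ function on the image $\nabla f(\mathbf{R}^d)$. Its derivative is $\mathrm{d}g(w)=\langle \nabla f(x),\mathrm{d}x\rangle=\langle w,(\nabla^2 f(x))^{-1}\mathrm{d}w\rangle$ with $x=(\nabla f)^{-1}(w)$. In the radial direction $\hat w=w/|w|$ this gives
\[\partial_s g(s\hat w)=\langle \hat w,(\nabla^2 f)^{-1}s\hat w\rangle\ge s\,\|\nabla^2 f\|^{-1}\ge \alpha r/C_R.\]
So $g$ is strictly increasing along rays, at a uniform rate on the annulus. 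Its tangential derivative is bounded there by $C_RR/\alpha$.

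Next compare $u$ with $v'=|u|\,v/|v|$, which has the same radius as $u$ and the direction of $v$. The tangential bound gives $|g(v')-g(u)|\lesssim \theta$, moving along an arc of length $\le |u|\theta$ at fixed radius. Since $g(v)-g(u)=f(b)-f(a)=\varepsilon$, we get $|g(v)-g(v')|\le \varepsilon+C\theta$. The radial monotonicity bound then gives $\big||v|-|u|\big|\lesssim \varepsilon+\theta$. Combining the three steps yields $|b-a|\lesssim \theta+\varepsilon$.

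The main obstacle is making sure the radial segment and the arc used above stay inside the region where the constants are uniform. The arc is at radius $|u|$ and the radial segment lies between $|u|$ and $|v|$, so both stay in the annulus $\{\alpha r\le |w|\le C_RR\}$. However, their preimages under $\nabla f$ could a priori leave $B(0,R)$. I would handle this by replacing $R$ with a larger $R'$ such that $\nabla f(B(0,R'))$ contains the whole annulus, with $R'$ depending only on $f,r,R$. Such an $R'$ exists because $|\nabla f(x)|\ge\alpha|x|$, so $R'=C_RR/\alpha$ works. One also needs $\nabla f$ to be onto a neighborhood, which follows from strong convexity, since $\nabla f$ is a global diffeomorphism of $\mathbf{R}^d$. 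All implied constants then depend only on $\alpha$, $r$, $R$ and $\sup_{B(0,R')}\|\nabla^2 f\|$.
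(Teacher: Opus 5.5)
Your proof is correct, but it takes a genuinely different route from the paper's.

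\textbf{The paper's route.} The paper never leaves physical space. It writes the two first-order strong-convexity inequalities at $a$ and at $b$ along $u=b-a$:
\[\langle \nabla f(a),u\rangle\le \varepsilon-\tfrac{\alpha}{2}|u|^2,\qquad \langle \nabla f(b),u\rangle\ge \varepsilon+\tfrac{\alpha}{2}|u|^2.\]
It then divides by $|\nabla f(a)|$ and $|\nabla f(b)|$ and subtracts. For the unit normals $n_a,n_b$ this gives $\langle n_b-n_a,u\rangle\ge c|u|^2-C\varepsilon|u|$. Since $|n_b-n_a|\le\theta$, dividing by $|u|$ finishes the proof. The only auxiliary inputs are two-sided bounds on $|\nabla f|$ on the annulus and a Lipschitz bound for $1/|\nabla f|$ along $[a,b]$, so everything stays inside $B(0,R)$.

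\textbf{Your route.} You use strong monotonicity to reduce to bounding $|\nabla f(b)-\nabla f(a)|$. You split this into an angular part, trivially $\lesssim\theta$, and a radial part $\bigl||\nabla f(a)|-|\nabla f(b)|\bigr|$. You control the radial part in gradient coordinates through $g=f\circ(\nabla f)^{-1}$, which is essentially $\langle w,\nabla f^*(w)\rangle-f^*(w)$ with $f^*$ the Legendre transform. This $g$ increases along rays at a uniform rate and is tangentially Lipschitz.

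\textbf{Comparison.} Your approach costs you two extra inputs, both of which you justify correctly:
\begin{itemize}
\item the global invertibility of $\nabla f$;
\item a Hessian bound on the larger ball $B(0,C_RR/\alpha)$ instead of $B(0,R)$.
\end{itemize}
In return it gives a transparent geometric picture. In gradient coordinates the level sets of $f$ become radial graphs over the sphere, so $\theta$ controls the tangential displacement and $\varepsilon$ the radial one. The paper's argument is shorter, uses only the first-order convexity inequalities, and works verbatim in every dimension.

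\textbf{A small gap.} In dimension $d=1$ with $\theta=\pi$, there is no arc on the sphere of radius $|u|$ joining $u$ to $v'$, so your tangential step is unavailable. In that case, and more generally whenever $\theta$ is bounded below, the claim is trivial, since $|b-a|\le 2R\lesssim\theta$.
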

In the case $\varepsilon=0$, this lemma basically states that the angle between the normal vectors to a given level set of $f$ at the points $a$ and $b$ is controlled by the distance between $a$ and $b$. This relies on the convexity of the set $\{f\le c\}$.
\begin{proof}Let $u=b-a$. Then, by convexity,
	\[f(b)-f(a)-\langle \nabla f(a),u\rangle\ge \frac{\alpha}{2}|u|^2, \qquad f(a)-f(b)+\langle \nabla f(b),u\rangle\ge \frac{\alpha}{2}|u|^2.\]
	Then,
	\[\Big\langle \frac{\nabla f(a)}{|\nabla f(a)|},u\Big\rangle\le \frac{1}{|\nabla f(a)|}\Big(\varepsilon- \frac{\alpha}{2}|u|^2\Big), \quad \text{and}\quad \Big\langle \frac{\nabla f(b)}{|\nabla f(b)|},u\Big\rangle\ge \frac{1}{|\nabla f(b)|}\Big(\varepsilon+\frac{\alpha}{2}|u|^2\Big).\]
	Summing these two inequalities and dividing both sides by $|u|=|b-a|$ leads to the result. We have used the fact that $|\nabla f(a)|$ is bounded from above and below by positive constants depending on $f,r,R$, and that the difference between $\frac{1}{|\nabla f(a)|}$ and $\frac{1}{|\nabla f(b)|}$ is bounded by $|b-a|$ times a constant depending on $f,r,R$.
\end{proof}
We turn to the proof of the proposition.
\begin{proof}[Proof of Proposition \ref{prop:separation orbite base 2}]
	Introduce the open set 
	\[\mathcal A_\varepsilon=\{(t,s)\in \mathbf R/T_1\mathbf Z\times \mathbf R/T_2\mathbf Z \ : \ d_M(\gamma_1(t),\gamma_2(s))< \varepsilon\}\]
	and note that $I_\varepsilon$ is nothing but the projection of $\mathcal A_\varepsilon$ on the first coordinate. 
	We claim that the connected components of $\mathcal A_\varepsilon$ have diameter $\lesssim \mathrm{e}^{CL}\varepsilon$, and are distant from each other by some uniform distance $d_0>0$. This will imply the result on $I_\varepsilon$ by projecting on the first coordinate.
	
	To begin, consider the distance function between the two orbits on the base
	\[f:(t,s)\mapsto d_M(\gamma_1(t),\gamma_2(s))\]
	Then, $\mathcal A_\varepsilon=f^{-1}([0,\varepsilon[)$ is open, while the closure $\overline{\mathcal A_\varepsilon}$ is included in $f^{-1}([0,\varepsilon])$. Take a nonempty connected component $U$ of $\mathcal A_\varepsilon$. Then, the restriction $f_{|\overline U}$ admits a local minimum at some point $(t_\star,s_\star)$ that actually belongs to $U$, since $f=\varepsilon$ on $\partial U$. There are two cases to distinguish:
	\begin{itemize}
		\item $f(t_\star,s_\star)=0$, meaning that the curves $\gamma_1(t)$ and $\gamma_2(s)$ intersect at $(t_\star,s_\star)$ on the base, with an angle $\theta\in ]-\pi,\pi]$.
		\item $f(t_\star,s_\star)>0$.
	\end{itemize}
	
	\textbf{1.} We first rule out the second possibility. By contradiction, assume that $f(t_\star,s_\star)>0$. Let $\pi:T^*M\to M$ denote the projection. Then, the vectors $\partial_t \pi(\gamma_1(t_\star))$ and $\partial_s \pi(\gamma_2(s_\star))$ are parallel (however, they need not have the same norm). Since the points $\pi(\gamma(t_\star))$ and $\pi(\gamma_2(s_\star))$ are $\varepsilon$-close, we can fix a local chart $U$ of $M$ near these points, that induces local symplectic coordinates on $T^*U$. We write $\gamma_1(t)=(x_1(t),\xi_1(t))$ and $\gamma_2(s)=(x_2(s),\xi_2(s))$ in these coordinates. By what precedes, the vectors $\dot x_1(t_\star)$ and $\dot x_2(s_\star)$ have positive norms and belong to a cone of aperture $\le C\varepsilon$, where $C$ depends on the constants $\alpha,r_0,r_1$ from Remark \ref{rem: shell is trapped}. Up to replacing $\gamma_2$ by the momentum-reversed orbit, we can assume that this cone is \emph{positive}. 
	
	By the Hamilton equations, one has in local coordinates
	\[\dot x_1(t)=\partial_\xi p(x_1(t),\xi_1(t)), \qquad \dot x_2(s)=\partial_\xi p(x_2(s),\xi_2(s)).\]
	Since $x_1(t_\star)$ is $C\varepsilon$-close to $x_2(s_\star)$, it follows that $\partial_\xi p(x_2(s_\star),\xi_1(t_\star))$ and $\partial_\xi p(x_2(s_\star),\xi_2(s_\star))$ belong to a positive cone of aperture $\le C \varepsilon$, where $C$ might have increased. Note also that $p(x_2(s_\star),\xi_1(t_\star))=E+\mathcal O(\varepsilon)$ because $p(x_1(t_\star),\xi_1(t_\star))=E$ and $x_1(t_\star)$ is $\varepsilon$-close to $x_2(s_\star)$. By Lemma \ref{lem: lem convexe}, we deduce 
	\[|\xi_1(t_\star)-\xi_2(s_\star)|\le C\varepsilon,\]
	implying $d_{T^*M}(\gamma_1(t_\star),\gamma_2(s_\star))\le C\varepsilon$. This contradicts the orbit separation in phase space when $\varepsilon\le c\mathrm{e}^{-CL}$.\medskip
	
	\textbf{2.}	We can thus consider the case $f(t_\star,s_\star)=0$, meaning that $x_1(t_\star)=x_2(s_\star)=x$ for some $x\in M$. We work again in local symplectic coordinates $(x,\xi)$. By Proposition \ref{prop: separation orbits phase space}, we know that $|\xi_1(t_\star)-\xi_2(s_\star)|\ge c\mathrm{e}^{-CL}$. Write $v_1:=\dot x_1(t_\star)=\partial_\xi p(x,\xi_1(t_\star))$ and $v_2:=\dot x_2(s_\star)=\partial_\xi p(x,\xi_2(t_\star))$. Since $p(x,\xi_1(t_\star))=p(x,\xi_2(s_\star))=E$, Lemma \ref{lem: lem convexe} implies that the angle $\theta\in [0,\pi/2]$ between the lines $\mathbf{R}v_1$ and $\mathbf{R}v_2$ satisfies
	\begin{equation} \label{eq: lower bound theta} \theta\gtrsim \theta_0:=c\mathrm{e}^{-CL}.\end{equation}
	Now, provided that $u,u'$ are small enough, we may write in coordinates using Taylor's formula:
	\[x_1(t_\star+u)=x+ uv_1 +\mathcal O(u^2), \qquad x_2(s_\star+u')= x +u'v_2+\mathcal O(u'^2).\]
	The implied constant in $\mathcal O(\cdot)$ is uniform in $x$ because $M$ is compact. By Remark \ref{rem: shell is trapped}, $v_1$ and $v_2$ have positive norm, it then follows from \eqref{eq: lower bound theta} that
	\[d_M(x_1(t_\star+u),x_2(s_\star+u'))\gtrsim \theta_0 \min(|u|,|u'|)+\mathcal O(|u|^2+|u'|^2).\]
	Therefore, there is a constant $\eta>0$ such that for any $u,u'\in ]-\eta,+\eta[$, one has
	\[d_M(x_1(t_\star+u),x_2(s_\star+u'))\gtrsim c\max(|u|,|u'|) \mathrm{e}^{-CL}.\]
	It follows that when $(u,u')$ satisfies
	\[\eta\ge \max(|u|,|u'|) \ge c\mathrm{e}^{CL}\varepsilon,\]
	then $(t_\star+u,s_\star+u')\notin \mathcal A_\varepsilon$. This implies the following two facts:
	\begin{itemize}
		\item Connected components of $\mathcal A_\varepsilon$ have diameter $\lesssim \varepsilon \mathrm{e}^{CL}$.
		\item Disjoint connected components of $\mathcal A_\varepsilon$ are at distance $\ge \eta$ from each other, for some positive constant $\eta$ that is independent of $\varepsilon,L$. Thus, $\mathcal A_\varepsilon$ has at most $\mathcal O(L^2)$ disjoint nonempty connected components.
	\end{itemize}
	
	\textbf{3.} Since $\mathcal A_\varepsilon$ has $\mathcal O(L^2)$ disjoint nonempty connected components, each of which has diameter $\lesssim \varepsilon \mathrm{e}^{CL}$, and since $I_\varepsilon$ is the projection of $\mathcal A_\varepsilon$ on the first coordinate, we infer:
	\[|I_\varepsilon| \lesssim L^2\mathrm{e}^{CL}\varepsilon,\]
	as we wished.
\end{proof}

Similarly, we obtain an analogue of Proposition \ref{prop: nonfocusing} on the base.
\begin{prop}[Self-intersections on the base] \label{prop: self intersect base} Under the same assumptions as in Proposition \ref{prop:separation orbite base 2}, there are constants $C,c>0$ such that the following holds. Take a primitive orbit $\gamma(t)$ of period $L$, and define
	\[I_\varepsilon=\big\{t\in \mathbf{R}/L\mathbf{Z}~:~ \exists s\in \mathbf{R}/L\mathbf{Z}, \ \text{$|s-t|>C \varepsilon$ and $ d_M(\gamma(t),\gamma(s))< \varepsilon$}\big\}.\]
	Assume that $\varepsilon\le c\mathrm{e}^{-CL}$. Then, the measure of $I_\varepsilon$ is smaller than $CL^{2}\mathrm{e}^{C L}\varepsilon$.
\end{prop}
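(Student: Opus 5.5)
The plan is to mimic the proof of Proposition \ref{prop:separation orbite base 2}, with $\gamma_1=\gamma_2=\gamma$, and with the role of Proposition \ref{prop: separation orbits phase space} played by Proposition \ref{prop: nonfocusing}. Introduce
\[\mathcal A_\varepsilon=\{(t,s)\in (\mathbf R/L\mathbf Z)^2~:~ |s-t|>C\varepsilon,\ d_M(\gamma(t),\gamma(s))<\varepsilon\},\]
where $|s-t|$ denotes the distance in $\mathbf R/L\mathbf Z$. Then $I_\varepsilon$ is the projection of $\mathcal A_\varepsilon$ onto the first coordinate. It suffices to show two things: each connected component of $\mathcal A_\varepsilon$ has diameter $\lesssim \mathrm e^{CL}\varepsilon$, and distinct components are separated by a uniform distance $\eta>0$. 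This gives $\mathcal O(L^2)$ components, and hence $|I_\varepsilon|\lesssim L^2\mathrm e^{CL}\varepsilon$.

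\textbf{Step 1 (no positive local minima).} Take a component $U$ and a minimum $(t_\star,s_\star)$ of $f(t,s)=d_M(\gamma(t),\gamma(s))$ on $\overline U$. Suppose first that $f(t_\star,s_\star)>0$ and that the minimum is interior. Then the base velocities $\dot x(t_\star)$ and $\dot x(s_\star)$ are parallel up to an aperture $\mathcal O(\varepsilon)$. There are two sub-cases, depending on whether they point in the same direction or in opposite directions.
\begin{itemize}
\item \emph{Same direction.} Exactly as in the proof of Proposition \ref{prop:separation orbite base 2}, Lemma \ref{lem: lem convexe} gives $d_{T^*M}(\gamma(t_\star),\gamma(s_\star))\le C\varepsilon$. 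Proposition \ref{prop: nonfocusing} (with $C_\star$ absorbed into $c,C$) then gives $|s_\star-t_\star|\le C_\star C\varepsilon$. Choosing the constant $C$ in the definition of $I_\varepsilon$ larger than $C_\star C$ contradicts $|s-t|>C\varepsilon$.
\item \emph{Opposite directions.} Here $\gamma(s_\star)$ is $C\varepsilon$-close to $\iota(\gamma(t_\star))$ in phase space, so $\gamma$ and $\gamma^{-1}$ nearly share a point. If $\gamma^{-1}\neq\gamma$, Proposition \ref{prop: separation orbits phase space} gives a contradiction. If $\gamma^{-1}=\gamma$, write $\iota\gamma(t_\star)=\gamma(\sigma)$ and apply Proposition \ref{prop: nonfocusing} to $\gamma(\sigma)$ and $\gamma(s_\star)$, which places $s_\star$ within $\mathcal O(\varepsilon)$ of $\sigma$.
\end{itemize}
In the self-reversed case one must then check that the point $\sigma$ is genuinely a place where the curve turns back on the base. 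The base projection of $\gamma(t_\star)$ and $\gamma(\sigma)$ coincide, while the velocities are opposite. Since base speed is bounded below by Remark \ref{rem: shell is trapped}, such a configuration corresponds to a genuine crossing or to a point where $\xi$ nearly vanishes. The latter is excluded by $\Sigma_E\subset\{|\xi|\ge r_0\}$, so either case reduces to Step 2.

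\textbf{Step 2 (zero minima).} When $f(t_\star,s_\star)=0$, the curve self-intersects at $x$ with momenta $\xi(t_\star)\ne\pm\xi(s_\star)$, up to the above exceptions. By Proposition \ref{prop: nonfocusing} and $|s_\star-t_\star|>C\varepsilon$, these momenta are $c\mathrm e^{-CL}$-separated. Lemma \ref{lem: lem convexe} then gives an angle $\theta\gtrsim \mathrm e^{-CL}$ between the lines $\mathbf R\dot x(t_\star)$ and $\mathbf R\dot x(s_\star)$. The Taylor argument from the proof of Proposition \ref{prop:separation orbite base 2} then yields both the diameter bound $\lesssim\mathrm e^{CL}\varepsilon$ and the uniform separation $\eta$ between components.

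A subtlety is that the boundary of $\mathcal A_\varepsilon$ now also contains pieces of the lines $|s-t|=C\varepsilon$, so the minimum of $f$ on $\overline U$ need not lie at an interior point or at a point where $f=\varepsilon$. I would handle this by working with the larger set where only $d_M<\varepsilon$ is required, away from a neighbourhood of the diagonal. The component of that set containing the diagonal corresponds to $s=t$, where the points are $\Phi^{s-t}$-related. Every other component meets the diagonal strip $\{|s-t|\le C\varepsilon\}$ only if it is near a point with $\gamma(s)\approx\gamma(t)$ in phase space, which is again excluded by Proposition \ref{prop: nonfocusing}.

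\textbf{Main obstacle.} The delicate point is the bookkeeping near the diagonal and in the time-reversal-symmetric case $\gamma=\gamma^{-1}$, where antiparallel near-coincidences are genuinely present. The rest is a direct transcription of the proof of Proposition \ref{prop:separation orbite base 2}.
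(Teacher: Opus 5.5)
Your route is the paper's: the same set $\mathcal A_\varepsilon$ and the same positive-minimum/zero-minimum dichotomy, with Proposition~\ref{prop: nonfocusing} replacing Proposition~\ref{prop: separation orbits phase space} for parallel near-coincidences. You are right that antiparallel configurations still need Proposition~\ref{prop: separation orbits phase space} applied to $(\gamma,\gamma^{-1})$, which the paper's one-line proof leaves implicit.

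\textbf{Where it fails.} Your treatment of the case $\gamma=\gamma^{-1}$ does not work. You say antiparallel self-coincidences are ``genuinely present'' there and ``reduce to Step 2''. But Step 2 needs a lower bound on the angle between the lines $\mathbf R\dot x(t_\star)$ and $\mathbf R\dot x(s_\star)$, and opposite velocities at one base point give angle zero. Worse, if $\gamma=\gamma^{-1}$ as sets, write $\iota\gamma(0)=\gamma(a)$. Then $\iota\circ\Phi^t=\Phi^{-t}\circ\iota$ gives $\iota\gamma(t)=\gamma(a-t)$ for all $t$. The base curve retraces itself, so $d_M(\gamma(t),\gamma(a-t))=0$ for every $t$, and $I_\varepsilon$ would be essentially all of $\mathbf R/L\mathbf Z$. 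The statement would simply be false, so no reduction can handle this case.

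\textbf{The missing idea.} This case is empty. At $t=a/2$ one gets $\iota\gamma(a/2)=\gamma(a/2)$, i.e.\ a point of $\Sigma_E$ with $\xi=0$, which Remark~\ref{rem: shell is trapped} excludes. Hence $\gamma\neq\gamma^{-1}$ always. Proposition~\ref{prop: separation orbits phase space} for $(\gamma,\gamma^{-1})$ then removes every antiparallel configuration in Step 1. In Step 2 it supplies $|\xi(t_\star)+\xi(s_\star)|\gtrsim\mathrm e^{-CL}$.

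\textbf{A second repair, in Step 2.} From $|s_\star-t_\star|>C\varepsilon$ and Proposition~\ref{prop: nonfocusing} you only get $d_{T^*M}(\gamma(t_\star),\gamma(s_\star))\ge\min(\mathrm e^{-C_\star L},C\varepsilon/C_\star)\gtrsim\varepsilon$. That is far too weak for the Taylor argument, which needs $\gtrsim\mathrm e^{-CL}$. What you need is a uniform time gap. The base speed is $\ge\alpha r_0$ by Remark~\ref{rem: shell is trapped}, and the base acceleration is bounded. So $d_M(\gamma(t),\gamma(s))\ge c_1|t-s|$ whenever $|t-s|\le\tau_0$, with $c_1,\tau_0>0$ uniform.

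\textbf{Consequences of the time gap.} Take the constant $C$ in the definition of $I_\varepsilon$ larger than $1/c_1$. Then the strip $\{C\varepsilon\le|s-t|\le\tau_0\}$ does not meet $\{d_M<\varepsilon\}$. So $\mathcal A_\varepsilon$ is a union of connected components of $\{d_M<\varepsilon\}$ lying in $\{|s-t|>\tau_0\}$. This settles your boundary subtlety: the lines $|s-t|=C\varepsilon$ never meet $\overline{\mathcal A_\varepsilon}$, so $f=\varepsilon$ on $\partial U$ and the minimum argument goes through verbatim. It also gives $d_{T^*M}(\gamma(t_\star),\gamma(s_\star))\gtrsim\mathrm e^{-C_\star L}$ in Step 2. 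Otherwise Proposition~\ref{prop: nonfocusing} would put $t_\star$ and $s_\star$ within $\tau_0/2$ of each other on the circle.

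Your own justification of the diagonal issue via Proposition~\ref{prop: nonfocusing} does not do this job. That proposition says phase-space-close points of $\gamma$ are close in time, which is perfectly consistent with lying near the diagonal. It is the local injectivity of the base curve that separates the diagonal component from the rest.
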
 
\begin{proof} This is the same proof as above, but we instead work with
	\[\mathcal A_\varepsilon'=\big\{(t,s)\in \mathbf R/L\mathbf Z\times \mathbf R/L\mathbf Z~:~ |t-s|>C \varepsilon ~\text{and}~ d_M(\gamma(t),\gamma(s))< \varepsilon\big\}\]
	with a large enough constant $C$, and invoke Proposition \ref{prop: nonfocusing} in place of Proposition \ref{prop: separation orbits phase space}.
\end{proof}
Proposition \ref{prop: self intersect base} shows that if $\gamma$ has period $T_\gamma\le \mathfrak c(\beta)|\log h|$, then the intersection of $\gamma$ with any ball of radius $h^\beta$ is either empty or consists of a single segment. This allows proving the next result.

\begin{prop}\label{prop: random mag checks assumptions} The random magnetic potential model satisfies Condition \ref{assum: condition potential}.
\end{prop}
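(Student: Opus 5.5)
We check the five items of Condition \ref{assum: condition potential} for $p_{\delta,\omega}=|\xi-\delta\mathbf A_\omega(x)|_g^2+V(x)$, $q_{\bm j}=-2q_j'\chi_i\langle \xi,f_i^*\mathbf e_k\rangle$ and the remainder $r_{\delta,\omega}$. Most items follow by direct computation. The only substantive one is item \ref{assum: int Vj flow}, the covering of periodic orbits, and there the self-intersection bound of Proposition \ref{prop: self intersect base} is the key input.

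\textbf{Items (1), (2), (3), (5).} Expanding,
\[p_{\delta,\omega}=p-2\delta\langle\xi,\mathbf A_\omega\rangle_g+\delta^2|\mathbf A_\omega|_g^2.\]
Hence $\partial_\delta p_{\delta,\omega}|_{\delta=0}=q_\omega=\sum\omega_{\bm j}q_{\bm j}$, which is item (2). The only nonzero higher derivative is $\partial_\delta^2p=2|\mathbf A_\omega|_g^2$.

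Since each $q_j'\in S^0_\beta$ is supported in $B(x_j,h^\beta)$ and the balls overlap boundedly, $\mathbf A_\omega$ is a sum, with $\mathcal O(1)$ overlap and bounded coefficients $\omega$, of functions whose $k$-th derivatives are $\mathcal O(h^{-k\beta})$. So $\mathbf A_\omega\in S^0_\beta$ uniformly in $\omega\in[-1,1]^{J_h}$. Consequently $\langle\xi,\mathbf A_\omega\rangle$ and $|\mathbf A_\omega|^2$ lie in $S^1_\beta\subset S^m_\beta$, after cutting off in $\xi$ or using $m\ge 2$ as in the elliptic setting.

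Next, $\partial_{\omega_{\bm j}}p_{\delta,\omega}=\delta q_{\bm j}+2\delta^2\langle\partial_{\omega_{\bm j}}\mathbf A_\omega,\mathbf A_\omega\rangle$. Its $x$-support lies in $\operatorname{Supp}q_j'\subset B(x_j,h^\beta)$, which gives items (1) and (3). Item (5) is the remainder bound $r_{\delta,\omega}\in h^{2-2\beta}S^0_\beta$ already stated with the model. It comes from the composition formula in the exotic calculus: each derivative of $\mathbf A_\omega$ costs $h^{-\beta}$, and each commutator gains $h$.

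\textbf{Item (4).} We need $\sum_{\bm j}|\int_\gamma q_{\bm j}|\ge c_0T_\gamma$ for periodic orbits with $T_0\le T_\gamma\le\mathfrak c|\log h|$. Choose the $q_j'\ge0$ with $\sum_jq_j'\ge c_0>0$ on $M$; this is possible and consistent with the hypothesis taken on $(q_j')$.

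The first step is to show each $q_{\bm j}$ is sampled by $\gamma$ along a single short segment. Take $\mathfrak c$ small enough that $\varepsilon=h^\beta$ satisfies $\varepsilon\le c\mathrm e^{-CT_\gamma}$ and $CT_\gamma^2\mathrm e^{CT_\gamma}h^\beta\ll1$. Then by Proposition \ref{prop: self intersect base}, for all but a set of times of measure $o(1)\cdot T_\gamma$, the point $\gamma(t)$ lies in no ball where $\gamma$ returns after a long time. For such $t$, the intersection of $\gamma$ with $B(x_j,h^\beta)$ is a single segment of length $\asymp h^\beta$, since the base speed is $\ge\alpha r_0$ by Remark \ref{rem: shell is trapped}. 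Repetitions of a primitive orbit just multiply both sides by the repetition number.

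The second step is to bound each such segment contribution from below. On a segment, $\xi$ is essentially constant up to $\mathcal O(h^\beta)$. So
\[\sum_{k=1,2}\Big|\int_{\text{seg}}q_j'\chi_i\langle\xi,f_i^*\mathbf e_k\rangle\Big|\gtrsim|\xi|\int_{\text{seg}}q_j'\chi_i,\]
because $(f_i^*\mathbf e_1,f_i^*\mathbf e_2)$ is a basis and $|\xi|\ge r_0$ on $\Sigma_E$. Summing over $i$, using $\sum_i\chi_i=1$ and discarding nothing in absolute value, gives
\[\sum_{\bm j}\Big|\int_\gamma q_{\bm j}\Big|\gtrsim r_0\sum_j\int_{\gamma\setminus\text{bad}}q_j'\ge c\,c_0\,(T_\gamma-o(T_\gamma)).\]
The inequality over $i$ is handled by noting that for fixed $j$ and fixed segment, the vectors $\chi_i f_i^*\mathbf e_k$ at nearby points are nearly constant, up to $\mathcal O(h^\beta)$ errors.

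The main obstacle is this single-segment reduction, i.e.\ making sure that contributions of different passages of $\gamma$ through the same ball do not cancel. That is exactly what Proposition \ref{prop: self intersect base} rules out, and it fixes the constant $\mathfrak c$.
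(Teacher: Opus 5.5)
Your treatment of item (4) has the same skeleton as the paper's proof, which, like you, treats the other items as immediate from the expansion of $p_{\delta,\omega}$. A bump is met along one short segment, on which $\langle\xi,f_i^*\mathbf e_k\rangle=u_{i,k}+\mathcal O(h^\beta)$ with $|u_{i,1}|+|u_{i,2}|\gtrsim 1$, and one then sums using $\sum_i\chi_i=1$.

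The gap is the sentence ``choose the $q_j'\ge 0$ with $\sum_j q_j'\ge c_0$''. In the model the $q_j'$ are given. The only hypothesis on them is Condition \ref{assum: condition potential}, i.e.\ $\sum_j|\int_\gamma q_j'|\ge c_0T_\gamma$. Signed bumps are allowed; nonnegativity is only the sufficient condition quoted after item (4). Two of your steps use positivity: the bound $\sum_k|\int_{\rm seg}q_j'\chi_i\langle\xi,f_i^*\mathbf e_k\rangle|\gtrsim\int_{\rm seg}q_j'\chi_i$, and the cancellation-free summation over $i$ and $j$. So as written you prove only a special case.

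The repair is short:
\begin{enumerate}
\item Keep absolute values, so that on a single segment $\sum_k|\cdots|\ge(|u_{i,1}|+|u_{i,2}|)\,|\int_\gamma\chi_iq_j'|-\mathcal O(h^{2\beta})$.
\item Use the triangle inequality $\sum_i|\int_\gamma\chi_iq_j'|\ge|\int_\gamma q_j'|$; this is exactly the paper's step.
\item The hypothesis controls $\sum_j|\int_\gamma q_j'|$ over all $j$, so subtract the exceptional balls from $c_0T_\gamma$. By bounded overlap, $\sum_{j\ \mathrm{bad}}|\int_\gamma q_j'|\lesssim\operatorname{Leb}(I_{Ch^\beta})\lesssim T_\gamma^2\mathrm e^{CT_\gamma}h^\beta$.
\end{enumerate}

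Where you differ from the paper, you are more careful. The paper's proof invokes the remark that $\gamma$ meets \emph{every} ball of radius $h^\beta$ in at most one segment, and then freezes $\xi$ for all $j$. But Proposition \ref{prop: self intersect base} only bounds the measure of the self-approach set $I_\varepsilon$. Near a transverse self-crossing of $\pi(\gamma)$, which typical closed orbits have, an $h^\beta$-ball is met twice with different momenta. Your exceptional set of measure $\mathcal O(T_\gamma^2\mathrm e^{CT_\gamma}h^\beta)$ is exactly the bookkeeping that makes that step rigorous. Your remark on repetitions of primitive orbits also fills a point the paper leaves implicit, since the proposition is stated only for primitive orbits.
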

\begin{proof}We merely need to check item (\ref{assum: int Vj flow}), that is, for any periodic orbit $\gamma$ with period $T_0\le T_\gamma\le \mathfrak c|\log h|$, one has
	\[\sum_{\bm j\in J_h} \Big|\int_{\gamma} q_{\bm j}\Big|=\sum_{i\in I}\sum_{j\in J_h'} \Big|\int_\gamma \chi_i q_j \langle \xi,f_i^*\mathbf e_1\rangle\Big|+\Big|\int_\gamma \chi_i q_j \langle \xi,f_i^*\mathbf e_2\rangle\Big|\ge c_0T_\gamma,\]
	where the sum on the left runs over $\bm j=(k,i,j)\in J_h=\{1,2\}\times I\times J_h'$. By the remark after Proposition \ref{prop: self intersect base}, we know that the intersection of $\gamma$ with $\operatorname{Supp}(q_j')$ consists of a single segment of orbit of length $\le h^\beta$. On this segment, the quantities $\langle \xi,f_i^*\mathbf e_k\rangle$ for $k\in \{1,2\}$ are almost constant, more precisely
	\[\langle \xi,f_i^*\mathbf e_k\rangle=u_{i,k}+\mathcal O(h^\beta),\]
	with $u_{i,k}$ independent of the point. Since the orbit $\gamma$ spends a time $\lesssim T_\gamma h^\beta$ in the ball $B(x_j,h^\beta)$ (this follows from item (\ref{assum: supp pot Vj}) in Condition \ref{assum: condition potential}), one obtains
	\[\int_\gamma \chi_i q_j'\langle \xi,f_i^* \mathbf e_k\rangle=u_{i,k}\int_\gamma \chi_i q_j'+\mathcal O(T_\gamma h^{2\beta}\mathbf 1_{\gamma\cap \operatorname{Supp}(q_j')\neq \varnothing}).\]
	moreover,
	\[u_{i,1}^2+u_{i,2}^2=\|(f_i)_*\xi\|^2 +\mathcal O(h^\beta)\gtrsim 1,\]
	thus $|u_{i,1}|+|u_{i,2}|\gtrsim 1$. Since $\sum \chi_i=1$, the triangle inequality implies
	\[\sum_{i\in I}\Big|\int_\gamma \chi_i q_j'\Big|\ge \Big|\int_\gamma q_j'\Big|.\]
	Since $(q_j')_{j\in J_h'}$ satisfies Condition \ref{assum: condition potential}, and since there are at most $\mathcal O(T_\gamma h^{-\beta})$ balls $B(x_j,h^\beta)$ that intersect $\gamma$, one finds
	\begin{equation} \begin{aligned}\sum_{\bm j\in J_h} \Big|\int_{\gamma} q_{\bm j}\Big| & \ge \sum_{i\in I}\sum_{j\in J_h'} \Big[\Big|u_{i,1}\int_\gamma \chi_i q_j'\Big|+\Big|u_{i,2}\int_\gamma \chi_i q_j'\Big|+\mathcal O(T_\gamma h^{2\beta}\mathbf 1_{\gamma\cap \operatorname{Supp}(q_j')\neq \varnothing})\Big] \\
	& \gtrsim \sum_{j\in J_h'} \Big[\Big|\int_\gamma q_j'\Big|+\mathcal O(T_\gamma h^{2\beta}\mathbf 1_{\gamma\cap \operatorname{Supp}(q_j')\neq \varnothing})\Big]\ge c_0T_\gamma +\mathcal O(T_\gamma^2 h^{\beta})\ge \frac{c_0}{2}T_\gamma. \end{aligned}\end{equation}
This finishes the proof.
\end{proof}

\section{Cancellations between periodic orbits} The following proposition is the main input in the proof of Proposition \ref{prop: moments N_osc}, as it provides a probabilistic cancellation mechanism between the contributions of distinct periodic orbits to the trace formula. Recall that we assume $2\beta<\alpha<1$ and $\beta<2-2\alpha$.

\begin{prop}\label{prop: averages actions}Let $0<\varepsilon<\min(1-\alpha-\beta/2,\alpha-2\beta)$. Then there is a constant $\mathfrak c=\mathfrak c(\alpha,\beta,\varepsilon)>0$ depending additionally on the flow such that the following holds. For all $n\ge 1$, there is a constant $C_n>0$ such that for all sufficiently small $h>0$ depending on $n$, for any tuple $(\gamma_1,\ldots,\gamma_n)$ of periodic orbits with periods $\le \mathfrak c|\log h|$ such that $\gamma_j\notin \{\gamma_i,\gamma_i^{-1}\}$ when $i\neq j$, one has:
	\begin{itemize}
  \item if the random perturbation preserves time reversal symmetry, then for any nonzero integers $k_1,\ldots,k_n$, one has
  \[\Big|\mathbf E_\omega\Big[\exp\Big(\frac{\mathrm{i}}{h}\sum_{1\le i\le n} k_i S_{\omega}(\gamma_i)\Big)\Big]\Big|\le C_n h^{\varepsilon n}.\]
  \item if the random perturbation strongly breaks time reversal symmetry, then for any integers $k_{1}^{\pm},\ldots,k_n^{\pm}$, one has
   \[\Big|\mathbf E_\omega\Big[\exp\Big(\frac{\mathrm{i}}{h}\sum_{1\le i\le n} k_i^+ S_{\omega}(\gamma_i)+k_i^-S_\omega(\gamma_i^{-1})\Big)\Big]\Big|\le C_n h^{\varepsilon N(\vec k^{\pm})},\]
   where $N(\vec k^{\pm})$ is the number of pairs $\{k_i^+,k_i^-\}$ of the form $\{-1,1\}$ or $\{0,k\}$ with $k\neq 0$.
\end{itemize}\end{prop}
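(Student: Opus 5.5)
The plan is to exploit the product structure of $\mathbf P_\omega$ and integrate by parts in a few well-chosen coordinates $\omega_j$, one for each orbit $\gamma_i$ that carries a nontrivial frequency. Write the phase as
\[\Phi(\omega)=\frac1h\sum_i k_i S_\omega(\gamma_i)=\frac1h\sum_i k_iS(\gamma_i)-h^{\alpha-1}\sum_j\omega_j\sum_i k_i\int_{\gamma_i}q_j+\frac1h\sum_ik_ir_\omega(\gamma_i).\]
The deterministic part is a unimodular constant and can be dropped.

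The linear coefficient of $\omega_j$ is $a_j:=\sum_ik_i\int_{\gamma_i}q_j$. The key geometric input is Proposition \ref{prop:separation orbite base 2}, applied with $\varepsilon\sim h^\beta$. It requires $\mathfrak c$ small enough that $L^2\mathrm e^{CL}h^\beta\ll h^{\beta-\varepsilon'}$ for $L\le \mathfrak c|\log h|$. With this choice, for $i\ne i'$ the set of times along $\gamma_i$ that lie within $h^\beta$ of $\gamma_{i'}$ on the base has measure $\lesssim h^{\beta}h^{-C\mathfrak c}$. By Proposition \ref{prop: self intersect base}, each ball $B(x_j,h^\beta)$ meets $\gamma_i$ in at most one segment.

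For each $i$, consider the bumps $j$ whose supports meet $\gamma_i$ but no other $\gamma_{i'}$ (in the time-reversal-preserving case we may also identify $\gamma_i$ and $\gamma_i^{-1}$; these are excluded by hypothesis). These ``private'' bumps exhaust all of $\gamma_i$ except a set of measure $\lesssim h^{\beta-C\mathfrak c}$. Condition \ref{assum: condition potential}(\ref{assum: int Vj flow}) then gives
\[\sum_{j\ \mathrm{private\ to}\ i}\Big|\int_{\gamma_i}q_j\Big|\ge c_0T_{\gamma_i}/2.\]
Since there are only $O(T h^{-\beta})$ such bumps, we can select one private index $j_i$ with $|\int_{\gamma_i}q_{j_i}|\gtrsim h^{\beta}$. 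Consequently $|a_{j_i}|=|k_i|\,|\int_{\gamma_i}q_{j_i}|\gtrsim h^\beta$.

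In the time-reversal-breaking case I would run the same argument with $\gamma_i$ and $\gamma_i^{-1}$ sharing the same bumps. There the coefficient is
\[k_i^+\int_{\gamma_i}q_j+k_i^-\int_{\gamma_i^{-1}}q_j=\int_{\gamma_i}\big(k_i^+q_j(x,\xi)+k_i^-q_j(x,-\xi)\big),\]
because the two orbits traverse the same segment with opposite momenta. When $\{k^+,k^-\}=\{1,-1\}$, the lower bound comes from \eqref{eq: defi time sym break}. When one of them is $0$, it comes from item (\ref{assum: int Vj flow}). For other pairs I give up on that orbit, which is why only $N(\vec k^\pm)$ orbits count.

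Next, I would freeze all coordinates other than $\omega_{j_1},\dots,\omega_{j_n}$ and use Fubini. In each selected variable I integrate by parts twice against the $C^2$ compactly supported density $m$ (hence the $C^2$ assumption). The first derivative of the phase is
\[\partial_{\omega_{j_i}}\Phi=-h^{\alpha-1}a_{j_i}+h^{-1}\sum_{i'}k_{i'}\partial_{\omega_{j_i}}r_\omega(\gamma_{i'}).\]
Its main term has size $\gtrsim h^{\alpha-1+\beta}$. By \eqref{eq: bound derivee r omega}, the error is $\lesssim h^{2\alpha-\beta-1}\mathrm e^{C\mathfrak c|\log h|}$, which is smaller provided $\alpha-2\beta>C\mathfrak c$. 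Each integration by parts also produces second derivatives of $\Phi$ in $\omega$. These come from $r_\omega$ and, through Condition \ref{assum: condition potential}(1), from the $\delta$-expansion of $p_{\delta,\omega}$. I expect them to be $O(h^{2\alpha-1-2\beta}\mathrm e^{CT})$, which is negligible relative to $(\partial\Phi)^2$.

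Each integration by parts therefore gains a factor $(h^{1-\alpha-\beta})$ up to $h^{-C\mathfrak c}$ losses. Two integrations by parts would give $h^{2(1-\alpha-\beta)}$; the stated exponent $\varepsilon<1-\alpha-\beta/2$ suggests the actual gain per variable is $h^{1-\alpha-\beta/2}$. This improvement should come from the bound $|a_{j_i}|\gtrsim h^{\beta/2}$, obtained by a Cauchy–Schwarz choice of $j_i$ rather than the crude pigeonhole above. The integrations by parts in the different selected variables are done successively, and their gains multiply to give $h^{\varepsilon n}$.

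The main obstacle is the nonlinear dependence: $r_\omega(\gamma)$ couples all the variables, so the phase is not a sum of functions of single $\omega_j$. The integration by parts must therefore be done carefully in several variables at once, with a vector field in $(\omega_{j_1},\dots,\omega_{j_n})$ and control of mixed derivatives of $r_\omega$. This is where the constraint $\varepsilon<\alpha-2\beta$ enters. I would first try to prove higher-derivative analogues of \eqref{eq: bound derivee r omega}, namely $|\partial_\omega^\nu r_\omega|\lesssim h^{2\alpha-|\nu|\beta}\mathrm e^{CT}$, and then apply a standard non-stationary phase lemma.
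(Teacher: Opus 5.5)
There is a genuine gap at exactly the step you flag as the source of the $\beta/2$. No single coordinate can have $|a_j|\gtrsim h^{\beta/2}$. The symbol $q_j$ is bounded, and $\gamma_i$ crosses $B(x_j,h^\beta)$ in one segment of duration $O(h^\beta)$ (Proposition \ref{prop: self intersect base} and the lower bound on the base speed). Hence $|a_j|\lesssim |k_i|\,h^\beta$ for every $j$, whatever selection rule you use.

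Integrating by parts in one variable $\omega_{j_i}$ therefore gains at best $h^{1-\alpha-\beta}$ per integration, and with a $C^2$ density you can do it at most twice. This is not a loss of sharpness but a failure. Condition \ref{assum: on alpha beta} requires $\beta<2-2\alpha$, which is precisely $1-\alpha-\beta/2>0$, and it allows $\beta\ge 1-\alpha$. For example, with $\alpha=0.9$ and $\beta=0.15$ the statement is nontrivial for every $\varepsilon<0.025$, but $h^{1-\alpha-\beta}=h^{-0.05}$ gains nothing. Even when $\beta<1-\alpha$, two integrations in your selected coordinate give only $h^{2(1-\alpha-\beta)}$. With $\alpha=0.8$ and $\beta=0.15$ this is $h^{0.1}$, weaker than the claimed $h^{\varepsilon}$ for $\varepsilon$ up to $0.125$.

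The missing idea is a single integration by parts per orbit along the collective direction $\theta^{(i)}=(k_i\int_{\gamma_i}q_j)_{j\in\widetilde J_h(\gamma_i)}$. This vector is supported on all $\asymp T_{\gamma_i}h^{-\beta}$ private bumps, each entry of size $\asymp h^\beta$ (Lemmas \ref{lem: Jh(gamma)} and \ref{lem: bunches periodic orbits}).

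\begin{itemize}
\item The linear part of your phase satisfies $\theta^{(i)}\cdot\nabla_\omega\Phi_{\rm lin}=-h^{\alpha-1}\|\theta^{(i)}\|^2$, with $\|\theta^{(i)}\|^2\gtrsim h^{\beta}$.
\item The term falling on the density is controlled in $\ell^2$, not $\ell^1$. By Cauchy--Schwarz and independence, the cross terms $\int m'(\omega_j)m'(\omega_k)\,\mathrm d\omega_j\mathrm d\omega_k$ vanish. This gives $\int\big|\sum_j\theta^{(i)}_j m'(\omega_j)/m(\omega_j)\big|\,\bm m\,\mathrm d\omega\lesssim\|\theta^{(i)}\|$.
\item This square-root cancellation produces the gain $h^{1-\alpha}/\|\theta^{(i)}\|\lesssim h^{1-\alpha-\beta/2}$.
\item The remainder contributes $|\theta^{(i)}\cdot\nabla_\omega r_\omega(\gamma_i)|\lesssim\|\theta^{(i)}\|\,h^{2\alpha-3\beta/2}\mathrm e^{CL}$ by \eqref{eq: bound derivee r omega}. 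This is the origin of the factor $h^{\alpha-2\beta}\mathrm e^{CL}$.
\end{itemize}

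Finally, the coupling through $r_\omega$ that you call the main obstacle needs no bounds on higher or mixed $\omega$-derivatives, which you would otherwise have to prove. Perturbed orbits stay within $O(h^{\alpha-\beta}\mathrm e^{CL})\ll h^\beta$ of the unperturbed ones. So by Lemma \ref{lem: derivees r omega zero}, $r_\omega(\gamma_{i'})$ is exactly independent of the private variables of $\gamma_i$ for $i'\neq i$. The $n$ integrations by parts therefore factorize, and only first derivatives of $r_\omega$ enter.
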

We recall from Proposition \ref{prop: long time gutz} that
\[S_\omega(\gamma_i)=S_0(\gamma_i)-h^\alpha\int_{\gamma_i}q_\omega +r_\omega(\gamma),\]
where $S_0(\gamma_i)$ is independent of $\omega$ and $|\partial_{\omega_i} r_\omega(\gamma)|\lesssim h^{2\alpha-\beta}\mathrm{e}^{CL}$. \medskip

We begin with some preliminary results. By an adaptation of \cite[eq. (9.8)]{ingremeau2023emergence}, one can show that Condition \ref{assum: condition potential} implies the following lemma.
\begin{lem}\label{lem: Jh(gamma)} Let $(q_j)_{j\in J_h}$ be a family of potentials satisfying Condition \ref{assum: condition potential}. For $j\in J_h$ set $\theta_j(\gamma)=\int_{\gamma}q_j$. Then, there are constants $c_1,c_2, \varepsilon_0>0$ such that for any closed orbit $\gamma$, there is a subset $J_h(\gamma)\subset J_h$ such that $c_1h^{-\beta}T_\gamma\le \#J_h(\gamma)\le c_2 h^{-\beta} T_\gamma$, and
	\[\forall j\in J_h(\gamma), \quad |\theta_j(\gamma)|\ge \varepsilon_0 h^\beta.\]
	Moreover, there is a constant $c_3>0$ such that
	\[\#\{j\in J_h~:~ \theta_j(\gamma)\neq 0\}\le c_3 h^{-\beta}T_\gamma.\]
\end{lem}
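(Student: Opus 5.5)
The plan is to derive both counting statements from a single local estimate. Every bump $q_j$ is localized in $B(x_j,h^\beta)$, and $\gamma$ moves with speed bounded above and below on the base (Remark \ref{rem: shell is trapped}). From this we get a pointwise bound on each $|\theta_j(\gamma)|$ and a bound on how many $\theta_j(\gamma)$ can be nonzero. A pigeonhole argument against item (\ref{assum: int Vj flow}) of Condition \ref{assum: condition potential} then gives the lower bound on $\#J_h(\gamma)$. I take $\gamma$ primitive with $T_0\le T_\gamma\le \mathfrak c|\log h|$, which is the range where Condition \ref{assum: condition potential} applies. For a repetition $\gamma=\gamma_0^k$ one has $\theta_j(\gamma)=k\theta_j(\gamma_0)$, so the lower bound on $\#J_h(\gamma)$ can only be expected in terms of $T_{\gamma}^\sharp$, and I would state it that way.

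\textbf{Step 1 (upper bound on the number of nonzero $\theta_j$).} Cut $\gamma$ into $\lesssim h^{-\beta}T_\gamma$ consecutive segments of duration $h^\beta$. Since the base speed is bounded, each segment projects into a ball of radius $Ch^\beta$. I claim such a ball meets only $\mathcal O(1)$ of the balls $B(x_j,h^\beta)$. This follows from the bounded-overlap assumption in item (\ref{assum: supp pot Vj}) by volume counting in dimension $2$: all the balls meeting it lie in a ball of radius $(C+2)h^\beta$, have area $\sim h^{2\beta}$, and overlap at most $\mathcal O(1)$ times. If $\theta_j(\gamma)\neq 0$, then $\gamma$ meets $\pi(\operatorname{Supp} q_j)\subset B(x_j,h^\beta)$. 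Hence $\#\{j:\theta_j(\gamma)\neq 0\}\le c_3h^{-\beta}T_\gamma$.

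\textbf{Step 2 (upper bound on each $|\theta_j|$).} The symbols $q_j\in S^m_\beta$ are bounded uniformly on $\Sigma_E$, so $|\theta_j(\gamma)|\le C\,|\{t:\pi\gamma(t)\in B(x_j,h^\beta)\}|$. To bound this time, apply Proposition \ref{prop: self intersect base} with $\varepsilon\sim h^\beta$. Its hypothesis $h^\beta\le c\mathrm e^{-CL}$ holds once $\mathfrak c$ is small compared to $\beta/C$. The exceptional set then has measure $\lesssim L^2\mathrm e^{CL}h^\beta\ll 1$, and by the remark after that proposition, $\gamma\cap B(x_j,h^\beta)$ is a single segment. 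By the lower bound on the speed, this segment has duration $\lesssim h^\beta$. So $|\theta_j(\gamma)|\le C'h^\beta$ for every $j$.

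\textbf{Step 3 (pigeonhole).} Define $J_h(\gamma)=\{j:|\theta_j(\gamma)|\ge \varepsilon_0h^\beta\}$. The upper bound $\#J_h(\gamma)\le c_3h^{-\beta}T_\gamma$ is then immediate from Step 1. For the lower bound, Step 1 controls the contribution of the indices outside $J_h(\gamma)$:
\[\sum_{j\notin J_h(\gamma)}|\theta_j(\gamma)|\le \varepsilon_0h^\beta\cdot c_3h^{-\beta}T_\gamma=\varepsilon_0c_3T_\gamma .\]
Choose $\varepsilon_0=c_0/(2c_3)$. Item (\ref{assum: int Vj flow}) then gives $\sum_{j\in J_h(\gamma)}|\theta_j(\gamma)|\ge \tfrac{c_0}{2}T_\gamma$. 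Combining this with Step 2,
\[\#J_h(\gamma)\ge \frac{c_0T_\gamma}{2C'h^\beta},\]
which is the required lower bound.

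The main obstacle is Step 2. It is the only place where the geometry of periodic orbits (non-self-intersection at scale $h^\beta$) enters, and it is what forces both the restriction to periods $\le\mathfrak c|\log h|$ and the primitivity caveat. Steps 1 and 3 are routine.
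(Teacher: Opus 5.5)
The paper does not prove this lemma; it only points to an adaptation of eq.~(9.8) in Ingremeau--Vogel. So your argument has to stand on its own. Steps~1 and~3 are sound. Your caveat about repetitions is also correct: for $\gamma=\gamma_0^k$ the nonzero $\theta_j(\gamma)$ number at most $c_3h^{-\beta}T_{\gamma_0}$, so the lower bound can only hold in terms of $T_\gamma^\sharp$, and the paper only applies the lemma to primitive orbits.

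\textbf{The gap is in Step~2.} Proposition~\ref{prop: self intersect base} says the set $I_\varepsilon$ of near-self-intersection times is \emph{small}, not \emph{empty}. It cannot be empty in general. Most primitive closed geodesics on a hyperbolic surface are not simple; typically they have on the order of $T_\gamma^2$ transverse self-intersections. Any ball $B(x_j,h^\beta)$ containing such a point is crossed by two distinct passes of $\gamma$. So ``$\gamma\cap B(x_j,h^\beta)$ is a single segment'' is false; the remark you quote overstates the proposition. A bounded number of passes would be harmless, but nothing you cite bounds the number of passes through one ball by an absolute constant. The only a priori bound is $\mathcal O(T_\gamma)$ passes: distinct passes are separated in time by a fixed $\tau_0>0$, because the base curve has speed bounded below and bounded acceleration. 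That gives only $|\theta_j|\lesssim Lh^\beta$, and fed into Step~3 it yields $\#J_h(\gamma)\gtrsim h^{-\beta}T_\gamma/L$, losing a factor $L$.

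\textbf{The fix.} Use the measure bound you already computed on average, not ball by ball. Call $j$ \emph{multiple} if $\pi\gamma$ enters $B(x_j,h^\beta)$ in at least two passes. Every time $t$ with $\pi\gamma(t)$ in such a ball lies in $I_{2h^\beta}$, since another pass comes within $2h^\beta$ at time distance at least $\tau_0$, which exceeds $2Ch^\beta$. The balls have bounded overlap and $|\theta_j(\gamma)|\lesssim\operatorname{Leb}\{t:\pi\gamma(t)\in B(x_j,h^\beta)\}$. Hence, for $\mathfrak c$ and $h$ small,
\[
\sum_{j\ \text{multiple}}|\theta_j(\gamma)|\lesssim \operatorname{Leb}(I_{2h^\beta})\lesssim T_\gamma^2\mathrm{e}^{CT_\gamma}h^\beta\le \frac{c_0}{4}T_\gamma .
\]
Now run your pigeonhole on non-multiple indices only. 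Take $\varepsilon_0=c_0/(4c_3)$ and let $J_h(\gamma)$ be the non-multiple $j$ with $|\theta_j(\gamma)|\ge\varepsilon_0h^\beta$. Step~1 and the display above give
\[
\sum_{j\in J_h(\gamma)}|\theta_j(\gamma)|\ge \frac{c_0}{2}T_\gamma .
\]
For these $j$, a single pass lasts $\lesssim h^\beta$, so $|\theta_j(\gamma)|\le C'h^\beta$ genuinely holds. This gives $\#J_h(\gamma)\ge c_0T_\gamma/(2C'h^\beta)$. The upper bound $\#J_h(\gamma)\le c_3h^{-\beta}T_\gamma$ is unchanged.
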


\begin{lem}\label{lem: Jh(gamma gamma')}Let $\gamma_1,\gamma_2$ be two primitive periodic orbits of period $\le L$, with $\gamma_2\notin\{\gamma_1,\gamma_1^{-1}\}$. Fix $C_0>0$ and set 
	\[J_h(\gamma_1,\gamma_2):=\{j\in J_h~:~ \operatorname{dist}_M(x_j,\gamma_1)<C_0h^\beta\ \text{and} \ \operatorname{dist}_M(x_j,\gamma_2)<C_0h^\beta\}.\] Then,
	\[\#J_h(\gamma_1,\gamma_2)\lesssim_{\!\! C_0} \mathrm{e}^{C L}.\]
\end{lem}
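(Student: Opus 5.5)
We reduce the count of $J_h(\gamma_1,\gamma_2)$ to the measure estimate of Proposition \ref{prop:separation orbite base 2}, applied at scale $\varepsilon=C_1h^\beta$ for a suitable constant $C_1$ depending on $C_0$. Throughout we assume, as in the intended application, that $L\le \mathfrak c|\log h|$ with $\mathfrak c$ small enough that $C_1h^\beta\le c\mathrm{e}^{-CL}$; this is needed for the proposition to apply. If instead $h^\beta\gtrsim \mathrm{e}^{-CL}$, the trivial bound is used: by Lemma \ref{lem: Jh(gamma)} and the bounded overlap in item (\ref{assum: supp pot Vj}) of Condition \ref{assum: condition potential}, at most $\mathcal O(h^{-\beta}T_{\gamma_1})$ centres $x_j$ lie within $C_0h^\beta$ of $\gamma_1$. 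This is $\lesssim L h^{-\beta}\lesssim \mathrm{e}^{C'L}$, which already gives the claim.

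\textbf{Step 1.} Fix $j\in J_h(\gamma_1,\gamma_2)$ and choose times $t_j,s_j$ with $d_M(x_j,\gamma_1(t_j))<C_0h^\beta$ and $d_M(x_j,\gamma_2(s_j))<C_0h^\beta$. By Remark \ref{rem: shell is trapped}, projected trajectories have speed bounded above by some $v_{\max}$, so for $|t-t_j|\le h^\beta$ we get
\[d_M(\gamma_1(t),\gamma_2(s_j))\le (2C_0+v_{\max})h^\beta=:C_1h^\beta.\]
Hence the whole interval $[t_j-h^\beta,t_j+h^\beta]$ lies in $I_{C_1h^\beta}$, the set defined in Proposition \ref{prop:separation orbite base 2}.

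\textbf{Step 2.} We show these intervals have bounded multiplicity. If $|t_j-t_{j'}|\le h^\beta$, then $d_M(x_j,x_{j'})\le (2C_0+v_{\max})h^\beta$. Item (\ref{assum: supp pot Vj}) of Condition \ref{assum: condition potential} says each point lies in $\mathcal O(1)$ of the balls $B(x_j,h^\beta)$, which forces the centres to be $h^\beta$-separated up to bounded multiplicity. A volume argument then shows that any ball of radius $Kh^\beta$ contains $\mathcal O_K(1)$ centres. So each time $t$ lies in $\mathcal O_{C_0}(1)$ of the intervals $[t_j-h^\beta,t_j+h^\beta]$.

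\textbf{Step 3.} Summing over $j$ and using Steps 1 and 2, then Proposition \ref{prop:separation orbite base 2},
\[\#J_h(\gamma_1,\gamma_2)\cdot 2h^\beta\lesssim_{C_0}|I_{C_1h^\beta}|\lesssim L^2\mathrm{e}^{CL}h^\beta .\]
Dividing by $h^\beta$ gives $\#J_h(\gamma_1,\gamma_2)\lesssim L^2\mathrm{e}^{CL}\lesssim \mathrm{e}^{C'L}$, which is the claim after renaming the constant.

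The delicate point is the bounded-multiplicity argument in Step 2. It requires extracting a packing bound on the centres $x_j$ from the covering hypothesis, and it requires that $\mathfrak c$ be small enough that Proposition \ref{prop:separation orbite base 2} applies at scale $h^\beta$.
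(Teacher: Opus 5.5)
Your proof is correct and follows essentially the paper's argument: you thicken each index $j$ into a time interval of length $2h^\beta$ contained in $I_{Ch^\beta}$, bound how often these intervals overlap, and compare the total with the measure bound of Proposition \ref{prop:separation orbite base 2}. Two minor differences: your volume argument gives overlap multiplicity $\mathcal O_{C_0}(1)$ rather than the paper's $C_3L$, and your fallback case removes the restriction $L\le\mathfrak c|\log h|$ (there the count of centres near $\gamma_1$ follows from your Step 2 volume argument, not from Lemma \ref{lem: Jh(gamma)}, which counts indices with $\theta_j(\gamma)\neq 0$); neither changes the substance.
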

\begin{proof} We view $\gamma_1$ and $\gamma_2$ as curves on $M$. For all $j\in J_h(\gamma,\gamma')$, we can find $(t_j,s_j)$ such that $\gamma_1(t_j)$ and $\gamma_2(s_j)$ belong to $B(x_j,C_0h^\beta)$. It follows that for all $t\in \mathcal I_j:=[t_j-h^\beta,t_j+h^\beta]$ we have $d_M(\gamma_1(t),\gamma_2(s_j))<(C_0+C_1)h^\beta=:C_2h^\beta$ for some constant $C_1$ depending on the flow. Therefore, in the notation of Proposition \ref{prop:separation orbite base 2}, we have $\mathcal I_j\subset I_{C_2h^\beta}$. Now, set
	\[\mathcal I:=\bigcup_{j\in J_h(\gamma_1,\gamma_2)} \mathcal I_j\subset I_{C_2h^\beta}.\]
	Note that each element of $\mathcal I$ belongs at most to $C_3L$ of the $\mathcal I_j$'s, where $C_3$ depends on item (\ref{assum: supp pot Vj}) in Condition \ref{assum: condition potential}. Using Proposition \ref{prop:separation orbite base 2}, we infer that if $\mathfrak c=\mathfrak c(\alpha,\beta)$ is small enough, then for $L\le \mathfrak c|\log h|$ we have, denoting by $\operatorname{Leb}$ the Lebesgue measure:
	\[C_2 \mathrm{e}^{CL} h^\beta \ge \operatorname{Leb}(I_{C_2h^\beta})\ge \operatorname{Leb}(\mathcal I)\ge \frac{1}{C_3L}\sum_{j\in J_h(\gamma,\gamma')}\operatorname{Leb}(\mathcal I_j)\ge \frac{2}{C_3L}h^\beta \#J_h(\gamma,\gamma').\]
	The result follows by increasing slightly $C$.
\end{proof}
We now consider bunches of orbits.
\begin{lem}\label{lem: bunches periodic orbits} There are constants $C_0,C_1,C>0$ such that the following holds. Let $\gamma_1,\ldots,\gamma_n$ be primitive periodic orbits of period $\le L$, such that $\gamma_j\notin\{\gamma_i,\gamma_i^{-1}\}$ if $i\neq j$. Then, there are pairwise disjoint subsets $\widetilde J_h(\gamma_i)\subset J_h$ for $i=1,\ldots,n$ such that
	\begin{itemize}
		\item $\widetilde J_h(\gamma_i)\subset J_h(\gamma_i)$.
		\item $\# \widetilde J_h(\gamma_i)\ge c_1 T_{\gamma_i} h^{-\beta}-C_1 n \mathrm{e}^{CL}$
		\item If $j\in \widetilde J_h(\gamma_{i})$ then $\operatorname{dist}(x_j,\gamma_{i'})\ge C_0 h^\beta$  for all $i'\neq i$.
\end{itemize}\end{lem}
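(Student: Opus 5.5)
The plan is to start from the sets $J_h(\gamma_i)$ given by Lemma \ref{lem: Jh(gamma)} and remove the "bad" indices, namely those $j$ whose centre $x_j$ lies $C_0h^\beta$-close to some other orbit $\gamma_{i'}$. Concretely, set
\[\widetilde J_h(\gamma_i):=\Big\{j\in J_h(\gamma_i)~:~ \operatorname{dist}_M(x_j,\gamma_{i'})\ge C_0h^\beta \ \text{for all } i'\neq i\Big\}.\]
The first and third items then hold by construction.

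For disjointness, I first choose $C_0$ so that every $j\in J_h(\gamma_i)$ has $x_j$ within $C_0h^\beta$ of $\gamma_i$. This is possible because $\theta_j(\gamma_i)=\int_{\gamma_i}q_j\neq 0$ forces $\gamma_i$ to meet $\operatorname{Supp}(q_j)$, whose projection lies in $B(x_j,h^\beta)$ by item (\ref{assum: supp pot Vj}) of Condition \ref{assum: condition potential}; so $C_0=2$ works, and any larger constant does too. Now suppose $j\in\widetilde J_h(\gamma_i)\cap\widetilde J_h(\gamma_{i'})$ with $i\neq i'$. Since $j\in J_h(\gamma_{i'})$, we get $\operatorname{dist}(x_j,\gamma_{i'})<C_0h^\beta$. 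This contradicts $j\in \widetilde J_h(\gamma_i)$, so the sets are pairwise disjoint.

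For the cardinality bound, the indices removed from $J_h(\gamma_i)$ all lie in $\bigcup_{i'\neq i} J_h(\gamma_i,\gamma_{i'})$, the set from Lemma \ref{lem: Jh(gamma gamma')} taken with the same constant $C_0$. Indeed, a removed $j$ satisfies $\operatorname{dist}(x_j,\gamma_i)<C_0h^\beta$ by the choice of $C_0$, and $\operatorname{dist}(x_j,\gamma_{i'})<C_0h^\beta$ for some $i'\neq i$. Since $\gamma_{i'}\notin\{\gamma_i,\gamma_i^{-1}\}$ and both orbits are primitive with period $\le L$, Lemma \ref{lem: Jh(gamma gamma')} gives $\#J_h(\gamma_i,\gamma_{i'})\lesssim \mathrm{e}^{CL}$. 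Summing over the at most $n-1$ values of $i'$ and combining with the lower bound $\#J_h(\gamma_i)\ge c_1T_{\gamma_i}h^{-\beta}$ from Lemma \ref{lem: Jh(gamma)} yields
\[\#\widetilde J_h(\gamma_i)\ge c_1T_{\gamma_i}h^{-\beta}-C_1n\mathrm{e}^{CL}.\]

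The only delicate point is the mismatch between the support condition and the distance condition. Membership in $J_h(\gamma_i)$ is phrased via $\theta_j\neq0$, while Lemma \ref{lem: Jh(gamma gamma')} is phrased via distances of $x_j$ to the orbits. The first step above reconciles the two with a single constant $C_0$, and Lemma \ref{lem: Jh(gamma gamma')} is stated for arbitrary fixed $C_0$, which is what allows the same constant to be used in both places. The implicit constant in Lemma \ref{lem: Jh(gamma gamma')} then depends on this $C_0$, and that dependence is absorbed into $C_1$ and $C$.
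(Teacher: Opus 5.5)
Your proof is correct and is essentially the paper's. Once $C_0\ge 2$, every $j\in J_h(\gamma_i)$ satisfies $\operatorname{dist}_M(x_j,\gamma_i)<C_0h^\beta$, so your set coincides with the paper's choice $\widetilde J_h(\gamma_i)=J_h(\gamma_i)\setminus\bigcup_{i'\neq i}J_h(\gamma_i,\gamma_{i'})$, and the cardinality bound follows from Lemma \ref{lem: Jh(gamma gamma')} as you say. Your reconciliation of $\theta_j(\gamma_i)\neq 0$ with the distance condition, via the support property in Condition \ref{assum: condition potential}, is left implicit in the paper; it is exactly what makes disjointness and the third item follow from that definition.
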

\begin{proof}With the notation of Lemma \ref{lem: Jh(gamma)} and Lemma \ref{lem: Jh(gamma gamma')}, it suffices to take
	\[\widetilde J_h(\gamma_k):=J_h(\gamma_k)\backslash \bigcup_{k'\neq k}J_h(\gamma_k,\gamma_{k'}).\]
\end{proof}
The next lemma will be crucial to simplify computations in the proof of Proposition \ref{prop: averages actions}. Recall that $r_\omega$ has been defined in Proposition \ref{prop: long time gutz}.
\begin{lem}\label{lem: derivees r omega zero} Consider a tuple $(\gamma_1,\ldots,\gamma_n)$ as in Lemma \ref{lem: bunches periodic orbits}. Assume that $j\in \widetilde J_h(\gamma_i)$. Then, for $i'\neq i$ we have
	\[\forall \omega_j\in [-1,1], \quad \partial_{\omega_j} r_\omega(\gamma_i')=0.\]
\end{lem}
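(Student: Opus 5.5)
The plan is to show that $r_\omega(\gamma_{i'})$ does not depend on $\omega_j$ at all. The reason is locality: the perturbed action of $\gamma_{i'}$ only sees the perturbed symbol in a small phase-space neighbourhood of $\gamma_{i'}$, while $\omega_j$ only changes the symbol over the ball $B(x_j,h^\beta)$, which stays $C_0h^\beta$-away from $\gamma_{i'}$ on the base. Concretely, I would go back to the construction of $S_\omega(\gamma)$ in \cite{MoyLongTime}. There the perturbed action is the action of the periodic orbit $\gamma_\omega$ of the perturbed Hamiltonian flow generated by $p_{h^\alpha,\omega}$ on the shell $\{p_{h^\alpha,\omega}=E\}$, continued from $\gamma$ by structural stability of Anosov flows. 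The correction $r_\omega(\gamma)$ is defined as $S_\omega(\gamma)-S(\gamma)+h^\alpha\int_\gamma q_\omega$.

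The first step is to check that the perturbed orbit $\gamma_\omega$ stays close to $\gamma$. Its distance from $\gamma$ is at most $\mathcal O(h^{\alpha-\beta}\mathrm{e}^{CL})$: the perturbation is $h^\alpha$ in $S_\beta$, so its Hamiltonian vector field has size $h^{\alpha-\beta}$, and the closing/shadowing constants grow at most like $\mathrm{e}^{CL}$. For $L\le\mathfrak c|\log h|$ with $\mathfrak c$ small, this is $\ll h^\beta$, since $\alpha-\beta>\beta$ by Condition \ref{assum: on alpha beta}. Hence $\pi(\gamma_\omega)$ stays in the $\tfrac{C_0}{2}h^\beta$-neighbourhood of $\pi(\gamma_{i'})$, provided the constant $C_0$ of Lemma \ref{lem: bunches periodic orbits} is chosen large compared with the radius of the support balls.

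The second step uses Condition \ref{assum: condition potential}(3): $\operatorname{Supp}\partial_{\omega_j}p_{\delta,\omega}$ projects into $B(x_j,h^\beta)$ for every $\delta$. Since $\operatorname{dist}(x_j,\gamma_{i'})\ge C_0h^\beta$, the symbol $p_{\delta,\omega}$ is independent of $\omega_j$ on a neighbourhood of $\gamma_\omega$. It is also independent of $\omega_j$ on a neighbourhood of the whole one-parameter family of orbits obtained as $\omega_j$ varies in $[-1,1]$, by a continuity argument. Therefore the closed orbit, which is characterised by a local fixed-point problem for the Poincaré map near $\gamma_{i'}$, does not move as $\omega_j$ varies, and neither does its action $\oint\xi\,\mathrm{d}x$. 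The term $\int_{\gamma_{i'}}q_\omega$ is independent of $\omega_j$ too, since $q_j$ vanishes near $\gamma_{i'}$ by item (3). Hence $\partial_{\omega_j}r_\omega(\gamma_{i'})=0$ for all $\omega_j\in[-1,1]$.

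The main obstacle I expect is making the localization rigorous. Depending on how \cite{MoyLongTime} defines $S_\omega$, it may instead be given by an interpolation in $\delta\in[0,h^\alpha]$, namely $\frac{\mathrm d}{\mathrm d\delta}S_{\delta,\omega}(\gamma)=-\int_{\gamma_{\delta,\omega}}\partial_\delta p_{\delta,\omega}$. In that case I would differentiate this identity in $\omega_j$ and use uniqueness of the perturbed orbit: the $\omega_j$-derivative of $\gamma_{\delta,\omega}$ solves a linearized problem whose source is $\partial_{\omega_j}H_{p_{\delta,\omega}}$ restricted to $\gamma_{\delta,\omega}$. That source vanishes identically, and invertibility of $I-P_\gamma$ then forces the derivative of the orbit to vanish. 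The integrand $\partial_{\omega_j}\partial_\delta p_{\delta,\omega}$ also vanishes on $\gamma_{\delta,\omega}$. Integrating in $\delta$ then gives the claim.
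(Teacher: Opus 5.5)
Your proposal is correct and uses the same locality mechanism as the paper. The continued orbit $\gamma_{\delta,\omega}$ stays in an $\mathcal O(h^{\alpha-\beta}\mathrm{e}^{CL})\ll h^\beta$ neighbourhood of $\gamma_{i'}$, uniformly in $\delta\in[0,h^\alpha]$ and $\omega$, so it never meets the support of $\partial_{\omega_j}p_{\delta,\omega}$. The only difference is in the closing step. The paper applies the first-variation formula $\partial_{\omega_j}S(\gamma_{\delta,\omega})=-\int_{\gamma_{\delta,\omega}}\partial_{\omega_j}p_{\delta,\omega}=0$ inside the Taylor-remainder expression for $r_\omega$, which avoids having to show that the orbit itself does not move. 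You instead prove that the orbit is fixed, via local uniqueness or invertibility of the linearized fixed-point problem, which works but is an extra step.
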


\begin{proof} To prove this result, we need to make the definition of $r_\omega$ explicit. Fix $\omega\in [-1,1]^{J_h}$. Let $\Phi_{\delta,\omega}^t$ denote the Hamiltonian flow generated by $p_{\delta,\omega}$. We proved in \cite{MoyLongTime} that for every periodic orbit $\gamma=\gamma_0\subset p^{-1}(E)$ of the flow $\Phi^t_0$, there is a smooth variation of periodic orbits $\gamma_{\delta,\omega}\subset p_{\delta,\omega}^{-1}(E)$ of the flow $\Phi_{\delta,\omega}^t$. Moreover for every point $\rho_0\in \gamma_0$, one can find a smooth variation of points $\rho_{\delta,\omega}\in \gamma_{\delta,\omega}$. Denoting by $T_{\delta,\omega}$ the period of $\gamma_{\delta,\omega}$ and introducing $z_{\delta,\omega}:=(T_{\delta,\omega},\rho_{\delta,\omega})$, we know from \cite{MoyLongTime} (see the proof of Proposition \ref{prop: bounds derivatives omega r} below) that $|\partial_{\delta}z_{\delta,\omega}|\lesssim h^{-\beta} \mathrm{e}^{CT_\gamma}$ uniformly in $\delta\in [0,h^\alpha]$.
There is also an explicit formula for the variation of the action of $\gamma_{\delta,\omega}$, given by
\begin{equation} \label{eq: partial delta S delta}\partial_\delta S(\gamma_{\delta,\omega})=-\int_{\gamma_{\delta,\omega}}\partial_\delta p_{\delta,\omega}.\end{equation}
A consequence of the estimates on $z_{\delta,\omega}$ is that for $\delta\in [0,h^\alpha]$, the orbit $\gamma_{\delta,\omega}$ is contained in a neighborhood of size $\mathcal O(h^{\alpha-\beta}\mathrm{e}^{CL})$ of $\gamma_0$. The function $r_\omega(\gamma)$ is defined by
\[S(\gamma_{h^\alpha,\omega})=S(\gamma_0)-h^\alpha \frac{\partial S(\gamma_{\delta,\omega})}{\partial \delta}\Big|_{\delta=0}+r_{\omega}(\gamma).\]
By Taylor's formula, one has
\begin{equation} \label{eq: express r omega}r_\omega(\gamma)=\int_0^{h^\alpha} (h^\alpha-\delta)\partial_{\delta}^2 S(\gamma_{\delta,\omega}) \mathrm{d}\delta.\end{equation}
By differentiating under the integral in \eqref{eq: express r omega}:
\begin{equation} \label{eq: partial omega j r}\partial_{\omega_j}r_\omega(\gamma)=\int_0^{h^\alpha} (h^\alpha-\delta)\partial_{\delta}^2 \partial_{\omega_j}S(\gamma_{\delta,\omega}) \mathrm{d}\delta.\end{equation}
In the notation of the lemma, starting from $\gamma=\gamma_{i'}$, it is enough to prove that $\partial_{\omega_j}S(\gamma_{\delta,\omega})=0$ for all $\delta\in [0,h^\alpha]$ to finish the proof. By \eqref{eq: partial delta S delta}, one has
\[\partial_{\omega_j}S(\gamma_{\delta,\omega})=-\int_{\gamma_{\delta,\omega}}\partial_{\omega_j}p_{\delta,\omega}.\]
Let $i\neq i'$ and take $j\in \widetilde J_h(\gamma_i)$. We claim that $\gamma_\delta$ does not intersect the support of $\partial_{\omega_j}p_{\delta,\omega}$, for all $\delta\in [0,h^\alpha]$. By construction, we have $\operatorname{dist}(x_j,\gamma_i')\ge C_0h^\beta$. Since $\gamma_{\delta,\omega}$ is contained in a $\mathcal O(h^{\alpha-\beta}\mathrm{e}^{CL})$-neighborhood of $\gamma_{i'}$, we deduce
\[\forall \delta\in [0,h^\alpha], \qquad\operatorname{dist}(x_j,\gamma_{\delta,\omega})\ge C_0h^{\beta}-C_1 h^{\alpha-\beta}\mathrm{e}^{CL}\]
for another constant $C_1>0$. We choose $L\le \mathfrak c|\log h|$ with $\mathfrak c$ small enough so that $h^{\alpha-2\beta}\mathrm{e}^{CL}\le \frac{C_0}{2C_1}$. Then, for $h$ small enough, one has
\[\forall \delta\in [0,h^\alpha], \qquad \gamma_\delta\cap B(x_j,h^\beta)=\varnothing.\]
This allows us to conclude.
	
\end{proof}
Before turning to the proof of Proposition \ref{prop: averages actions}, we give estimates on the derivatives of $r_\omega(\gamma)$.
\begin{prop}\label{prop: bounds derivatives omega r} One has
	\begin{equation} \label{eq: prop partial omega i r}	|\partial_{\omega_i}r_{\omega}(\gamma)|\lesssim h^{2\alpha} h^{-\beta} \mathrm{e}^{CT_\gamma}.\end{equation}
\end{prop}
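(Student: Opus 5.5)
We start from the Taylor representation \eqref{eq: partial omega j r}, with $\omega_j$ replaced by $\omega_i$:
\[\partial_{\omega_i}r_\omega(\gamma)=\int_0^{h^\alpha}(h^\alpha-\delta)\,\partial_\delta^2\partial_{\omega_i}S(\gamma_{\delta,\omega})\,\mathrm{d}\delta.\]
The integration produces a factor $h^{2\alpha}/2$. It therefore suffices to show
\[\sup_{\delta\in[0,h^\alpha]}\big|\partial_\delta^2\partial_{\omega_i}S(\gamma_{\delta,\omega})\big|\lesssim h^{-\beta}\mathrm{e}^{CT_\gamma}.\]

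The variational formula \eqref{eq: partial delta S delta} holds for any parameter of the Hamiltonian family. Applying it to the parameter $\omega_i$ gives
\[\partial_{\omega_i}S(\gamma_{\delta,\omega})=-\int_0^{T_{\delta,\omega}}(\partial_{\omega_i}p_{\delta,\omega})\big(\Phi^t_{\delta,\omega}(\rho_{\delta,\omega})\big)\,\mathrm{d}t.\]
Since $p_{0,\omega}=p$, the integrand vanishes at $\delta=0$. We then differentiate twice in $\delta$. Each $\delta$-derivative falls in one of three places:
\begin{itemize}
\item on the symbol $\partial_{\omega_i}p_{\delta,\omega}$ itself, which costs nothing by Condition \ref{assum: condition potential}(1), since $\partial_\delta^n\partial_{\omega_i}p_{\delta,\omega}\in S^m_\beta$;
\item on the endpoint $T_{\delta,\omega}$ or the base point $\rho_{\delta,\omega}$;
\item on the flow $\Phi^t_{\delta,\omega}$.
\end{itemize}

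For the second and third cases we need the estimates on $z_{\delta,\omega}=(T_{\delta,\omega},\rho_{\delta,\omega})$ from \cite{MoyLongTime}. These come from the implicit function theorem applied to $F(\delta,T,\rho)=\Phi^T_{\delta,\omega}(\rho)-\rho$ restricted to a Poincaré section in the energy shell. The inverse of the linearization, essentially $(I-P_\gamma)^{-1}$, is bounded by $\mathrm{e}^{CT_\gamma}$. Each $\partial_\delta$ of $F$ costs $h^{-\beta}\mathrm{e}^{CT_\gamma}$: an $h^{-\beta}$ from one derivative of a symbol in $S_\beta$, and $\mathrm{e}^{CT}$ from Grönwall along the flow. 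This gives $|\partial_\delta z|\lesssim h^{-\beta}\mathrm{e}^{CT}$ and $|\partial_\delta^2 z|\lesssim h^{-2\beta}\mathrm{e}^{CT}$. Likewise, $\partial_\delta\Phi^t_{\delta,\omega}$ and $\partial_\delta^2\Phi^t_{\delta,\omega}$ are bounded by $h^{-\beta}\mathrm{e}^{Ct}$ and $h^{-2\beta}\mathrm{e}^{Ct}$, using the variational equations and Grönwall.

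The crude count then gives too many powers of $h^{-\beta}$:
\begin{itemize}
\item spatial derivatives of $\partial_{\omega_i}p_{\delta,\omega}$ cost $h^{-\beta}$ each;
\item the derivatives of $z$ and $\Phi^t$ cost $h^{-\beta}$ each.
\end{itemize}
Two $\delta$-derivatives could thus produce up to $h^{-4\beta}$ or $h^{-3\beta}$, far more than the single $h^{-\beta}$ we need.

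The main obstacle is to avoid these extra losses. Two points help.
\begin{itemize}
\item The integrand is supported where the orbit meets $B(x_i,h^\beta)$. By Proposition \ref{prop: self intersect base} this is a single segment of time-length $\lesssim h^\beta$, for the unperturbed orbit and hence, by the $\mathcal O(h^{\alpha-\beta}\mathrm{e}^{CL})$ proximity used in Lemma \ref{lem: derivees r omega zero}, for $\gamma_{\delta,\omega}$ too. Restricting the time integral to this segment gains a factor $h^\beta$.
\item The exact bound claimed is $h^{2\alpha-\beta}$. Since $\alpha>2\beta$, we may absorb $h^{-k\beta}$ losses only if they are compensated; otherwise the approach must be refined.
\end{itemize}

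The refinement I would try first is to use the first-order (Hellmann–Feynman type) structure twice. We write
\[\partial_\delta\partial_{\omega_i}S=-\partial_{\omega_i}\int_{\gamma_{\delta,\omega}}\partial_\delta p_{\delta,\omega},\]
so that the stationarity of the action removes the first derivative of the orbit. Only one derivative of $z$ and of $\Phi$ then remains, together with $\partial_{\omega_i}$ of the orbit. The latter is bounded by $h^{\beta}\cdot h^{-\beta}\mathrm{e}^{CT}$, since $\partial_{\omega_i}p$ is localized and the orbit spends time $\lesssim h^\beta$ in its support. Combining the localization gain $h^\beta$ with at most $h^{-2\beta}$ from the remaining derivatives should give the claimed $h^{-\beta}\mathrm{e}^{CT_\gamma}$.
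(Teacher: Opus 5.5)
Your reduction matches the paper's. By \eqref{eq: partial omega j r} it suffices to show $|\partial_\delta^2\partial_{\omega_i}S(\gamma_{\delta,\omega})|\lesssim h^{-\beta}\mathrm{e}^{CT_\gamma}$ uniformly in $\delta\in[0,h^\alpha]$, and your variational formula for $\partial_{\omega_i}S$ and your time-localization remark are correct. But you never prove this bound. After the crude count overshoots, your last paragraph only says the refinement ``should give'' it, and the bookkeeping there does not work.

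You bound the $\omega_i$-variation of the orbit by $h^{\beta}\cdot h^{-\beta}\mathrm{e}^{CT_\gamma}=\mathrm{e}^{CT_\gamma}$. With that bound, consider the term $\int_{\gamma_{\delta,\omega}}\nabla^2(\partial_\delta p_{\delta,\omega})(\partial_\delta\Phi,\partial_{\omega_i}\Phi)$, which appears when you apply $\partial_\delta\partial_{\omega_i}$ to $\int_{\gamma_{\delta,\omega}}\partial_\delta p_{\delta,\omega}$. You can only bound it by $\mathcal O(T_\gamma h^{-3\beta}\mathrm{e}^{CT_\gamma})$, and no localization gain is available for it. The reason is that $\partial_\delta p_{\delta,\omega}\approx q_\omega$ involves all the bumps. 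Moreover, the $\omega_i$-variation of a \emph{periodic} orbit is spread along the whole orbit, because the closing condition moves $\rho_{\delta,\omega}$. Separately, your claim $|\partial_\delta^2 z|\lesssim h^{-2\beta}\mathrm{e}^{CT_\gamma}$ is not justified. The source term $\mathrm{d}H_{\partial_\delta p}\cdot\partial_\delta\Phi$ in the second variational equation is already of size $h^{-2\beta}\cdot h^{-\beta}$, so the available bound is $h^{-3\beta}\mathrm{e}^{CT_\gamma}$, which is what the paper records.

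The missing idea is one you noticed but did not quantify. Since $p_{0,\omega}=p$ and $\partial_\delta\partial_{\omega_i}p_{\delta,\omega}\in S^m_\beta$ uniformly, the symbol $a_\delta:=\partial_{\omega_i}p_{\delta,\omega}=\int_0^\delta\partial_s\partial_{\omega_i}p_{s,\omega}\,\mathrm{d}s$ lies in $\delta\, S^m_\beta$. So every term in which no $\delta$-derivative falls on $a_\delta$ carries a factor $\delta\le h^\alpha$. Since $h^{\alpha-2\beta}\mathrm{e}^{CL}\le 1$ for $L\le\mathfrak c|\log h|$, this factor absorbs the excess losses.

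With this, your first route closes directly. Take the base point with $\pi(\rho_0)\notin B(x_i,2h^\beta)$, so boundary terms vanish. Then $\partial_\delta^2\int_0^{T_{\delta,\omega}}a_\delta(\Phi^t_{\delta,\omega}\rho_{\delta,\omega})\,\mathrm{d}t$ splits into terms whose integrands are supported in a time set of measure $\mathcal O(T_\gamma h^\beta)$:
\begin{itemize}
\item $\int\partial_\delta^2a_\delta=\mathcal O(T_\gamma h^{\beta})$;
\item $2\int\nabla\partial_\delta a_\delta\cdot\partial_\delta\Phi=\mathcal O(h^{-\beta}\cdot h^{-\beta}\mathrm{e}^{CT_\gamma}\cdot T_\gamma h^{\beta})$;
\item $\int\nabla^2a_\delta(\partial_\delta\Phi,\partial_\delta\Phi)$ and $\int\nabla a_\delta\cdot\partial_\delta^2\Phi$, both $\mathcal O(\delta h^{-3\beta}\mathrm{e}^{CT_\gamma})$.
\end{itemize}
Your refinement also works once you use $|\partial_{\omega_i}(\text{orbit})|\lesssim\delta\mathrm{e}^{CT_\gamma}\lesssim h^{2\beta}$ and $|\partial_\delta\partial_{\omega_i}(\text{orbit})|\lesssim\mathrm{e}^{CT_\gamma}$.

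The paper encodes the same mechanism in the generating-function estimate \eqref{eq: we recall 2}, where trading a $\partial_\delta$ for a $\partial_{\omega_j}$ gains $h^{3\beta}$. It then differentiates $S(\gamma_{\delta,\omega})=\Psi(z_{\rm c})$ three times. Using $|\partial_{\omega_i}z_{\rm c}|\lesssim h^{2\beta}$, $|\partial_{\delta\omega_i}z_{\rm c}|\lesssim\mathrm{e}^{CL}$ and $\nabla_z\Psi(z_{\rm c})=0$, it checks that each resulting term is $\mathcal O(h^{-\beta}\mathrm{e}^{CL})$.
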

\begin{proof} 
We recall some results from \cite[\S 3,\S 6.7]{MoyLongTime}. There, we built a long time generating function for the flow $\Phi^t_{\delta,\omega}$ in coordinates. More precisely, let $\rho_0\in T^*M$ and set $\rho_1=\Phi_0^{t_0}(\rho_0)$. Then, there are local symplectic coordinates $(y,\eta)$ near $\rho_0$ and $(x,\xi)$ near $\rho_1$ (that map $\rho_0$ and $\rho_1$ to $(0,0)$, respectively), and a local generating function $\varphi_{\delta,\omega}(t,x,\eta)$ for the flow $\Phi^t_{\delta,\omega}$ such that 
\[\Phi^t_{\delta,\omega}(y,\eta)=(x,\xi)\Longleftrightarrow (y,\xi)=(\nabla_\eta \varphi_{\delta,\omega},\nabla_x\varphi_{\delta,\omega}).\]
Let $\gamma$ be a periodic orbit of the flow $\Phi_0^t$ of period $T_\gamma \le L\le \mathfrak c|\log h|$. Let $\rho_0\in \gamma$ and $t_0=T_\gamma$, so that $\rho_1=\rho_0$ in the above notation. We can thus take the same chart near $\rho_0$ and $\rho_1$. We can also design the local coordinates to ensure that the short-time flow $\Phi_0^s$ becomes the $s$-translation along the $x_1$ coordinates. We write $x=(x_1,x')\in \mathbf{R}\times \mathbf{R}$ and set
\[\Psi_{\delta,\omega}(t,x,\xi)=\varphi_{\delta,\omega}(t,x,\xi)-\langle x,\xi\rangle+tE.\]
The critical points of the phase correspond to periodic points of the flow $\Phi_{\delta,\omega}^t$ in the energy shell $p_{\delta,\omega}^{-1}(E)$. We fix $x_1=0$, and introduce the variable $z=(t,x',\xi)$, which parameterizes a Poincar\'e section for the flow. For $\delta=0$, one has $\nabla_z \Psi_0(0)=0$, because $\rho_0\equiv (0,0)$ is a periodic point of the flow $\Phi^t_0$. Then, by perturbation, for any $\delta\in [0,h^\alpha]$, there is a unique critical point $z_{\rm c}$ of $\Psi_{\delta,\omega}$ (that depends on $\delta,\omega$), which corresponds to a periodic point of the perturbed flow $\Phi_{\delta,\omega}^t$. We drop the $\delta,\omega$ subscripts to ease notation. The variations of $z_{\rm c}$ with respect to $\delta$ are given by
\begin{equation} \label{eq: partial delta z delta in proof} \partial_\delta z_{\rm c}=-(\nabla_z^2\Psi)^{-1}\partial_\delta \nabla_z \Psi.\end{equation}
Moreover, the value of the phase $\Psi(z_{\rm c})$ corresponds to the action of an orbit of the perturbed flow, and is thus independent of the point $\rho_0\in \gamma$. We denote this action by $S(\gamma_{\delta,\omega})$. We recall from \cite[Proposition 3.31]{MoyLongTime} the following estimates on the phase function $\Psi$. Assume $T_\gamma\le L$. Then, uniformly in $\delta\in [0,h^\alpha]$ and $\omega\in [-1,1]^{J_h}$, one has
\begin{equation} \label{eq: we recall the estimates}\forall n\ge 1,\forall k\ge 0 \quad |\partial_\delta^n\partial_z^k \Psi|\lesssim h^{-\beta(k+2n-2)}\mathrm{e}^{CL(2n+k-2)_+}.\end{equation}
Moreover, for all $j\in J_h$, one has
\begin{equation}\label{eq: we recall 2} \qquad \forall n\ge 1,\forall k\ge 0 \ |\partial_\delta^{n-1}\partial_{\omega_j}\partial_z^k\Psi|\lesssim h^{-\beta(k+2n-5)}\mathrm{e}^{CL(2n+k-2)_+}.\end{equation}
From the formula \eqref{eq: partial omega j r}, to obtain the sought estimate \eqref{eq: prop partial omega i r}, it is enough to prove that $|\partial_{\omega_j}\partial_\delta^2 S(\gamma_{\delta,\omega})|\lesssim h^{-\beta}$ uniformly in $0\le \delta\le h^\alpha$. By differentiating three times the identity $S(\gamma_{\delta,\omega})=\Psi(z_{\rm c})$, one finds
\begin{equation}\label{eq: d omega delta delta S}\begin{aligned}
		\partial^3_{\delta\delta \omega_i}S(\gamma_{\delta,\omega})& ={\color{black} (\partial^3_{\delta \delta\omega_i} \Psi)(z_{\rm c})+(\partial^3_{\delta\omega_iz}\Psi)\partial_\delta z_{\rm c}} \\
		& +{\color{black}\Big[\partial_{z \delta\omega_i}^3 \Psi +\partial^3_{zz\omega_i}\Psi \partial_{\delta}z_{\rm c}\Big]\partial_\delta z_{\rm c}+\partial_{z\omega_i}^2 \Psi \partial_{\delta\delta}^2z_{\rm c}} \\
		& +{\color{black} \Big[\partial_{z\delta\delta}^3 \Psi +\partial^3_{zz\delta}\Psi \partial_\delta z_{\rm c}\Big]\partial_{\omega_i}z_{\rm c}+\partial_{z\delta}^2\Psi \partial_{\delta\omega_i}^2z_{\rm c}} \\
		& +{\color{black}\Big[\partial_{\delta zz}^3\Psi +\partial_{zzz}^3\Psi \partial_{\delta}z_{\rm c}\Big]\partial_{\delta}z_{\rm c}\partial_{\omega_i}z_{\rm c} + \partial_{zz}^2\Psi\Big[\partial_{\delta\delta}^2z_{\rm c}\partial_{\omega_i}z_{\rm c}+\partial_{\delta}z_{\rm c}\partial_{\delta\omega_i}^2z_{\rm c}\Big]} \\
		& + {\color{black}\partial^2_{\delta z}\Psi\partial_{\delta\omega_i}^2z_{\rm c}+\partial_z\Psi\partial_\delta z_{\rm c}\partial_{\delta\omega_i}^2z_{\rm c}+\partial_z\Psi\partial^3_{\delta\delta\omega_i}z_{\rm c}}.
\end{aligned}\end{equation}
Here $\Psi$ and its derivatives are evaluated at $z_{\rm c}$. In particular, since $\partial_z\Psi(z_{\rm c})=0$, the last term vanishes, so we do not need to control $\partial^3_{\delta\delta\omega_i}z_{\rm c}$. The variations of the critical point $z_{\rm c}$ are estimated as follows, using \eqref{eq: partial delta z delta in proof} and \eqref{eq: we recall the estimates}--\eqref{eq: we recall 2}. There is a constant $C>0$ such that one has uniformly in $0\le \delta\le h^\alpha$:
	\begin{itemize}
		\item $|\partial_{\omega_i} z_{\rm c}|\lesssim h^{2\beta}$,
		\item $|\partial_{\delta}z_{\rm c}|\lesssim h^{-\beta}\mathrm{e}^{CL}$,
		\item $|\partial_{\delta\omega}z_{\rm c}|\lesssim \mathrm{e}^{CL}$,
		\item $|\partial_{\delta\delta}^2z_{\rm c}|\lesssim h^{-3\beta}\mathrm{e}^{CL}$.
	\end{itemize}
Finally, we evaluate each term of \eqref{eq: d omega delta delta S} separately using the estimate \eqref{eq: we recall the estimates} on the function $\Psi$ with the above estimates of $z_{\rm c}$. Each of them is $\mathcal O(h^{-\beta}\mathrm{e}^{CL})$, except the last one that simply vanishes because $\nabla_z\Psi(z_{\rm c})=0$, as we mentioned above.
\end{proof}

We are ready to prove Proposition \ref{prop: averages actions}.

\begin{proof}[Proof of Proposition \ref{prop: averages actions}] \textbf{1.} We prove the first item, where we assume that the perturbation preserves time-reversal symmetry. With Lemma \ref{lem: derivees r omega zero} at hand, the proof is an adaptation of \cite[Proposition 9.2]{ingremeau2023emergence}, which had $n=1$. We use the sets $\widetilde J_h(\gamma_i)$ from Lemma \ref{lem: bunches periodic orbits}. We introduce shorthands
		\[\mathbf I:=\mathbf E_\omega\Big[\exp\Big(\frac{\mathrm{i}}{h}\sum k_i S_{\omega}(\gamma_i)\Big)\Big],\]
		and
	\[\Phi(\omega):=-h^\alpha\sum_{j\in J_h} \omega_j\Big(\sum_{i=1}^n k_i\int_{\gamma_i}q_j\Big), \quad R(\omega):=\sum_{i=1}^n r_\omega(\gamma_i),\]
	We let $\bm m(\omega):=\prod_{j\in J_h} m(\omega_j)$ denote the product density on $[-1,1]^{J_h}$. Then,
	\begin{equation} \label{eq: esperance is integral}\mathbf I=\int_{[-1,1]^{J_h}} \exp\Big(\frac{\mathrm{i}}{h} \big(\Phi(\omega) +R(\omega)\big)\Big) \bm m(\omega)\mathrm{d}\omega.\end{equation}
	For $i\in \{1,\ldots,n\}$, let $\theta^{(i)}= (\mathbf 1_{j\in \widetilde J_h(\gamma_i)}  k_i\theta_j(\gamma_i))_{j\in J_h}$. Observe that the vectors $\theta^{(i)}$ have mutually disjoint supports, for $i=1,\ldots,n$. By Lemma \ref{lem: Jh(gamma)} and Lemma \ref{lem: bunches periodic orbits}, we have a lower bound
	\[\|\theta^{(i)}\|^2\gtrsim k_i^2 T_{\gamma_i}h^{-\beta}h^{2\beta}\gtrsim h^{\beta}.\]
	Note that
	\[(\theta^{(i)}\cdot \nabla_{\omega}) \mathrm{e}^{\frac{\mathrm{i}}{h}\Phi}=\frac{-\mathrm{i}h^\alpha}{h} \sum_{j\in \widetilde J_h(\gamma_i)} k_i^2\theta_j(\gamma_i)^2\mathrm{e}^{\frac{\mathrm{i}}{h}\Phi}=- \mathrm{i}h^{\alpha-1}\|\theta^{(i)}\|^2\mathrm{e}^{\frac{\mathrm{i}}{h}\Phi},\]
	and more generally,
	\[(\theta^{(1)}\cdot \nabla_{\omega})\cdots(\theta^{(n)}\cdot \nabla_{\omega})\mathrm{e}^{\frac{\mathrm{i}}{h}\Phi}=\mathrm{e}^{\frac{\mathrm{i}}{h}\Phi} (-\mathrm{i}h^{\alpha-1})^n \prod_{i=1}^n \|\theta^{(i)}\|^2.\]
	By performing $n$ integrations by parts, we obtain
	\begin{multline} \label{eq: int by parts}
	\mathbf I =\frac{(\mathrm{i}h)^{n(1-\alpha)}}{\prod_{1\le i\le n} \|\theta^{(i)}\|^2}\int_{[-1,1]^{J_h}} \mathrm{e}^{\frac{\mathrm{i}}{h}\Phi(\omega)} (\theta^{(1)}\cdot \nabla_{\omega})\cdots ( \theta^{(n)}\cdot \nabla_\omega) \big(\mathrm{e}^{\frac{\mathrm{i}}{h} R(\omega)} \bm m(\omega)\big) \mathrm{d}\omega.
	\end{multline}
	We set $\bm m^i(\omega):=\prod_{j\in \widetilde J_h(\gamma_i)} m(\omega_i)$, and $\widetilde{\bm m}(\omega):=\prod_{j\notin \bigsqcup_i \widetilde J_h(\gamma_i)} m(\omega_j)$. One can then rewrite
	\[\mathrm{e}^{\frac{\mathrm{i}}{h} R(\omega)}\bm m(\omega)= \Big(\prod_{1\le i\le n} \mathrm{e}^{\frac{\mathrm{i}}{h} r_{\omega}(\gamma_i)} \bm m^i(\omega)\Big)\widetilde {\bm m}(\omega).\]
	By Lemma \ref{lem: derivees r omega zero} and the fact that the vectors $\theta^{(i)}$ have disjoint support, the derivative acts diagonally on the product:
	\[(\theta^{(1)}\cdot \nabla_{\omega})\cdots (\theta^{(n)}\cdot \nabla_\omega) \big(\mathrm{e}^{\frac{\mathrm{i}}{h} R(\omega)} \bm m(\omega)\big)=\Big(\prod_{i=1}^n (\theta^{(i)}\cdot \nabla_{\omega}) (\mathrm{e}^{\frac{\mathrm{i}}{h}r_\omega(\gamma_i)} \bm m^i(\omega))\Big) \widetilde {\bm m}(\omega).\]
	For $i\in \{1,\ldots,n\}$, define
	\[I^{(i)}:=\int_{[-1,1]^{\widetilde J_h(\gamma_i)}} \big|(\theta^{(i)}\cdot \nabla_{\omega}) \mathrm{e}^{\frac{\mathrm{i}}{h}r_\omega(\gamma_i)}\bm m^i(\omega)\big| \mathrm{d}\omega.\]
	Then, by \eqref{eq: esperance is integral} and \eqref{eq: int by parts}, 
	\begin{equation} \label{eq: bound esp by prod}|\mathbf I|\le h^{n(1-\alpha)} \prod_{1\le i\le n}\frac{|I^{(i)}|}{\|\theta^{(i)}\|^2}.\end{equation}
	We control each of the $|I^{(i)}|$ separately. The end of the argument is the same as in \cite{ingremeau2023emergence} but we reproduce it here. By the triangle inequality, one has
	\[|I^{(i)}|\le \int \big|h^{-1}\theta^{(i)}\cdot \nabla_{\omega} r_\omega(\gamma_i)|\bm m^i(\omega)\mathrm{d}\omega+\int |\theta^{(i)}\cdot \nabla_\omega \bm m^i(\omega)|\mathrm{d}\omega=:I_1^{(i)}+I_2^{(i)}.\]
	To deal with $I_1^{(i)}$, we use Proposition \ref{prop: bounds derivatives omega r} and the second part of Lemma \ref{lem: Jh(gamma)}, together with the Cauchy--Schwarz inequality, to obtain:
	\[|\theta^{(i)}\cdot \nabla_\omega r_\omega(\gamma_i)|\le \|\theta^{(i)}\|\cdot \Big(\sum_{j\in J_h} |\partial_{\omega_j} r_{\omega}(\gamma_i)|^2\Big)^{1/2}\le C_n\|\theta^{(i)}\| h^{2\alpha-\frac{3}{2}\beta}\mathrm{e}^{CL}.\]
This leads to 
	\begin{equation} \label{eq: bound I(i)1}|I_1^{(i)}|\le C_n\|\theta^{(i)}\| h^{2\alpha-1-\frac{3}{2}\beta}\mathrm{e}^{CL}.\end{equation}
	We now deal with $I_2^{(i)}$. One has
	\[(\theta^{(i)}\cdot \nabla_{\omega}) \bm m^i(\omega)=\sum_{j\in \widetilde J_h(\gamma_i)} \theta^{(i)}_j \frac{m'(\omega_j)}{m(\omega_j)}\bm m^i(\omega).\]
	By Cauchy--Schwarz inequality, we find
	\[|I_2^{(i)}|\le \Big(\int_{[-1,1]^{\widetilde J_h(\gamma_i)}} \Big|\sum_{j\in \widetilde J_h(\gamma_i)} \theta^{(i)}_j \frac{m'(\omega_j)}{m(\omega_j)}\Big|^2 \bm m^i(\omega)\mathrm{d}\omega\Big)^{1/2}.\]
	We develop the square:
	\begin{align*}\Big|\sum_{j\in \widetilde J_h(\gamma_i)} \theta^{(i)}_j \frac{m'(\omega_j)}{m(\omega_j)}\Big|^2\bm m^i(\omega)& =\sum_{j\in \widetilde J_h(\gamma_i)} (\theta^{(i)}_j)^2 \frac{m'(\omega_j)^2}{m(\omega_j)}\prod_{\ell\neq j} m(\omega_\ell) \\ & +\sum_{\substack{j,k\in \widetilde J_h(\gamma_i) \\ j\neq k}} \theta^{(i)}_j\theta^{(i)}_k m'(\omega_j)m'(\omega_k)\prod_{\ell\notin\{j,k\}} m(\omega_\ell).
	\end{align*}
	The second sum integrates to $0$ over $[-1,1]^{\widetilde J_h(\gamma_i)}$ because for $j\neq k$ we have
	\[\int m'(\omega_j)m'(\omega_k)\mathrm{d}\omega_j\mathrm{d}\omega_k=0.\]
	To control the first sum, we observe that $(m')^2/m\le 4 \|m''\|_\infty$, which leads to
	\begin{equation} \label{eq: bound on I(i)2} |I^{(i)}_2|\le \Big(\int_{[-1,1]^{\widetilde J_h(\gamma_i)}} \sum_{j\in \widetilde J_h(\gamma_i)} (\theta^{(i)}_j)^2 \frac{m'(\omega_j)^2}{m(\omega_j)}\prod_{\ell\neq j} m(\omega_\ell) \mathrm{d}\omega\Big)^{1/2}\lesssim \|\theta^{(i)}\|.\end{equation}
	By combining \eqref{eq: bound I(i)1} with \eqref{eq: bound on I(i)2}, we find $|I^{(i)}|\lesssim \|\theta^{(i)}\|(1+h^{2\alpha-1-3/2\beta}\mathrm{e}^{CL})$. Recalling \eqref{eq: bound esp by prod}, we conclude
	\[\Big|\mathbf E_\omega\Big[\exp\Big(\frac{\mathrm{i}}{h}\sum k_i S_{\omega}(\gamma_i)\Big)\Big]\Big|\lesssim (h^{1-\alpha-\beta/2}+h^{\alpha-2\beta}\mathrm{e}^{CL})^n.\]
	So, it is enough to take $\mathfrak c(\alpha,\beta,\varepsilon)$ small enough to ensure $h^{1-\alpha-\beta/2}+h^{\alpha-2\beta}\mathrm{e}^{CL}\le h^\varepsilon$ when $L\le \mathfrak c|\log h|$.
	
	\textbf{2.} Let us explain the modifications in the case where the perturbation strongly breaks time reversal symmetry. We let $\gamma_1,\ldots,\gamma_n$ be as above with $\gamma_j\notin \{\gamma_i,\gamma_i^{-1}\}$, and take integers $k_i^+,k_i^-$. This time, we let
		\[\Phi(\omega)=-h^\alpha\sum_{j\in J_h}\omega_j\Big(\sum_{i=1}^n k_i^+\int_{\gamma_i} q_j+k_i^-\int_{\gamma_i^{-1}} q_j\Big).\]
		One sets $\theta^{(i)}=\big(\mathbf 1_{j\in \widetilde J_h(\gamma_i)}(k_i^+\theta_j(\gamma_i)+k_i^-\theta_j(\gamma_i^{-1}))\big)_{j\in J_h}$. One can check that when the pair $\{k_i^+,k_i^-\}$ is of the form $\{-1,1\}$ or $\{0,k\}$ with $k\neq 0$, then
		\[|k_i^+\theta_j(\gamma)+k_i^-\theta_j(\gamma^{-1})|\ge \min(|\theta_j(\gamma)|,|\theta_j(\gamma^{-1})|, |\theta_j(\gamma)-\theta_j(\gamma^{-1})|).\]
		Using \eqref{eq: defi time sym break} and the same arguments as in \textbf{1.}, we deduce that when $\{k_i^+,k_i^-\}$ is of the above form, we have $\|\theta^{(i)}\|^2\gtrsim h^\beta$. When $\{k_i^+,k_i^-\}$ is not of the above form, we cannot say anything. By repeating the end of the proof of \textbf{1.}, we deduce
		\[\Big|\mathbf E_\omega\Big[\exp\Big(\frac{\mathrm{i}}{h}\sum_{1\le i\le n} (k_i^+ S_{\omega}(\gamma_i)+k_i^- S_\omega(\gamma_i^{-1})\Big)\Big]\Big|\le C_n h^{\varepsilon N(\vec k^{\pm})},\]
		where $N(\vec k^{\pm})$ counts the number of pairs $\{k_i^+,k_i^-\}$ of the form $\{0,k\}$ with $k\neq 0$ or $\{-1,1\}$.
\end{proof}

\section{Proof of Proposition \ref{prop: moments N_osc}: perturbations preserving time reversal symmetry}\label{sec: proof T sym} This section is devoted to the proof of Proposition \ref{prop: moments N_osc} in the case where the perturbations preserve time-reversal symmetry in the sense of Definition \ref{defi: preserve T sym}. We introduce the shorthand $a_{\gamma,\varepsilon}:=\frac{T_\gamma^\sharp}{|I-P_{\gamma}|^{1/2}} \widehat f\Big(\varepsilon\frac{T_\gamma}{L}\Big)$ and also set $A_\gamma=|a_{\gamma,+}|=|a_{\gamma,-}|$. We rewrite the oscillating term in the trace formula
\[N_{\rm osc}(\omega,\mu)=\frac 1L\sum_{\varepsilon=\pm 1}\sum_{\gamma\in \mathcal P} a_{\gamma,\varepsilon} \mathrm{e}^{\frac{\mathrm{i}}{h} \varepsilon S_\omega(\gamma)}\mathrm{e}^{\mathrm{i}\mu \varepsilon T_\gamma/L}.\]
Notice that the summands take the same value for $\gamma$ and $\gamma^{-1}$. Let $\mathcal P_0$ denote the set of primitive periodic orbits quotiented by the relation $\gamma\sim \gamma^{-1}$. By writing each periodic orbit as a power of a primitive one, we are led to
\begin{equation} \label{eq: new exp nosc}N_{\rm osc}=\frac 2L\sum_{\varepsilon=\pm 1}\sum_{\gamma\in \mathcal P_0}\sum_{k\ge 1} a_{\gamma^k,\varepsilon} \mathrm{e}^{\frac{\mathrm{i}}{h} \varepsilon k S_\omega(\gamma)}\mathrm{e}^{\mathrm{i}\mu \varepsilon k T_\gamma/L}.\end{equation}
Taking \eqref{eq: new exp nosc} to the $n$-th power yields
\begin{equation} \label{eq: Nosc power n}N_{\rm osc}^{n}=\frac{2^n}{L^n}\sum_{\mathbf t=(\vec \gamma,\vec k,\vec \varepsilon)} \Big(\prod_{1\le i\le n} a_{\gamma_i^{k_i},\varepsilon_i}\Big) \exp\Big(\frac{\mathrm{i}}{h}\sum_{1\le i\le n} \varepsilon_ik_i S_{\gamma_i}\Big) \exp\Big(\mathrm{i}\mu\sum_{1\le i\le n} \varepsilon_ik_iT_{\gamma_i}/L\Big).\end{equation}
The sum runs over tuples $\mathbf t=(\vec \gamma,\vec k,\vec \varepsilon)$ with $\vec \gamma\in \mathcal P_0^n$, $\vec k\in (\mathbf N^*)^n$ and $\vec \varepsilon\in \{\pm 1\}^n$.
\begin{defi}\label{defi: tuples} A tuple $\mathbf t$ is \emph{diagonal} if $\sum \varepsilon_i k_i \gamma_i\equiv 0$ in the $\mathbf Z$-module $\mathbf Z[\mathcal P_0]$. A tuple $\mathbf t$ is \emph{primitive} if $\vec k=(1,\ldots,1)$ and if each $\gamma_i$ appears at most twice in the tuple $\vec \gamma$.
\end{defi}
The terminology is motivated by the fact that for $n=2$, a pair $(\gamma_1,k_1,\varepsilon_1;\gamma_2,k_2,\varepsilon_2)$ is diagonal if and only if $\gamma_1=\gamma_2$ and $k_1\varepsilon_1=-k_2\varepsilon_2$. When $\mathbf t$ is a diagonal tuple, the phase in \eqref{eq: Nosc power n} is trivial. We split the sum \eqref{eq: Nosc power n} into a \say{diagonal} and an \say{off-diagonal} part:
\begin{equation} \label{eq: def N osc diag}N_{\rm osc,\rm{diag}}^n:=\frac{2^n}{L^n} \sum_{\mathbf t~ \text{diag}}\Big(\prod_{1\le i\le n} a_{\gamma_i^{k_i},\varepsilon_i}\Big), \end{equation}
\begin{equation} \label{eq: def N osc off} N_{\rm osc,\rm{off}}^n:=\frac{2^n}{L^n} \sum_{\mathbf t~\text{nondiag}}\Big(\prod_{1\le i\le n} a_{\gamma_i^{k_i},\varepsilon_i}\Big)\exp\Big(\frac{\mathrm{i}}{h}\sum_{1\le i\le n} \varepsilon_ik_i S_\omega(\gamma_i)\Big) \exp\Big(\mathrm{i}\mu\sum_{1\le i\le n} \varepsilon_ik_iT_{\gamma_i}/L\Big).\end{equation}

\subsection{The diagonal part} \label{subsec: diagonal part} We first deal with the diagonal part \eqref{eq: def N osc diag}. Note that if $\mathbf t$ is diagonal then each $\gamma_i$ appears at least twice in $\vec \gamma$. Thus one can write $\{\gamma_1,\ldots,\gamma_n\}=\{g_1,\ldots,g_j\}$ with $g_1,\ldots,g_j\in \mathcal P_0$, for some $j\le n/2$.

\subsubsection{Discarding nonprimitive contributions} \label{subsec: discard nonprim 1} We denote by $\mathcal S_{\rm np}^{\rm diag}$ the subsum of \eqref{eq: def N osc diag} that runs over nonprimitive tuples. We show that it has a negligible contribution when $L\to +\infty$.

\begin{defi}For each tuple $\mathbf t=(\vec \gamma,\vec k,\vec \varepsilon)$, we define a map $K_{\mathbf t}:\mathcal P_0\to \{\text{multisets of integers}\}$, by 
\[K_{\mathbf t}(\gamma):=\big\{k_i~:~i\in \{1,\ldots,n\}\text{ such that } {\gamma_i=\gamma}\big\}.\]
By convention, $\operatorname{Supp} K_{\mathbf t}:=\{\gamma~:~\# K_{\mathbf t}(\gamma)\neq 0\}$. We let $|K_{\mathbf t}(\gamma)|:=\sum_{k\in K_{\mathbf t}(\gamma)} k$. \end{defi} When $\mathbf t$ is a diagonal tuple, the following holds:
\begin{itemize}
\item if $K_{\mathbf t}(\gamma)\neq \varnothing$ then $\# K_{\mathbf t}(\gamma)\ge 2$.
\item $\sum_\gamma \# K_{\mathbf t}(\gamma)=n$.
\item if $\mathbf t$ is nonprimitive, then there is some $\gamma$ with $|K_{\mathbf t}(\gamma)|\ge 3$.
\end{itemize}
We can then write
\[|\mathcal S_{\rm np}^{\rm diag}|\le C_nL^{-n} \sum_{\substack{\mathbf t ~\text{diagonal} \\ \text{and nonprimitive}}}\prod_{\gamma\in \mathcal P_0}\prod_{k\in K_{\mathbf t}(\gamma)} A_{\gamma^{k}}.\]
For each function $K:\mathcal P_0\to \{\text{multisets of integers}\}$ satisfying the three bullet properties above, there are at most $\mathcal O_n(1)$ tuples $\mathbf t=(\vec \gamma,\vec k,\vec \varepsilon)$ for which $K_{\mathbf t}=K$. Indeed, for each $i\in \{1,\ldots,n\}$ we have to choose $\gamma_i$ in $\operatorname{Supp}(K)$ which has size $\le n/2$, giving $\mathcal O_n(1)$ choices in total for $\vec \gamma$. Once $\vec \gamma$ is chosen there are $\mathcal O_n(1)$ choices for $\vec k$, and $\le 2^n$ choices for $\vec \varepsilon$. It follows that
\begin{align*}
|\mathcal S_{\rm np}^{\rm diag}| & \le C_nL^{-n} \sum_{K:\mathcal P_0\to \text{multisets}} \prod_{\gamma\in \operatorname{Supp}K}\prod_{k\in K(\gamma)} A_{\gamma^{k}}.\end{align*}
The sum runs over maps $K$ satisfying the three bullet properties above. Such a map is determined by the choice of the support of $K$, which is a set $\{g_1,\ldots,g_j\}$ of size $j\le n/2$, and the images $K_i=K(g_i)$ for $i\in \{1,\ldots,j\}$, where all $K_i$ satisfy $\#K_i\ge 2$ and one has $|K_1|\ge 3$ (up to changing labels). This leads to
\[|\mathcal S_{\rm np}^{\rm diag}|\le C_n L^{-n} \sum_{1\le j\le n/2} \sum_{g_1,\ldots,g_j}\sum_{\substack{K_1,\ldots,K_j, \\ \#K_i\ge 2, \  |K_1|\ge 3 \\ \sum \#K_i=n}} \prod_{1\le i\le j}\Big(\prod_{k\in K_i} A_{g_i^k}\Big).\]
At this point we invoke Lemma \ref{lem: poincare map powers} to obtain, for $\# K_i\ge 2$:
\[\prod_{k\in K_i} A_{g_i^k}\le C_n A_{g_i}^2  \mathrm{e}^{-c(|K_i|-2)T_{g_i}}.\]
It follows that
\begin{align}
|\mathcal S_{\rm np}^{\rm diag}|  & \le  C_n L^{-n} \sum_{1\le j\le n/2} \sum_{g_1,\ldots,g_j}\Big(\prod_{1\le i\le j} A_{g_i}^2\Big)\cdot \sum_{\substack{K_1,\ldots,K_j, \\ \#K_i\ge 2, \  |K_1|\ge 3 \\ \sum \# K_i=n}} \Big(\prod_{1\le i\le j} \mathrm{e}^{-c(|K_i|-2)T_{g_i}}\Big) \nonumber \\
& \le C_n L^{-n}\sum_{1\le j\le n/2} \sum_{g_1,\ldots,g_j} \Big(\prod_{1\le i\le j} A_{g_i}^2\Big) \mathrm{e}^{-cT_{g_1}} \nonumber \\
&\le C_n L^{-n}\sum_{1\le j\le n/2} \Big(\sum_{g_1} A_{g_1}^2\mathrm{e}^{-cT_{g_1}}\Big)\Big(\sum_{g} A_g^2\Big)^{j-1} \nonumber \\
& \le C_n L^{-n} L^{2\lfloor n/2-1\rfloor} \le C_nL^{-2}. \label{eq: bound Snp}\end{align}
To pass from the third line to the fourth line we have used the fact that $\sum_g A_g^2\lesssim L^2$ and $\sum_g A_g^2\mathrm{e}^{-cT_g}\lesssim 1$, which follows from \eqref{eq: formula for GOE}.

\subsubsection{Dealing with primitive contributions} We now study the subsum of \eqref{eq: def N osc diag} that consists of primitive diagonal tuples $\mathbf t=(\vec \gamma,\vec k,\vec \varepsilon)$ in the sense of Definition \ref{defi: tuples}, for which each $\gamma_i$ appears exactly twice in $\vec \gamma$, and $k_i=1$ for all $i$. This part is denoted by $\mathcal S_{\rm p}^{\rm diag}$. We can assume that $n=2p$ is even otherwise $\mathcal S_{\rm p}^{\rm diag}=0$. If $\mathbf t$ is a primitive diagonal tuple, then:
\begin{itemize}
\item $\operatorname{Supp} K_{\mathbf t}$ has cardinal $p$.
\item $\# K_{\mathbf t}(\gamma)=|K_{\mathbf t}(\gamma)|=2$ on $\operatorname{Supp}(K_{\mathbf t})$.
\end{itemize} 
Actually, $K_{\mathbf t}$ is completely determined by the function $\# K_{\mathbf t}$, which is in turn uniquely determined by its support, which has size $p$. For any subset $\Gamma\subset \mathcal P_0$ of cardinal $p$, there are $(2p)!$ primitive diagonal tuples $\mathbf t$ such that $\operatorname{Supp} K_{\mathbf t}=\Gamma$. Indeed, there are
\[\binom{2p}{2}\binom{2p-2}{2}\cdots\binom{2}{2}=\frac{(2p)!}{2^p}\]
choices for the tuple $(\gamma_1,\ldots,\gamma_{2p})$; and $2^p$ choices for $(\varepsilon_1,\ldots,\varepsilon_{2p})$ associated with it. This leads to
\[\mathcal S_{\rm p}^{\rm diag}=\frac{2^{2p}}{L^{2p}}\frac{(2p)!}{p!}\sum_{\substack{(\gamma_1,\ldots,\gamma_p)\in \mathcal P_0^p \\ \text{all distinct}}} \Big(\prod_{1\le i\le p}A_{\gamma_i}^2\Big).\]
We have divided by $p!$ to account for the fact that there are $p!$ bijective maps $\{1,\ldots,p\}\to \Gamma$. We deduce from \eqref{eq: formula for GOE} that
\begin{equation}
	\label{eq: bound Sp} \mathcal S_{\rm p}^{\rm diag}=\frac{(2p)!}{2^pp!} \Big(8L^{-2}\sum_{\gamma\in \mathcal P_0} A_\gamma^2\Big)^p+\mathcal O_p(L^{-2})=\frac{(2p)!}{2^pp!}\Sigma_{\rm GOE}^{2p}+\mathcal O_p(L^{-2}).
\end{equation}
Combining \eqref{eq: bound Snp} with \eqref{eq: bound Sp} leads to the following proposition.
\begin{prop} \label{prop: esperance diag} For all $n\ge 1$,
\[N_{\rm osc, \rm diag}^{n}=m_{n}+\mathcal O_n(L^{-2}).\]
\end{prop}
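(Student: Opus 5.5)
The proposition should follow by splitting the diagonal sum \eqref{eq: def N osc diag} into its primitive and nonprimitive parts, $N^n_{\rm osc,diag}=\mathcal S_{\rm p}^{\rm diag}+\mathcal S_{\rm np}^{\rm diag}$, and using the two estimates just established. Every diagonal tuple is either primitive or not, so the splitting is exhaustive.

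\textbf{Nonprimitive part.} The bound \eqref{eq: bound Snp} gives $|\mathcal S_{\rm np}^{\rm diag}|\le C_nL^{-2}$ for every $n$, whatever the parity of $n$. I would double-check the exponent count. If $j\le n/2$ orbits occur, each at least twice, with one of them of weight $|K_1|\ge 3$, then Lemma \ref{lem: poincare map powers} brings in a factor $\mathrm e^{-cT_{g_1}}$. This makes $\sum_{g_1}A_{g_1}^2\mathrm e^{-cT_{g_1}}=\mathcal O(1)$, while the remaining $j-1$ sums contribute $\mathcal O(L^{2(j-1)})$ each in total. We need $2(j-1)\le n-2$, which holds since $j\le n/2$. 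The one point to watch is the case $j=n/2$ with $n$ even and some $|K_i|\ge 3$. There $\#K_i=2$ for all $i$, so a nonprimitive tuple has some $k\ge 2$; this still forces $|K_1|\ge 3$, so the bound holds.

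\textbf{Primitive part.} When $n$ is odd, a primitive diagonal tuple would need each orbit to appear exactly twice, which is impossible. Hence $\mathcal S_{\rm p}^{\rm diag}=0=m_n$. When $n=2p$, the count of $(2p)!/p!$ ordered configurations gives \eqref{eq: bound Sp}, but two checks remain.

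First, the restriction to distinct $\gamma_1,\dots,\gamma_p$ must be removed in order to factor the sum as a $p$-th power. The coincident terms are bounded by $\sum_g A_g^4\cdot(\sum_g A_g^2)^{p-2}$. Since $A_g^2\lesssim T_g^2\mathrm e^{-cT_g}$ from the Poincaré determinant, we have $\sum A_g^4\lesssim 1$, so after the $L^{-2p}$ normalization these terms are $\mathcal O(L^{-4})$.

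Second, the constant must be right. Passing to $\mathcal P_0$ gives $8L^{-2}\sum_{\mathcal P_0}A_\gamma^2=4L^{-2}\sum_{\mathcal P}A_\gamma^2$ over primitive orbits. By \eqref{eq: formula for GOE}, in its primitive-orbit form from Proposition \ref{prop: equidistribution result}, this equals $\Sigma_{\rm GOE}^2+\mathcal O(L^{-2})$. This step requires that generically $\gamma\neq\gamma^{-1}$; self-reversed orbits would have to be treated separately, and their contribution is negligible. Expanding $(\Sigma^2_{\rm GOE}+\mathcal O(L^{-2}))^p$ then gives $m_{2p}+\mathcal O_p(L^{-2})$.

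Adding the two parts yields $N^n_{\rm osc,diag}=m_n+\mathcal O_n(L^{-2})$. The main obstacle is the bookkeeping of the factors of $2$: the $2^n$ from identifying $\gamma$ with $\gamma^{-1}$, and the $2^p$ sign choices. Together these must produce exactly the GOE variance.
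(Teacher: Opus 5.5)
Your proposal is correct and follows the paper's proof. The nonprimitive diagonal tuples are bounded by $C_nL^{-2}$ using Lemma~\ref{lem: poincare map powers} together with $\sum_g A_g^2\lesssim L^2$ and $\sum_g A_g^2\mathrm{e}^{-cT_g}\lesssim 1$; the primitive ones are evaluated through the $(2p)!/p!$ count and \eqref{eq: formula for GOE}; and your extra checks (removing the distinctness constraint at cost $\mathcal O(L^{-4})$, the factors of $2$) are sound. The hedge on self-reversed orbits is unnecessary: they cannot occur, since $\iota(\gamma)=\gamma$ combined with \eqref{eq: time reversal} would produce a point of $\gamma$ fixed by $\iota$, i.e.\ with $\xi=0$, and such points are excluded from $\Sigma_E$ by Remark~\ref{rem: shell is trapped}.
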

Note that until now, we have not exploited any cancellation mechanism.

\subsection{The off-diagonal part} We now deal with the nondiagonal part \eqref{eq: def N osc off}. By integrating \eqref{eq: def N osc off} over $\mu$, we obtain
\[\mathbf E_\mu(N_{\rm osc, \rm off}^n)=\sum_{\substack{\mathbf t=(\vec \gamma,\vec k,\vec \varepsilon)\\ \text{non-diagonal}}} \prod_{1\le i\le n} a_{\gamma_i^{k_i},\varepsilon_i} \exp\Big(\frac{\mathrm{i}}{h}\sum_{1\le i\le n} \varepsilon_i k_iS_\omega(\gamma_i)\Big)\operatorname{sinc}\Big(\Lambda\sum_{1\le i\le n} \varepsilon_ik_iT_{\gamma_i}/L\Big).\]
We introduce a refined version of the map $K_{\mathbf t}$ defined in the previous subsection. 

\begin{defi}\label{defi: refined K tuple} Let $\mathbf t=(\vec \gamma,\vec k,\vec \varepsilon)$.

\begin{itemize}
\item Define $K^{\pm}_{\mathbf t}(\gamma):=\{k_j~:~ \gamma_j = \gamma\ \text{and} \ \varepsilon_j=\pm 1\}$. 
\item Set $q^{\pm}_{\mathbf t}(\gamma):=\# K^{\pm}_{\mathbf t}(\gamma)$ and $k^{\pm}_{\mathbf t}(\gamma):=|K^{\pm}_{\mathbf t}(\gamma)|$
\item Lastly, set $q_{\mathbf t}:=q_{\mathbf t}^{+}+q_{\mathbf t}^{-}$, $k_{\mathbf t}=k_{\mathbf t}^++k_{\mathbf t}^-$, and $\Delta k_{\mathbf t}=k_{\mathbf t}^+-k_{\mathbf t}^-$.
\end{itemize}
One has $\operatorname{Supp}(k_{\mathbf t}^{\pm})=\operatorname{Supp}(q_{\mathbf t}^{\pm})$ and $\sum_\gamma q_{\mathbf t}(\gamma)=n$.\end{defi}
For any pair of maps $K^{\pm}:\mathcal P_0\to \{\text{multisets of integers}\}$, we associate functions $k^{\pm},q^{\pm},\Delta k$ as above. Introducing the shorthand $a_{K^{\pm}}(\gamma):= \prod_{k\in K^+(\gamma)} a_{\gamma^k,1}\prod_{k\in K^-(\gamma)} a_{\gamma^k,-1}$ (with the convention that an empty product equals $1$), one can then rewrite
\begin{equation} \label{eq: new exp N osc using aK}\mathbf E_\mu(N_{\rm osc, \rm off}^n)=\sum_{\mathbf t\ \ \text{nondiag}} \Big(\prod_\gamma a_{K_{\mathbf t}}(\gamma)\Big)\exp\Big(\frac{\mathrm{i}}{h}\sum_\gamma \Delta k_{\mathbf t}(\gamma)S_\omega(\gamma)\Big)\operatorname{sinc}\Big(\Lambda\sum_{\gamma}\Delta k_{\mathbf t}(\gamma)T_\gamma/L\Big).\end{equation}
We take the modulus squared of \eqref{eq: new exp N osc using aK}, then compute the $\omega$-average. By Proposition \ref{prop: averages actions}, given two tuples $\mathbf t,\mathbf t'$, we have
\[\mathbf E_\omega\Big[\exp\Big(\frac{\mathrm{i}}{h}\sum_\gamma \Delta k_{\mathbf t}(\gamma)S_\omega(\gamma)\Big)\exp\Big(-\frac{\mathrm{i}}{h}\sum_\gamma \Delta k_{\mathbf t'}(\gamma)S_\omega(\gamma)\Big)\Big]=\mathcal O_n\big(h^{\varepsilon \#\{\gamma~:~\Delta k_{\mathbf t}(\gamma)\neq \Delta k_{\mathbf t'}(\gamma)\}}\big).\]
Introducing another shorthand
\begin{equation} \label{eq: b kt kt'}b_{K_{\mathbf t}^{\pm},K_{\mathbf t'}^{\pm}}:=\operatorname{sinc}\Big(\Lambda\sum_{\gamma}\Delta k_{\mathbf t}(\gamma)T_\gamma/L\Big)\operatorname{sinc}\Big(\Lambda\sum_{\gamma}\Delta k_{\mathbf t'}(\gamma)T_\gamma/L\Big),\end{equation}
one finds
\begin{equation} \label{eq: intro another shorthand}\mathbf E_\omega\big[|\mathbf E_\mu^{\rm off}(N_{\rm osc}^n)|^2\Big]\le \frac{C_n}{L^{2n}} \sum_{\substack{\mathbf t,\mathbf t'\\ \text{nondiag}}}\Big(\prod_\gamma a_{K_{\mathbf t}}(\gamma)a_{K_{\mathbf t'}}(\gamma)\Big) h^{\varepsilon \#\{\gamma~:~\Delta k_{\mathbf t}(\gamma)\neq \Delta k_{\mathbf t'}(\gamma)\}} b_{K_{\mathbf t},K_{\mathbf t'}}. \end{equation}
For each pair of maps $K=(K^{\pm})$ satisfying the properties of Definition \ref{defi: refined K tuple}, there are $\mathcal O_n(1)$ tuples $\mathbf t=(\vec \gamma,\vec k,\vec \varepsilon)$ such that $K_{\mathbf t}^{\pm}=K^{\pm}$. One is led to
\begin{equation} \label{eq: new express N osc off n}\mathbf E_\omega\big[|\mathbf E_\mu^{\rm off}(N_{\rm osc}^n)|^2\Big]\le C_nL^{-2n} \sum_{K,K^{'}}\Big(\prod_{\gamma\in \mathcal P_0} a_{K}(\gamma)a_{K'}(\gamma)\Big) h^{\varepsilon \#\{\gamma~:~\Delta k(\gamma)\neq \Delta k'(\gamma)\}} b_{K,K'}.\end{equation}
The sum runs over maps $K=K^{\pm}\to \{\text{multisets of integers}\}$ satisfying the properties of Definition \ref{defi: refined K tuple} and similarly for $K'$. The right-hand side of \eqref{eq: new express N osc off n} is denoted by $\mathcal S^{\rm off}$. We will show that it becomes negligible in the large $L$ limit.

\subsubsection{The part $\Delta k\neq \Delta k'$}\label{subsec: Delta k neq Delta k'}

We first consider a \say{nondiagonal} part of $\mathcal S^{\rm off}$, denoted by $\mathcal S^{\rm off}_{\rm nd}$, consisting of pairs $(K,K')=(K^{\pm},{K'}^{\pm})$ for which $\Delta k\neq \Delta k'$. We use the trivial bound $b_{K^{\pm},K^{'\pm}}\le 1$. Up to replacing $\varepsilon$ by $\varepsilon/2$, since $\#\{\gamma~:~\Delta k(\gamma)\neq \Delta k'(\gamma)\}\ge 1$ for each $(K,K')$ in the sum, we have
\[|\mathcal S^{\rm off}_{\rm nd}| \le C_n L^{-2n} h^{\varepsilon}\sum_{\substack{(K,K')\\ \Delta k\neq \Delta k'}} \Big(\prod_{(q+q')(\gamma)\ge 1} a_{K}(\gamma)a_{K'}(\gamma)\Big) h^{\varepsilon \#\{\gamma~:~\Delta k(\gamma)\neq \Delta k'(\gamma)\}}.\]
Now, observe that in the notation of Definition \ref{defi: refined K tuple}, $(q+q')(\gamma)=1$ implies $|\Delta k(\gamma)-\Delta k'(\gamma)|=1$. When $(q+q')(\gamma)=1$ we simply bound $a_K(\gamma)a_{K'}(\gamma)\lesssim 1$. Also, by Lemma \ref{lem: poincare map powers}, we have
\[\Big(\prod_{(q+q')(\gamma)\ge 2} a_K(\gamma)a_{K'}(\gamma)\Big)\le C_n\prod_{(q+q')(\gamma)\ge 2}A_\gamma^2 \mathrm{e}^{-c(k(\gamma)+k'(\gamma))}.\]
We thus obtain
\begin{equation} \label{eq: we thus obtain bis} |\mathcal S^{\rm off}_{\rm nd}|\le C_n L^{-2n} h^{\varepsilon}\sum_{\substack{K,K'\\ \Delta k\neq \Delta k'}} \Big(\prod_{(q+q')(\gamma)\ge 2}A_\gamma^2 \mathrm{e}^{-c(k+k')(\gamma)}\Big)h^{\varepsilon \#\{\gamma~:~(q+q')(\gamma)=1\}}.\end{equation}
\begin{defi}\label{defi: Ns Qs}
To each pair $\mathbf s=(K,K')$ we associate new functions defined as follows:
\begin{itemize}
\item We set $N_{\mathbf s}:=(k+k')\mathbf 1_{q+q'\ge 2}$.
\item We set $Q_{\mathbf s}:=(q+q')\mathbf 1_{q+q'\ge 2}$.
\item For future use below we set $d_{\mathbf s}(\gamma):=\Delta k(\gamma) \mathbf 1_{(q+q')(\gamma)\ge 2}$.
\end{itemize}\end{defi}
Note that $\sum Q_{\mathbf s}(\gamma)=2n-\#\{\gamma~:~q(\gamma)+q'(\gamma)=1\}$. The triple $(N_{\mathbf s}, Q_{\mathbf s},d_{\mathbf s})$ enjoys the following properties:
\begin{enumerate}
\item \label{prop 1 Ns} $N_{\mathbf s}$ has the same support as $Q_{\mathbf s}$, of size $\le n$.
\item \label{prop 2 Qs}$\sum_\gamma Q_{\mathbf s}(\gamma)\le 2n$.
\item \label{prop 3 ds} $|d_{\mathbf s}(\gamma)|\le N_{\mathbf s}(\gamma)$.
\end{enumerate}
With this new notation, one can rewrite \eqref{eq: we thus obtain bis} as
\begin{equation} \label{eq: bound Acal1 intermediate}|\mathcal S^{\rm off}_{\rm nd}|\le C_nL^{-2n}h^\varepsilon\sum_{\mathbf s=(K,K')} \Big(\prod_{(q+q')(\gamma)\ge 2} A_{\gamma}^{2}\mathrm{e}^{-cN_{\mathbf s}(\gamma)}\Big)\cdot h^{\varepsilon(2n-\sum_\gamma Q_{\mathbf s}(\gamma))}.\end{equation}
For each pair $(N,Q)$ of functions satisfying the properties (\ref{prop 1 Ns}),(\ref{prop 2 Qs}),(\ref{prop 3 ds}) above, let us count the number of pairs $\mathbf s=(K,K')$ such that $(N,Q)=(N_{\mathbf s},Q_{\mathbf s})$. For each $\gamma\in \operatorname{Supp}(Q)$, we have to choose multisets $K^{\pm}(\gamma),K^{'\pm}(\gamma)$, knowing that
\[|K(\gamma)|+|K'(\gamma)|\le N(\gamma), \quad \# K(\gamma)+\#K'(\gamma)\le n.\]
The number of such choices is $\mathcal O_n(N(\gamma)^n)$. Outside $\operatorname{Supp}(Q)$, letting $r=2n	-\sum_\gamma Q(\gamma)$, we have to choose distinct elements $\gamma_1,\ldots,\gamma_r$ of period $\le L$, such that
\[\big(\operatorname{Supp}(K)\cup \operatorname{Supp}(K')\big)\backslash \operatorname{Supp}(Q)=\{\gamma_1,\ldots,\gamma_r\}.\]
There are $\mathcal O_n(\mathrm{e}^{CL r})$ such possible choices, because there are $\mathcal O(\mathrm{e}^{CL})$ periodic orbits of period $\le L$. Then, for each $i=1,\ldots,r$, we have to choose the values of $K^{+}(\gamma_i)$, $K^{-}(\gamma_i)$, $K^{'+}(\gamma_i)$, and $K^{'-}(\gamma_i)$. All of these four sets are empty except one that contains an integer $\le L$. Thus there are $\mathcal O_n(L^r)$ possible choices in total. Up to increasing the constant $C$ in the exponential, we get
\[\#\big\{\mathbf s=(K,K')~:~(N_{\mathbf s},Q_{\mathbf s})=(N,Q)\big\}\le C_n \mathrm{e}^{C L(2n-\sum_\gamma Q(\gamma))} \prod_\gamma (1+N(\gamma))^n.\]
Going back to \eqref{eq: bound Acal1 intermediate}, we deduce the following bound on $\mathcal S^{\rm off}_{\rm nd}$:
\[|\mathcal S^{\rm off}_{\rm nd}|\le C_nL^{-2n} h^\varepsilon\sum_{(N,Q)} \Big(\prod_{\gamma\in \operatorname{Supp}N} A_\gamma^{2}\mathrm{e}^{-cN(\gamma)}(1+N(\gamma)^{n})\Big)\cdot \mathrm{e}^{CL(2n-\sum Q(\gamma))} h^{\varepsilon(2n-\sum Q(\gamma))}.\]
We take $L\le \mathfrak c|\log h|$ with $\mathfrak c$ small enough to ensure $\mathrm{e}^{CL}h^\varepsilon<1$. Also, we use the fact that for each $N$ there are $\mathcal O_n(1)$ corresponding functions $Q$ such that the properties (\ref{prop 1 Ns}),(\ref{prop 2 Qs}), are satisfied. This leads to
\[|\mathcal S^{\rm off}_{\rm nd}|\le C_n L^{-2n}h^\varepsilon\sum_{N} \Big(\prod_{\gamma\in \operatorname{Supp}N} A_\gamma^{2}\mathrm{e}^{-cN(\gamma)}(1+N(\gamma)^{n})\Big),\]
where the sum runs over functions $N:\mathcal P_0\to \mathbf N$ with support of size $\le n$. Such a function is determined by the choice of its support $\{\gamma_1,\ldots,\gamma_k\}$ with $k\le n$, and the choice of the images $N_i=N(\gamma_i)$. It follows that
\begin{align}
|\mathcal S^{\rm off}_{\rm nd}|& \le C_nh^\varepsilon L^{-2n}\sum_{k=1}^n\sum_{(\gamma_1,\ldots,\gamma_k)}\Big(\prod_{1\le i\le k} A_{\gamma_i}^2\Big)\Big(\sum_{N_1,\ldots,N_k\ge 0} \prod_{1\le i\le k} \mathrm{e}^{-cN_i} (1+N_i)^n\Big) \nonumber\\
& \le C_nh^\varepsilon L^{-2n}\sum_{k=1}^n\Big(\sum_{\gamma} A_{\gamma}^2\Big)^{k} \cdot\Big(\sum_{N_1\ge 0} \mathrm{e}^{-cN_1}(1+N_1)^{n}\Big)^k \nonumber \\
& \le  C_nh^{\varepsilon}L^{-2n}\sum_{k=1}^n L^{2k}\le C_n h^\varepsilon, \label{eq: bound Acal1}
\end{align}
where the constant $C_n$ might have changed from line to line. To obtain the last line, we have used again the bound $\sum A_\gamma^2\lesssim L^2$ from \eqref{eq: formula for GOE}.
\subsubsection{The part $\Delta k=\Delta k'\neq 0$} We now consider pairs $(K,K')$ with $\Delta k=\Delta k'\neq 0$. Note that the case $\Delta k=\Delta k'=0$ corresponds to diagonal tuples, which have been dealt with in \S \ref{subsec: diagonal part} The corresponding sum is denoted by $\mathcal S^{\rm off}_{\rm d}$. For these pairs there is no cancellation mechanism coming from Proposition \ref{prop: averages actions}, but there is a gain coming from the energy averaging. Start from
\begin{equation} \label{eq: start from A2 equal}\mathcal S^{\rm off}_{\rm d}\le C_nL^{-2n}\sum_{\substack{(K,K') \\ \Delta  k=\Delta k'\neq 0}}\Big(\prod_\gamma a_K(\gamma)a_{K'}(\gamma)\Big)\operatorname{sinc}\Big(\frac{\Lambda}{L}\sum_\gamma \Delta k(\gamma)T_\gamma\Big)^2, \end{equation}
where the sum runs over maps $K^{\pm},K^{'\pm}$ such that $\operatorname{Supp}(k^{\pm})=\operatorname{Supp}q^{\pm}$ and $\sum q(\gamma)=n$, and similarly for $k^{'\pm},q^{'\pm}$. Observe that $\Delta k=\Delta k'\neq 0$ forces $q+q'\ge 2$ on $\operatorname{Supp}(q+q')$. In particular, $k+k'$ has support of size $\le n$, and $k+k'\ge 2$ on its support.

Let $\alpha\ge 1$, that will be chosen later. We consider a dyadic decomposition of the sum on the right-hand side of \eqref{eq: start from A2 equal}.
\begin{itemize}
\item A first piece $\mathcal S_{-1}$ consists of pairs $(K,K')$ with $|\sum_\gamma \Delta k(\gamma)T_\gamma|\le \alpha$.
\item For $j\ge 0$, we let $\mathcal S_j$ be the subsum associated with pairs $(K,K')$ such that  $|\sum_\gamma \Delta k(\gamma)T_\gamma|\in (2^j\alpha,2^{j+1}\alpha]$. 
\end{itemize}
Then, $\mathcal S_{\rm d}^{\rm off}=\mathcal S_{-1}+\sum_{j=0}^{+\infty}\mathcal S_j$. We control each of these terms separately.
\subsubsection{Controlling $\mathcal S_{-1}$} \label{subsec: controlling S-1} We use the functions $(N_{\mathbf s},Q_{\mathbf s},d_{\mathbf s})$ from Definition \ref{defi: Ns Qs}. Since $q+q'\ge 2$ on its support we have $Q_{\mathbf s}=q+q'$ and $N_{\mathbf s}=k+k'$. Then, we bound successively:
\begin{equation} \label{eq: bounds S -1}\begin{aligned}
		\mathcal S_{-1}& \le C_nL^{-2n}\sum_{\substack{(K,K') \\ \Delta  k=\Delta k'\neq 0}}\Big(\prod a_K(\gamma)a_{K'}(\gamma)\Big)\mathbf 1_{|\sum \Delta k(\gamma)T_\gamma|\le \alpha} \\
		& \le C_nL^{-2n}\sum_{\mathbf s=(K,K'), \ d_{\mathbf s}\neq 0} \Big(\prod_{\gamma\in \operatorname{Supp}(N_{\mathbf s})} A_{\gamma}^2 \mathrm{e}^{-cN_{\mathbf s}(\gamma)}\Big)\mathbf 1_{|\sum d_{\mathbf s}(\gamma)T_\gamma|\le \alpha} \\
		&\le C_n L^{-2n} \sum_{N,d\neq 0} \Big(\prod_{\gamma\in \operatorname{Supp}(N)} A_{\gamma}^2 \mathrm{e}^{-cN(\gamma)}(1+N(\gamma))^n\Big)\mathbf 1_{|\sum d(\gamma)T_\gamma|\le \alpha} \\
		&\le C_n L^{-2n} \sum_{k=1}^n\sum_{(\gamma_1,\ldots,\gamma_k)}\sum_{\substack{N_1,\ldots,N_k\ge 0 \\ d_1,\ldots,d_k \\ |d_i|\le N_i, d_1\neq 0}}\prod_{1\le i\le k} A_{\gamma_i}^2 \mathrm{e}^{-cN_i}(1+N_i)^n \mathbf 1_{|\sum d_iT_{\gamma_i}|\le \alpha}.
\end{aligned}\end{equation}

In the third line, the sum runs over pairs of functions $(N,d):\mathcal P_0\to \mathbb N^2$ such that $\operatorname{Supp}(N)$ has size $\le n$, $N(\gamma)\ge 2$ on the support, $|d(\gamma)|\le N(\gamma)$ and $d\neq 0$. We have used the fact that for each pair $(N,d)$ with these properties, there are $\mathcal O(\prod_\gamma (1+N(\gamma)^n))$ pairs $\mathbf s$ such that $(N_{\mathbf s},d_{\mathbf s})=(N,d)$. In the last line, we have parameterized such pairs of maps $(N,d)$ by the choice of the support of $N$, denoted $\{\gamma_1,\ldots,\gamma_k\}$, which has size $k\le n$; and the choice of $N_i:=N(\gamma_i)$ and $d_i=d(\gamma_i)$. The restriction $d_1\neq 0$ in the sum comes from the fact that the map $d$ is nonzero in the third line.

Given tuples $(N_i)_{1\le i\le k}$ and $(d_i)_{1\le i\le k}$ satisfying $|d_i|\le N_i$ and $d_1\neq 0$, we estimate
\begin{align*}
\sum_{(\gamma_1,\ldots,\gamma_k)\in \mathcal P_0^k} \Big(\prod_{1\le i\le k} A_{\gamma_i}^2\Big)\mathbf 1_{|\sum d_iT_{\gamma_i}|\le \alpha} & \le  \sum_{\gamma_2,\ldots,\gamma_k} \Big(\prod_{2\le i\le k}A_{\gamma_i}^2\Big) \sum_{\gamma_1~:~T_{\gamma_1}\le L} A_{\gamma_1}^2\mathbf 1_{|T_{\gamma_1}+\sum_i \frac{d_i}{d_1}T_{\gamma_i}|\lesssim \frac{\alpha}{d_1}} \\
& \lesssim \sum_{\gamma_2,\ldots,\gamma_k} \Big(\prod_{2\le i\le k}A_{\gamma_i}^2\Big)\cdot L\alpha \lesssim L^{2(k-1)}L\alpha.
\end{align*}
To pass from the first to second line we have used the following consequence of Proposition \ref{prop: equidistribution result}:
\begin{equation} \label{eq: we have used}\forall \alpha,T\ge 1, \ \sum_{T_\gamma\in [T,T+\alpha], \ T_\gamma\le L} A_\gamma^2\lesssim L\alpha.\end{equation}
Going back to \eqref{eq: bounds S -1}, we have
\begin{align*}\mathcal S_{-1}& \lesssim C_n L^{-2n} \sum_{k=1}^n L^{2k-1}\alpha \sum_{\substack{N_1,\ldots,N_k\ge 0 \\ d_1,\ldots,d_k, \ |d_i|\le N_i}} \Big(\prod_{1\le i\le k}\mathrm{e}^{-cN_i}(1+N_i)^n\Big) \\
& \lesssim C_nL^{-2n} L^{2n-1}\alpha\le C_n L^{-1}\alpha. \end{align*}
\subsubsection{Controlling $\mathcal S_j$ for $j\ge 0$} \label{subsec: controlling Sj} To deal with $\mathcal S_j$, we repeat the calculations carried above for $\mathcal S_{-1}$, but instead use the upper bound
\[\operatorname{sinc}\Big(\frac{\Lambda}{L}|\sum \Delta k(\gamma)T_\gamma|\Big)^2\le \Big(\frac{L}{\Lambda 2^j\alpha}\Big)^2,\]
which is valid for $|\sum \Delta k(\gamma)T_\gamma|\ge 2^j\alpha$; along with the estimate
\begin{equation} \label{eq: sum intermediaire} \sum_{T_\gamma\in [T,T+2^j\alpha], \ T_\gamma\le L} A_\gamma^2\lesssim L2^j\alpha,\end{equation}
which is used in place of \eqref{eq: we have used}. This leads to
\[|\mathcal S_j|\le \Big(\frac{L}{\Lambda 2^j\alpha}\Big)^2\cdot \frac{2^j\alpha}{L}=\frac{L}{\Lambda^2 2^j\alpha}.\]
Summing over $j$ leads to
\[|\mathcal S^{\rm off}_{\rm d}|\le |\mathcal S_{-1}|+\sum_{j=0}^{+\infty}|\mathcal S_j|\le C_n \Big(\frac{\alpha}{L}+\frac{L}{\Lambda^2 \alpha}\Big).\]
We optimize the right-hand side by taking $\alpha=\frac{L}{\Lambda}$ which is $\ge 1$ by assumption, which gives
\begin{equation} \label{eq: bound Acal2}|\mathcal S^{\rm off}_{\rm d}|\le \frac{C_n}{\Lambda}.\end{equation}
Combining \eqref{eq: bound Acal1} with \eqref{eq: bound Acal2}, we end up with the following proposition.
\begin{prop} \label{prop: the nondiag cont} Assume $1\le \Lambda\le L\le \mathfrak c|\log h|$. The nondiagonal contribution is controlled by
\begin{equation} \label{eq: bound E omega E mu N osc} \mathbf E_\omega(|\mathbf E_\mu(N_{\rm osc,\rm off}^n)|^2) \le C_n\Big(\frac{1}{\Lambda}+h^\varepsilon\Big)\le \frac{C_n}{\Lambda}. \end{equation}\end{prop}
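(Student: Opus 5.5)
The statement is the final assembly of the bounds already obtained in this section, so the plan is to combine them rather than introduce new estimates. First I would expand the square in \eqref{eq: new exp N osc using aK}. Averaging over $\omega$ with Proposition \ref{prop: averages actions} gives \eqref{eq: new express N osc off n}, which bounds $\mathbf E_\omega(|\mathbf E_\mu(N_{\rm osc,\rm off}^n)|^2)$ by $\mathcal S^{\rm off}$. When Proposition \ref{prop: averages actions} is applied to a pair of tuples, I would take the orbits in the union of the two supports. These are distinct elements of $\mathcal P_0$ of period $\le L\le \mathfrak c|\log h|$. The relevant integer attached to each orbit is $\Delta k_{\mathbf t}(\gamma)-\Delta k_{\mathbf t'}(\gamma)$. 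Orbits where this integer vanishes simply drop out, so the bound $h^{\varepsilon\#\{\Delta k\neq\Delta k'\}}$ holds with $n$ replaced by at most $2n$. Only the $\mathcal O_n(1)$ multiplicity from passing from tuples to pairs of maps $(K,K')$ then needs to be recorded.

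Next I would split $\mathcal S^{\rm off}$ according to $\Delta k$ and $\Delta k'$. The pairs with $\Delta k\neq\Delta k'$ form $\mathcal S^{\rm off}_{\rm nd}$, which is $\le C_nh^\varepsilon$ by \eqref{eq: bound Acal1}. The pairs with $\Delta k=\Delta k'\neq0$ form $\mathcal S^{\rm off}_{\rm d}$, which is $\le C_n/\Lambda$ by \eqref{eq: bound Acal2}. This uses the dyadic decomposition with $\alpha=L/\Lambda\ge1$, which is admissible because $\Lambda\le L$. It remains to check that pairs with $\Delta k=\Delta k'=0$ cannot occur. Such a pair would mean $\sum_i\varepsilon_ik_i\gamma_i=0$ in $\mathbf Z[\mathcal P_0]$, so both tuples would be diagonal. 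They are excluded because the sum runs only over nondiagonal tuples, and a single nondiagonal $\mathbf t$ already has $\Delta k_{\mathbf t}\neq0$.

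Adding the two contributions gives $C_n(\Lambda^{-1}+h^\varepsilon)$. The last inequality in \eqref{eq: bound E omega E mu N osc} follows from $\Lambda\le\mathfrak c|\log h|$. Indeed, $h^\varepsilon\le \mathfrak c|\log h|^{-1}\cdot C\le C/\Lambda$ for $h$ small, since $h^\varepsilon|\log h|\to0$. Both terms are therefore absorbed into $C_n/\Lambda$.

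The main point to watch is the requirement $\mathrm e^{CL}h^\varepsilon<1$ used in the counting argument for $\mathcal S^{\rm off}_{\rm nd}$. This holds once $\mathfrak c$ is small relative to $\varepsilon/C$, so I would fix $\varepsilon$ first, via Proposition \ref{prop: averages actions}, and only then choose $\mathfrak c=\mathfrak c(\alpha,\beta)$. The rest is bookkeeping of constants depending only on $n$.
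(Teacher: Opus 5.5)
Your proposal is correct and follows the paper's route. You reduce via Proposition \ref{prop: averages actions} to the sum $\mathcal S^{\rm off}$ over pairs $(K,K')$, then add the bound $C_nh^\varepsilon$ from \eqref{eq: bound Acal1} for $\Delta k\neq\Delta k'$ to the dyadic bound $C_n/\Lambda$ from \eqref{eq: bound Acal2}, with $\alpha=L/\Lambda$, for $\Delta k=\Delta k'\neq 0$. Your extra remarks only make explicit what the paper leaves implicit: $\Delta k=\Delta k'=0$ cannot occur for nondiagonal tuples, $h^\varepsilon\Lambda\le \mathfrak c\, h^\varepsilon|\log h|$ stays bounded, and $\mathfrak c$ must be fixed after $\varepsilon$ so that $\mathrm e^{CL}h^\varepsilon<1$.
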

\subsubsection{End of the proof of Proposition \ref{prop: moments N_osc}} 
We combine Proposition \ref{prop: esperance diag} with Proposition \ref{prop: the nondiag cont} and use the fact that $1\le \Lambda\le L$ to obtain
\begin{align*} \mathbf E_\omega\big(|\mathbf E_\mu(N_{\rm osc}^n)-m_n|^2\big) & \le 2\mathbf E_\omega\big(\mathbf E_\mu(|N_{\rm osc,\rm diag}^n-m_n|^2)\big)+2\mathbf E_\omega\big(|\mathbf E_\mu(N_{\rm osc,\rm off}^n)|^2\big) \\
& \le C_n\big(\Lambda^{-1}+L^{-2}\big)\le C_n \Lambda^{-1}.
\end{align*}

\subsubsection{Deducing Theorem \ref{thm: as GOE fluctuations} from Proposition \ref{prop: moments N_osc}} We now explain how to obtain Theorem \ref{thm: as GOE fluctuations} from Proposition \ref{prop: moments N_osc}.  By Proposition \ref{prop: moments N_osc}, there is a family of constants $(C_k)$ such that for all $k\ge 1$, one has
\[\mathbf E_\omega(|\mathbf E_\mu(N_{\rm osc}^k)-m_k|^2)\le \frac{C_k}{\Lambda}.\]
Fix another family $(D_k)$. Let $\delta>0$. By the Markov inequality, for all $k$ there is a set $B_k$ of probability $\ge 1-\frac{C_k}{\Lambda D_k\delta}$ such that
\[\forall \omega\in B_k, \qquad |\mathbf E_\mu(N_{\rm osc}^k)-m_k|^2\le D_k\delta.\]
Take $D_k=2^kC_k$ then set $B=\bigcap_{k\ge 1} B_k$. One has $\mathbf P_\omega(B)\ge 1-\frac{1}{\Lambda \delta}$, moreover,
\[\forall \omega\in B, \forall k\ge 1, \qquad |\mathbf E_\mu(N_{\rm osc}^k)-m_k|^2\le D_k\delta.\]
Recall that $\widetilde N_{\rm osc}=N_{\rm osc}-\mathbf E_\mu(N_{\rm osc})$. If $\omega\in B$ then $|\mathbf E_\mu(N_{\rm osc})|^2\le D_1\delta$. Also, recall from Proposition \ref{prop: long time gutz} that $|\widetilde N-\widetilde N_{\rm osc}|\lesssim \frac{\Lambda}{L}$. Thus, up to increasing the constants $D_k$, one finds
\[\forall \omega\in B, \forall k\ge 1, \qquad |\mathbf E_\mu(\widetilde N^k)-m_k|^2\le D_k\Big(\delta+\frac{\Lambda}{L}\Big).\]

Let $1\ll \Lambda_n\ll L_n\le \mathfrak c|\log h_n|$, and define $\delta_n:=\Lambda_n^{-1/2}$. Define $B_n$ as $B$ above; then one has $\mathbf P_{\omega_n}(B_n)\ge 1-\frac{1}{\Lambda_n\delta_n}\underset{n\to +\infty}\longrightarrow 1$, and
\[\forall \omega_n\in B_n, \forall k\ge 1, \qquad |\mathbf E_\mu(\widetilde N_{\rm osc}^k(\omega_n,\cdot))-m_k|^2\le D_k\Big(\delta_n+\frac{\Lambda_n}{L_n}\Big).\]
For any fixed $k$, the right-hand side goes to $0$ as $n\to +\infty$. This implies uniform convergence in law of $\widetilde N(\omega_n,\cdot)$ to a centered Gaussian random variable $\mathcal Z\sim \mathcal N(0,\Sigma_{\rm GOE}^2)$ along sequences $\omega_n\in B_n$, by a standard application of the method of moments.

\section{Proof of Proposition \ref{prop: moments N_osc}: perturbations breaking time reversal symmetry} Without repeating the whole argument, we explain the required modifications to the proof of the previous section to obtain Proposition \ref{prop: moments N_osc} for random perturbations that strongly break time reversal symmetry. In this case, the perturbed actions of the orbits $\gamma$ and $\gamma^{-1}$ become almost uncorrelated, which results in a smaller energy variance. Although the proofs follow the same lines, we need to understand more cancellation mechanisms. As in the previous section, some cancellations happen at the level of the perturbed action differences and some happen at the level of the energy averaging.

Let $\mathcal P_0$ denote the set of primitive periodic orbits quotiented by the relation $\gamma\sim \gamma^{-1}$. By writing each periodic orbit as a power of a primitive one, one can write the oscillating term in the trace formula as follows.
\begin{equation} N_{\rm osc}(\omega,\mu)=\frac 1L\sum_{\varepsilon=\pm 1}\sum_{\eta=\pm 1}\sum_{\gamma\in \mathcal P_0}\sum_{k\ge 1} a_{\gamma^k,\varepsilon} \mathrm{e}^{\frac{\mathrm{i}}{h} \varepsilon k S_\omega(\gamma^\eta)}\mathrm{e}^{\mathrm{i}\mu \varepsilon k T_\gamma/L}.\end{equation}
Taking \eqref{eq: new exp nosc} to the $n$-th power yields
\begin{equation} N_{\rm osc}^{n}=\frac{1}{L^n}\sum_{\mathbf t,\vec \eta} \Big(\prod_{1\le i\le n} a_{\gamma_i^{k_i},\varepsilon_i}\Big) \exp\Big(\frac{\mathrm{i}}{h}\sum_{1\le i\le n} \varepsilon_ik_i S_\omega(\gamma_i^{\eta_i})\Big) \exp\Big(\mathrm{i}\mu\sum_{1\le i\le n} \varepsilon_ik_iT_{\gamma_i}/L\Big).\end{equation}
The sum runs over pairs $\mathbf p=(\mathbf t,\vec \eta)$ where $\mathbf t=(\vec \gamma,\vec k,\vec \varepsilon)$ with $\vec \gamma\in \mathcal P_0^n$, $\vec k\in (\mathbf N^*)^n$ and $\vec \varepsilon,\vec \eta\in \{\pm 1\}^n$.
\begin{defi}The pair $\mathbf p=(\mathbf t,\vec \eta)$ is \emph{diagonal} if $\mathbf t$ is diagonal in the sense of Definition \ref{defi: tuples} and $\vec \eta$ is constant equal to $(1,\ldots,1)$ or $(-1,\ldots,-1)$.
\end{defi}
 We split the sum \eqref{eq: Nosc power n} into a \say{diagonal} and an \say{off-diagonal} part:
\begin{equation} N_{\rm osc,\rm{diag}}^n:=\frac{1}{L^n} \sum_{\mathbf p~ \text{diag}}\Big(\prod_{1\le i\le n} a_{\gamma_i^{k_i},\varepsilon_i}\Big), \end{equation}
\begin{equation}  N_{\rm osc,\rm{off}}^n:=\frac{1}{L^n} \sum_{\mathbf p~\text{nondiag}}\Big(\prod_{1\le i\le n} a_{\gamma_i^{k_i},\varepsilon_i}\Big)\exp\Big(\frac{\mathrm{i}}{h}\sum_{1\le i\le n} \varepsilon_ik_i S_\omega(\gamma_i^{\eta_i})\Big) \exp\Big(\mathrm{i}\mu\sum_{1\le i\le n} \varepsilon_ik_iT_{\gamma_i}/L\Big).\end{equation}
The diagonal contribution is evaluated as in \S \ref{sec: proof T sym}, and one shows 
\begin{prop} \label{prop: esperance diag GUE} For all $n\ge 1$,
	\[N_{\rm osc, \rm diag}^{n}=m_{n}+\mathcal O_n(L^{-2}),\]
	where $(m_n)$ is the sequence of moments of $\mathcal N(0,\Sigma_{\rm GUE}^2)$.
\end{prop}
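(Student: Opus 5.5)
The plan is to copy the argument of \S\ref{subsec: diagonal part} almost word for word. The main thing to watch is the combinatorial weight coming from the extra sign $\vec\eta$.

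\textbf{Setup.} Note first that the amplitudes $a_{\gamma^k,\varepsilon}$ do not depend on $\eta$. Indeed $T_{\gamma^{-1}}=T_\gamma$, and $|\det(I-P_{\gamma^{-1}})|=|\det(I-P_\gamma)|$ by the lemma on time reversal. A diagonal pair $\mathbf p=(\mathbf t,\vec\eta)$ is therefore just a diagonal tuple $\mathbf t$ together with one of the two constant choices of $\vec\eta$. Hence
\[N^n_{\rm osc,diag}=\frac{2}{L^n}\sum_{\mathbf t\ \text{diag}}\prod_{i} a_{\gamma_i^{k_i},\varepsilon_i}.\]
Compare this with the prefactor $2^n/L^n$ in \eqref{eq: def N osc diag}. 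There, the factor $2$ per index came from identifying $\gamma$ with $\gamma^{-1}$, which have equal phases in the symmetric case. Here that factor is lost for all but one global choice.

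\textbf{Step 1: nonprimitive tuples.} I bound the sum over diagonal nonprimitive tuples exactly as in \S\ref{subsec: discard nonprim 1}. The argument uses only the amplitudes, the count of $\mathcal O_n(1)$ tuples per map $K_{\mathbf t}$, Lemma \ref{lem: poincare map powers}, and the bounds $\sum_g A_g^2\lesssim L^2$ and $\sum_g A_g^2\mathrm e^{-cT_g}\lesssim1$ from \eqref{eq: formula for GOE}. None of these is affected by the prefactor, so the sum is $\mathcal O_n(L^{-2})$.

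\textbf{Step 2: primitive tuples.} Take $n=2p$; odd $n$ gives no primitive diagonal tuples. As before, each support $\Gamma\subset\mathcal P_0$ of size $p$ carries $(2p)!/2^p$ orderings and $2^p$ sign choices $\vec\varepsilon$, giving $(2p)!$ tuples. With the extra factor $2$ from $\vec\eta$, this yields
\[\mathcal S^{\rm diag}_{\rm p}=\frac{2}{L^{2p}}\frac{(2p)!}{p!}\sum_{\substack{(\gamma_1,\dots,\gamma_p)\\ \text{distinct}}}\prod_i A_{\gamma_i}^2 .\]
Removing the distinctness constraint costs $\mathcal O_p(L^{-2})$, by the same estimate as in Step 1. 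Then \eqref{eq: formula for GOE} gives $L^{-2}\sum_{\gamma\in\mathcal P_0}A_\gamma^2=\tfrac18\Sigma^2_{\rm GOE}+\mathcal O(L^{-2})$; recall that $\mathcal P_0$ counts each pair $\{\gamma,\gamma^{-1}\}$ once, which accounts for a factor $1/2$ relative to $\mathcal P$. So the leading term becomes
\[\frac{2\,(2p)!}{p!}\Big(\tfrac18\Sigma^2_{\rm GOE}\Big)^p\cdot 4^p ,\]
which must be compared with $\frac{(2p)!}{2^pp!}\Sigma^{2p}_{\rm GUE}=\frac{(2p)!}{4^pp!}\Sigma^{2p}_{\rm GOE}$.

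\textbf{Main obstacle.} The hardest part is the normalisation bookkeeping in Step 2. A global factor $2$ cannot be right for all $p$; the correct count needs a factor $2^{p}$ replaced by $1$, up to lower order. This is where I expect the real subtlety: with a globally constant $\vec\eta$ the count is wrong. Phases cancel exactly whenever, for each pair $\gamma_i=\gamma_j$, one has $\eta_i=\eta_j$ and $\varepsilon_i=-\varepsilon_j$, that is, when $\vec\eta$ is constant on each pair. That gives $2^p$ admissible $\vec\eta$ patterns, and $\eta$-patterns that are not constant on pairs are off-diagonal.

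So I would first check that ``diagonal'' should be read as ``$\sum\varepsilon_ik_i\gamma_i^{\eta_i}\equiv0$ in $\mathbf Z[\mathcal P]$''. With that reading, the count is $(2p)!/2^p$ pairings times $2^p$ choices of $\varepsilon$ times $2^p$ choices of $\eta$, against the prefactor $L^{-2p}$. Using $\sum_{\mathcal P_0}A^2=\tfrac{L^2}{8}\Sigma^2_{\rm GOE}$ and $\Sigma^2_{\rm GOE}=2\Sigma^2_{\rm GUE}$, this gives
\[\frac{(2p)!\,2^p}{p!}\Big(\tfrac{\Sigma^2_{\rm GUE}}{4}\Big)^p=\frac{(2p)!}{2^pp!}\Sigma^{2p}_{\rm GUE},\]
as required. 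I would then verify that the remaining $\eta$-configurations are handled in the off-diagonal part via the second item of Proposition \ref{prop: averages actions}. The nonprimitive estimate of Step 1 holds verbatim under this reading.
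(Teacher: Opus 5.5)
Your proposal is correct and follows the route the paper intends. The paper only says the diagonal part is evaluated as in the time-reversal-symmetric section: discard nonprimitive tuples via Lemma \ref{lem: poincare map powers}, count the pairings, and apply \eqref{eq: formula for GOE}.

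Your reinterpretation of ``diagonal'' is genuinely needed, not just a check. With the paper's literal definition ($\vec\eta$ globally constant), the primitive part is $\frac{2\,(2p)!}{p!}\big(\tfrac18\Sigma^2_{\rm GOE}\big)^p=2^{1-p}m_{2p}$, which is wrong for $n\ge 4$. Your intermediate display carries a spurious factor $4^p$; without it, the global $\eta$-count fails exactly for $p\ge 2$. Requiring instead $\eta_i=\eta_j$ on each matched pair, i.e.\ $\sum_i\varepsilon_ik_i\gamma_i^{\eta_i}=0$ in $\mathbf Z[\mathcal P]$, turns the global factor $2$ into $2^p$ and gives exactly $m_{2p}$. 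The nonprimitive bound is unchanged, since there are at most $2^n$ choices of $\vec\eta$.

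The off-diagonal analysis must use the same corrected definition. Terms with $\vec\eta$ constant on every matched pair but not globally have identically vanishing $\omega$- and $\mu$-phases, so they could never be small there. Likewise, the paper's later step that nondiagonality forces a matched pair with $\eta_{i_1}=-\eta_{i_2}$ is only valid under your reading.
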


\subsection{The nondiagonal contribution} We now consider the sum over pairs $\mathbf p,\mathbf p'$ where both $\mathbf p$ and $\mathbf p'$ are nondiagonal.

\begin{defi}\label{defi: new nondiag} For each pair $\mathbf p,\mathbf p':=(\mathbf t,\vec \eta),(\mathbf t',\vec \eta')$, we define functions $w_{\mathbf p,\mathbf p'}^{\pm}:\mathcal P_0\to \mathbf Z$ by:
	\[w_{\mathbf p,\mathbf p'}^{\pm}(\gamma)=\sum_{\gamma_i=\gamma,\eta_i=\pm1}\varepsilon_i k_i-\sum_{\gamma_i'=\gamma,\eta_i'=\pm1} \varepsilon_i'k_i'.\]
	We also let $N_{\mathbf p,\mathbf p'}$ denote the number of pairs $\{w_{\mathbf p,\mathbf p'}^+(\gamma),w_{\mathbf p,\mathbf p'}^-(\gamma)\}$ of the form $\{-1,1\}$ or $\{0,k\}$ with $k\neq 0$, for $\gamma\in \mathcal P_0$.

	\end{defi}
This time, one finds, using Proposition \ref{prop: averages actions}:
\begin{equation} \label{eq: this time one finds}\mathbf E_\omega\big[|\mathbf E_\mu^{\rm off}(N_{\rm osc}^n)|^2\Big]\le \frac{C_n}{L^{2n}} \sum_{\substack{\mathbf p=(\mathbf t,\vec \eta),\ \mathbf p'=(\mathbf t',\vec \eta') \\ \text{non diagonal}}}\Big(\prod_\gamma a_{K_{\mathbf t}}(\gamma)a_{K_{\mathbf t'}}(\gamma)\Big) h^{\varepsilon N_{\mathbf p,\mathbf p'}} b_{K_{\mathbf t},K_{\mathbf t'}},\end{equation}
which should be compared with \eqref{eq: intro another shorthand}.

\subsection{The part $N_{\mathbf p,\mathbf p'}\neq 0$} We first consider the subsum of \eqref{eq: this time one finds}, that we denote $\mathcal S^{\rm off}_{\rm nd}$, that consists of pairs $(\mathbf p,\mathbf p')$ such that $N_{\mathbf p,\mathbf p'}\neq 0$. This is analogous to \S \ref{subsec: Delta k neq Delta k'} where we considered subsums consisting of pairs $(\mathbf t,\mathbf t')$ such that $\Delta k_{\mathbf t}\neq \Delta k_{\mathbf t'}$. Using the same method, we obtain
\[|\mathcal S^{\rm off}_{\rm nd}|\le C_n h^\varepsilon.\]

\subsection{The part $N_{\mathbf p,\mathbf p'}=0$} We now consider the subsum of \eqref{eq: this time one finds}, denoted $\mathcal S^{\rm off}_{\rm d}$, that consists of pairs $(\mathbf p,\mathbf p')$ such that $N_{\mathbf p,\mathbf p'}=0$, and $\mathbf p=(\mathbf t,\vec \eta)$ and $\mathbf p'=(\mathbf t',\vec \eta)$ are both nondiagonal in the sense of Definition \ref{defi: new nondiag}. Note that this condition forces $q_{\mathbf t}+q_{\mathbf t}'\ge 2$ on the support. In particular, $q_{\mathbf t}+q_{\mathbf t'}$ has support of size $\le n$.

\subsubsection{Discarding nonprimitive contributions} Consider the subsum of $\mathcal S^{\rm off}_{\rm d}$, denoted $\mathcal S^{\rm off}_{\rm d,\rm np}$, that consists of pairs $(\mathbf p,\mathbf p')$ such that $k_{\mathbf t}(\gamma)+k_{\mathbf t'}(\gamma)\ge 3$ for some $\gamma\in \mathcal P_0$. It is bounded by
\[|\mathcal S_{\rm d,\rm np}^{\rm off}|\le \frac{C_n}{L^{2n}} \sum_{\substack{\mathbf p=(\mathbf t,\vec \eta),\ \mathbf p'=(\mathbf t',\vec \eta') \\ \text{non diagonal}\\ \exists \gamma~:~ k_{\mathbf t}+k_{\mathbf t'}(\gamma)\ge 3} }\Big(\prod_\gamma a_{K_{\mathbf t}}(\gamma)a_{K_{\mathbf t'}}(\gamma)\Big).\]
This term is controlled as $\mathcal S_{\rm np}^{\rm diag}$ in \S \ref{subsec: discard nonprim 1} and one finds
\[|\mathcal S_{\rm d,\rm np}^{\rm off}|\le C_nL^{-2}.\]

\subsubsection{Dealing with primitive contributions} We now consider the subsum of $\mathcal S^{\rm off}_{\rm d}$, denoted $\mathcal S^{\rm off}_{\rm d,\rm p}$, that consists of pairs $(\mathbf p,\mathbf p')=((\mathbf t,\vec \eta),(\mathbf t',\vec \eta'))$ such that $N_{\mathbf p,\mathbf p'}=0$, $k_{\mathbf t}(\gamma)+k_{\mathbf t'}(\gamma)=2$ on its support, and such that $\mathbf p,\mathbf p'$ are nondiagonal, in the sense of Definition \ref{defi: new nondiag}. In particular, each $\gamma\in \mathcal P_0$ appears exactly twice in $\vec \gamma\cup \vec \gamma'$.

\begin{lem}Under the above assumptions on the pair $(\mathbf p,\mathbf p')$, one has $\Delta k_{\mathbf t}\neq 0$ and $\Delta k_{\mathbf t'}\neq 0$.
\end{lem}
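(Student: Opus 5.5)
The argument is a case analysis over each $\gamma$ in the support. Fix $(\mathbf p,\mathbf p')$ as in the statement. Since every $\gamma\in\operatorname{Supp}(q_{\mathbf t}+q_{\mathbf t'})$ satisfies $k_{\mathbf t}(\gamma)+k_{\mathbf t'}(\gamma)=2$, all the $k_i,k_i'$ attached to such a $\gamma$ equal $1$. Moreover $\gamma$ appears exactly twice in the concatenation $\vec\gamma\cup\vec\gamma'$. There are therefore only three configurations for $\gamma$:
\begin{enumerate}
\item $\gamma$ appears twice in $\vec\gamma$ and not in $\vec\gamma'$;
\item $\gamma$ appears twice in $\vec\gamma'$ and not in $\vec\gamma$;
\item $\gamma$ appears once in each.
\end{enumerate}

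I would next translate the condition $N_{\mathbf p,\mathbf p'}=0$ into constraints on each configuration. For every $\gamma$, the pair $\{w^+(\gamma),w^-(\gamma)\}$ must avoid $\{-1,1\}$ and $\{0,k\}$ with $k\neq0$. Since the total weight at $\gamma$ is two units of $\pm1$, the only allowed pairs are $\{0,0\}$ and pairs with both entries nonzero other than $\{-1,1\}$. So $\{0,0\}$, $\{1,1\}$ and $\{-1,-1\}$ are possible, while $\{\pm2,0\}$ is excluded.

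The three configurations then behave as follows.
\begin{itemize}
\item In case (3), suppose $\gamma=\gamma_i=\gamma'_{i'}$. If $\eta_i=\eta'_{i'}$, only one of $w^\pm$ is possibly nonzero, so we need $\varepsilon_i=\varepsilon'_{i'}$ to get $\{0,0\}$. If $\eta_i\neq\eta'_{i'}$, we get $\{\varepsilon_i,-\varepsilon'_{i'}\}$, which must not be $\{-1,1\}$; hence $\varepsilon_i=-\varepsilon'_{i'}$. In both subcases $\gamma$ contributes $\pm1\neq0$ to $\Delta k_{\mathbf t}$ and also to $\Delta k_{\mathbf t'}$.
\item In case (1), suppose $\gamma=\gamma_i=\gamma_j$. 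If $\eta_i=\eta_j$, we need $\varepsilon_i+\varepsilon_j=0$, which contributes $\Delta k_{\mathbf t}(\gamma)=0$. If $\eta_i\neq\eta_j$, the pair is $\{\varepsilon_i,\varepsilon_j\}$, which must be $\{1,1\}$ or $\{-1,-1\}$, so $\Delta k_{\mathbf t}(\gamma)=\pm2\neq0$.
\item Case (2) is symmetric to case (1).
\end{itemize}

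To conclude, suppose for contradiction that $\Delta k_{\mathbf t}=0$, and aim to show that $\mathbf p$ is diagonal. First, no $\gamma$ can be of type (3), since such a $\gamma$ would give $\Delta k_{\mathbf t}(\gamma)=\pm1$. So every $\gamma$ in $\operatorname{Supp}(q_{\mathbf t})$ is of type (1). By the case analysis above, each such $\gamma$ must have $\eta_i=\eta_j$ and $\varepsilon_i=-\varepsilon_j$, which gives $\sum\varepsilon_ik_i\gamma_i=0$, so $\mathbf t$ is diagonal in the sense of Definition~\ref{defi: tuples}.

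The delicate point is to also show that $\vec\eta$ is constant. Here I would use the symmetry $\gamma\sim\gamma^{-1}$ built into the parametrization. The summand in the expansion depends only on the oriented orbits $\gamma_i^{\eta_i}$, so $\eta$ can be flipped simultaneously on both copies of a given class $\gamma$ without changing the summand. After this normalization, $\vec\eta$ is constant, which is the convention that defines diagonality. If instead the definition is read literally, with a non-constant $\vec\eta$ declared nondiagonal, then I would treat such pairs separately: their phase is still trivial, and they are absorbed into the diagonal count of Proposition~\ref{prop: esperance diag GUE} with the appropriate combinatorial factor. In either reading, $\mathbf p$ is diagonal, which contradicts the hypothesis. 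The same argument applied to $\mathbf t'$ gives $\Delta k_{\mathbf t'}\neq0$.

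The main obstacle is exactly this bookkeeping of the orientation labels $\vec\eta$ against the paper's definition of a diagonal pair. The parity and sign constraints themselves are mechanical once the allowed pairs $\{0,0\}$, $\{1,1\}$, $\{-1,-1\}$ have been identified.
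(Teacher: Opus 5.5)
Your case analysis is correct and follows the paper's route: assume $\Delta k_{\mathbf t}=0$, rule out orbits shared with $\mathbf t'$, deduce that $\mathbf t$ is diagonal, then look at $\vec\eta$. You also use $N_{\mathbf p,\mathbf p'}=0$ in the right place, namely to force $\eta_i=\eta_j$ and $\varepsilon_i=-\varepsilon_j$ on each matched pair. The paper's write-up is weaker here. It claims a matched pair with $(\gamma_{i_1},\varepsilon_{i_1},\eta_{i_1})=(\gamma_{i_2},-\varepsilon_{i_2},-\eta_{i_2})$ that gives $|\Delta k_{\mathbf t}(\gamma_{i_1})|=2$. In fact such a pair has $\Delta k_{\mathbf t}(\gamma_{i_1})=0$, and it is excluded only because $\{w^+,w^-\}=\{-1,1\}$. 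Moreover, a globally nonconstant $\vec\eta$ need not be nonconstant on any matched pair.

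The gap is your final step, and it cannot be closed as stated. Flipping $\eta$ on both copies of a class gives a \emph{different} index with the same summand; it does not make $\mathbf p$ diagonal. With the definition read literally ($\vec\eta$ globally constant), the lemma is false. Take distinct $g_1,\dots,g_4\in\mathcal P_0$ and set $\vec\gamma=(g_1,g_1,g_2,g_2)$, $\vec\gamma'=(g_3,g_3,g_4,g_4)$, all $k_i=k_i'=1$, $\vec\varepsilon=\vec\varepsilon'=(1,-1,1,-1)$, $\vec\eta=\vec\eta'=(1,1,-1,-1)$. Then:
\begin{itemize}
\item every pair $\{w^+(\gamma),w^-(\gamma)\}$ equals $\{0,0\}$, so $N_{\mathbf p,\mathbf p'}=0$;
\item $k_{\mathbf t}+k_{\mathbf t'}=2$ on the support;
\item $\mathbf p$ and $\mathbf p'$ are nondiagonal in the literal sense;
\item yet $\Delta k_{\mathbf t}=\Delta k_{\mathbf t'}=0$.
\end{itemize}
Such terms have trivial phase and $\operatorname{sinc}(0)=1$, so they are not $\mathcal O(\Lambda^{-1})$.

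So your claim that ``in either reading $\mathbf p$ is diagonal'' is wrong for the literal reading. Your second option is a change of definition, and you should say so explicitly. The right definition is: $\mathbf p$ is diagonal iff $\sum_i\varepsilon_ik_i\,\gamma_i^{\eta_i}=0$ in the free $\mathbf Z$-module on oriented primitive orbits. For primitive tuples this means $\mathbf t$ is diagonal and $\vec\eta$ is constant on each matched pair. This is also what Proposition~\ref{prop: esperance diag GUE} requires. With $p$ matched pairs there are $2^p$ admissible orientation vectors, and only counting all of them yields the Gaussian moment $m_{2p}$; keeping only the two constant vectors gives $2^{1-p}m_{2p}$.

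With this definition your proof is complete and needs no normalization step. $\Delta k_{\mathbf t}=0$ forces every orbit of $\mathbf t$ to be of type (1) with $\eta_i=\eta_j$ and $\varepsilon_i=-\varepsilon_j$. Hence $\mathbf p$ is diagonal, a contradiction, and the same argument applies to $\mathbf t'$.
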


\begin{proof}By symmetry, it is enough to prove $\Delta k_{\mathbf t}\neq 0$. By contradiction, assume $\Delta k_{\mathbf t}=0$. Then each $\gamma\in \mathcal P_0$ appears either twice with signs $+1$ and $-1$ or none in $\mathbf t$, in particular $\mathbf t$ is diagonal in the sense of Definition \ref{defi: tuples}. Since $\mathbf p$ is nondiagonal in the sense of Definition \ref{defi: new nondiag}, we deduce that the vector $\vec \eta$ is nonconstant. In particular, one can find indices $i_1,i_2\in \{1,\ldots,n\}$ such that $(\gamma_{i_1},\varepsilon_{i_1},\eta_{i_1})=(\gamma_{i_2},-\varepsilon_{i_2},-\eta_{i_2})$. It follows that $|\Delta k_{\mathbf t}(\gamma_{i_1})|=2$, which contradicts our assumption. 
\end{proof}
With this at hand, we find
\[|\mathcal S^{\rm off}_{\rm d,\rm p}|\le \frac{C_n}{L^{2n}}\sum_{\substack{(\mathbf t,\mathbf t')  \\ \Delta k_{\mathbf t}\neq 0, \ \Delta k_{\mathbf t'}\neq 0 \\ q_{\mathbf t}+q_{\mathbf t'}\in \{0,2\}}} \Big(\prod_\gamma a_{K_{\mathbf t}}(\gamma)a_{K_{\mathbf t'}}(\gamma)\Big) b_{K_{\mathbf t},K_{\mathbf t'}}.\]
Up to permuting indices of the tuples, a pair $(\mathbf t,\mathbf t')$ satisfying the above properties is completely determined by:
\begin{itemize}
	\item the choice of $\operatorname{Supp}(q_{\mathbf t}+q_{\mathbf t'})$, which is a set $\Gamma:=\{g_1,\ldots,g_n\}$ of size $n$.
	\item the choices of maps $\{1,\ldots,n\}\to \Gamma$ which define the vectors $\vec \gamma$ and $\vec \gamma'$. Here there are $\mathcal O_n(1)$ choices.
	\item the choices of values $d_i:=\Delta k_{\mathbf t}(g_i)$ and $d_i'=\Delta k_{\mathbf t'}(g_i)$ in $\{-2,-1,0,1,2\}$ for each $i\in \{1,\ldots,n\}$, such that $\vec d\neq 0$ and $\vec d'\neq 0$. Up to exchanging indices (which adds $\mathcal O_n(1)$ choices) we might assume $d_1\neq 0$ and $d_1'\neq 0$.
	\item the choices of $\vec k,\vec \varepsilon$ for which there are $\mathcal O_n(1)$ choices.
\end{itemize}
It follows that
\[|\mathcal S^{\rm off}_{\rm d,\rm p}|\le \frac{C_n}{L^{2n}}\sum_{g_1,\ldots,g_n}\sum_{\substack{d_1,\ldots,d_n \\ d_1',\ldots,d_n' \\ d_1\neq 0, \ d_1'\neq 0}} \Big(\prod_{1\le i\le n} A_{g_i}^2\Big) \operatorname{sinc}\Big(\frac{\Lambda}{L} \sum_{1\le i\le n} d_iT_{g_i}\Big)\operatorname{sinc}\Big(\frac{\Lambda}{L}\sum_{1\le i\le n} d_i'T_{g_i}\Big).\]
Let $\alpha=\frac{L}\Lambda$ as before. We decompose the sum on the right-hand side as follows.
\begin{itemize}
	\item A first piece $\mathcal S_{-1}$ consists of pairs $(\vec d,\vec d')$ such that $\big|\sum d_iT_{g_i}\big|\le \alpha$ or $\big|\sum d_i'T_{g_i}\big|\le \alpha$.
	\item For $j,j'\ge 0$, we denote by $\mathcal S_{j,j'}$ the subsum consisting of pairs $(\vec d,\vec d')$ such that $\big|\sum d_iT_{g_i}\big|\in (2^j \alpha,2^{j+1}\alpha]$ and $\big|\sum d_i'T_{g_i}\big|\in (2^{j'} \alpha,2^{j'+1}\alpha]$
\end{itemize}

\subsubsection{Dealing with $\mathcal S_{-1}$} This is done exactly as in \S \ref{subsec: controlling S-1} and one finds
\[|\mathcal S_{-1}|\le C_nL^{-1}\alpha.\]

\subsubsection{Dealing with $\mathcal S_{j,j'}$ for $j,j'\ge 0$} Here we adapt the proof of \S \ref{subsec: controlling Sj}. One finds
\[|\mathcal S_{j,j'}|\le C_n \frac{L}{\Lambda 2^j\alpha} \frac{L}{\Lambda 2^{j'}\alpha}\cdot L2^{\min(j,j')}\alpha\le C_n \frac{L}{\Lambda^2\alpha} 2^{\min(j,j')-j-j'}.\]

By summing all the contributions, we obtain analogously to \eqref{eq: bound Acal2}:
\[|\mathcal S^{\rm off}_{\rm d,\rm p}|\le |\mathcal S_{-1}|+\sum_{j,j'\ge 0} |\mathcal S_{j,j'}|\le \frac{C_n}{\Lambda}.\]

\section{Convergence to a Gaussian random field}
\subsection{Proof of Theorem \ref{thm: convergence random function}} \label{sec: random function 1} In this subsection, we prove Theorem \ref{thm: convergence random function} on the convergence of the sequence of random functions $\widetilde N(\omega,\cdot)$ where $\omega$ is picked at random in $\Omega_h$. The proof of Theorem \ref{thm: conv random fun energy} where the energy is picked at random is slightly more technical and will be proved in the next subsection. We prove Theorem \ref{thm: convergence random function} for perturbations that preserve time reversal symmetry. The case of perturbations that strongly break time-reversal symmetry is dealt with in a similar way. We first establish convergence of moments. 
\begin{prop}\label{prop: moments method higher dim} Let $\mathfrak f_{\rm GOE}$ denote the centered Gaussian random field on $\mathbf R$ with covariance kernel $K_{\rm GOE}(\mu,\mu')$. Let $\vec \mu=(\mu_1,\ldots,\mu_n)\in \mathbf R^n$. Then, in the regime $1\le L\le \mathfrak c|\log h|$, one has
	\begin{equation} \label{eq: convergence multivar}\mathbf E_\omega\Big[\prod_{i=1}^n N_{\rm osc}(\omega,\mu_i)\Big]=\mathbf E\Big[\prod_{i=1}^n \mathfrak f_{\rm GOE}(\mu_i)\Big]+\mathcal O_{\vec \mu}(L^{-2}).\end{equation}
	Moreover the error is uniform in $\vec \mu$ provided the differences $|\mu_i-\mu_j|$ remain in a compact set.
\end{prop}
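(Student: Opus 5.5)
We expand the product $\prod_{i=1}^n N_{\rm osc}(\omega,\mu_i)$ exactly as in \eqref{eq: Nosc power n}, now with distinct shifts $\mu_i$:
\[\prod_{i=1}^n N_{\rm osc}(\omega,\mu_i)=\frac{2^n}{L^n}\sum_{\mathbf t=(\vec\gamma,\vec k,\vec\varepsilon)}\Big(\prod_{i} a_{\gamma_i^{k_i},\varepsilon_i}\Big)\exp\Big(\frac{\mathrm i}{h}\sum_i\varepsilon_ik_iS_\omega(\gamma_i)\Big)\exp\Big(\mathrm i\sum_i\mu_i\varepsilon_ik_iT_{\gamma_i}/L\Big),\]
and take the $\omega$-expectation term by term. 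We then split into diagonal and nondiagonal tuples in the sense of Definition \ref{defi: tuples}.

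\textbf{Nondiagonal tuples.} For a nondiagonal $\mathbf t$, the function $\Delta k_{\mathbf t}$ is nonzero. Grouping the orbits in $\operatorname{Supp}K_{\mathbf t}$ (distinct elements of $\mathcal P_0$, hence satisfying the hypothesis of Proposition \ref{prop: averages actions}), the first item of that proposition bounds the $\omega$-average of the phase by $C_nh^{\varepsilon}$. Summing absolute values then costs at most $L^{-n}$ times the number of tuples with periods $\le L$ weighted by $\prod|a|$, which is $\lesssim \mathrm e^{CnL}$. Choosing $\mathfrak c$ small makes the total $\mathcal O(h^{\varepsilon/2})=\mathcal O(L^{-2})$. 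This step is actually simpler than in Proposition \ref{prop: the nondiag cont}, since no square or $\mu$-average is needed.

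\textbf{Diagonal tuples.} Here the $S_\omega$-phase is trivial, so the contribution is deterministic. Nonprimitive diagonal tuples are bounded by $C_nL^{-2}$ exactly as in \S\ref{subsec: discard nonprim 1}, since the $\mu$-phases have modulus one. For primitive diagonal tuples, $n=2p$ and the indices split into $p$ pairs $\{i,j\}$ with $\gamma_i=\gamma_j$ and $\varepsilon_i=-\varepsilon_j$. The $\mu$-phase then factorizes as $\prod_{\{i,j\}}\mathrm e^{\mathrm i\varepsilon_i(\mu_i-\mu_j)T_{\gamma}/L}$. Summing over $\varepsilon=\pm1$ and over the matching of indices gives
\[\sum_{\text{pairings }\pi}\prod_{\{i,j\}\in\pi}\Big(\frac{8}{L^2}\sum_{\gamma\in\mathcal P_0}A_\gamma^2\cos\big((\mu_i-\mu_j)T_\gamma/L\big)\Big)\]
plus the correction from the constraint that the $p$ orbits be distinct. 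That correction is a nonprimitive-type sum, again $\mathcal O(L^{-2})$.

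\textbf{Identifying the limit.} Applying Proposition \ref{prop: equidistribution result} with $\varphi(t)=t|\widehat f(t)|^2\cos(t(\mu_i-\mu_j))$, a smooth compactly supported function vanishing at $0$, each factor equals $K_{\rm GOE}(\mu_i,\mu_j)+\mathcal O(L^{-2})$. The implied constant depends on finitely many derivatives of $\varphi$, so it is uniform while $|\mu_i-\mu_j|$ stays in a compact set. By Wick's formula, the sum over pairings of products of covariances is exactly $\mathbf E[\prod_i\mathfrak f_{\rm GOE}(\mu_i)]$, which gives \eqref{eq: convergence multivar}.

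The main point to check is that the $L^{-2}$ (rather than $L^{-1}$) error in Proposition \ref{prop: equidistribution result} applies. It does, because $\varphi(0)=0$, and the improved error bound depends only on $C^k$ norms of $\varphi$, which are polynomial in $|\mu_i-\mu_j|$.
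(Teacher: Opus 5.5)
Your outline matches the paper's proof. You expand the product, discard nondiagonal tuples via Proposition~\ref{prop: averages actions}, and discard nonprimitive ones as in \S\ref{subsec: discard nonprim 1}. You then turn primitive diagonal tuples into perfect matchings with cosine factors, and conclude with Proposition~\ref{prop: equidistribution result} (using $\varphi(0)=0$) and Wick's formula. The diagonal analysis and the identification of the limit are fine.

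The weak point is the nondiagonal estimate. You spend a single factor $h^{\varepsilon}$ against the total weight $\lesssim \mathrm e^{CnL}$ of all $n$-tuples. This requires $\mathrm e^{CnL}h^{\varepsilon}\to 0$, i.e.\ $\mathfrak c<\varepsilon/(Cn)$, so your $\mathfrak c$ degenerates as $n\to\infty$. In the paper, $\mathfrak c=\mathfrak c(\alpha,\beta)$ is fixed independently of $n$, as in Propositions~\ref{prop: averages actions} and~\ref{prop: moments N_osc}. This is essential: Theorem~\ref{thm: convergence random function} is deduced by the method of moments, which needs the identity for \emph{every} $n$ along a single sequence $L_n\le \mathfrak c|\log h_n|$. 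Your bound only gives this when $L_n=o(|\log h_n|)$.

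The repair is to use the full strength of Proposition~\ref{prop: averages actions}. Apply it only to the orbits with $\Delta k_{\mathbf t}(\gamma)\neq 0$, since it requires nonzero integers. It gives $\mathcal O_n(h^{\varepsilon m})$ with $m=\#\{\gamma:\Delta k_{\mathbf t}(\gamma)\neq 0\}$. Every orbit occurring exactly once in $\vec\gamma$ has $\Delta k_{\mathbf t}(\gamma)\neq 0$. So if there are $r$ such orbits, then $m\ge\max(1,r)$ and $h^{\varepsilon m}\le h^{\varepsilon/2}(h^{\varepsilon/2})^{r}$.

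Each singly-occurring orbit costs at most $L\,\mathrm e^{CL}$ to choose, with $C$ depending only on the flow (it comes from counting periodic orbits of period $\le L$). This cost is paid by that orbit's own factor $h^{\varepsilon/2}$ once $\mathfrak c<\varepsilon/(2C)$. The orbits occurring at least twice are summed using Lemma~\ref{lem: poincare map powers} and $\sum_\gamma A_\gamma^2\lesssim L^2$, which gives at most $L^{n-r}$. Against the prefactor $L^{-n}$ this leaves $C_nh^{\varepsilon/2}=\mathcal O(L^{-2})$, with $\mathfrak c$ independent of $n$. This is exactly the counting done for $\mathcal S^{\rm off}_{\rm nd}$ in the proof of Proposition~\ref{prop: the nondiag cont}, applied to a single tuple rather than a pair.
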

Since $\widetilde N(\omega,\mu)=\widetilde N_{\rm osc}(\omega,\mu)+\mathcal O_{\vec \mu}(L^{-1})$, a corollary is that \eqref{eq: convergence multivar} holds with $N_{\rm osc}$ replaced by $\widetilde N$, up to replacing the error by $\mathcal O_{\vec \mu}(L^{-1})$. Using the multivariate method of moments \cite[Theorem 6.1]{puder2022local}, Proposition \ref{prop: moments method higher dim} implies that for any $\mu_1,\ldots,\mu_n\in \mathbf{R}$, one has convergence in distribution, as $h\to 0$:
\[\big(\widetilde N(\omega,\mu_1),\ldots,\widetilde N(\omega,\mu_n)\big)\longrightarrow \big(\mathfrak f_{\rm GOE}(\mu_1),\ldots,\mathfrak f_{\rm GOE}(\mu_n)\big).\]
As argued in \cite[\S 9]{ingremeau2023emergence}, thanks to Prokhorov's theorem, it is then enough to prove that the sequence of random functions $\widetilde N(\omega,\cdot)$ is \emph{tight} in $C^\infty(\mathbf{R})$ to obtain Theorem \ref{thm: convergence random function}.

\begin{proof}[Proof of Proposition \ref{prop: moments method higher dim}]By \cite[Lemma 11.6.1]{RandomFields}, one has
	\begin{equation} \label{eq: expected prod GFF}\mathbf E\Big[\prod_{i=1}^n \mathfrak f_{\rm GOE}(\mu_i)\Big]=\sum_{\mathbf A} \prod_{\{i,j\}\in \mathbf A} K_{\rm GOE}(\mu_i-\mu_j),\end{equation}
	where the sum runs over \emph{perfect matchings} $\mathbf A$ of $\{1,\ldots,n\}$. Recall that a \emph{perfect matching} on $\{1,\ldots,n\}$ is a partition $\mathbf A$ of this set into pairs. We write $i\sim j$ if $\{i,j\}\in \mathbf A$.
	
Let us denote $\mathbf I=\mathbf E_\omega\Big[\prod_{i=1}^n N_{\rm osc}(\omega,\mu_i)\Big]$. We need to show that $\mathbf I$ approaches \eqref{eq: expected prod GFF} as $h\to 0$. Start from
\[\mathbf I=\frac{2^n}{L^n}\sum_{(\vec \gamma,\vec k,\vec \varepsilon)} \prod_{1\le i\le n} a_{\gamma_i^{k_i},\varepsilon_i}\mathbf E_\omega\Big[\exp\Big(\frac{\mathrm{i}}{h}\sum_{1\le i\le n} \varepsilon_i k_iS_\omega(\gamma_i)\Big)\Big]\exp\Big(\frac{\mathrm{i}}{L}\sum_{1\le i\le n} \varepsilon_ik_i \mu_iT_{\gamma_i}\Big).\]
One can discard tuples that are nonprimitive or nondiagonal, which contribute to an error $\mathcal O(L^{-2})$. That is,
\begin{equation} \label{eq: one can discard}\mathbf I=\frac{2^n}{L^n}\sum_{\substack{\mathbf t=(\vec \gamma,\vec \varepsilon) \\ \text{diagonal} \\ \text{and primitive}}} \Big(\prod_{1\le i\le n} a_{\gamma_i,\varepsilon_i}\Big)\exp\Big(\frac{\mathrm{i}}{L}\sum_{1\le i\le n} \varepsilon_i\mu_iT_{\gamma_i}\Big)+\mathcal O(L^{-2}).\end{equation}
Since $(\vec \gamma,\vec \varepsilon)$ being diagonal and primitive forces $n$ to be even, we now consider the case $n=2p$ with $p\ge 1$. We can then rewrite \eqref{eq: one can discard} as
\begin{equation} \label{eq:we can then rewrite}\mathbf I=\frac{2^{2p}}{L^{2p}}\sum_{\substack{\mathbf A\  \text{perfect} \\ \text{ matching}}} \sum_{\substack{\gamma_1,\ldots,\gamma_{2p}\in \mathcal P_0 \\ \varepsilon_1,\ldots,\varepsilon_{2p}\in \{\pm 1\} \\ (\gamma_j,\varepsilon_j)=(\gamma_i,-\varepsilon_i) \ \text{if $i\sim j$}}} \Big(\prod_{i=1}^{2p} a_{\gamma_i,\varepsilon_i}\Big)\exp\Big(\frac{\mathrm{i}}{L}\sum_{i=1}^{2p} \varepsilon_i\mu_iT_{\gamma_i}\Big)+\mathcal O(L^{-2}).\end{equation}
Here we have omitted the vector $\vec k=(1,\ldots,1)$ from the notation. For each pair $\mathbf a=\{i,j\}\in \mathbf A$ one must choose an element $\gamma_{\mathbf a}\in \mathcal P_0$ such that $\gamma_i=\gamma_j=\gamma_{\mathbf a}$. Then if $\vec \varepsilon$ is chosen so that $\varepsilon_j=-\varepsilon_i$ when $\{i,j\}\in \mathbf A$, one has 
\[\prod_{i=1}^{2p} a_{\gamma_i,\varepsilon_i}=\prod_{\mathbf a\in \mathbf A}A_{\gamma_{\mathbf a}}^2.\]
Moreover, given a perfect matching $\mathbf A$, one has
\[\sum_{\substack{\varepsilon_1,\ldots,\varepsilon_{2p}\in \{\pm 1\}\\ \varepsilon_j=-\varepsilon_i \ \text{if $i\sim j$}}} \exp\Big(\frac{\mathrm{i}}{L}\sum_{\mathbf a=\{i,j\}\in \mathbf A} (\varepsilon_i\mu_i+\varepsilon_j\mu_j)T_{\gamma_{\mathbf a}}\Big)=2^p\prod_{\mathbf a=\{i,j\}\in \mathbf A} \cos\Big((\mu_i-\mu_j)\frac{T_{\gamma_{\mathbf a}}}{L}\Big).\]
Then, \eqref{eq:we can then rewrite} becomes
\begin{equation}\label{eq: write as perfect} \begin{aligned}\mathbf I & =\frac{2^{3p}}{L^{2p}}\sum_{\substack{\mathbf A\  \text{perfect} \\ \text{ matching}}} \Big[\sum_{\substack{\gamma:\mathbf A\to \mathcal P_0 \\ \text{injective}}} \Big(\prod_{\mathbf a=\{i,j\}\in \mathbf A}A_{\gamma_{\mathbf a}}^2\cos((\mu_i-\mu_j)T_{\gamma_{\mathbf a}}/L)\Big)\Big]+\mathcal O(L^{-2})\\
& =\sum_{\substack{\mathbf A\  \text{perfect} \\ \text{ matching}}}\Big[ \prod_{\mathbf a=\{i,j\}\in \mathbf A} \Big(\frac{8}{L^2}\sum_{\gamma\in \mathcal P_0} A_{\gamma }^2\cos((\mu_i-\mu_j)T_{\gamma}/L)\Big)\Big]+\mathcal O(L^{-2}).\end{aligned}\end{equation}
Here, to pass from the first line to the second line we have replaced the sum over injective maps $\mathbf A\to \mathcal P_0$ by the sum over all maps, and we used the fact that noninjective maps $\mathbf A\to \mathcal P_0$ give a contribution $\mathcal O(L^{-2})$. Now for each pair $\{i,j\}$, one has from Proposition \ref{prop: equidistribution result}:
\[\frac{8}{L^2}\sum_{\gamma\in \mathcal P_0} A_{\gamma }^2\cos((\mu_i-\mu_j)T_{\gamma}/L)=4\int_{0}^{+\infty}t|\widehat f(t)|^2 \cos((\mu_i-\mu_j)t)\mathrm{d}t+\mathcal O_{\mu_i,\mu_j}(L^{-2}).\]
Here the error is uniform in $\vec \mu$ provided the differences $|\mu_i-\mu_j|$ lie in a compact set. The leading term on the right-hand side is nothing but $k_{\rm GOE}(|\mu_i-\mu_j|)$. Plugging these asymptotics in \eqref{eq: write as perfect} leads to the result. \end{proof}
It remains to prove the tightness of the sequence of random functions $\widetilde N(\omega,\cdot)$. Since $\widetilde N(\omega,\cdot)$ is only defined on $[-\Lambda,\Lambda]$, we introduce a smooth cutoff to make it defined on the whole real line.
\begin{prop}\label{prop: tightness 1} Let $\chi\in C^\infty_{\rm comp}([-1,1],\mathbf{R})$. The sequence of random functions $\varphi_h:\mu\mapsto \chi(\mu/\Lambda)\widetilde N(\omega,\mu)$ is tight in $C^\infty(\mathbf{R})$. That is, for any $\varepsilon>0$, there is a compact subset $\mathcal K_\varepsilon\subset C^\infty(\mathbf{R})$ such that
\[\sup_{h\in (0,1]} \mathbf P_\omega\big(\varphi_h\in \mathcal K_\varepsilon\big)>1-\varepsilon.\] \end{prop}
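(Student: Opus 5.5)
Tightness in the Fréchet space $C^\infty(\mathbf{R})$ will come from uniform Sobolev bounds combined with Markov's inequality. By Arzelà--Ascoli, for any sequence $(M_\ell)_{\ell\ge 0}$ of positive constants, the set
\[\mathcal K=\Big\{\varphi\in C^\infty(\mathbf{R})~:~ \|\varphi\|_{C^\ell([-k,k])}\le M_{\ell}k \ \text{for all } \ell,k\ge 1\Big\}\]
is compact in $C^\infty(\mathbf{R})$. By the Sobolev embedding on intervals, it is then enough to establish moment bounds
\begin{equation*}
\sup_{h}\ \sup_{\mu\in\mathbf{R}} \mathbf E_\omega\big[|\partial_\mu^\ell\varphi_h(\mu)|^2\big]\le C_\ell\qquad\text{for all }\ell\ge 0.
\end{equation*}
Integrating over $[-k,k]$ gives $\mathbf E_\omega\|\varphi_h\|_{H^{\ell+1}([-k,k])}^2\le C_\ell k$. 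Markov's inequality and a union bound over $(\ell,k)$, with thresholds $M_{\ell,k}$ growing fast enough (say $M_{\ell,k}^2=2^{\ell+k}C_\ell k/\varepsilon$), then show that $\varphi_h$ lies in the corresponding compact set with probability $\ge 1-\varepsilon$, uniformly in $h$.

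For the moment bounds, write $\widetilde N=\widetilde N_{\rm osc}+\widetilde R$, where the Weyl term has been removed by centering and $\widetilde R=R-\mathbf E_\mu R$. The derivatives of the cutoff $\chi(\mu/\Lambda)$ only produce factors $\Lambda^{-j}\le 1$, so by Leibniz it suffices to control $\partial_\mu^\ell \widetilde N_{\rm osc}$ and $\partial_\mu^\ell \widetilde R$. Differentiating \eqref{eq: Nosc in traceformula} in $\mu$ replaces $\widehat f(\pm T_\gamma/L)$ by $(\pm \mathrm{i}T_\gamma/L)^\ell\widehat f(\pm T_\gamma/L)$. This is again a compactly supported smooth amplitude $g_\ell(t)=t^\ell\widehat f(t)$, so $\partial_\mu^\ell N_{\rm osc}$ is the oscillating sum attached to the test function whose Fourier transform is $g_\ell$.

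The computation of Proposition \ref{prop: moments method higher dim} with $n=2$ then applies verbatim to this amplitude: discard nondiagonal pairs using Proposition \ref{prop: averages actions}, discard nonprimitive terms using Lemma \ref{lem: poincare map powers}, and bound the diagonal part by $\frac{8}{L^2}\sum_\gamma A_\gamma^2|g_\ell|^2/|\widehat f|^2\lesssim 1$ using Proposition \ref{prop: equidistribution result}. Together these give $\mathbf E_\omega|\partial^\ell_\mu N_{\rm osc}(\omega,\mu)|^2\le C_\ell$ uniformly in $\mu$. For the centering term, Cauchy--Schwarz in $\mu$ gives $|\mathbf E_\mu N_{\rm osc}|^2\le \mathbf E_\mu|N_{\rm osc}|^2$, and after Fubini this is bounded by the same constant, so $\widetilde N_{\rm osc}$ inherits the bound.

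The remainder is the main obstacle. Proposition \ref{prop: long time gutz} bounds $R$ only in sup norm, $|R|\lesssim\Lambda/L$, and says nothing about $\partial_\mu^\ell R$. My plan is to get derivative control from the structure of the remainder rather than from the statement as quoted. Since $N(\omega,\mu)=\operatorname{Tr}f_\mu(\cdot)$, we have $\partial_\mu^\ell N(\omega,\mu)=(-1)^\ell N_{f^{(\ell)}}(\omega,\mu)$, and $\widehat{f^{(\ell)}}(t)=(\mathrm{i}t)^\ell\widehat f(t)$ is still smooth and supported in $[-1,1]$. Applying the trace formula of Proposition \ref{prop: long time gutz} with $f$ replaced by $f^{(\ell)}$ shows that $\partial^\ell_\mu R$ is exactly the remainder for that test function, hence bounded by $C_\ell\Lambda/L\le C_\ell$. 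The Weyl term for $f^{(\ell)}$ vanishes for $\ell\ge1$ because $\widehat{f^{(\ell)}}(0)=0$; for $\ell=0$ it is removed by centering.

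The point to verify is that the constants in Proposition \ref{prop: long time gutz} depend only on finitely many seminorms of $\widehat f$. With that, all derivatives of $\varphi_h$ have uniformly bounded second moments, and tightness follows.
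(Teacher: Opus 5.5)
Your proposal is correct and follows essentially the same route as the paper: apply the trace formula to $f^{(\ell)}$ to control $\partial_\mu^\ell$ of the remainder, use $h$-uniform $\omega$-second-moment bounds on $\partial_\mu^\ell N_{\rm osc}$ from the diagonal/off-diagonal analysis, and conclude with Sobolev embedding, Markov's inequality and a union bound over $(\ell,k)$ for compact sets of the form $\{\|\varphi\|_{C^\ell([-k,k])}\le a_{\ell,k}\}$. The differences are cosmetic. You handle the centering constant by Cauchy--Schwarz and Fubini instead of the bound $\mathbf E_\omega(|\mathbf E_\mu N_{\rm osc}|^2)\le C_0\Lambda^{-1}$. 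Your closing worry about seminorm dependence is unnecessary, since each $f^{(\ell)}$ is itself a fixed admissible test function (its Fourier transform is smooth and supported in $[-1,1]$), so the trace formula directly gives $|R_\ell|\le C_{f,\ell}\Lambda/L$, which is all the paper uses.
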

\begin{proof}Following \cite[\S 9]{ingremeau2023emergence}, we consider compact subsets of the form
	\[\mathcal K(a)=\{f\in C^\infty(\mathbf{R})~:~ \forall \ell,k\ge 0, \ \|f\|_{C^\ell([-k,k])}\le a_{\ell,k}\},\]
	where $a=\{a_{\ell,k}\}_{\ell,k\ge 0}$ is a family of real numbers. It is easy to check
	\[\mathcal K(a)^c\subset \bigcup_{k,\ell} \{f~:~ \|f\|_{C^\ell([-k,k])}>a_{\ell,k}\}.\]
	In turn,
	\begin{equation}\label{eq: by markov} \begin{aligned}\mathbf P_\omega\big(\varphi_h\in \mathcal K(a)^c\big)& \le \sum_{k,\ell\ge 0} \mathbf P_\omega\big(\|\varphi_h\|_{C^\ell([-k,k])}>a_{\ell,k}\big)\end{aligned}\end{equation}
	To study derivatives of $\widetilde N(\omega,\mu)$ in the $\mu$ variable, we go back to the definition
	\[\partial_\mu^\ell N(\omega,\mu)=(-1)^\ell\operatorname{Tr}f^{(\ell)}\Big(\frac{P_h^\omega-E}{h}L-\mu\Big),\]
	where $f^{(\ell)}$ denotes the derivative of $f$ of order $\ell$, which has Fourier transform $\widehat{f^{(\ell)}}(\xi)=(-\mathrm{i}\xi)^\ell \widehat f(\xi)$. By applying the trace formula from Proposition \ref{prop: long time gutz} to $f^{(\ell)}$, one finds, for $\ell\ge 0$, $\mu\in [-\Lambda,\Lambda]$ and $\Lambda\le L$:
	\[\partial_\mu^\ell N(\omega,\mu)=\frac{(-1)^\ell C_\omega}{hL} \widehat{f^{(\ell)}}(0)+\partial_\mu^\ell N_{\rm osc}(\omega,\mu)+R_\ell(\omega,\mu),\]
	where the error is bounded deterministically by $|R_\ell(\omega,\mu)|\lesssim_{\!\! f,\ell} \Lambda/L$. In turn,
	\begin{equation} \label{eq: partial mu ell tilde N}\partial_\mu^\ell \widetilde N(\omega,\mu)=\partial_\mu^\ell \widetilde N_{\rm osc}(\omega,\mu)+\widetilde R_\ell(\omega,\mu),\end{equation}
	where we write $\widetilde Y=Y-\mathbf E_\mu[Y]$. By the proof of Proposition \ref{prop: moments N_osc}, we know that there is a family of $h$-independent constants $C_{\ell}\ge 1$, such that for any $0\le |k|\le \Lambda$ and $\ell\ge 0$, one has
	\[\mathbf E_\omega\Big[\int_{k-1}^{k+1} |\partial_\mu^\ell N_{\rm osc}(\omega,\mu)|^2\mathrm{d}\mu\Big]\le C_{\ell}.\]
	It follows from the Sobolev embedding theorem that for all $0\le k\le \Lambda$ and $\ell\ge 0$, one has
	\begin{equation} \label{eq: esperance C ell norm} \mathbf E_\omega(\|N_{\rm osc}(\omega,\cdot)\|_{C^\ell([-k,k])})\le C_{\ell,k},\end{equation}
	for some family of constants $C_{\ell,k}\ge 1$. Fix a family of numbers $(b_{\ell,k})$, to be chosen below. Using \eqref{eq: esperance C ell norm}, the Markov inequality shows that there exists a subset $B_{\ell,k}\subset \Omega_h$ with probability $\ge 1-\frac{C_{\ell,k}}{b_{\ell,k}}$ such that
	\[\forall \omega\in B_{\ell,k}, \qquad \|N_{\rm osc}(\omega,\cdot)\|_{C^\ell([-k,k])}\le b_{\ell,k}.\]
	Since $\mathbf E_\omega(|\mathbf E_\mu(N_{\rm osc})|^2)\le C_0\Lambda^{-1}$, up to replacing $B_{\ell,k}$ by a subset of probability $\ge 1-\frac{2 C_{\ell,k}}{b_{\ell,k}}$, we have
	\[\forall \omega\in B_{\ell,k}, \qquad \|\widetilde N_{\rm osc}(\omega,\cdot)\|_{C^\ell([-k,k])}\le 2 b_{\ell,k}.\]
	Recalling \eqref{eq: partial mu ell tilde N} and the bound $|\widetilde R_\ell|\lesssim \Lambda/L\lesssim 1$, we deduce
	\[\forall \omega\in B_{\ell,k},  \qquad \|\widetilde N(\omega,\cdot)\|_{C^\ell([-k,k])}\le 2 b_{\ell,k}+C_\ell.\]
	Multiplying $\widetilde N$ by the cutoff $\chi(\cdot/\Lambda)$ does not affect the estimates and we find another family of constants $(b_{\ell,k}')$ depending on $(b_{\ell,k})$, such that
	\[\forall \ell,k\ge 0, \forall \omega\in B_{\ell,k},\ \qquad \|\varphi_h\|_{C^\ell([-k,k])}\le b_{\ell,k}'.\]
	Thus,
	\[\sum_{k,\ell\ge 0} \mathbf P_\omega\big(\|\varphi_h\|_{C^\ell([-k,k])}>b_{\ell,k}'\big)\le \sum_{k,\ell\ge 0} (1-\mathbf P_{\omega}(B_{\ell,k}))\le \sum_{k,\ell\ge 0} \frac{2C_{\ell,k}}{b_{\ell,k}}.\]
	We now take $b_{\ell,k}= 8C_{\ell,k} 2^{k+\ell} \varepsilon^{-1}$ to make the right-hand side smaller than $\varepsilon$. Going back to \eqref{eq: by markov}, it is enough to take $a_{\ell,k}=b_{\ell,k}'$ to conclude that
	\[\mathbf P_\omega(\varphi_h\in \mathcal K(a)^c)\le \varepsilon\]
	uniformly in $h\in (0,1]$.
	 \end{proof}
	 
	 \subsection{Proof of Theorem \ref{thm: conv random fun energy}} This final section is devoted to the proof of Theorem \ref{thm: conv random fun energy}, which combines ideas from the proofs of Theorem \ref{thm: as GOE fluctuations} and Theorem \ref{thm: convergence random function}. The next proposition is analogous to Proposition \ref{prop: moments N_osc} and is proved in a similar way. 
	 \begin{prop} \label{prop: moments method gen} Endow the interval $[-\Lambda,\Lambda]$ with the uniform probability measure $\mathbf P_\lambda$. Let $b,d\in \mathbf N^*$. For any $\vec \mu=(\mu_1,\ldots,\mu_d)\in [-b,b]^d$, define
	 	\[G(\omega,\vec \mu):=\mathbf E_\lambda\Big[\prod_{i=1}^dN_{\rm osc}(\omega,\lambda+\mu_i)\Big]-\mathbf E\Big[\prod_{i=1}^d \mathfrak f_{\rm GOE}(\mu_i)\Big].\]
	 	Then, one has
	 	\[\mathbf E_\omega\big(|G(\omega,\vec \mu)|^2\big)=\mathcal O_{b,d}(\Lambda^{-1}). \]
	 \end{prop}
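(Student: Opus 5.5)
We expand $\prod_{i=1}^d N_{\rm osc}(\omega,\lambda+\mu_i)$ exactly as in \eqref{eq: Nosc power n}, as a sum over tuples $\mathbf t=(\vec\gamma,\vec k,\vec\varepsilon)$. The only change is that the phase $\mathrm{e}^{\mathrm{i}\mu\sum\varepsilon_ik_iT_{\gamma_i}/L}$ becomes
\[\exp\Big(\frac{\mathrm{i}\lambda}{L}\sum_i\varepsilon_ik_iT_{\gamma_i}\Big)\exp\Big(\frac{\mathrm{i}}{L}\sum_i\varepsilon_ik_i\mu_iT_{\gamma_i}\Big).\]
Averaging in $\lambda$ replaces the first factor by $\operatorname{sinc}\big(\frac{\Lambda}{L}\sum_\gamma\Delta k_{\mathbf t}(\gamma)T_\gamma\big)$. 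The second factor has modulus one and depends only on $\vec\mu\in[-b,b]^d$.

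We split the sum into diagonal and nondiagonal tuples as in Definition \ref{defi: tuples}. For diagonal tuples the $\lambda$-phase and the $\omega$-phase are both trivial, so the diagonal part is deterministic. It is exactly the expression treated in the proof of Proposition \ref{prop: moments method higher dim}. We discard nonprimitive contributions as in \S\ref{subsec: discard nonprim 1}, at cost $\mathcal O(L^{-2})$; this uses only $|\cdot|$ bounds, so the $\mu$-phases are harmless. The primitive ones we organize by perfect matchings as in \eqref{eq: write as perfect}. Proposition \ref{prop: equidistribution result} then gives $\sum_{\mathbf A}\prod_{\{i,j\}\in\mathbf A}k_{\rm GOE}(|\mu_i-\mu_j|)+\mathcal O_{b,d}(L^{-2})$, which equals $\mathbf E[\prod\mathfrak f_{\rm GOE}(\mu_i)]$ up to that error by \eqref{eq: expected prod GFF}. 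The error is uniform since $|\mu_i-\mu_j|\le 2b$. Because $L\ge\Lambda$, this is $\mathcal O(\Lambda^{-1})$.

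For the off-diagonal part we compute $\mathbf E_\omega|\cdot|^2$ as in \eqref{eq: intro another shorthand}. We bound the unimodular $\mu$-phases by $1$ in absolute value; this is legitimate because all subsequent estimates were already carried out on absolute values. We are then left with exactly the majorant $\mathcal S^{\rm off}$ of \eqref{eq: new express N osc off n}, with the same $b_{K,K'}$ and the same Proposition \ref{prop: averages actions} gain $h^{\varepsilon\#\{\Delta k\neq\Delta k'\}}$. The part with $\Delta k\neq\Delta k'$ is $\mathcal O_d(h^\varepsilon)$ by \eqref{eq: bound Acal1}. The part with $\Delta k=\Delta k'\neq0$ is $\mathcal O_d(\Lambda^{-1})$ by the dyadic decomposition leading to \eqref{eq: bound Acal2}, choosing $\alpha=L/\Lambda$.

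Combining both parts through $|x+y|^2\le2|x|^2+2|y|^2$ gives $\mathbf E_\omega|G|^2=\mathcal O_{b,d}(\Lambda^{-1}+L^{-4}+h^\varepsilon)=\mathcal O_{b,d}(\Lambda^{-1})$.

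The main point to check is that taking absolute values before summing costs nothing in the off-diagonal estimate. Since the $\mu$-dependent phase is deterministic and unimodular, the bound on each pair $(\mathbf t,\mathbf t')$ is unchanged, so I expect no real obstacle. The only other place needing care is the diagonal count, where the injectivity correction must stay $\mathcal O(L^{-2})$ uniformly in $\vec\mu$.
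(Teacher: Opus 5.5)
Your proposal is correct and follows essentially the same route as the paper. The diagonal part is deterministic and matches $\mathbf E\big[\prod_i \mathfrak f_{\rm GOE}(\mu_i)\big]$ up to $\mathcal O(L^{-2})$ through the perfect-matching computation of Proposition \ref{prop: moments method higher dim}; the $\lambda$-averaged off-diagonal part is bounded by rerunning the proof of Proposition \ref{prop: the nondiag cont}, where the extra $\vec\mu$-phases are unimodular and independent of $\omega$ and so drop out under the triangle inequality.
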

	 \begin{proof}As in Section \ref{sec: proof T sym}, we decompose
	 	\[\prod_{i=1}^dN_{\rm osc}(\omega,\lambda+\mu_i)=:\mathcal N_{\rm diag}+\mathcal N_{\rm osc}\]
	 	as the sum of a diagonal part and a nondiagonal part. The diagonal part $\mathcal N_{\rm diag}$ is in fact independent of $\omega$ and $\lambda$, and is given by the formula \eqref{eq: one can discard} (with $d$ in place of $n$). It was shown in Section \ref{sec: random function 1} to satisfy
	 	\[\mathcal N_{\rm diag}=\mathbf E\Big[\prod_{i=1}^d \mathfrak f_{\rm GOE}(\mu_i)\Big]+\mathcal O(L^{-2}).\]
	 	Thus, one has
	 	\[\mathbf E_\omega\big(|G(\omega,\vec \mu)|^2\big)\lesssim \mathbf E_\omega\big(|\mathbf E_\lambda[\mathcal N_{\rm off}]|^2\big)+\mathcal O(L^{-2}).\]
	 	The nondiagonal part $\mathcal N_{\rm off}$ is defined as \eqref{eq: def N osc off}, except that one should replace $n$ by $d$ there, and write
	 	\[\exp\Big(\mathrm{i}\sum_{i=1}^d \mu_i \varepsilon_ik_iT_{\gamma_i}/L\Big)\quad \text{instead of} \quad \exp\Big(\mathrm{i}\mu\sum_{i=1}^n \varepsilon_ik_iT_{\gamma_i}/L\Big).\]
	 	By repeating the proof of Proposition \ref{prop: the nondiag cont}, we find
	 	\[\mathbf E_\omega\big(|\mathbf E_\lambda[\mathcal N_{\rm off}]|^2\big)\lesssim \Lambda^{-1}.\]
	 \end{proof}
	 Fix $\vec \mu\in \mathbf R^d$. Proposition \ref{prop: moments method gen} shows the existence of a sequence of subsets $B_n(\vec \mu)\subset \Omega_{h_n}$ with $\mathbf P_\omega(B_n(\vec \mu))\to 1$, such that for any sequence $(\omega_n\in B_n(\vec \mu))$, the sequence of random vectors $(N_{\rm osc}(\omega_n,\lambda+\mu_1),\ldots,N_{\rm osc}(\omega_n,\lambda+\mu_d))$ (where $\lambda$ is picked at random) converges in law to a centered Gaussian random vector. However, we want to control the convergence of this sequence of random vectors not only for one value of $\vec \mu$, but for all values of $\vec \mu$ in a compact set of $\mathbf{R}^d$. We cannot naively take the intersection of the sets $B_n(\vec \mu)$ because we have no control on the measure of this set.
	 
	 To remedy this issue, we fix a compact set $[-b,b]^d$ and set up a fine enough net of points in $[-b,b]^d$. We apply Proposition \ref{prop: moments method gen} for each vector in the net, then control the adjacent vectors by estimating the variations of $G(\omega,\vec \mu)$ with $\vec \mu$ using the next lemma.
	 
	 \begin{lem}\label{lem: esperance cell bis} There is a family of $h$-independent constants $C_{\ell,b,d}\ge 1$ such that for any $\ell,b,d\ge 1$, one has
	 \[\mathbf E_\omega\big(\big\|G(\omega,\vec \mu)\big\|_{C^\ell_{\vec \mu}([-b,b]^d)}^2\big)\le C_{\ell,b,d}.\]\end{lem}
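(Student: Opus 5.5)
The plan is to reduce the $C^\ell$ norm in $\vec\mu$ to $L^2$ bounds on derivatives via Sobolev embedding, and then bound those $L^2$ norms with the moment computations of Section \ref{sec: proof T sym}. The second term of $G$, namely $\mathbf E[\prod_i \mathfrak f_{\rm GOE}(\mu_i)]$, is a finite sum over perfect matchings of products of $k_{\rm GOE}(|\mu_i-\mu_j|)$. Each $k_{\rm GOE}$ is smooth with all derivatives bounded, since $\widehat f$ is compactly supported. This term is therefore deterministic with $C^\ell([-b,b]^d)$ norm $\lesssim_{\ell,b,d}1$, and it suffices to treat
\[F(\omega,\vec\mu):=\mathbf E_\lambda\Big[\prod_{i=1}^d N_{\rm osc}(\omega,\lambda+\mu_i)\Big].\]
By the Sobolev embedding $H^{s}([-b,b]^d)\hookrightarrow C^\ell([-b,b]^d)$ with $s=\ell+d$, it is enough to show that for every multi-index $\vec\beta$ with $|\vec\beta|\le \ell+d$,
\[\mathbf E_\omega\Big[\int_{[-b,b]^d}|\partial^{\vec\beta}_{\vec\mu}F(\omega,\vec\mu)|^2\,\mathrm d\vec\mu\Big]\le C_{\ell,b,d}.\]
By Fubini, this follows from a bound on $\mathbf E_\omega|\partial^{\vec\beta}F(\omega,\vec\mu)|^2$ that is uniform in $\vec\mu\in[-b,b]^d$.

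The derivative factorises:
\[\partial^{\vec\beta}F=\mathbf E_\lambda\Big[\prod_i \partial^{\beta_i}_\mu N_{\rm osc}(\omega,\lambda+\mu_i)\Big].\]
Differentiating $N_{\rm osc}$ in $\mu$ $\beta_i$ times multiplies the $\gamma$-summand in \eqref{eq: Nosc in traceformula} by $(\mathrm i\varepsilon T_\gamma/L)^{\beta_i}$. This is the same as replacing $\widehat f$ by $\widehat{f^{(\beta_i)}}(t)=(-\mathrm i t)^{\beta_i}\widehat f(t)$, up to sign, which is again smooth and supported in $[-1,1]$. Hence $\partial^{\beta_i}_\mu N_{\rm osc}$ is an oscillating sum of exactly the same form, with amplitudes $a_{\gamma,\varepsilon}$ modified by bounded factors $(T_\gamma/L)^{\beta_i}\le 1$ on the support.

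We then expand $|\partial^{\vec\beta}F|^2$ into the diagonal part plus the off-diagonal part, as in Proposition \ref{prop: moments method gen}, now with $d$ different test functions $f^{(\beta_1)},\dots,f^{(\beta_d)}$.
\begin{itemize}
\item The diagonal part is bounded deterministically, as in Proposition \ref{prop: esperance diag}. That argument only used $|a_{\gamma,\varepsilon}|\le A_\gamma$, the bound $\sum_\gamma A_\gamma^2\lesssim L^2$ from \eqref{eq: formula for GOE}, and Lemma \ref{lem: poincare map powers}, all of which persist with the new amplitudes. This gives $O_{d,\ell}(1)$.
\item The off-diagonal part is bounded in $L^2_\omega$ by $C(\Lambda^{-1}+h^\varepsilon)$ by repeating the proof of Proposition \ref{prop: the nondiag cont}. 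That proof used only absolute bounds on amplitudes, Proposition \ref{prop: averages actions}, and the sinc bounds from the $\lambda$-average. The shifts $\mu_i$ only enter through unimodular phases $\mathrm e^{\mathrm i\mu_i\varepsilon_ik_iT_{\gamma_i}/L}$, so the bound is uniform in $\vec\mu$. The sinc factor is unchanged because $\lambda$ enters as the common shift $\lambda\sum_i\varepsilon_ik_iT_{\gamma_i}/L$.
\end{itemize}
Combining these gives $\mathbf E_\omega|\partial^{\vec\beta}F|^2\le C_{\vec\beta,d}$ uniformly on $[-b,b]^d$, and the lemma follows.

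The main obstacle is checking carefully that every bound in Sections \ref{sec: proof T sym}–5 is insensitive to replacing the single test function $f$ by the tuple $(f^{(\beta_i)})_i$ and to the $\mu_i$-dependent phases. In particular, the counting of pairs $(K,K')$ must absorb the fact that amplitudes now depend on the index $i$ and not only on $\gamma$. I would handle this by bounding each amplitude by $C_\ell A_{\gamma^k}$ at the start, which turns every estimate into one already established. One caveat is that this bound loses the exact diagonal identity, but only an upper bound is needed here.
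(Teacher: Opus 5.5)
Your proposal is correct and follows essentially the paper's route. The paper also treats $\mathbf E\big[\prod_i \mathfrak f_{\rm GOE}(\mu_i)\big]$ as a deterministic smooth function. It then controls $\mathbf E_\lambda\big[\prod_i N_{\rm osc}(\omega,\lambda+\mu_i)\big]$ by bounding $L^2_{\vec\mu}$-norms of its derivatives, using the diagonal/off-diagonal moment estimates with $f$ replaced by $f^{(\beta_i)}$, followed by Sobolev embedding, exactly as in the tightness argument of Proposition \ref{prop: tightness 1}. You additionally make explicit two points the paper leaves implicit: the uniformity in $\vec\mu$, and the reduction of index-dependent amplitudes to $A_{\gamma^k}$.
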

	\begin{proof} Arguing as in Section \ref{sec: random function 1}, we show that for any $\ell,b,d\in \mathbf N^*$, one has
		\begin{equation} \label{eq: enough to get tightness}\mathbf E_\omega\Big(\Big\|\mathbf E_\lambda\Big[\prod_{i=1}^dN_{\rm osc}(\omega,\lambda+\mu_i)\Big] \Big\|^2_{C^\ell_{\vec \mu}([-b,b]^d)}\Big)\le C_{\ell,b,d},\end{equation}
	 for some family of $h$-independent constants $C_{\ell,b,d}\ge 1$. Up to increasing $C_{\ell,b,d}$, we also have
	 \[\Big\|\mathbf E\Big[\prod_{i=1}^d \mathfrak f_{\rm GOE}(\mu_i)\Big] \Big\|^2_{C^\ell_{\vec \mu}([-b,b]^d)}\le C_{\ell,b,d},\]
	 which leads to the result, up to increasing $C_{\ell,b,d}$ again. \end{proof}
	 Let $r>0$, and consider a net of points $\Gamma\subset [-b,b]^d$ of size $\#\Gamma \lesssim_{\!\! b,d} r^{-d}$, such that
	 \[[-b,b]^d\subset \bigcup_{\vec \mu_\star\in \Gamma} B(\vec \mu_\star,r).\]
	 Then, for any $\vec \mu\in [-b,b]^d$, one can find $\vec \mu_\star\in \Gamma$ such that $|\vec \mu-\vec \mu_\star|\le r$. In turn,
	 \[|G(\omega,\vec \mu)|^2\le 2|G(\omega,\vec \mu_\star)|^2+2r^2\|G(\omega,\cdot)\|_{C^1([-b,b]^d)}^2\]
	 Taking the supremum over $\vec \mu\in [-b,b]^d$ leads to
	 \[\big\|G(\omega,\vec \mu)^2\big\|_{L^\infty([-b,b]^d)}\le 2\Big(\sum_{\vec \mu_\star\in \Gamma} |G(\omega,\vec \mu_\star)|^2\Big) +2r^2\|G(\omega,\cdot)\|_{C^1([-b,b]^d)}^2.\]
	 Taking the expectations and recalling $\#\Gamma\lesssim_{\!\! b,d}r^{-d}$, Proposition \ref{prop: moments method gen} and Lemma \ref{lem: esperance cell bis} lead to the following improvement of Proposition \ref{prop: moments method gen}.
	 \begin{prop}There exists a family of $h$-independent constants $C_{b,d}\ge 1$ such that for any $r>0$, one has
	 \[\mathbf E_\omega\Big(\big\|G(\omega,\vec \mu)^2\big\|_{L^\infty([-b,b]^d)}\Big)\le C_{b,d}\Big(\frac{r^{-d}}{\Lambda}+r^2\Big).\]\end{prop}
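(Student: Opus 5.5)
The proof will follow the net argument sketched just before the statement, made quantitative. Fix $r>0$ and a net $\Gamma\subset[-b,b]^d$ with $\#\Gamma\lesssim_{b,d} r^{-d}$ whose balls $B(\vec\mu_\star,r)$ cover $[-b,b]^d$.

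The first step is a pointwise bound. For every $\vec\mu\in[-b,b]^d$ I choose $\vec\mu_\star\in\Gamma$ with $|\vec\mu-\vec\mu_\star|\le r$. The mean value inequality on the convex set $[-b,b]^d$ gives
\[|G(\omega,\vec\mu)-G(\omega,\vec\mu_\star)|\le r\,\|G(\omega,\cdot)\|_{C^1([-b,b]^d)}.\]
Combining this with $(x+y)^2\le 2x^2+2y^2$, I obtain
\[|G(\omega,\vec\mu)|^2\le 2|G(\omega,\vec\mu_\star)|^2+2r^2\|G(\omega,\cdot)\|_{C^1([-b,b]^d)}^2.\]

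The second step removes the dependence on $\vec\mu$. I bound $|G(\omega,\vec\mu_\star)|^2$ by the sum over all points of the net, which is independent of $\vec\mu$. Taking the supremum over $\vec\mu$ then gives
\[\big\|G(\omega,\cdot)^2\big\|_{L^\infty([-b,b]^d)}\le 2\sum_{\vec\mu_\star\in\Gamma}|G(\omega,\vec\mu_\star)|^2+2r^2\|G(\omega,\cdot)\|^2_{C^1([-b,b]^d)}.\]
Using $\sum_{\vec\mu_\star\in\Gamma}$ rather than $\sup_{\vec\mu_\star\in\Gamma}$ is what allows expectations to pass through linearly.

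The third step takes $\mathbf E_\omega$ and uses two earlier results. For each fixed $\vec\mu_\star\in[-b,b]^d$, Proposition \ref{prop: moments method gen} gives $\mathbf E_\omega|G(\omega,\vec\mu_\star)|^2\le C'_{b,d}\Lambda^{-1}$. I will check that this constant is uniform in $\vec\mu\in[-b,b]^d$. That uniformity is the only delicate point: it should follow from the proof, since the diagonal term is uniform when the differences $|\mu_i-\mu_j|$ lie in a compact set (Proposition \ref{prop: moments method higher dim}), and the off-diagonal bound uses only $|\exp(\mathrm i\sum\mu_i\varepsilon_ik_iT_{\gamma_i}/L)|=1$ together with the $\operatorname{sinc}$ bounds. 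Summing over the net then contributes at most $C'_{b,d}\,\#\Gamma\,\Lambda^{-1}\lesssim r^{-d}\Lambda^{-1}$.

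For the gradient term, Lemma \ref{lem: esperance cell bis} with $\ell=1$ gives $\mathbf E_\omega\|G\|^2_{C^1}\le C_{1,b,d}$. Adding the two contributions yields the claimed bound $C_{b,d}(r^{-d}\Lambda^{-1}+r^2)$, with all constants independent of $h$ and of $r$.
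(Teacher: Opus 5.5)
Your proposal is correct and follows the paper's argument essentially verbatim. Both use the $r$-net $\Gamma$ and the pointwise bound $|G(\omega,\vec\mu)|^2\le 2|G(\omega,\vec\mu_\star)|^2+2r^2\|G(\omega,\cdot)\|_{C^1}^2$, replace the single net point by the sum over $\Gamma$, and take $\mathbf E_\omega$ using Proposition \ref{prop: moments method gen} and Lemma \ref{lem: esperance cell bis}. Your check that the constant in Proposition \ref{prop: moments method gen} is uniform over $\vec\mu_\star\in[-b,b]^d$ is exactly what the $\mathcal O_{b,d}$ there encodes.
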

	Let $\varepsilon=\frac{r^{-d}}{\Lambda}+r^2$. Fix $\delta>0$, and a family of constants $m_{b,d}>0$. By the Markov inequality, there exists a subset $B_{b,d}\subset \Omega_h$ of probability $\ge 1-\frac{C_{b,d} \varepsilon}{m_{b,d} \delta}$ such that
	\[\forall \omega\in B_{b,d}, \forall \vec \mu\in [-b,b]^d, \qquad |G(\omega,\vec \mu)|^2\le m_{b,d} \delta.\]
	Let $m_{b,d}=2^{b+d}C_{b,d}$, and set $B:=\bigcap_{b,d\ge 1} B_{b,d}$. Then $\mathbf P_\omega(B)\ge 1-\frac{\varepsilon}{\delta}$ and
	\[\forall \omega\in B, \forall d\ge 1, \forall b\ge 1, \forall \vec \mu\in [-b,b]^d, \qquad |G(\omega,\vec \mu)|^2\le 2^{b+d}C_{b,d} \delta.\]
	Let $Y(\omega)=\mathbf E_\lambda[N_{\rm osc}(\omega,\lambda)]$. Recall that there is a subset $B'$ of probability $\ge 1-\frac{C}{\Lambda \delta}$ such that $|Y(\omega)|^2\le \delta$ for all $\omega\in B'$. Also recall that $\widetilde N=\widetilde N_{\rm osc}+\widetilde R$ with $|\widetilde R|\lesssim \Lambda/L$. Replace $B$ by $B\cap B'$. Then $\mathbf P_\omega(B)\ge 1-\frac{\varepsilon}{\delta}-\frac{C}{\Lambda \delta}$ and for all $\omega\in B$, $b,d\ge 1$ and $\vec \mu\in [-b,b]^d$, one has
	\[\Big|\mathbf E_\lambda\Big[\prod_{i=1}^d\widetilde N(\omega,\lambda+\mu_i)\Big]-\mathbf E\Big[\prod_{i=1}^d \mathfrak f_{\rm GOE}(\mu_i)\Big]\Big|^2\lesssim_{\!\! b,d}\delta+\frac{\Lambda^2}{L^2}.\]
	
	Now let $h=h_n\to 0$, and let $1\ll \Lambda_n\ll L_n\le \mathfrak c|\log h_n|$. Take $r_n=\Lambda_n^{-1/2d}$ so that $\varepsilon_n\to 0$, and finally let $\delta_n=\varepsilon_n^{1/2}$ so that $\varepsilon_n/\delta_n+1/\Lambda_n\delta_n\to 0$. We end up with the following proposition, where the constants need not be the same as above.
	\begin{prop}\label{prop: moments method for blabla} Assume $1\ll \Lambda_n\le L_n\le \mathfrak c|\log h_n|$. Then there exists a sequence $\delta_n\to 0$ and a family of measurable subsets $B_n\subset \Omega_{h_n}$ such that the following holds.
		\begin{itemize}
  \item $\mathbf P_\omega(B_n)\longrightarrow 1$.
  \item There exists a family of constants $C_{b,d}$ such that for all $\omega_n\in B_n$, for all $b,d\ge 1$, for all $\vec \mu\in [-b,b]^d$, one has
  \[\Big|\mathbf E_\lambda\Big[\prod_{i=1}^d\widetilde N(\omega_n,\lambda+\mu_i)\Big]-\mathbf E\Big[\prod_{i=1}^d \mathfrak f_{\rm GOE}(\mu_i)\Big]\Big|\le C_{b,d} \delta_n.\]
\end{itemize}
	\end{prop}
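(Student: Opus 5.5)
The plan is to assemble the ingredients already established just above the statement; the only new work is a diagonal choice of parameters. Fix $h=h_n$, $\Lambda=\Lambda_n$, $L=L_n$ with $1\ll\Lambda_n\le L_n\le\mathfrak c|\log h_n|$.

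\textbf{Step 1: a uniform bound on $G$ from the net argument.} For each pair $b,d\ge 1$, the improved version of Proposition \ref{prop: moments method gen} gives
\[
\mathbf E_\omega\big(\|G(\omega,\cdot)^2\|_{L^\infty([-b,b]^d)}\big)\le C_{b,d}\,\varepsilon_n,\qquad \varepsilon_n:=\frac{r_n^{-d}}{\Lambda_n}+r_n^2 .
\]
The net radius $r_n$ must not depend on $d$, since we will intersect over all $d$. So take $r_n:=\Lambda_n^{-1/(2D_n)}$ with $D_n\to\infty$ slowly, for instance chosen so that $r_n\to 0$ and $r_n^{-D_n}\le\Lambda_n^{1/2}$. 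Then for every fixed $d$, as soon as $D_n\ge d$ we have $r_n^{-d}/\Lambda_n\le\Lambda_n^{-1/2}$. Hence $\varepsilon_n(d)\le\Lambda_n^{-1/2}+r_n^2=:\varepsilon_n\to 0$ for $n$ large, depending on $d$. For the finitely many pairs with $d>D_n$, the factor $2^{-(b+d)}$ introduced in the next step will absorb them: I will simply exclude those pairs from the intersection.

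\textbf{Step 2: Markov inequality and intersection over $(b,d)$.} Set $\delta_n:=\varepsilon_n^{1/2}$. Apply the Markov inequality with thresholds $m_{b,d}\delta_n$, where $m_{b,d}=2^{b+d}C_{b,d}$, to obtain sets $B_{b,d}$. Intersect these over all $b\ge 1$ and $d\le D_n$, and also with the set $B'$ on which $|\mathbf E_\lambda N_{\rm osc}(\omega_n,\lambda)|^2\le\delta_n$. This set $B'$ has probability $\ge 1-C/(\Lambda_n\delta_n)$, by Proposition \ref{prop: moments N_osc} with $n=1$. The resulting set $B_n$ satisfies
\[
\mathbf P_\omega(B_n)\ge 1-\frac{\varepsilon_n}{\delta_n}-\frac{C}{\Lambda_n\delta_n}\longrightarrow 0\ \text{complement},
\]
that is, $\mathbf P_\omega(B_n)\to 1$, because $\varepsilon_n/\delta_n=\varepsilon_n^{1/2}\to 0$ and $\Lambda_n\delta_n\ge\Lambda_n^{3/4}\to\infty$.

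\textbf{Step 3: from $N_{\rm osc}$ to $\widetilde N$.} On $B_n$, replace $N_{\rm osc}$ by $\widetilde N=N_{\rm osc}-Y+\widetilde R$. Here $|Y|\le\delta_n^{1/2}$ and $|\widetilde R|\lesssim\Lambda_n/L_n$, the latter bounded by Proposition \ref{prop: long time gutz}. Expand $\prod_i\widetilde N(\omega_n,\lambda+\mu_i)$ multilinearly. Each cross term contains at least one small factor ($Y$ or $\widetilde R$), and the remaining $\mathbf E_\lambda$-averaged products of at most $d-1$ factors $N_{\rm osc}$ are bounded by $O_{b,d}(1)$ on $B_n$. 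Indeed, by Hölder in $\lambda$ these mixed moments are controlled by even moments $\mathbf E_\lambda[N_{\rm osc}(\lambda+\mu)^{2k}]$, which are themselves controlled on $B_n$ (the case $\mu_i$ all equal of the bound on $G$) by the Gaussian moments plus $O(\delta_n)$.

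This yields an error of order $C_{b,d}(\delta_n^{1/2}+\Lambda_n/L_n)$. If $\Lambda_n\ll L_n$ is not assumed, the term $\Lambda_n/L_n$ need not vanish. In that case I would instead rely on the bound $\widetilde R\lesssim\Lambda/L$ being improvable, or read the statement as in the displayed estimate before it, with $\delta_n$ replaced by $\delta_n^{1/2}+\Lambda_n/L_n$. This is the step I expect to be the main obstacle: making the remainder from the trace formula genuinely $o(1)$, uniformly over $\vec\mu\in[-b,b]^d$ and all $(b,d)$ simultaneously, with constants $C_{b,d}$ independent of $n$. Once the remainder is under control, redefine $\delta_n$ as the maximum of the small quantities to conclude.
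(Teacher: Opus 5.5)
This is the paper's argument: the net bound on $G$, Markov with weights $2^{-(b+d)}$, intersection with the set where $|\mathbf E_\lambda N_{\rm osc}|^2\le\delta_n$, then $\widetilde N=N_{\rm osc}-Y+\widetilde R$ (your H\"older control of the cross terms fills in a step the paper only asserts), and the ``obstacle'' you flag is just a slip in the stated hypothesis: the paper's proof explicitly takes $1\ll\Lambda_n\ll L_n$ immediately before the statement, so $\Lambda_n/L_n\to 0$ and your final redefinition of $\delta_n$ closes the argument with no improvement of the remainder needed. One correction: the pairs with $d>D_n$ are not finitely many and the weights $2^{-(b+d)}$ do nothing for pairs you exclude, so as written your $B_n$ gives no control for those $d$; repair this either by enlarging $C_{b,d}$ to cover the finitely many $n$ with $D_n<2d$ (at fixed $h$, $\widetilde N$ is deterministically bounded, e.g.\ via the trace formula since only orbits of period $\le L$ contribute), or, as the paper implicitly does, by letting $r=\Lambda_n^{-1/(2d)}$ depend on $d$ (each $B_{b,d}$ comes from its own net) and noting that $\sum_{b,d}2^{-(b+d)}\varepsilon_{n,d}\to 0$ by dominated convergence, which lets you intersect over all $(b,d)$.
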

	Proposition \ref{prop: moments method for blabla} is already enough to show that for any sequence $(\omega_n\in B_n)$, for any $\vec \mu\in \mathbf{R}^d$, the sequence of random vectors $(\widetilde N(\omega_n,\lambda+\mu_1),\ldots,\widetilde N(\omega_n,\lambda+\mu_d))$, where $\lambda$ is picked at random in $[-\Lambda_n,\Lambda_n]$, converges in law to a centered Gaussian random vector with covariance matrix $(K_{\rm GOE}(\mu_i,\mu_j))_{1\le i,j\le d}$. To prove convergence of the sequence of random functions $N(\omega_n,\lambda+\cdot)$ and obtain Theorem \ref{thm: conv random fun energy}, we are left with proving tightness of the sequence, but this basically follows from the estimate \eqref{eq: enough to get tightness} and arguing as in the proof of Proposition \ref{prop: tightness 1}.
	\printbibliography
	
\end{document}